\documentclass[11pt]{article}
\usepackage{smile}

\usepackage{fullpage}
\usepackage{url}
\usepackage{lscape}
\usepackage{bigints}
\usepackage{framed}
\usepackage{mdframed}
\usepackage{enumerate}
\usepackage[inline]{enumitem}
\usepackage[T1]{fontenc}
\usepackage{moresize}
\usepackage{bm}
\usepackage{bbm}
\usepackage{dsfont}
\usepackage{amsmath}
\usepackage{amssymb}
\usepackage{amsthm}
\usepackage{amsfonts}
\usepackage{stmaryrd}
\usepackage{array}
\usepackage{mathrsfs}
\usepackage{mathtools} 
\usepackage{extarrows}
\usepackage{stackrel}
\usepackage{relsize,exscale}
\usepackage{scalerel}
\usepackage{colortbl}
\usepackage[nodisplayskipstretch]{setspace}
\usepackage{color}
\usepackage[usenames,dvipsnames]{xcolor}
\usepackage{cancel}
\usepackage{soul}
\usepackage{undertilde}
\usepackage{xfrac}
\usepackage{siunitx}
\usepackage{graphicx}
\usepackage{float}
\usepackage{rotating}
\usepackage{subcaption}
\usepackage{overpic}
\usepackage[all]{xy}
\usepackage{tikz}
\usetikzlibrary{arrows,matrix,positioning,calc,automata,patterns}
\usepackage{booktabs}
\usepackage{dcolumn}
\usepackage{multirow}
\usepackage{diagbox}
\usepackage{tabularx}
\usepackage{verbatim}
\usepackage{listings}
\usepackage[ruled,vlined]{algorithm2e}
\usepackage{fancyvrb}
\usepackage{hyperref}
\usepackage[round]{natbib}
\usepackage{sectsty}

\hypersetup{
    bookmarks=true,         
    unicode=false,          
    pdftoolbar=true,        
    pdfmenubar=true,        
    pdffitwindow=false,     
    pdfstartview={FitH},    
    pdftitle={My title},    
    pdfauthor={Author},     
    pdfsubject={Subject},   
    pdfcreator={Creator},   
    pdfproducer={Producer}, 
    pdfkeywords={key1, key2}, 
    pdfnewwindow=true,      
    colorlinks=true,        
    linkcolor=blue,         
    citecolor=blue,         
    filecolor=blue,         
    urlcolor=cyan           
}

\usepackage{stackengine}
\stackMath
\newcommand\tenq[2][1]{%
\def\useanchorwidth{T}%
\ifnum#1>1%
\stackunder[0pt]{\tenq[\numexpr#1-1\relax]{#2}}{\!\scriptscriptstyle\thicksim}%
\else%
\stackunder[1pt]{#2}{\!\scriptstyle\thicksim}%
\fi%
}

\makeatletter
\DeclareRobustCommand\widecheck[1]{{\mathpalette\@widecheck{#1}}}
\def\@widecheck#1#2{%
    \setbox\z@\hbox{\m@th$#1#2$}%
    \setbox\tw@\hbox{\m@th$#1%
       \widehat{%
          \vrule\@width\z@\@height\ht\z@
          \vrule\@height\z@\@width\wd\z@}$}%
    \dp\tw@-\ht\z@
    \@tempdima\ht\z@ \advance\@tempdima2\ht\tw@ \divide\@tempdima\thr@@
    \setbox\tw@\hbox{%
       \raise\@tempdima\hbox{\scalebox{1}[-1]{\lower\@tempdima\box
\tw@}}}%
    {\ooalign{\box\tw@ \cr \box\z@}}}
\makeatother

\def\tr{\mathop{\text{tr}}\kern.2ex}

\def\P{{\mathrm P}}
\def\Q{{\mathrm Q}}

\def\E{{\mathrm E}}

\def\d{{\mathrm d}}

\def\BL1{{\rm BL}_1}

\renewcommand{\P}{\mathrm{P}}

\newcommand{\zahl}[1]{\llbracket #1\rrbracket}
\newcommand\yestag{\addtocounter{equation}{1}\tag{\theequation}}
\newcolumntype{L}[1]{>{\raggedright\let\newline\\\arraybackslash\hspace{0pt}}m{#1}}
\newcolumntype{C}[1]{>{  \centering\let\newline\\\arraybackslash\hspace{0pt}}m{#1}}
\newcolumntype{R}[1]{>{ \raggedleft\let\newline\\\arraybackslash\hspace{0pt}}m{#1}}
\newcolumntype{d}[1]{D{.}{.}{#1}}
\newcolumntype{H}{>{\setbox0=\hbox\bgroup}c<{\egroup}@{}}
\newcolumntype{Z}{>{\setbox0=\hbox\bgroup}c<{\egroup}@{\hspace*{-\tabcolsep}}}
\newcolumntype{b}{X}
\newcolumntype{s}{>{\hsize=.5\hsize}X}
\def\checkmark{\tikz\fill[scale=0.4](0,.35) -- (.25,0) -- (1,.7) -- (.25,.15) -- cycle;}

\numberwithin{equation}{section}

\newtheorem{theorem}{Theorem}[section]
\newtheorem{lemma}{Lemma}[section]
\newtheorem{proposition}{Proposition}[section]
\newtheorem{assumption}{Assumption}[section]
\newtheorem{corollary}{Corollary}[section]
\newtheorem{innercustomgeneric}{\customgenericname}
\providecommand{\customgenericname}{}
\newcommand{\newcustomtheorem}[2]{%
  \newenvironment{#1}[1]
  {%
   \renewcommand\customgenericname{#2}%
   \renewcommand\theinnercustomgeneric{##1}%
   \innercustomgeneric
  }
  {\endinnercustomgeneric}
}
\newcustomtheorem{customdefinition}{Definition}
\newcustomtheorem{customdefinitions}{Definitions}
\newcustomtheorem{customtheorem}{Theorem}
\newcustomtheorem{customassumption}{Assumption}
\newcustomtheorem{customlemma}{Lemma}
\newcustomtheorem{customexample}{Example}
\theoremstyle{definition}
\newtheorem{definition}{Definition}[section]

\newtheorem{remark}{Remark}[section]

\usepackage{enumitem}
\makeatletter
\newcommand{\mylabel}[2]{#2\def\@currentlabel{#2}\label{#1}}
\makeatother

\graphicspath{{./fig3/}}

\allowdisplaybreaks

\begin{document}

\setlength{\abovedisplayskip}{5pt}
\setlength{\belowdisplayskip}{5pt}
\setlength{\abovedisplayshortskip}{5pt}
\setlength{\belowdisplayshortskip}{5pt}
\hypersetup{colorlinks,breaklinks,urlcolor=blue,linkcolor=blue}

\title{\LARGE Generative bootstrap processes}

\author{Ziming Lin\thanks{Department of Statistics, University of Washington, Seattle, WA 98195, USA; e-mail: {\tt zmlin@uw.edu}}
~~~and ~ Fang Han\thanks{Department of Statistics, University of Washington, Seattle, WA 98195, USA; e-mail: {\tt fanghan@uw.edu}}
}

\date{\today}

\maketitle

\vspace{-1em}

\begin{abstract}
We study generative bootstrap processes obtained by resampling from fitted
generative distributions. We establish necessary and sufficient conditions
for their conditional weak convergence to the $\P$-Brownian bridge,
formulated in terms of finite-dimensional conditional weak convergence and
conditional asymptotic equicontinuity. We further provide general sufficient
conditions and a Gin\'e--Zinn-type characterization relating this convergence
to the $\P$-Donsker property. As applications, we verify these conditions for
prominent classes of generative models, including triangular normalizing
flows, flow matching, score-based diffusion models, and Wasserstein
generative adversarial networks.
\end{abstract}

{\bf Keywords}: bootstrap process; generative models; empirical process.

\section{Introduction}
\label{sec:introduction}

The bootstrap approximates the sampling distribution of a statistic by generating synthetic samples and recomputing the statistic on these samples \citep{efron1979bootstrap}. In Efron's original bootstrap, the synthetic samples are drawn from the empirical distribution. Other bootstrap procedures instead generate synthetic samples from a fitted model for the data distribution. For example, the parametric bootstrap resamples from a fitted parametric model, whereas the smoothed bootstrap resamples from a smoothed nonparametric estimate of the underlying distribution \citep{silverman1987bootstrap,hall1989smoothing}. Thus, a bootstrap procedure is characterized by its resampling law. Once this law is specified, the resulting resample naturally induces a \emph{bootstrap process}.

Let $\bP_n$ denote the empirical measure based on observations from an
unknown data-generating distribution $\P$. The corresponding
$\cF$-indexed \emph{empirical process} is
\[
\bG_n\cF
:=
\Big\{
\sqrt n\big(\bP_n-\P\big)f
;\ f\in\cF
\Big\}.
\]
The function class $\cF$ is said to be \emph{$\P$-Donsker} if there
exists a $\P$-Brownian bridge indexed by $\cF$, denoted by $\bG_\P$,
such that
\[
\bG_n
\rightsquigarrow
\bG_\P
\qquad\text{in }\ell^\infty(\cF),
\]
in the sense of \citet[Section~2.1]{MR1385671}.

A bootstrap process mimics the empirical process above, with $\bP_n$ and $\P$ replaced by their bootstrap analogues. It is said to be consistent if, conditionally on the observed data, it converges weakly to the same $\bG_\P$ in probability. By the functional delta method, consistency of the bootstrap process in turn yields bootstrap consistency for a broad class of smooth statistical functionals of the empirical distribution \citep{dumbgen1993nondifferentiable,MR1385671,fang2019inference}.

The theory of bootstrap processes plays a fundamental role in statistical inference and has generated a substantial literature. For Efron's original bootstrap, \citet{gine1990bootstrapping} showed that conditional weak convergence of the corresponding bootstrap process to the $\P$-Brownian bridge is characterized by the $\P$-Donsker property of $\cF$. \citet{praestgaard1990bootstrap} extended this theory to bootstrap processes based on multiplier weights. Related generalized weighted bootstrap schemes were studied by \citet{mason1992rank}, who established consistency for univariate empirical and quantile processes. \citet{praestgaard1993exchangeably} subsequently developed a general theory of conditional weak convergence for exchangeably weighted bootstrap processes indexed by arbitrary function classes. More recently, \citet{chernozhukov2016empirical} established non-asymptotic bootstrap approximations for suprema of empirical processes indexed by increasingly complex function classes, while \citet{lin2026bootstrap} obtained general bootstrap validity results for sample-splitting-based procedures, including, in particular, double/debiased machine learning estimators \citep{chernozhukov2018double}. See also \cite{lin2023failure} and \cite{lin2026consistency} for some relevant studies.

Modern generative modeling techniques substantially enlarge the class of admissible resampling laws. Rather than restricting bootstrap samples to the observed data points or to conventional parametric or smoothing-based constructions, one may fit a flexible generative model and draw synthetic observations from the resulting learned distribution. It is \citet{tran2026generative} who first formalized this idea by introducing a general generative-model-based bootstrap framework and establishing its validity for both regular and irregular estimators, including settings in which Efron's original bootstrap fails. In a related and notable development, \citet{liang2026diffusion} further advanced this perspective by showing that a diffusion-based bootstrap can consistently capture the uncertainty of high-dimensional ordinary least squares, thereby addressing another setting in which the original bootstrap is known to fail \citep{el2018can}.

The present paper develops the framework of \citet{tran2026generative} at the level of empirical and bootstrap processes. To formalize the setup, let $Z_1,\ldots,Z_n$ be independent copies drawn from $\P$, and let $\P_U$ be a known base distribution. Suppose that a fitting procedure, possibly obtained by adapting an existing generator trained on a large external data set, produces a \emph{generator} $\hat G_n$. Define
\begin{align*}
\Q_n := \hat G_n\#\P_U,
\end{align*}
where $\hat G_n\#\P_U$ is the pushforward measure of $\P_U$ by $\hat G_n$. The measure $\Q_n$ represents the \emph{learned distribution} and is itself random through the procedure used to construct $\hat G_n$. 

Let $\cO$ collect the original data together with any additional randomness used in fitting the generator. Conditionally on $\cO$, let $\tilde Z_1,\ldots,\tilde Z_n$ be independent draws from $\Q_n$. Define
\begin{align*}
\tilde\bP_n
:=
\frac1n
\sum_{i=1}^n
\delta_{\tilde Z_i}\qquad {\rm and}
\qquad
\tilde\bG_n
:=
\sqrt n
\big(
\tilde\bP_n-\Q_n
\big),
\end{align*}
where $\delta_x$ denotes the Dirac measure at $x$. We refer to $\tilde\bG_n$ as the \emph{generative bootstrap process}. Our objective is to establish, with probability tending to one, that
\begin{align}\label{eq:main-target}
\tilde\bG_n
\rightsquigarrow
\bG_\P
\quad\text{conditionally on $\cO$ in }
\ell^\infty(\cF),
\end{align}
in the sense of \citet[Chapter 3.6]{MR1385671}. 

Our main theoretical result provides sufficient conditions under which \eqref{eq:main-target} holds. In this sense, our results complement the classical theory of bootstrap processes developed by \citet{gine1990bootstrapping}, \citet{praestgaard1993exchangeably}, and others. We further verify these conditions for four representative classes of generative models: triangular normalizing flows \citep{dinh2014nice,dinh2017density,irons2022triangular}, flow matching \citep{lipman2022flow}, score-based diffusion models \citep{song2021scorebased,song2021maximum}, and Wasserstein generative adversarial networks \citep{arjovsky2017wasserstein}.

The remainder of the paper is organized as follows. Section~\ref{sec:main} introduces the formal generative bootstrap framework and develops the main theoretical results for the generative bootstrap process. Section~\ref{sec:app} verifies the general conditions for triangular normalizing flows, flow matching, score-based diffusion models, and Wasserstein generative adversarial networks. The appendices contain all proofs, detailed model-specific verifications, and auxiliary technical lemmas.

\paragraph*{Notation.} 
For any positive integers $n,p\in \bN$, let $\zahl{n}:=\{1,2,\ldots,n\}$, and let $\bR^p$ denote the $p$-dimensional Euclidean space. For any $a,b\in\bR$, write $a\vee b:=\max\{a,b\}$ and $a\wedge b:=\min\{a,b\}$. We use $\lVert\cdot\rVert$ and $\lVert\cdot\rVert_\infty$ to denote the Euclidean norm and the sup norm, respectively. For two real sequences $\{a_n\}_{n\ge1}$ and $\{b_n\}_{n\ge1}$, we write $a_n\lesssim b_n$ (or $a_n=O(b_n)$) if there exists a constant $C>0$ such that $|a_n|\le C|b_n|$ for all sufficiently large $n$, and write $a_n=o(b_n)$ if $a_n/b_n\to0$ as $n\to\infty$. For a sequence of random elements $\{Z_n\}_{n\ge1}$, we write $Z_n=o_\P(1)$ if $Z_n\to0$ in $\P$-probability, and $Z_n=O_\P(1)$ if $\{Z_n\}_{n\ge1}$ is bounded in $\P$-probability. For a set $A\subset\bR^p$, write 
\[
\operatorname{int}(A), ~~~\bar A,~~~ \text{and}~~~\partial A:=\bar A\setminus\operatorname{int}(A)
\]
for its interior, closure, and boundary, respectively. Define
\[
B_R:=\{z\in\bR^p:\lVert z\rVert\le R\}.
\]
For probability measures $\mu$ and $\nu$, write $\mu\ll\nu$ if $\mu$ is absolutely continuous with respect to $\nu$, and $\mu\perp\nu$ if $\mu$ and $\nu$ are mutually singular. Throughout the paper, we use the convention 
\[
\P f=\int f \d \P.
\]

\paragraph*{A note on measurability conventions.}
Throughout the paper, we impose the standard measurability conditions ensuring that the empirical-process suprema, conditional expectations, and conditional probabilities appearing below are measurable. Equivalent formulations in terms of outer expectation and outer probability can be given under the usual empirical-process conventions. To streamline the exposition, we suppress this distinction.

\section{Main theory}
\label{sec:main}


Let $\P_U$ be a known base distribution, and let $U,\tilde U_1,\tilde U_2,\ldots$ be independent draws from $\P_U$. Define a generic synthetic observation and the bootstrap observations by
\begin{align*}
\tilde Z
&:= \hat G_n(U),
\qquad
\tilde Z_i
:= \hat G_n(\tilde U_i),
\qquad i \in \zahl{n}.
\end{align*}
The learned bootstrap distribution is the \emph{random} probability measure
\begin{align*}
\Q_n
:=
\hat G_n\#\P_U.
\end{align*}
Recall that in this paper we use $\cO$ to represent all information (i.e. the induced sigma-field) used to construct the \emph{learned generator} $\hat G_n$, including the original sample $(Z_1,\ldots,Z_n)$ and any auxiliary randomness arising from training and optimization. Thus, conditional on $\cO$,
\[
\tilde Z,\tilde Z_1,\ldots,\tilde Z_n
\overset{\rm i.i.d.}{\sim}
\Q_n.
\]
Hereafter, we use $\P_\cO$ to denote the probability measure induced on
the $\sigma$-field $\cO$, and use $\E_{\tilde U}[\cdot\mid\cO]$ and
$\P_{\tilde U}(\cdot\mid\cO)$ to denote conditional expectation and
probability with respect to the bootstrap randomness only.

Recall that the bootstrap empirical distribution is defined by
\[
\tilde\bP_n
:=
\frac1n
\sum_{i=1}^n
\delta_{\tilde Z_i}
\]
along with
\[
\tilde\bG_n\cF
=
\Big\{\sqrt n
\big(
\tilde\bP_n-\Q_n
\big); f\in\cF\Big\}.
\]
We refer to $\tilde\bG_n\cF$ as the \emph{($\cF$-indexed) generative bootstrap process}. The choice $\Q_n=\bP_n$ recovers Efron's bootstrap process. More generally, conditional on $\cO$, $\tilde\bG_n$ is the centered empirical process associated with an independent and identically distributed (i.i.d.) sample of size $n$ drawn from $\Q_n$.

Let $\bG_\P$ denote the $\P$-Brownian bridge indexed by $\cF$, namely, the centered Gaussian process with covariance function
\begin{align*}
\E
\big[
\bG_\P[f]\bG_\P[g]
\big]
=
\P[fg]
-
\P[f]\P[g],
\qquad
f,g\in\cF.
\end{align*}
Our object of study is the conditional weak convergence of $\tilde\bG_n$ to $\bG_\P$ in $\ell^\infty(\cF)$. Throughout the paper, whenever such conditional weak convergence or the $\P$-Donsker property of $\cF$ is considered, we assume that $\bG_\P$ is a tight, separable, Borel random element of $\ell^\infty(\cF)$.

\subsection{Characterization of conditional weak convergence}
\label{sec:CWC_characterization}

To characterize conditional weak convergence, we follow the standard empirical- and bootstrap-process terminology outlined in, e.g., \citet[Chapter~3.6]{MR1385671} and \citet[Chapter~10]{kosorok2008introduction}. To this end, we first introduce the bounded-Lipschitz \emph{test class} used to formulate weak convergence and the canonical semimetric governing empirical-process increments.

For a metric space $(\bD,d)$, let 
\begin{align*}
\BL1(\bD,d)
:=
\Big\{
h:\bD\to\bR:
\sup_{x\in\bD}
\lvert h(x)\rvert
\le 1,
\quad
\sup_{x_1\ne x_2}
\frac{
\lvert h(x_1)-h(x_2)\rvert
}{
d(x_1,x_2)
}
\le 1
\Big\}.
\end{align*}
We consider the function space
\[
\ell^\infty(\cF):=\Big\{x:\cF \to  \bR: \|x\|_{\cF}:=
\sup_{f\in\cF}
\Big\lvert x[f]\Big\rvert<\infty\Big\}
\]
equipped with the supremum norm $\|\cdot\|_{\cF}$.

For any probability measure $\Q$ such that $\cF\subseteq L^2(\Q)$,
the $L^2$ space with regard to $\Q$, define the canonical semimetric
\begin{align*}
\rho_\Q(f,g)^2
:=
\Q
\big[
\big\{
(f-g)-\Q[f-g]
\big\}^2
\big],
\qquad
f,g\in\cF.
\end{align*}
With this notation, the following result specializes the classical characterization of weak convergence through finite-dimensional convergence and asymptotic equicontinuity to the conditional generative-bootstrap setting.



\begin{theorem}[Characterization of conditional weak convergence]
\label{thm:cvg_emp_proc}
Let $\cF\subseteq L^2(\P)$ be a measurable class such that the generative bootstrap process $\tilde\bG_n$ is well defined
as an $\ell^\infty(\cF)$-valued random element for every $n$. Then $\tilde\bG_n$ converges conditionally
weakly to $\bG_\P$ in $\ell^\infty(\cF)$ in the sense that
\begin{align*}
\sup_{h\in\BL1(\ell^\infty(\cF),\lVert \cdot\rVert_\cF)}
\Big\lvert
\E_{\tilde U}
\big[
h(\tilde\bG_n)
\mid
\cO
\big]
-
\E
\big[
h(\bG_\P)
\big]
\Big\rvert
=
o_{\P_\cO}(1),
\yestag
\label{eq:CWC}
\end{align*}
if and only if both of the following conditions hold:
\begin{enumerate}[itemsep=-.5ex,label=(\roman*)]
\item \emph{Finite-dimensional conditional weak convergence}: for every
$k\in\bN$ and $f_1,\ldots,f_k\in\cF$, we have
\begin{align*}
\sup_{h\in\BL1(\bR^k,\lVert \cdot\rVert_{\infty})}
\Big\lvert
\E_{\tilde U}\big[
h\big(
\tilde\bG_n[f_1],\ldots,\tilde\bG_n[f_k]
\big)
\mid\cO
\big]
-
\E\big[
h\big(
\bG_\P[f_1],\ldots,\bG_\P[f_k]
\big)
\big]
\Big\rvert
=
o_{\P_\cO}(1).
\yestag
\label{eq:FD-CWC}
\end{align*}

\item \emph{Conditional asymptotic $\rho_\P$-equicontinuity}: for every
$\epsilon,\eta>0$,
\begin{align*}
\lim_{\delta\downarrow0}
\limsup_{n\to\infty}
\P_\cO
\Big\{
\P_{\tilde U}
\Big(
\sup_{\rho_\P(f,g)<\delta}
\big\lvert
\tilde\bG_n[f]
-
\tilde\bG_n[g]
\big\rvert
>
\epsilon
\ \Big|\ 
\cO
\Big)
>
\eta
\Big\}
=
0.
\yestag
\label{eq:CAE}
\end{align*}
\end{enumerate}
\end{theorem}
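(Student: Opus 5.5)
The plan is to transcribe the classical characterization of weak convergence in $\ell^\infty(\cF)$ (finite-dimensional convergence plus asymptotic tightness, via \citet[Theorems~1.5.4, 1.5.7, 1.12.2]{MR1385671}) to the conditional setting. The device is a subsequence argument that turns ``in $\P_\cO$-probability'' statements into almost-sure statements along subsequences. Conditionally on $\cO$ the process $\tilde\bG_n$ is a genuine empirical process, so for $\P_\cO$-almost every realization we can apply the unconditional theory to the sequence of conditional laws. We use throughout that $\bG_\P$ is tight and separable, so $(\cF,\rho_\P)$ is totally bounded and $\bG_\P$ has almost all sample paths uniformly $\rho_\P$-continuous \citep[Example~1.5.10]{MR1385671}.

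\emph{Necessity.} Finite-dimensional convergence \eqref{eq:FD-CWC} follows directly from \eqref{eq:CWC}. For $h\in\BL1(\bR^k,\|\cdot\|_\infty)$, the map $x\mapsto h(x[f_1],\ldots,x[f_k])$ belongs to $\BL1(\ell^\infty(\cF),\|\cdot\|_\cF)$, since coordinate projections are $1$-Lipschitz for the sup norm.

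For \eqref{eq:CAE}, fix $\epsilon,\delta$ and set $m_\delta(x):=\sup_{\rho_\P(f,g)<\delta}|x[f]-x[g]|$. Let $\varphi_\epsilon$ be a piecewise-linear function equal to $0$ on $[0,\epsilon/2]$ and $1$ on $[\epsilon,\infty)$. Then $x\mapsto\varphi_\epsilon(m_\delta(x))$ is bounded by $1$ and Lipschitz with constant $4/\epsilon$, because $m_\delta$ is $2$-Lipschitz in $\|\cdot\|_\cF$. After rescaling, \eqref{eq:CWC} gives
\[
\P_{\tilde U}(m_\delta(\tilde\bG_n)>\epsilon\mid\cO)\le \E[\varphi_\epsilon(m_\delta(\bG_\P))]+o_{\P_\cO}(1).
\]
The first term tends to $0$ as $\delta\downarrow0$ by uniform continuity of the paths of $\bG_\P$ and dominated convergence. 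This yields \eqref{eq:CAE}.

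\emph{Sufficiency.} This is the main step. I would follow the proof of \citet[Theorem~2.9.6]{MR1385671} (the multiplier/bootstrap version) by approximating through finite grids. Fix $\delta>0$ and a finite $\delta$-net $\{f_1,\ldots,f_k\}$ of $(\cF,\rho_\P)$, and let $\pi_\delta:\cF\to\{f_j\}$ map each $f$ to a nearest net point. For any process $x$, define $x\circ\pi_\delta[f]:=x[\pi_\delta f]$. Then for $h\in\BL1$,
\[
|\E_{\tilde U}[h(\tilde\bG_n)\mid\cO]-\E h(\bG_\P)|\le \E_{\tilde U}[\|\tilde\bG_n-\tilde\bG_n\circ\pi_\delta\|_\cF\wedge2\mid\cO]+|\E_{\tilde U}[h(\tilde\bG_n\circ\pi_\delta)\mid\cO]-\E h(\bG_\P\circ\pi_\delta)|+\E[\|\bG_\P-\bG_\P\circ\pi_\delta\|_\cF\wedge2].
\]
Each term is then controlled separately.

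\begin{itemize}
\item The first term is at most $\epsilon+2\P_{\tilde U}(m_\delta(\tilde\bG_n)>\epsilon\mid\cO)$, which is small in $\P_\cO$-probability for small $\delta$ and large $n$ by \eqref{eq:CAE}.
\item The third term vanishes as $\delta\downarrow0$ by path continuity of $\bG_\P$.
\item For the middle term, $h(x\circ\pi_\delta)=\tilde h(x[f_1],\ldots,x[f_k])$ with $\tilde h\in\BL1(\bR^k,\|\cdot\|_\infty)$, because $\|x\circ\pi_\delta-y\circ\pi_\delta\|_\cF=\max_j|x[f_j]-y[f_j]|$. The supremum over $h$ is therefore bounded by the finite-dimensional supremum in \eqref{eq:FD-CWC}, which is $o_{\P_\cO}(1)$.
\end{itemize}

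This handles the supremum over $h$ uniformly, so no separate subsequence argument is needed. One then lets $n\to\infty$, then $\delta\downarrow0$, then $\epsilon\downarrow0$. The order of limits must be managed carefully, because \eqref{eq:CAE} only gives a $\limsup_n$ bound for each fixed $\delta$, and the net depends on $\delta$.

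The main obstacle I anticipate is measurability: the conditional expectations of suprema, and the fact that $\tilde\bG_n$ and $\tilde\bG_n\circ\pi_\delta$ are possibly non-Borel elements of $\ell^\infty(\cF)$. This is dispatched by the paper's standing measurability convention, or otherwise by replacing conditional expectations with outer and inner versions as in \citet[Section~1.9]{MR1385671}.
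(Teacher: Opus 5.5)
Your proposal is correct and follows the paper's proof essentially step for step. Necessity uses the coordinate projections together with the $4/\epsilon$-Lipschitz map $\varphi_\epsilon\circ m_\delta$ and the uniform $\rho_\P$-continuity of the paths of $\bG_\P$. Sufficiency uses the same three-term decomposition through a finite $\rho_\P$-net, with the middle term reduced to a bounded-Lipschitz function on $\bR^k$, so that the supremum over $h$ is controlled uniformly by \eqref{eq:FD-CWC}.

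Two minor points. Take the net so that $\rho_\P(f,\pi_\delta f)<\delta$ holds strictly; this is what guarantees $\lVert x-x\circ\pi_\delta\rVert_\cF\le m_\delta(x)$. And, as you note, the subsequence device announced at the outset is never needed.
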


Theorem~\ref{thm:cvg_emp_proc} itself does not require an envelope
condition. To streamline the presentation, however, we restrict attention in the remainder of the paper to those $\cF$ with \emph{bounded envelopes}. Under this additional condition,
Proposition~\ref{prop:FD_cov_characterization} in the appendix shows that
\eqref{eq:FD-CWC} is equivalent to the following
\emph{finite-dimensional covariance consistency} condition: for every $f,g\in\cF$,
\begin{align*}
\Q_n[fg]
-
\Q_n[f]\Q_n[g]
=
\P[fg]
-
\P[f]\P[g]
+
o_{\P_\cO}(1).
\yestag
\label{eq:FD-cov}
\end{align*}
Consequently, in the bounded-envelope setting adopted below,
\eqref{eq:CWC} holds if and only if \eqref{eq:FD-cov} and
\eqref{eq:CAE} hold. The remainder of this section develops verifiable sufficient
conditions for these two requirements in turn.


\subsection{Sufficient conditions for finite-dimensional covariance consistency}
\label{sec:suff_cond_FD_cov}

Condition~\eqref{eq:FD-cov} reduces to the convergence of
$\Q_n[h]$ to $\P[h]$ for those $h=f$, $g$, and $fg$. 

\begin{definition}[Weak consistency] $\Q_n$ is said  to converge weakly to
$\P$ in $\P_\cO$-probability and written as 
\[
\Q_n \rightsquigarrow \P \text{ in } \P_\cO\text{-probability}
\]
if, for every \emph{bounded continuous function}
$h:\bR^p\to\bR$,
\begin{align*}
(\Q_n-\P)h = o_{\P_\cO}(1).
\end{align*}
We also refer to this property as \emph{weak consistency} of $\Q_n$ to $\P$.
\end{definition}

Weak consistency gives convergence of expectations for bounded continuous functions, but the indexing class $\cF$ need not consist of continuous
functions. The following approximation lemma provides a general device
for extending weak convergence to bounded measurable test functions.

\begin{lemma}[Continuous approximation]
\label{lem:continuous_approximation_principle}
Let $\cG$ be a class of bounded Borel measurable functions on $\bR^p$.
Assume $\Q_n\rightsquigarrow\P$ in $\P_\cO$-probability. Suppose further that,
for every $h\in\cG$, there exists a sequence of bounded continuous functions
$\{h_m\}_{m\ge1}$ such that
\begin{align*}
\P[\lvert h-h_m\rvert]
\longrightarrow
0,
\yestag
\label{eq:CAP_P_approximation}
\end{align*}
and, for every $\epsilon>0$,
\begin{align*}
\lim_{m\to\infty}
\limsup_{n\to\infty}
\P_\cO\Big(
\Q_n[\lvert h-h_m\rvert]>\epsilon
\Big)
=
0.
\yestag
\label{eq:CAP_Qn_approximation}
\end{align*}
Then $(\Q_n-\P)h=o_{\P_\cO}(1)$ for every $h\in\cG$.
\end{lemma}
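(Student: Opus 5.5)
The argument is a standard $\epsilon/3$ approximation. Fix $h\in\cG$, $\epsilon>0$ and $\eta>0$; we must show that $\P_\cO(\lvert(\Q_n-\P)h\rvert>\epsilon)\le\eta$ for all sufficiently large $n$. For any $m$ we use the triangle-inequality decomposition
\begin{align*}
\lvert(\Q_n-\P)h\rvert
\le
\Q_n[\lvert h-h_m\rvert]
+
\lvert(\Q_n-\P)h_m\rvert
+
\P[\lvert h-h_m\rvert],
\end{align*}
which is valid because $h$ and $h_m$ are bounded and Borel, so all integrals are finite. Each of the three terms is then controlled separately.

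First, by \eqref{eq:CAP_P_approximation}, there is $m_1$ such that $\P[\lvert h-h_m\rvert]<\epsilon/3$ for all $m\ge m_1$. This term is deterministic, so nothing probabilistic is needed here.

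Next, by \eqref{eq:CAP_Qn_approximation} applied with $\epsilon/3$, there is $m_2$ such that for every $m\ge m_2$ we have $\limsup_{n}\P_\cO(\Q_n[\lvert h-h_m\rvert]>\epsilon/3)<\eta/2$. Fix one $m\ge m_1\vee m_2$. Since $h_m$ is bounded and continuous, weak consistency gives $(\Q_n-\P)h_m=o_{\P_\cO}(1)$. Hence for all large $n$ we have $\P_\cO(\lvert(\Q_n-\P)h_m\rvert>\epsilon/3)<\eta/2$, and also $\P_\cO(\Q_n[\lvert h-h_m\rvert]>\epsilon/3)<\eta/2$ by the choice of $m$.

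Finally, a union bound over the two random terms gives $\P_\cO(\lvert(\Q_n-\P)h\rvert>\epsilon)<\eta$ for all large $n$. Since $\epsilon$ and $\eta$ are arbitrary, the claim follows.

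The only delicate point is the order of quantifiers: $m$ must be fixed, using the $\lim_m\limsup_n$ structure of \eqref{eq:CAP_Qn_approximation}, before sending $n\to\infty$. We cannot let $m$ depend on $n$, because weak consistency is only available for each fixed bounded continuous function. Once that order is respected, the argument is routine.
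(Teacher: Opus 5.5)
Your proposal is correct and follows essentially the same route as the paper: the same three-term triangle-inequality decomposition, a union bound at level $\epsilon/3$, weak consistency for the fixed continuous $h_m$, and the $\lim_m\limsup_n$ structure of \eqref{eq:CAP_Qn_approximation}. The only difference is bookkeeping: the paper bounds $\limsup_n$ for every fixed $m$ and then lets $m\to\infty$ (the left side does not depend on $m$), while you choose a single large $m$ in advance. The two are equivalent.
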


Lemma~\ref{lem:continuous_approximation_principle} yields two complementary
routes for verifying \eqref{eq:FD-cov}. The first places regularity on the
indexing functions and requires only weak consistency 
of $\Q_n$. The second
instead places regularity on the measures, thereby allowing arbitrary
bounded measurable indexing functions.

\begin{theorem}[Sufficient conditions for Condition \eqref{eq:FD-cov}]
\label{thm:FD-cov}
Suppose $\cF$ has a bounded envelope function. 
Then \eqref{eq:FD-cov} holds if either of the following conditions is
satisfied.
\begin{enumerate}[itemsep=-.5ex,label=(\roman*)]

\item
$\Q_n\rightsquigarrow\P$ in $\P_\cO$-probability and every $f\in\cF$ is
$\P$-a.s.\ continuous.

\item
$\Q_n\rightsquigarrow\P$ in $\P_\cO$-probability, and there exists a
deterministic finite Borel measure $\nu$ on $\bR^p$ such that
$\P\ll\nu$ and $\Q_n$ is asymptotically uniformly absolutely continuous
with respect to $\nu$, that is, for every $\epsilon>0$,
\begin{align*}
\lim_{\delta\downarrow0}
\limsup_{n\to\infty}
\P_\cO\Big(
\sup_{A:\nu(A)\le\delta}
\Q_n(A)
>
\epsilon
\Big)
=
0,
\yestag
\label{eq:AUAC_Qn_nu}
\end{align*}
where the supremum is taken over all Borel measurable sets $A$.
\end{enumerate}
\end{theorem}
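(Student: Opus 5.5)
Since \eqref{eq:FD-cov} only involves $\Q_n[h]$ for $h\in\{f,g,fg\}$, and products and continuous maps preserve convergence in $\P_\cO$-probability, it suffices to show $(\Q_n-\P)h=o_{\P_\cO}(1)$ for every $h$ in
\[
\cG:=\{f,\ fg:\ f,g\in\cF\}.
\]
Every element of $\cG$ is bounded by $M\vee M^2$, where $M$ bounds the envelope. Indeed, given these three convergences, $\Q_n[fg]-\Q_n[f]\Q_n[g]$ differs from $\P[fg]-\P[f]\P[g]$ by $o_{\P_\cO}(1)$, because the bounded factors $\Q_n[f],\P[g]$ satisfy $|\Q_n[f]\Q_n[g]-\P[f]\P[g]|\le M(|(\Q_n-\P)f|+|(\Q_n-\P)g|)$. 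Under (i), each $h\in\cG$ is also $\P$-a.s.\ continuous, since the discontinuity set of $fg$ lies in the union of those of $f$ and $g$.

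\emph{Case (i).} I would not use Lemma~\ref{lem:continuous_approximation_principle} directly. Instead I would use a subsequence characterization of convergence in probability. Fix a countable convergence-determining family $\{\phi_j\}$ of bounded continuous functions, for instance bounded Lipschitz functions dense in the appropriate sense on $\bR^p$. Given any subsequence, a diagonal argument yields a further subsequence along which $(\Q_n-\P)\phi_j\to0$ for all $j$ almost surely. Along that subsequence $\Q_n\rightsquigarrow\P$ almost surely in the ordinary sense. The deterministic continuous mapping / portmanteau theorem then gives $\Q_n h\to\P h$ almost surely for every bounded, $\P$-a.s.\ continuous $h$. Since every subsequence has such a further subsequence, $(\Q_n-\P)h=o_{\P_\cO}(1)$.

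\emph{Case (ii).} Here I would apply Lemma~\ref{lem:continuous_approximation_principle}, which requires, for each bounded measurable $h$, continuous $h_m$ satisfying \eqref{eq:CAP_P_approximation} and \eqref{eq:CAP_Qn_approximation}. Since $\nu$ is a finite Borel measure on $\bR^p$, bounded continuous functions are dense in $L^1(\nu)$ (Lusin's theorem). So I would choose continuous $h_m$ with $\|h_m\|_\infty\le\|h\|_\infty=:K$ and $\nu(h\ne h_m)\le 1/m$. Truncation by $\max(-K,\min(K,\cdot))$ preserves continuity and keeps the sup bound.

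Condition \eqref{eq:CAP_P_approximation} follows because $\P\ll\nu$: the sets $A_m:=\{h\ne h_m\}$ have $\nu(A_m)\to0$, so $\P(A_m)\to0$ by absolute continuity of a finite measure (the $\epsilon$–$\delta$ form). Hence $\P|h-h_m|\le2K\P(A_m)\to0$.

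For \eqref{eq:CAP_Qn_approximation}, note that $\Q_n|h-h_m|\le 2K\,\Q_n(A_m)\le 2K\sup_{A:\nu(A)\le 1/m}\Q_n(A)$. Given $\epsilon>0$, \eqref{eq:AUAC_Qn_nu} with $\epsilon/(2K)$ makes $\limsup_n\P_\cO(\cdot>\epsilon)$ arbitrarily small once $1/m$ is below the relevant $\delta$. Thus the double limit is zero, and the lemma gives the claim.

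The main obstacle is the choice of $h_m$ in case (ii). Mere $L^1(\nu)$ closeness is not enough; one needs the disagreement set to have small $\nu$-measure together with a uniform sup bound, so that the uniform absolute continuity \eqref{eq:AUAC_Qn_nu} can be applied to sets rather than functions. Lusin's theorem on the finite (hence Radon) Borel measure $\nu$ on $\bR^p$, combined with Tietze extension and truncation, supplies exactly this. In case (i) the only delicate point is the subsequence/countable-family reduction, which lets us invoke the deterministic portmanteau theorem.
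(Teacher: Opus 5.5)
Your proposal is correct and follows the paper's proof essentially step for step. The paper uses the same reduction to $(\Q_n-\P)h=o_{\P_\cO}(1)$ for $h\in\cF\cup\{fg:f,g\in\cF\}$ via the bounded-envelope product bound, the same subsequence argument over a countable convergence-determining class combined with the portmanteau theorem for $\P$-a.s.\ continuous functions in case (i), and the same Lusin--Tietze construction fed into Lemma~\ref{lem:continuous_approximation_principle} in case (ii). The only cosmetic difference is that you bound $\Q_n|h-h_m|$ through the disagreement set $\{h\neq h_m\}$ rather than the complement of the Lusin closed set; the two are equivalent.
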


Theorem~\ref{thm:FD-cov}(i) is particularly convenient when consistency of
$\Q_n$ is available in Wasserstein distance. By
Lemma~\ref{lemma:wq-implies-weak} in the appendix, $\Q_n\rightsquigarrow\P$ in
$\P_\cO$-probability whenever, for some $q\ge1$,
$W_q(\Q_n,\P)=o_{\P_\cO}(1)$. This condition also accommodates familiar
indicator classes underlying empirical distribution functions. For
$\cF=\{\ind_A:A\in\cA\}$, the $\P$-a.s.\ continuity requirement reduces to
\[
\P(\partial A)=0
\qquad
\text{for every }A\in\cA.
\]

Theorem~\ref{thm:FD-cov}(ii) instead shifts regularity from the indexing
functions to the measures: sets with vanishingly small $\nu$-measure cannot
carry nonnegligible $\Q_n$-mass. This removes the continuity requirement on
$\cF$ entirely. A stronger but convenient sufficient condition is convergence
in total variation. Standard divergence inequalities then yield the following
immediate consequences.

\begin{corollary}[Total-variation and divergence criteria]
\label{cor:divergence_FD_cov}
Suppose $\cF$ has a bounded envelope. Then \eqref{eq:FD-cov} holds if either of the following
conditions is satisfied:
\begin{enumerate}[itemsep=-.5ex,label=(\roman*)]
\item
the total variation distance converges:
\[
d_{\rm TV}(\Q_n,\P)=o_{\P_\cO}(1);
\]

\item
the Kullback–Leibler (KL) divergence converges in either direction:
\[
D_{\rm KL}(\Q_n\|\P)=o_{\P_\cO}(1)
\qquad\text{or}\qquad
D_{\rm KL}(\P\|\Q_n)=o_{\P_\cO}(1);
\]

\item
the Hellinger distance converges:
\[
H(\Q_n,\P)=o_{\P_\cO}(1);
\]

\item
the chi-square divergence converges:
$\Q_n\ll\P$ with probability tending to one and
\[
\chi^2(\Q_n\|\P)=o_{\P_\cO}(1).
\]
\end{enumerate}
\end{corollary}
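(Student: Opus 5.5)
The plan is to reduce everything to part~(i), total-variation convergence, and to obtain (ii)--(iv) from standard divergence inequalities. For (i) there are two routes. The first is to invoke Theorem~\ref{thm:FD-cov}(ii) with $\nu=\P$. The second, which I would actually write, is direct. Let $F$ be a bounded envelope with $\sup F\le M$. For every $f,g\in\cF$, each of $f$, $g$, and $fg$ is bounded by $M\vee M^2$. For any bounded measurable $h$ we then have
\[
\lvert(\Q_n-\P)h\rvert\le 2\lVert h\rVert_\infty\, d_{\rm TV}(\Q_n,\P)=o_{\P_\cO}(1).
\]
So $\Q_n f\to\P f$, $\Q_n g\to\P g$, and $\Q_n[fg]\to\P[fg]$ in $\P_\cO$-probability. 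Because all these quantities are uniformly bounded, the product $\Q_n[f]\Q_n[g]$ converges in probability to $\P[f]\P[g]$ by the continuous mapping theorem. This gives \eqref{eq:FD-cov}. The direct argument needs no continuity of $f$ and no appeal to weak consistency. It is also consistent with the theorem route: TV convergence implies weak consistency, and $\sup_{A:\P(A)\le\delta}\Q_n(A)\le\delta+d_{\rm TV}(\Q_n,\P)$ yields \eqref{eq:AUAC_Qn_nu} with $\nu=\P$.

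For (ii), Pinsker's inequality gives $d_{\rm TV}(\Q_n,\P)\le\sqrt{D_{\rm KL}(\Q_n\|\P)/2}$. Since $d_{\rm TV}$ is symmetric, the same bound holds with the KL arguments reversed, so either direction of KL convergence implies (i). One technical remark is needed: on the event where the KL divergence is $+\infty$ the bound is vacuous, but that event has probability tending to zero because $D_{\rm KL}=o_{\P_\cO}(1)$.

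For (iii), I use $d_{\rm TV}\le \sqrt2\,H$; the constant depends on the normalization of $H$, but it is irrelevant here. For (iv), on the event $\{\Q_n\ll\P\}$, whose probability tends to one, Cauchy--Schwarz gives
\[
2d_{\rm TV}(\Q_n,\P)=\int\lvert d\Q_n/d\P-1\rvert\,d\P\le\sqrt{\chi^2(\Q_n\|\P)}.
\]
Alternatively, one can use $D_{\rm KL}\le\log(1+\chi^2)$ and reduce to (ii).

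The only delicate point is the measurability and event bookkeeping. Each divergence is a random quantity determined by $\cO$, so each inequality is applied pointwise on the relevant event, and the convergence in probability then transfers directly. Beyond this, the argument is routine.
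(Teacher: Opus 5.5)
Your proposal is correct. For (ii)--(iv) it matches the paper's proof: Pinsker in either direction, $d_{\rm TV}\le\sqrt2\,H$ via Cauchy--Schwarz, and the Cauchy--Schwarz bound for $\chi^2$ on the event $\{\Q_n\ll\P\}$.

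The difference is in (i). The paper does not use your direct bound $\lvert(\Q_n-\P)h\rvert\le 2\lVert h\rVert_\infty d_{\rm TV}(\Q_n,\P)$. It instead argues that total-variation convergence gives weak consistency, and that $\sup_{A:\P(A)\le\delta}\Q_n(A)\le\delta+d_{\rm TV}(\Q_n,\P)$ gives \eqref{eq:AUAC_Qn_nu} with $\nu=\P$. It then invokes Theorem~\ref{thm:FD-cov}(ii), whose proof runs through Lusin's theorem, Tietze extension and Lemma~\ref{lem:continuous_approximation_principle}. This is exactly the route you list as your first option.

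Your direct argument is more elementary and also more informative. With $B$ bounding the envelope, the decomposition of the covariance discrepancy into $\lvert(\Q_n-\P)[fg]\rvert+B\lvert(\Q_n-\P)g\rvert+B\lvert(\Q_n-\P)f\rvert$ yields the explicit bound $6B^2 d_{\rm TV}(\Q_n,\P)$, uniformly over $f,g\in\cF$. The paper's route gives only pointwise convergence in probability, with no rate.

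What the paper's route buys is conceptual placement. It shows total-variation consistency to be a special case of the measure-side criterion, asymptotic uniform absolute continuity, which is the organizing device of that subsection. The same criterion is reused for the density-based results that follow (Lemma~\ref{lem:nu_UI_AUAC} and its corollaries).
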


Corollary \ref{cor:divergence_FD_cov} verifies the measure-side condition through
convergence in statistical divergences. For density-based generative models,
the same condition can often be checked more directly through the learned
densities. The next lemma shows that \eqref{eq:AUAC_Qn_nu} is equivalent to
\emph{uniform integrability} of these densities with respect to the reference
measure.

\begin{lemma}[Uniform integrability and asymptotic uniform absolute continuity]
\label{lem:nu_UI_AUAC}
Let $\nu$ be a finite Borel measure on $\bR^p$.
Suppose
$\P_\cO(\Q_n\ll\nu)\to1$. On the event $\{\Q_n\ll\nu\}$, write
$q_n:=\d\Q_n/\d\nu$, and set $q_n:=0$ otherwise. Then
\eqref{eq:AUAC_Qn_nu} holds if and only if the densities $q_n$ are uniformly
integrable with respect to $\nu$ in $\P_\cO$-probability: for every
$\epsilon>0$,
\begin{align*}
\lim_{L\to\infty}
\limsup_{n\to\infty}
\P_\cO\Big(
\int
q_n\ind\{q_n>L\}\d\nu
>
\epsilon
\Big)
=
0.
\yestag
\label{eq:nu_UI_density}
\end{align*}
\end{lemma}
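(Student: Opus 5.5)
The plan is to prove both directions by turning each condition into a deterministic inequality between two nonnegative functionals of a single density. For a density $q$ with respect to $\nu$, write
\[
\omega(q,\delta):=\sup_{A:\nu(A)\le\delta}\int_A q\,\d\nu,
\qquad
\tau(q,L):=\int q\ind\{q>L\}\d\nu .
\]
We will show that one functional controls the other. On the event $\{\Q_n\ll\nu\}$ we have $\Q_n(A)=\int_A q_n\,\d\nu$ for every Borel $A$. The complement of this event has $\P_\cO$-probability tending to zero, so it can be absorbed into every limsup. Hence \eqref{eq:AUAC_Qn_nu} is the statement $\lim_{\delta}\limsup_n\P_\cO(\omega(q_n,\delta)>\epsilon)=0$, and \eqref{eq:nu_UI_density} is the statement $\lim_L\limsup_n\P_\cO(\tau(q_n,L)>\epsilon)=0$. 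The supremum over sets is measurable because it equals $\int (q_n-c)_+$-type quantities; in any case the paper's measurability convention covers it.

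\emph{UI implies AUAC.} For any $A$ with $\nu(A)\le\delta$ and any $L>0$, split $A$ according to whether $q\le L$:
\[
\int_A q\,\d\nu\le L\,\nu(A)+\int q\ind\{q>L\}\d\nu,
\qquad\text{so}\qquad
\omega(q,\delta)\le L\delta+\tau(q,L).
\]
Given $\epsilon,\eta>0$, first choose $L$ so that $\limsup_n\P_\cO(\tau(q_n,L)>\epsilon/2)<\eta$. Then take $\delta<\epsilon/(2L)$. This gives $\limsup_n\P_\cO(\omega(q_n,\delta)>\epsilon)<\eta$, and the limit as $\delta\downarrow0$ exists by monotonicity in $\delta$.

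\emph{AUAC implies UI.} Take $A_L=\{q>L\}$. Then $\tau(q,L)=\Q(A_L)$, so it suffices to make $\nu(A_L)$ small. Markov's inequality gives $\nu(A_L)\le L^{-1}\int q\,\d\nu\le 1/L$, since $\int q_n\,\d\nu\le1$ (it equals $0$ off the event). Hence
\[
\tau(q,L)\le\omega(q,1/L),
\]
and the monotonicity of $\omega$ in $\delta$ yields \eqref{eq:nu_UI_density} directly from \eqref{eq:AUAC_Qn_nu} by taking $\delta=1/L$. Finiteness of $\nu$ is not needed here. It matters only for making sense of the setup, for instance in Theorem~\ref{thm:FD-cov}.

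The main subtlety is not analytic but bookkeeping. The step I would check most carefully is the order of quantifiers in the double limits, in particular that $\delta$ is chosen after $L$ in the first direction. I would also verify that the uniform mass bound $\int q_n\,\d\nu\le1$ makes the Markov step independent of $n$ and of $\cO$.
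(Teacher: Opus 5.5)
Your proposal is correct and follows essentially the same route as the paper's proof. One direction uses $\sup_{\nu(A)\le\delta}\Q_n(A)\le L\delta+\int q_n\ind\{q_n>L\}\d\nu$ with $\delta\le\epsilon/(2L)$ chosen after $L$; the other uses the Markov bound $\nu(\{q_n>L\})\le 1/L$, which gives $\int q_n\ind\{q_n>L\}\d\nu\le\sup_{\nu(A)\le 1/L}\Q_n(A)$. In both directions the event $\{\Q_n\not\ll\nu\}$ is absorbed into the $\limsup$.
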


Uniform integrability can in turn be verified through a superlinear
integrability bound.

\begin{corollary}[Density criteria for asymptotic uniform absolute continuity]
\label{cor:superlinear_AUAC}
Let $\nu$ be a finite Borel measure on $\bR^p$. Suppose
$\P_\cO(\Q_n\ll\nu)\to1$. On the event $\{\Q_n\ll\nu\}$, write
$q_n:=\d\Q_n/\d\nu$, and set $q_n:=0$ otherwise. If there exists a measurable
function $\Phi:[0,\infty)\to[0,\infty)$ satisfying
$\Phi(t)/t\to\infty$ as $t\to\infty$ such that
\begin{align*}
\int
\Phi(q_n)\d\nu
=
O_{\P_\cO}(1),
\yestag
\label{eq:reference_orlicz_bound}
\end{align*}
then \eqref{eq:nu_UI_density} holds. In particular, the same conclusion holds
if, for some $r\in(1,\infty]$,
\[
\lVert q_n\rVert_{L^r(\nu)}
=
O_{\P_\cO}(1).
\]
\end{corollary}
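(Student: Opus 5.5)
The plan is to reduce the statement to Lemma~\ref{lem:nu_UI_AUAC}. Since $\P_\cO(\Q_n\ll\nu)\to1$ is assumed, it suffices to prove the uniform integrability condition \eqref{eq:nu_UI_density}. This is a de la Vallée-Poussin-type argument carried out in $\P_\cO$-probability.

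\textbf{Deterministic bound.} Fix $\epsilon>0$. Because $\Phi(t)/t\to\infty$, for every $M>0$ there is $L_M$ such that $t\le \Phi(t)/M$ for all $t>L_M$. For $L\ge L_M$ this gives the pointwise inequality
\[
q_n\ind\{q_n>L\}\le \frac{1}{M}\Phi(q_n)\ind\{q_n>L\}\le \frac1M\Phi(q_n),
\]
where we use $\Phi\ge0$. On the complementary event $\{\Q_n\not\ll\nu\}$ we have $q_n=0$, so the left side vanishes and the bound holds trivially. Integrating against $\nu$ yields
\[
\int q_n\ind\{q_n>L\}\,\d\nu\le \frac1M\int\Phi(q_n)\,\d\nu .
\]

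\textbf{Tightness.} By \eqref{eq:reference_orlicz_bound}, for any $\eta>0$ there exist $K<\infty$ and $n_0$ such that $\P_\cO(\int\Phi(q_n)\d\nu>K)<\eta$ for all $n\ge n_0$. Choose $M=K/\epsilon$. Then for every $L\ge L_M$,
\[
\limsup_{n\to\infty}\P_\cO\Big(\int q_n\ind\{q_n>L\}\d\nu>\epsilon\Big)\le\eta .
\]
Since $\eta$ is arbitrary and the left side is nonincreasing in $L$, letting $L\to\infty$ gives \eqref{eq:nu_UI_density}. Lemma~\ref{lem:nu_UI_AUAC} then yields \eqref{eq:AUAC_Qn_nu}.

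\textbf{The $L^r$ case.} For $r\in(1,\infty)$, take $\Phi(t)=t^r$. It is superlinear, and $\int\Phi(q_n)\d\nu=\lVert q_n\rVert_{L^r(\nu)}^r=O_{\P_\cO}(1)$, so the previous argument applies directly.

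For $r=\infty$, one can argue directly. On the event $\{\lVert q_n\rVert_{L^\infty(\nu)}\le K\}$, for every $L\ge K$ the set $\{q_n>L\}$ is $\nu$-null, so the integral $\int q_n\ind\{q_n>L\}\d\nu$ is zero. This event has probability at least $1-\eta$ eventually. Alternatively, since $\nu$ is finite, $\lVert q_n\rVert_{L^2(\nu)}\le\nu(\bR^p)^{1/2}\lVert q_n\rVert_{L^\infty(\nu)}$, which reduces to the case $r=2$.

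\textbf{Main obstacle.} No step is genuinely hard. The only care needed is the order of quantifiers: $M$ must be chosen from the tightness constant $K$ before $L$ is sent to infinity. One must also note that $\Phi$ need only be measurable and nonnegative, not monotone or convex, because the pointwise bound uses only its behavior for large arguments.
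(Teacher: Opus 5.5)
Your proof is correct and follows the paper's de la Vall\'ee-Poussin argument: your threshold $L_M$, with $t\le\Phi(t)/M$ for $t>L_M$, plays the role of the paper's $a_L=\sup_{t>L}t/\Phi(t)\to0$, and your direct treatment of $r=\infty$ matches theirs. The detour through Lemma~\ref{lem:nu_UI_AUAC} is unnecessary, since \eqref{eq:nu_UI_density} is itself the conclusion, but it does no harm.
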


\begin{remark}
Taking $\nu=\P$ gives a useful special case. If
$\P_\cO(\Q_n\ll\P)\to1$ and, for some $r\in(1,\infty]$,
\[
\Big\lVert
\frac{\d\Q_n}{\d\P}
\Big\rVert_{L^r(\P)}
=
O_{\P_\cO}(1),
\]
then \eqref{eq:nu_UI_density} holds. Taking
$\Phi(t)=t\log t-t+1$ further shows that the bounded forward-KL condition
\[
D_{\rm KL}(\Q_n\|\P)
=
O_{\P_\cO}(1)
\]
is another sufficient condition for \eqref{eq:nu_UI_density} to hold.
\end{remark}

Theorem~\ref{thm:FD-cov}(ii), Lemma~\ref{lem:nu_UI_AUAC}, and
Corollary~\ref{cor:superlinear_AUAC} all require the reference measure
$\nu$ to be finite.
They therefore cannot
be applied directly with Lebesgue measure on $\bR^p$. We end this section with a 
localization argument, which replaces this global requirement by uniform
integrability of the learned densities on bounded sets. This criterion
will be particularly useful in Section~\ref{sec:app}, where we apply
the general theory to learned laws $\Q_n$ arising from different
generative modeling approaches.

\begin{corollary}[Localized Lebesgue-density criterion]
\label{cor:localized_Lebesgue_FD_cov}
Suppose $\cF$ has a bounded envelope. Assume
$\Q_n\rightsquigarrow\P$ in $\P_\cO$-probability and $\P\ll\lambda$, where
$\lambda$ denotes Lebesgue measure on $\bR^p$. Assume further that
$\P_\cO(\Q_n\ll\lambda)\to1$. On the event $\{\Q_n\ll\lambda\}$, write
$q_n:=\d\Q_n/\d\lambda$, and set $q_n:=0$ otherwise. Suppose that, for every
$R>0$ and every $\epsilon>0$,
\begin{align*}
\lim_{L\to\infty}
\limsup_{n\to\infty}
\P_\cO\Big(
\int_{B_R}
q_n\ind\{q_n>L\}\d\lambda
>
\epsilon
\Big)
=
0.
\yestag
\label{eq:local_Lebesgue_UI}
\end{align*}
Then Condition \eqref{eq:FD-cov} holds.
\end{corollary}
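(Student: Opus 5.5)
The plan is to localize to a large ball and then use Lemma~\ref{lem:continuous_approximation_principle} with a measure-side approximation, as in Theorem~\ref{thm:FD-cov}(ii). Condition~\eqref{eq:FD-cov} follows once we show $(\Q_n-\P)h=o_{\P_\cO}(1)$ for $h\in\{f,g,fg\}$. Each such $h$ is bounded by some constant $M$, since $\cF$ has a bounded envelope. So it suffices to prove $(\Q_n-\P)h=o_{\P_\cO}(1)$ for every bounded Borel $h$.

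\textbf{Step 1 (localization).} Weak consistency together with tightness of $\P$ gives uniform tightness of $\Q_n$ in $\P_\cO$-probability. Fix $\epsilon>0$ and choose $R$ with $\P(B_{R}^c)<\epsilon$. Take a continuous cutoff $\chi$ with $\ind_{B_{R}}\le\chi\le\ind_{B_{R+1}}$. Applying weak consistency to $\chi$ gives
\[
\Q_n(B_{R+1}^c)\le 1-\Q_n\chi=1-\P\chi+o_{\P_\cO}(1)\le\epsilon+o_{\P_\cO}(1).
\]
Write $h=h\chi+h(1-\chi)$. The second piece contributes at most $M(\Q_n+\P)(1-\chi)\le 2M\epsilon+o_{\P_\cO}(1)$. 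It therefore remains to treat $h\chi$, which is bounded, Borel, and supported in $B_{R+1}$.

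\textbf{Step 2 (approximation on the ball).} Let $\nu:=\lambda|_{B_{R+2}}$, a finite measure, and note $\P|_{B_{R+2}}\ll\nu$. By Lusin's theorem applied to the finite measure $\nu+\P$ (or simply by density of $C_c$ in $L^1(\nu+\P)$), choose bounded continuous $h_m$ with $|h_m|\le M$, support in $B_{R+2}$, and $(\nu+\P)(\{h\chi\neq h_m\})\to0$. Then
\[
\P|h\chi-h_m|\le 2M\,\P(h\chi\ne h_m)\to0,
\]
which is \eqref{eq:CAP_P_approximation}.

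\textbf{Step 3 ($\Q_n$-side approximation).} For \eqref{eq:CAP_Qn_approximation}, set $A_m:=\{h\chi\ne h_m\}\subseteq B_{R+2}$, so that $\lambda(A_m)\to0$. On $\{\Q_n\ll\lambda\}$, for any $L$,
\[
\Q_n(A_m)\le L\lambda(A_m)+\int_{B_{R+2}}q_n\ind\{q_n>L\}\d\lambda .
\]
By \eqref{eq:local_Lebesgue_UI} with radius $R+2$, first choose $L$ so the second term exceeds $\epsilon/(4M)$ with $\limsup_n$-probability at most $\eta$. Then take $m$ large so that $L\lambda(A_m)<\epsilon/(4M)$. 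This gives $\Q_n|h\chi-h_m|\le 2M\Q_n(A_m)\le\epsilon$ outside an event of asymptotic probability at most $\eta+\limsup_n\P_\cO(\Q_n\not\ll\lambda)=\eta$. Lemma~\ref{lem:continuous_approximation_principle} then yields $(\Q_n-\P)(h\chi)=o_{\P_\cO}(1)$. Combining with Step~1 and letting $\epsilon\downarrow0$ finishes the proof.

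The main obstacle is the order of limits in Step~3. The truncation level $L$ must be fixed before $m$, and the bound from \eqref{eq:local_Lebesgue_UI} holds only in a $\limsup_n$-probability sense. Also, the $h_m$ have to be chosen simultaneously for $\P$ and for $\lambda$ on the ball, which is why we approximate in $L^1(\nu+\P)$. These are bookkeeping points rather than conceptual ones.
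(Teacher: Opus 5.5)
Your argument is correct and is essentially the paper's proof. Both use Lusin--Tietze approximation on a ball, the bound $\Q_n(A)\le L\lambda(A)+\int_{B_R}q_n\ind\{q_n>L\}\d\lambda$ to control the $\Q_n$-mass of Lebesgue-small sets inside the ball, weak consistency for the tail, and then Lemma~\ref{lem:continuous_approximation_principle}. The differences are only technical: you split off the tail with a fixed continuous cutoff $\chi$ instead of building radii $R_m\uparrow\infty$ with $\P(\partial B_{R_m})=0$ into the approximating sequence, and you apply Lusin to $\lambda|_{B_{R+2}}+\P$ rather than using uniform absolute continuity of $\P$ with respect to $\lambda$ (so the hypothesis $\P\ll\lambda$ is never actually invoked); note also that your parenthetical $L^1$-density alternative would give $\int\lvert h\chi-h_m\rvert\,\d(\lambda|_{B_{R+2}}+\P)\to0$ rather than $(\lambda|_{B_{R+2}}+\P)(\{h\chi\ne h_m\})\to0$, so Lusin is the right tool there as written.
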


Localization reduces the measure-side condition to uniform integrability of
the learned densities on each fixed ball. In particular, by the same argument
as in Corollary~\ref{cor:superlinear_AUAC}, applied to each fixed ball $B_R$,
condition~\eqref{eq:local_Lebesgue_UI} holds if, for every $R>0$, either
\begin{align*}
\lVert q_n\rVert_{L^\infty(B_R,\lambda)}
:=
\operatorname*{esssup}_{z\in B_R}
\lvert q_n(z)\rvert
=
O_{\P_\cO}(1),
\end{align*}
or, for some $r_R>1$,
\[
\int_{B_R}
q_n^{r_R}\d\lambda
=
O_{\P_\cO}(1).
\]

\subsection{Sufficient conditions for conditional asymptotic $\rho_\P$-equicontinuity}
\label{sec:suff_cond_CAE}

We next turn to conditional asymptotic $\rho_\P$-equicontinuity \eqref{eq:CAE}. We
develop two complementary routes. The first transfers empirical-process
bounds from a fixed reference measure to the learned law $\Q_n$ through
\emph{measure domination}.
The second uses a distribution-free entropy bound
together with a local comparison between the canonical semimetrics
$\rho_\P$ and $\rho_{\Q_n}$.

We begin with the measure domination route.

\begin{definition}[Measure domination]
For probability measures $\mu$ and $\nu$, we say that $\mu$ is
\emph{dominated by $\nu$ up to a factor $D$}, written
$\mu\le D\nu$, if
\[
\mu(A)\le D\nu(A)
\]
for every Borel set $A$. Equivalently, $\mu\ll\nu$ and
\[
\frac{\d\mu}{\d\nu}\le D
\qquad
\nu\text{-almost surely}.
\]
\end{definition}

For any
$r\in(0,\infty]$, we define
\[
\Big\lVert
\frac{\d\mu}{\d\nu}
\Big\rVert_{L^r(\nu)}
:=
+\infty
\qquad
\text{when }\mu\not\ll\nu.
\]
Under this convention, $\mu\le D\nu$ is also equivalent to
\[
\Big\lVert
\frac{\d\mu}{\d\nu}
\Big\rVert_{L^\infty(\nu)}
\le D.
\]

To compare empirical-process fluctuations under different sampling
laws, let $X_1,\ldots,X_m$ be i.i.d.\ from a probability measure $\mu$
and define
\begin{align*}
\bG_m^\mu[a]
&:=
\frac{1}{\sqrt m}
\sum_{i=1}^m
\big\{
a(X_i)-\mu[a]
\big\},
\qquad
\lVert\bG_m^\mu\rVert_{\cA}
:=
\sup_{a\in\cA}
\big\lvert
\bG_m^\mu[a]
\big\rvert.
\end{align*}
Expectations indexed by $\mu$ include the randomness of this i.i.d.\
sample. When $\mu=\Q_n$, such expectations are understood conditionally
on the fitting information $\cO$.

For $\delta>0$, define the local difference class
\begin{align*}
\cF_\delta
:=
\big\{
f-g:
f,g\in\cF,\ 
\rho_\P(f,g)<\delta
\big\}.
\end{align*}
The following theorem then translates Condition \eqref{eq:CAE} to a measure domination condition, Condition \eqref{eq:stochastic_reference_domination}, coupled with a Donsker condition on that \emph{reference measure}, Condition \eqref{eq:local_mod_mean_ref_meas}.

\begin{theorem}[Reference domination implies conditional asymptotic equicontinuity]
\label{thm:reference_measure}
Suppose $\cF$ has a bounded envelope function. Assume further that there
exists a deterministic probability measure $\nu$ such that
\begin{align*}
\lim_{C\to\infty}
\limsup_{n\to\infty}
\P_\cO\Big(
\Q_n\nleq C\nu
\Big)
=
0,
\yestag
\label{eq:stochastic_reference_domination}
\end{align*}
and
\begin{align*}
\lim_{\delta\downarrow0}
\limsup_{m\to\infty}
\E_\nu
\Big[
\lVert\bG_m^\nu\rVert_{\cF_\delta}
\Big]
=
0.
\yestag
\label{eq:local_mod_mean_ref_meas}
\end{align*}
Then the conditional asymptotic $\rho_\P$-equicontinuity condition
\eqref{eq:CAE} holds.
\end{theorem}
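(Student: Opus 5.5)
The plan is to reduce conditional asymptotic equicontinuity for samples from $\Q_n$ to the unconditional modulus bound \eqref{eq:local_mod_mean_ref_meas} under $\nu$. The main tool is a coupling that realises $\Q_n$-samples as a random subsample of a $\nu$-sample, combined with symmetrization. Fix $\epsilon,\eta>0$ and $C>1$, and work on the event $E_{n,C}:=\{\Q_n\le C\nu\}$. By \eqref{eq:stochastic_reference_domination}, its complement has $\P_\cO$-probability that is negligible once we let $n\to\infty$ and then $C\to\infty$. By conditional Markov,
\[
\P_{\tilde U}\big(\lVert\tilde\bG_n\rVert_{\cF_\delta}>\epsilon\mid\cO\big)\le \epsilon^{-1}\,\E\big[\lVert \bG^{\Q_n}_n\rVert_{\cF_\delta}\mid \cO\big].
\]
It therefore suffices to show that on $E_{n,C}$,
\[
\E\big[\lVert \bG^{\Q_n}_n\rVert_{\cF_\delta}\mid\cO\big]\le K_C\sup_{m\ge n}\E_\nu\lVert\bG^\nu_m\rVert_{\cF_\delta}
\]
for a constant $K_C$ depending only on $C$. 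This is a deterministic bound once $\Q_n$ is frozen. Then \eqref{eq:local_mod_mean_ref_meas} gives the result after taking $\limsup_n$, then $\delta\downarrow0$, then $C\to\infty$.

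\textbf{Step 1 (symmetrization with free centering).} For a fixed $\Q=\Q_n$ with $\Q\le C\nu$, let $Y_1,\dots,Y_n$ be i.i.d.\ $\Q$ and let $\varepsilon_i$ be Rademacher signs. Introduce an independent copy $Y_i'$ and write
\[
a(Y_i)-a(Y_i')=\{a(Y_i)-\nu a\}-\{a(Y_i')-\nu a\}.
\]
This gives
\[
\E\lVert \bG^{\Q}_n\rVert_{\cF_\delta}\le 2\,\E\Big\lVert n^{-1/2}\textstyle\sum_{i\le n}\varepsilon_i\{a(Y_i)-\nu a\}\Big\rVert_{\cF_\delta}.
\]
The point of this step is to centre at $\nu a$ rather than $\Q a$, so that the later comparison involves only $\nu$-centred quantities. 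The bounded envelope ensures all these expectations are finite.

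\textbf{Step 2 (coupling via mixture).} Since $\Q\le C\nu$, the measure $R:=(\nu-\Q/C)/(1-1/C)$ is a probability measure and $\nu=C^{-1}\Q+(1-C^{-1})R$. Take $m:=\lceil 2Cn\rceil$ and draw $X_1,\dots,X_m$ i.i.d.\ from $\nu$ by first drawing independent labels $L_i\sim\mathrm{Bernoulli}(1/C)$, then drawing $X_i$ from $\Q$ if $L_i=1$ and from $R$ otherwise. Let $K=\sum_i L_i\sim\mathrm{Bin}(m,1/C)$, so $\P(K\ge n)\ge c_0>0$ uniformly in $n$ by Chebyshev.

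Condition on the labels and restrict to $\{K\ge n\}$. The first $n$ $\Q$-labelled points are i.i.d.\ $\Q$. The remaining terms $\varepsilon_i\{a(X_i)-\nu a\}$ are conditionally independent of these and have mean zero because of the Rademacher signs. Jensen's inequality (adding independent mean-zero terms cannot decrease $\E\lVert\cdot\rVert$) then gives
\[
c_0\,\E\Big\lVert\textstyle\sum_{i\le n}\varepsilon_i\{a(Y_i)-\nu a\}\Big\rVert\le \E\Big\lVert\textstyle\sum_{i\le m}\varepsilon_i\{a(X_i)-\nu a\}\Big\rVert.
\]

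\textbf{Step 3 (desymmetrization).} The summands $a(X_i)-\nu a$ are centred under $\nu$, so the standard desymmetrization inequality bounds the right-hand side by $2\sqrt m\,\E_\nu\lVert \bG^\nu_m\rVert_{\cF_\delta}$. Combining Steps 1–3 yields $K_C\lesssim \sqrt{C}/c_0$, which is the bound required above.

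I expect Step 2 to be the main obstacle. Making the subsample comparison rigorous requires conditioning on the labels, handling the event $\{K<n\}$ (which is why the bound carries the factor $\P(K\ge n)$), and keeping measurability of suprema over $\cF_\delta$ under the paper's stated conventions.
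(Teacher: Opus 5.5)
Your proof is correct. Its outer layer matches the paper's: conditional Markov, restriction to $\{\Q_n\le C\nu\}$, and then a deterministic comparison of expected local moduli under $\Q_n$ and $\nu$, started by symmetrizing and recentring at $\nu[a]$.

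You prove that comparison differently from the paper's Lemma~\ref{lem:centered_empirical_process_comparison}. The paper realises the $\Q_n$-sample by rejection sampling along an infinite $\nu$-sequence, so its sum is stopped at a random time $T_m$. It then truncates at $M=\lceil 2D\rceil m$ with a Chernoff bound and uses L\'evy's maximal inequality to pass from the stopped partial sum to the full sum. The contraction principle deletes the acceptance indicators, and a blocking step returns to sample size $m$. The result is a same-sample-size inequality with an additive remainder $4A_0\sqrt m\,e^{-m/4}$.

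Your fixed-size label coupling avoids stopping times altogether. Your single Jensen step does the work of both L\'evy's inequality and the contraction principle: given the labels and the selected points, the unused Rademacher-signed summands have conditional mean zero. The event $\{K<n\}$ is simply discarded at the cost of a factor $\P(K\ge n)^{-1}$, so there is no additive remainder. The argument is more elementary.

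What you give up is that you compare $\Q_n$ at sample size $n$ with $\nu$ at sample size $\lceil 2Cn\rceil$. That is harmless here, because \eqref{eq:local_mod_mean_ref_meas} is a $\limsup_m$ statement. The paper needs a same-size inequality, however, because it reuses the lemma with roles reversed ($\mu=\P$, $\nu=\Q_n$) in Theorem~\ref{thm:CAE_to_Donsker}. There \eqref{eq:CAE} only controls $\Q_n$-samples of size $n$.

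Your method delivers the same-size version too. Take $m=kn$ with $k=\lceil 2C\rceil$, and split the $\nu$-sum into $k$ i.i.d.\ blocks of size $n$ before desymmetrizing. This gives $\E\lVert\bG_n^{\Q}\rVert_{\cF_\delta}\le (4k/c_0)\,\E_\nu\lVert\bG_n^\nu\rVert_{\cF_\delta}$ with no remainder.
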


The measure domination condition admits a simple density-ratio
interpretation. Under the convention above,
\eqref{eq:stochastic_reference_domination} is equivalent to
\[
\Big\lVert
\frac{\d\Q_n}{\d\nu}
\Big\rVert_{L^\infty(\nu)}
=
O_{\P_\cO}(1).
\]
The target distribution itself is therefore a natural choice of
reference measure. When $\nu=\P$ and $\cF$ is $\P$-Donsker, Condition \eqref{eq:local_mod_mean_ref_meas} holds
automatically, leaving only measure domination of $\Q_n$ by $\P$.
This yields the following corollary.

\begin{corollary}[$\P$-Donsker plus $\P$-domination implies Condition \eqref{eq:CAE}]
\label{cor:domination}
Suppose $\cF$ is $\P$-Donsker and has a bounded envelope function. If further
\begin{align}
\lim_{C\to\infty}\label{eq:han-lrb0}
\limsup_{n\to\infty}
\P_\cO\Big(
\Q_n\nleq C\P
\Big)
=
0,
\end{align}
or equivalently,
\begin{align}\label{eq:han-lrb}
\Big\lVert
\frac{\d\Q_n}{\d\P}
\Big\rVert_{L^\infty(\P)}
=
O_{\P_\cO}(1),
\end{align}
then Condition
\eqref{eq:CAE} holds.
\end{corollary}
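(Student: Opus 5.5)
The plan is to apply Theorem~\ref{thm:reference_measure} with the reference measure $\nu=\P$. With this choice, condition \eqref{eq:stochastic_reference_domination} is exactly the hypothesis \eqref{eq:han-lrb0}. It then only remains to show that the $\P$-Donsker property of $\cF$, together with its bounded envelope, yields \eqref{eq:local_mod_mean_ref_meas} with $\nu=\P$:
\[
\lim_{\delta\downarrow0}\limsup_{m\to\infty}\E_\P\big[\lVert\bG_m^\P\rVert_{\cF_\delta}\big]=0 .
\]
Once this holds, Theorem~\ref{thm:reference_measure} gives \eqref{eq:CAE} directly.

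The equivalence of \eqref{eq:han-lrb0} and \eqref{eq:han-lrb} needs only a short remark. The events $\{\Q_n\nleq C\P\}$ and $\{\lVert \d\Q_n/\d\P\rVert_{L^\infty(\P)}>C\}$ coincide, under the convention that the norm is $+\infty$ when $\Q_n\not\ll\P$. Boundedness in probability of the density ratio is therefore the same statement as \eqref{eq:han-lrb0}.

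For the mean modulus, first note that $\bG_m^\P=\bG_m$, the usual empirical process of $\P$. Since $\cF$ is $\P$-Donsker, the classical characterization (e.g.\ \citet[Section~2.1]{MR1385671}) shows that $\cF$ is totally bounded under $\rho_\P$ and that $\bG_m$ is asymptotically $\rho_\P$-equicontinuous in probability. That is, for every $\epsilon>0$,
\[
\lim_{\delta\downarrow0}\limsup_m \P\big(\lVert\bG_m\rVert_{\cF_\delta}>\epsilon\big)=0 .
\]
To upgrade this to convergence in mean, I would use uniform integrability. Let $F\le M$ be the bounded envelope. Then $\cF_\delta$ has envelope $2M$, and $\lVert\bG_m\rVert_{\cF_\delta}\le\lVert\bG_m\rVert_{\cF_\infty}$, where $\cF_\infty=\{f-g: f,g\in\cF\}$, which is itself $\P$-Donsker. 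The standard moment result for Donsker classes with square-integrable envelope \citep[Lemma~2.3.9 / Section~2.14]{MR1385671} gives
\[
\sup_m \E\lVert\bG_m\rVert_{\cF_\infty}^2<\infty ,
\]
or at least $\sup_m \E\lVert\bG_m\rVert_{\cF_\infty}<\infty$ together with uniform integrability. A Hoffmann-J\o rgensen inequality argument gives this from the fact that $\lVert\bG_m\rVert_\cF=O_\P(1)$ and that $\max_i \lVert f(Z_i)\rVert/\sqrt m\le 2M/\sqrt m$ is negligible. Uniform integrability of $\{\lVert\bG_m\rVert_{\cF_\delta}\}_{m,\delta}$ combined with the convergence in probability then gives the mean statement, since
\[
\E\lVert\bG_m\rVert_{\cF_\delta}\le \epsilon+\E\big[\lVert\bG_m\rVert_{\cF_\infty}\ind\{\lVert\bG_m\rVert_{\cF_\delta}>\epsilon\}\big].
\]
The second term is small uniformly once the probability is small, by the uniform integrability.

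The main obstacle is this upgrade from equicontinuity in probability to equicontinuity in mean, which is where the bounded envelope enters. An alternative route avoids Hoffmann-J\o rgensen. One can apply \citet[Lemma~2.3.9]{MR1385671} or the equivalence in \citet[Theorem~2.14.? / Lemma~2.10.14]{MR1385671} that a class with integrable envelope is Donsker if and only if $\E^*\lVert\bG_m\rVert_{\cF_{\delta_m}}\to0$ for every $\delta_m\downarrow0$. That equivalence is exactly \eqref{eq:local_mod_mean_ref_meas} in sequential form, and a diagonal argument converts $\delta_m\downarrow 0$ into the iterated limit. Measurability is handled by the paper's standing convention.
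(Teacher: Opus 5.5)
Your proposal is correct and follows the paper's proof. The paper also applies Theorem~\ref{thm:reference_measure} with $\nu=\P$ and obtains \eqref{eq:local_mod_mean_ref_meas} from the $\P$-Donsker property, simply by citing Corollary~2.3.12 of \citet{MR1385671}; that corollary is the precise reference for the mean-equicontinuity equivalence you were reaching for in your alternative route, and your Hoffmann-J\o rgensen/uniform-integrability argument (using $\sup_m\E\lVert\bG_m\rVert_{\cF_\infty}^2<\infty$ from the bounded envelope) is a valid self-contained substitute for it.
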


For density-based models, Condition \eqref{eq:han-lrb} can often be
verified directly from Lebesgue densities. In particular, if $\Q_n$ is
supported on a set over which the target density is bounded away from
zero, then an $L^\infty$ bound on the learned density yields the required
$\P$-domination. This gives the following criterion, which is the last result we provide along the measure domination route.

\begin{corollary}[Lebesgue-density sufficient condition]
\label{cor:Lebegue_density}
Suppose $\cF$ is $\P$-Donsker with a bounded envelope. Suppose $\P$
admits a Lebesgue density $p$ and that there exists a Borel set
$S\subseteq\bR^p$ such that
\[
c
:=
\operatorname*{essinf}_{z\in S}
p(z)
>
0.
\]
Assume that, with probability tending to one, $\Q_n\ll\lambda$ and
$\Q_n(S)=1$. On the event $\{\Q_n\ll\lambda\}$, write
$q_n:=\d\Q_n/\d\lambda$. If
\begin{align*}
\lVert q_n\rVert_{L^\infty(\lambda)}
=
O_{\P_\cO}(1),
\end{align*}
then the conditional asymptotic $\rho_\P$-equicontinuity condition
\eqref{eq:CAE} holds.
\end{corollary}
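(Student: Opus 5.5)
The plan is to reduce Corollary~\ref{cor:Lebegue_density} to Corollary~\ref{cor:domination}. Since $\cF$ is assumed $\P$-Donsker with a bounded envelope, it suffices to verify the $\P$-domination condition \eqref{eq:han-lrb0}. Equivalently, for every $\epsilon>0$ we need a constant $C$ such that $\P_\cO(\Q_n\nleq C\P)\le\epsilon$ for all large $n$.

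First fix $\epsilon>0$. By the hypothesis $\lVert q_n\rVert_{L^\infty(\lambda)}=O_{\P_\cO}(1)$, choose $M<\infty$ with $\limsup_n\P_\cO(\lVert q_n\rVert_{L^\infty(\lambda)}>M)\le\epsilon$. Let $E_n$ be the event
\[
\{\Q_n\ll\lambda\}\cap\{\Q_n(S)=1\}\cap\{\lVert q_n\rVert_{L^\infty(\lambda)}\le M\}.
\]
The first two events have probability tending to one, so $\limsup_n\P_\cO(E_n^c)\le\epsilon$.

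Next, on $E_n$ we verify the set inequality $\Q_n(A)\le (M/c)\P(A)$ for every Borel $A$. Since $\Q_n(S^c)=0$,
\[
\Q_n(A)=\Q_n(A\cap S)=\int_{A\cap S}q_n\,\d\lambda\le M\lambda(A\cap S).
\]
Because $p\ge c$ holds $\lambda$-a.e.\ on $S$ by the definition of the essential infimum, we have
\[
\lambda(A\cap S)\le c^{-1}\int_{A\cap S}p\,\d\lambda=c^{-1}\P(A\cap S)\le c^{-1}\P(A).
\]
Hence $\Q_n\le (M/c)\P$ on $E_n$. Taking $C\ge M/c$ then gives $\limsup_n\P_\cO(\Q_n\nleq C\P)\le\epsilon$. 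Letting $C\to\infty$ and then $\epsilon\downarrow0$ yields \eqref{eq:han-lrb0}, and Corollary~\ref{cor:domination} delivers \eqref{eq:CAE}.

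There is no substantive obstacle. The only points needing care are the null sets: the bound $q_n\le M$ holds only $\lambda$-a.e., and $p\ge c$ only $\lambda$-a.e.\ on $S$. Both are harmless because the comparison is made through integrals against $\lambda$. The other point is that $M$ depends on $\epsilon$; this is handled by the order of limits $\lim_{C\to\infty}\limsup_n$ in \eqref{eq:han-lrb0}.
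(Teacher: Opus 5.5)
Your proposal is correct and follows essentially the same route as the paper's proof. Like the paper, you bound $\Q_n(A)\le M\lambda(A\cap S)\le (M/c)\P(A)$ on the event $\{\Q_n\ll\lambda,\ \Q_n(S)=1,\ \lVert q_n\rVert_{L^\infty(\lambda)}\le M\}$, use stochastic boundedness to make the complement asymptotically small, and then invoke Corollary~\ref{cor:domination}.
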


The measure domination route is convenient when the learned laws can be
controlled by a fixed reference measure, but such a requirement can be
restrictive for particular generative models.  The second route offers alternatives
that avoid this domination requirement. For $0<u\le1$, define
\begin{align*}
J(u,\cF,F)
:=
\int_0^u
\sup_{\Lambda}
\sqrt{
1+\log \cN\Big(
\epsilon\lVert F\rVert_{\Lambda,2},
\cF,
L^2(\Lambda)
\Big)
}
\,\d\epsilon,
\end{align*}
where $F$ is an envelope function of $\cF$, the supremum is taken over all probability measures $\Lambda$ on
the sample space, and $\cN(\epsilon,\cG,d)$ denotes the
$\epsilon$-covering number of a function class $\cG$ with respect to the
semimetric $d$.

\begin{theorem}[Uniform entropy implies conditional asymptotic equicontinuity]
\label{thm:unif_entropy}
Suppose $\cF$ has an envelope function bounded by $B>0$. Choose $F\equiv B$
and assume that the uniform entropy integral satisfies
\begin{align*}
J(1,\cF,F)
<
\infty.
\yestag
\label{eq:unif_entropy_F}
\end{align*}
Suppose further that, for every $\epsilon>0$,
\begin{align*}
\lim_{\delta\downarrow0}
\limsup_{n\to\infty}
\P_\cO\Big(
\sup_{\rho_\P(f,g)<\delta}
\rho_{\Q_n}(f,g)
>
\epsilon
\Big)
=
0.
\yestag
\label{eq:local_metric_transfer_Qn}
\end{align*}
Then the conditional asymptotic $\rho_\P$-equicontinuity condition
\eqref{eq:CAE} holds.
\end{theorem}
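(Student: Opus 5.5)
The plan is to work conditionally on $\cO$ and treat $\tilde\bG_n$ as an ordinary empirical process of an i.i.d.\ sample from the fixed (given $\cO$) law $\Q_n$. The distribution-free maximal inequality for uniform-entropy classes then applies under $\Q_n$, and condition \eqref{eq:local_metric_transfer_Qn} converts the $\rho_\P$-neighbourhoods in \eqref{eq:CAE} into $\rho_{\Q_n}$-neighbourhoods.

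\textbf{Step 1 (local maximal inequality).} Let $\cF_\infty:=\{f-g: f,g\in\cF\}$. It has envelope $2B$, and $\cN(2\epsilon B,\cF_\infty,L^2(\Lambda))\le \cN(\epsilon B,\cF,L^2(\Lambda))^2$ for every $\Lambda$, so $J(1,\cF_\infty,2B)\lesssim J(1,\cF,B)<\infty$. For $\tau>0$ set
\[
\cF_\infty(\tau,\Q_n):=\{f-g:\ f,g\in\cF,\ \rho_{\Q_n}(f,g)<\tau\}.
\]
Since $\rho_{\Q_n}(f,g)^2$ is the $\Q_n$-variance of $f-g$, we have $\Q_n[(f-g-\Q_n[f-g])^2]<\tau^2$. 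The centered class still has bounded envelope and finite uniform entropy integral. I will apply the standard local bound of van der Vaart--Wellner (Theorem~2.14.1, or its refinement with variance $\sigma^2$), conditionally on $\cO$. It gives, for every $n$ and every realization of $\cO$,
\[
\E_{\tilde U}\big[\lVert\tilde\bG_n\rVert_{\cF_\infty(\tau,\Q_n)}\mid\cO\big]\le \psi(\tau),
\]
for a deterministic function $\psi$ with $\psi(\tau)\to0$ as $\tau\downarrow0$. The constant depends only on $B$ and $J$. The key point is that the supremum over all $\Lambda$ in $J$ makes the bound independent of $\Q_n$.

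\textbf{Main obstacle.} Getting a $\psi$ uniform in $n$ is the delicate part. The simple Theorem~2.14.1 bound involves $J(\theta_n,\cdot)$ with a random $\theta_n^2=\sup \tilde\bP_n(f-g)^2/(4B^2)$, not a deterministic radius. I would handle it the classical way: bound $\E\,J(\theta_n)\le J(\sqrt{\E\theta_n^2})$ by concavity of $J$. Then control $\E\sup|\tilde\bP_n (f-g)^2-\Q_n(f-g)^2|\lesssim J(1)B^2/\sqrt n$ by symmetrization and contraction, since the squares of a bounded class form a uniform-entropy class. This gives $\psi(\tau)\lesssim J(c(\tau+\mathrm{mean\ shift}+n^{-1/4}))$. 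The mean shift $\Q_n[f-g]$ cancels in $\tilde\bG_n$, so working with centered functions removes it. The result is $\psi_n(\tau)\to\psi(\tau)$ with $\limsup_n\psi_n(\tau)\lesssim J(c\tau)\to0$. Measurability is assumed per the paper's convention.

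\textbf{Step 2 (transfer and Markov).} Fix $\epsilon,\eta>0$, choose $\tau$ with $\limsup_n\psi_n(\tau)<\epsilon\eta/2$, and let
\[
E_{n,\delta}:=\Big\{\sup_{\rho_\P(f,g)<\delta}\rho_{\Q_n}(f,g)\le\tau\Big\}.
\]
On $E_{n,\delta}$ the class $\{f-g:\rho_\P(f,g)<\delta\}$ is contained in $\cF_\infty(2\tau,\Q_n)$. Conditional Markov then gives
\[
\P_{\tilde U}\Big(\sup_{\rho_\P(f,g)<\delta}|\tilde\bG_n[f]-\tilde\bG_n[g]|>\epsilon\ \Big|\ \cO\Big)\le\psi_n(2\tau)/\epsilon<\eta
\]
for large $n$, after choosing $\tau$ relative to $2\tau$. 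Hence the $\P_\cO$-probability in \eqref{eq:CAE} is at most $\P_\cO(E_{n,\delta}^c)$. By \eqref{eq:local_metric_transfer_Qn} with $\epsilon$ replaced by $\tau$, $\lim_{\delta\downarrow0}\limsup_n\P_\cO(E_{n,\delta}^c)=0$, which proves \eqref{eq:CAE}.
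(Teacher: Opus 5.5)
Your proposal is correct and follows essentially the paper's route. Both arguments use a local maximal inequality that is uniform in the sampling law, because $J$ takes a supremum over all $\Lambda$. This inequality is applied conditionally on $\cO$ on the event where $\rho_\P$-close pairs are $\rho_{\Q_n}$-close, and the argument finishes with conditional Markov and the metric-transfer condition \eqref{eq:local_metric_transfer_Qn}.

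The only difference is in how the local bound is obtained. The paper invokes Theorem~2.1 of van der Vaart and Wellner (2011) directly for the $\Q$-centered class, controlling its entropy via $\bar\Lambda=(\Lambda+\Q)/2$. Your main route instead rebuilds the bound from Theorem~2.14.1, using concavity of $J$ and a symmetrization bound on the squared class. Both give a local mean bound whose $\limsup_n$ is $\lesssim J(c\tau)$.
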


The entropy condition \eqref{eq:unif_entropy_F} is stronger than the
$\P$-Donsker property, as it controls covering numbers uniformly over
all underlying probability measures. It is nevertheless satisfied by
many standard classes, including finite classes, finite-dimensional
parametric classes with regular parametrizations, VC-subgraph classes,
and many VC-type indicator classes.

The metric-transfer condition \eqref{eq:local_metric_transfer_Qn}
requires increments that are small under $\rho_\P$ to remain
asymptotically small under $\rho_{\Q_n}$. The following lemma provides a
sufficient condition for \eqref{eq:local_metric_transfer_Qn} based on
local Lebesgue-density control and asymptotic tightness of the learned
laws.

\begin{lemma}[Local Lebesgue control implies local metric transfer]
\label{lem:local_density_metric_transfer}
Suppose $\cF$ has a bounded envelope function. Assume that
$\Q_n$ is asymptotically tight in $\P_\cO$-probability in the sense that,
for every $\epsilon>0$,
\begin{align*}
\lim_{R\to\infty}
\limsup_{n\to\infty}
\P_\cO\Big(
\Q_n(B_R^c)>\epsilon
\Big)
=
0.
\yestag
\label{eq:Qn_asymptotic_tightness}
\end{align*}
Suppose $\P$ admits a Lebesgue density $p$ satisfying
\[
\operatorname*{essinf}_{z\in B_R}
p(z)
>
0
\qquad
\text{for every }R<\infty.
\]
Assume further that $\Q_n\ll\lambda$ with probability tending to one.
On this event, write $q_n:=\d\Q_n/\d\lambda$, and set $q_n:=0$ otherwise.
Suppose that, for every $R<\infty$, there exists $r_R>1$ such that
\begin{align*}
\int_{B_R}
q_n(z)^{r_R}\d z
=
O_{\P_\cO}(1).
\end{align*}
Then \eqref{eq:local_metric_transfer_Qn} holds.
\end{lemma}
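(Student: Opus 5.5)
Fix $\epsilon>0$ and write $h:=f-g$ for $f,g\in\cF$ with $\rho_\P(f,g)<\delta$. Then $|h|\le 2B$, where $B$ bounds the envelope. Since $\rho_{\Q_n}(f,g)^2=\mathrm{Var}_{\Q_n}(h)\le\Q_n[h^2]$, it suffices to bound $\sup_h\Q_n[h^2]$ uniformly over the local difference class $\cF_\delta$. The bound has two error pieces: a tail piece and a ball piece. The tail piece is $\Q_n[h^2\ind_{B_R^c}]\le 4B^2\Q_n(B_R^c)$, and by asymptotic tightness \eqref{eq:Qn_asymptotic_tightness} it is at most $\epsilon^2/3$ with probability close to one once $R$ is chosen large, uniformly in $h$.

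For the ball piece, fix that $R$ and set $c_R:=\operatorname*{essinf}_{B_R}p>0$. Work on the event $\{\Q_n\ll\lambda\}$, and let $r=r_R>1$ with conjugate exponent $r'$. Hölder's inequality gives
\[
\Q_n[h^2\ind_{B_R}]=\int_{B_R}h^2q_n\,\d\lambda\le\Big(\int_{B_R}q_n^{r}\d\lambda\Big)^{1/r}\Big(\int_{B_R}|h|^{2r'}\d\lambda\Big)^{1/r'}.
\]
Since $|h|\le 2B$, we have $|h|^{2r'}\le(2B)^{2r'-2}h^2$. Moreover $\int_{B_R}h^2\d\lambda\le c_R^{-1}\P[h^2]$. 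Combining these,
\[
\Q_n[h^2\ind_{B_R}]\le K_n\,C_{R,B}\,\P[h^2]^{1/r'},
\]
where $K_n:=(\int_{B_R}q_n^r)^{1/r}=O_{\P_\cO}(1)$ by hypothesis and $C_{R,B}$ is a constant depending only on $R$ and $B$.

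The main obstacle is that $\rho_\P$ controls only the centered quantity $\P[(h-\P h)^2]$, not $\P[h^2]$. I would handle this by working with centered functions. The semimetric $\rho_{\Q_n}$ is invariant under adding constants to $h$: replacing $h$ by $h-c$ leaves $\mathrm{Var}_{\Q_n}(h)$ unchanged. So set $c=\P h$ and apply the argument above to $\tilde h:=h-\P h$. This function is bounded by $4B$ and satisfies $\P[\tilde h^2]=\rho_\P(f,g)^2<\delta^2$. This gives
\[
\rho_{\Q_n}(f,g)^2\le\Q_n[\tilde h^2]\le 16B^2\Q_n(B_R^c)+K_nC'_{R,B}\,\delta^{2/r'}.
\]
The right-hand side does not depend on the pair $(f,g)$, so it bounds the supremum over $\rho_\P(f,g)<\delta$. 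Finally, choose $R$ so that the tail term is at most $\epsilon^2/2$ with $\limsup_n$ probability at least $1-\eta$. Then choose $M$ with $\P_\cO(K_n>M)\le\eta$ eventually, and take $\delta$ so small that $MC'_{R,B}\delta^{2/r'}<\epsilon^2/2$. Since $\eta$ is arbitrary and the event $\{\Q_n\ll\lambda\}$ has probability tending to one, this yields \eqref{eq:local_metric_transfer_Qn}.
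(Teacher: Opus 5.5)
Your proposal is correct and follows essentially the same route as the paper. You center at $\P[f-g]$ so that $\P[\tilde h^2]=\rho_\P(f,g)^2$ and $\rho_{\Q_n}(f,g)^2\le\Q_n[\tilde h^2]$, bound the part on $B_R^c$ by $16B^2\Q_n(B_R^c)$, and control the part on $B_R$ by H\"older together with $\int_{B_R}\tilde h^2\,\d\lambda\le c_R^{-1}\delta^2$; this gives the same uniform $O_{\P_\cO}(\delta^{2/r_R'})$ bound, and the only difference is that you fix $R$ before choosing $\delta$ whereas the paper lets $\delta\downarrow0$ and then $R\to\infty$, which is immaterial.
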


\begin{remark}
The asymptotic tightness requirement
\eqref{eq:Qn_asymptotic_tightness} is mild. In particular, it follows
automatically from $\Q_n\rightsquigarrow\P$ in $\P_\cO$-probability. To
see this, for any $\epsilon>0$, choose $R<\infty$ sufficiently large
such that
\[
\P(B_R^c)<\epsilon/2
\qquad\text{and}\qquad
\P(\partial B_R)=0.
\]
The continuity-set form of the Portmanteau theorem then gives
\[
\Q_n(B_R^c)-\P(B_R^c)
=
o_{\P_\cO}(1),
\]
and consequently
\[
\P_\cO
\big\{
\Q_n(B_R^c)>\epsilon
\big\}
\longrightarrow
0.
\]
\end{remark}

\subsection{A Donsker characterization under measure domination}
\label{subsec:relation_Donsker}

It is well known that conditional weak convergence of Efron's original bootstrap process is equivalent to the $\P$-Donsker property of $\cF$
\citep[Theorem~3.1]{gine1990bootstrapping}. For the generative
bootstrap, Corollary~\ref{cor:domination} for instance gives one direction: under
measure (upper) domination by $\P$, the $\P$-Donsker property implies
conditional asymptotic $\rho_\P$-equicontinuity. To obtain an analogous
equivalence as Efron's bootstrap, it remains to recover the $\P$-Donsker property from
conditional asymptotic equicontinuity. The following result shows that
a lower-coverage condition holding with nonvanishing probability is
sufficient.

\begin{theorem}[Reverse transfer under lower coverage]
\label{thm:CAE_to_Donsker}
Let $\cF$ be a measurable class with a bounded envelope function. Suppose further that, for some $D<\infty$,
\begin{align*}
\liminf_{n\to\infty}
\P_\cO\Big(
\P\le D\Q_n
\Big)
>0.
\yestag
\label{eq:lower_domination_nonvanishing}
\end{align*}
Then, if the conditional asymptotic $\rho_\P$-equicontinuity condition
\eqref{eq:CAE} holds, $\cF$ is $\P$-Donsker.
\end{theorem}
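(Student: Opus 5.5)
The plan is to construct, from a single favorable realization of $\cO$, an i.i.d.\ $\P$-sample sitting inside the $\Q_n$-sample, and to transfer the bootstrap modulus to a (symmetrized) $\P$-empirical modulus. Once we know
\[
\lim_{\delta\downarrow0}\limsup_{m\to\infty}\P\big(\lVert\bG_m\rVert_{\cF_\delta}>\epsilon\big)=0\quad\text{for every }\epsilon>0,
\]
the proof is finished by standard facts. Finite-dimensional convergence of $\bG_m$ follows from the multivariate CLT, since the envelope is bounded. Total boundedness of $(\cF,\rho_\P)$ follows from the standing assumption that $\bG_\P$ is tight and separable. Together with asymptotic $\rho_\P$-equicontinuity, these give that $\cF$ is $\P$-Donsker (the usual characterization in \citet[Section~1.5, 2.1]{MR1385671}).

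\emph{Step 1 (mixture representation).} On the event $\{\P\le D\Q_n\}$ (WLOG $D>1$), the measure $R_n:=(\Q_n-D^{-1}\P)/(1-D^{-1})$ is a probability measure. Hence a draw from $\Q_n$ can be realized as follows: draw a label $\xi\sim\mathrm{Bernoulli}(1/D)$, then draw from $\P$ if $\xi=1$ and from $R_n$ if $\xi=0$.

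Take two independent bootstrap samples $\tilde Z_i,\tilde Z_i'$ that share the same labels $\xi_i$. With $h\in\cF_\delta$, write
\[
\sqrt n\{\tilde\bG_n-\tilde\bG_n'\}[h]
=\sum_{i:\xi_i=1}\{h(Y_i)-h(Y_i')\}+\sum_{i:\xi_i=0}\{h(W_i)-h(W_i')\}=:A+B,
\]
where $Y_i,Y_i'\sim\P$ and $W_i,W_i'\sim R_n$. Conditionally on everything except the $W$'s, $B$ is symmetric. Since $\lVert A+B\rVert+\lVert A-B\rVert\ge2\lVert A\rVert$, this gives
\[
\P(\lVert A+B\rVert_{\cF_\delta}>t)\ge\tfrac12\P(\lVert A\rVert_{\cF_\delta}>t).
\]
The centerings $\Q_n h$ cancel in the difference, which is exactly why I symmetrize rather than work with $\tilde\bG_n$ directly: the unsymmetrized decomposition leaves a term $(N-n/D)(\P h-R_nh)/\sqrt n$ that need not be small. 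The same conditional-symmetry trick also handles the random size $N=\sum\xi_i\sim\mathrm{Bin}(n,1/D)$. For $k\ge m$, the symmetrized sum of size $k$ is the size-$m$ sum plus an independent symmetric remainder, so its tail is at least half that of the size-$m$ sum. Fix $m$ and choose $n=n(m)$ with $\P(N\ge m)\ge1/2$ (e.g.\ $n\approx Dm+O(\sqrt m)$). Writing $S_m:=m^{-1/2}\sum_{i\le m}\{h(Y_i)-h(Y_i')\}$, this yields
\[
\P\big(\lVert \tilde\bG_n-\tilde\bG_n'\rVert_{\cF_\delta}>2\epsilon\ \big|\ \cO\big)\ \ge\ \tfrac14\,\P\big(\lVert S_m\rVert_{\cF_\delta}>2\epsilon\sqrt{D'}\big)
\]
for a constant $D'$ comparable to $D$.

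\emph{Step 2 (choosing a good realization).} By \eqref{eq:lower_domination_nonvanishing}, $\P_\cO(\P\le D\Q_n)\ge c>0$ for all large $n$. By \eqref{eq:CAE}, for any $\eta$ and all small $\delta$, $\limsup_n\P_\cO\{\P_{\tilde U}(\lVert\tilde\bG_n\rVert_{\cF_\delta}>\epsilon\mid\cO)>\eta\}<c$. Hence for large $n$ some realization of $\cO$ lies in both good events. For that realization the left side above is at most $2\eta$, because $\lVert\tilde\bG_n-\tilde\bG_n'\rVert\le\lVert\tilde\bG_n\rVert+\lVert\tilde\bG_n'\rVert$. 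Everything on the right side involves only $\P$, so the choice of realization is harmless. This gives $\limsup_m\P(\lVert S_m\rVert_{\cF_\delta}>\epsilon')\le8\eta$ for small $\delta$.

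\emph{Step 3 (desymmetrization).} Write $S_m=\bG_m-\bG_m'$ for two independent $\P$-empirical processes. Conditioning on $\bG_m$ and choosing $h^*$ near-maximizing gives
\[
\P(\lVert S_m\rVert_{\cF_\delta}>\epsilon)\ge\P(\lVert\bG_m\rVert_{\cF_\delta}>2\epsilon)\big(1-\sup_{h\in\cF_\delta}\P(\lvert\bG_m'h\rvert>\epsilon)\big).
\]
By Chebyshev, $\sup_{h\in\cF_\delta}\P(\lvert\bG_m'h\rvert>\epsilon)\le\delta^2/\epsilon^2$, since $\operatorname{Var}_\P(h)=\rho_\P(f,g)^2<\delta^2$. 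This yields asymptotic equicontinuity of $\bG_m$.

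I expect the main obstacle to be Step 1's bookkeeping: making the label coupling, the random subsample size, and the required relation between $m$ and $n$ rigorous (including measurability of the suprema, which the paper's standing convention covers). The symmetric-remainder inequality is what I will rely on to remove both the $R_n$-part and the excess sample size.
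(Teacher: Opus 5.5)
Your proof is correct. It rests on the same device as the paper: on $\{\P\le D\Q_n\}$ the $\Q_n$-sample contains an embedded i.i.d.\ $\P$-sample, and your Bernoulli labels are exactly the acceptance indicators $J_i$ in the proof of Lemma~\ref{lem:centered_empirical_process_comparison}. Like the paper, you also pick a realization of $\cO$ in a positive-probability good event while the $\P$-side quantity stays deterministic. The execution is different, though.

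The paper works with means throughout:
\begin{enumerate}[label=(\alph*)]
\item Lemma~\ref{lem:CAE_local_mean_equivalence}, a McDiarmid lower-tail bound, first turns \eqref{eq:CAE} into smallness in probability of $\E_{\Q_n}\lVert\bG_n^{\Q_n}\rVert_{\cF_\delta}$.
\item Lemma~\ref{lem:centered_empirical_process_comparison} then gives $\E_\P\lVert\bG_n^\P\rVert_{\cF_\delta}\le K_D\,\E_{\Q_n}\lVert\bG_n^{\Q_n}\rVert_{\cF_\delta}+8B\sqrt n\,e^{-n/4}$ at the same sample size. Its proof uses rejection sampling with a stopping time, L\'evy's inequality and the contraction principle.
\item Markov's inequality finishes.
\end{enumerate}

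You stay with tail probabilities instead:
\begin{enumerate}[label=(\alph*)]
\item Conditional symmetry of the $R_n$-part and of the surplus accepted terms replaces both the contraction step and the L\'evy step.
\item The sample-size mismatch is absorbed by taking $n\approx Dm$.
\item Chebyshev's inequality desymmetrizes.
\end{enumerate}
Your two label-sharing samples are not independent, despite the wording, but you only use the union bound, so marginal correctness is all you need.

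What each route buys: the paper's route yields a reusable comparison inequality for local mean moduli, the same lemma that drives Theorem~\ref{thm:reference_measure}. Yours is more self-contained and bypasses the McDiarmid conversion. It also never uses the envelope bound quantitatively; only $\cF\subseteq L^2(\P)$ enters, via Chebyshev and the CLT. The paper, by contrast, needs $B$ both for the bounded-differences step and for the remainder term.
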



Combining Theorem \ref{thm:CAE_to_Donsker} with Corollary~\ref{cor:domination}  yields
the following analogue of the Gin\'e--Zinn characterization for 
generative bootstrap processes.

\begin{corollary}[Donsker characterization under measure domination]
\label{cor:Donsker_equivalence_weak_consistency}
Let $\cF$ be a measurable class with a bounded envelope function. Suppose that \eqref{eq:han-lrb0} and 
\eqref{eq:lower_domination_nonvanishing} hold and that
$\Q_n\rightsquigarrow\P$ in $\P_\cO$-probability. Then the conditional
weak convergence \eqref{eq:CWC} holds if and only if $\cF$ is
$\P$-Donsker.
\end{corollary}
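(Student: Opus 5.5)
The corollary assembles earlier results, so the plan is to prove the two implications separately. Throughout we use the bounded-envelope reduction after Theorem~\ref{thm:cvg_emp_proc}: \eqref{eq:CWC} holds if and only if \eqref{eq:FD-cov} and \eqref{eq:CAE} both hold.

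\emph{Sufficiency ($\P$-Donsker $\Rightarrow$ \eqref{eq:CWC}).} Since $\cF$ is $\P$-Donsker with a bounded envelope and \eqref{eq:han-lrb0} holds, Corollary~\ref{cor:domination} gives \eqref{eq:CAE} directly. For \eqref{eq:FD-cov}, apply Theorem~\ref{thm:FD-cov}(ii) with $\nu=\P$. This $\nu$ is a deterministic finite measure and trivially $\P\ll\P$. Asymptotic uniform absolute continuity \eqref{eq:AUAC_Qn_nu} follows from \eqref{eq:han-lrb0}. On the event $\{\Q_n\le C\P\}$ we have $\sup_{\P(A)\le\delta}\Q_n(A)\le C\delta$. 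Given $\epsilon>0$, first choose $C$ so that $\limsup_n\P_\cO(\Q_n\nleq C\P)$ is small, and then choose $\delta<\epsilon/C$. Together with the assumed weak consistency $\Q_n\rightsquigarrow\P$, Theorem~\ref{thm:FD-cov}(ii) yields \eqref{eq:FD-cov}. Hence \eqref{eq:CWC} holds.

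\emph{Necessity (\eqref{eq:CWC} $\Rightarrow$ $\P$-Donsker).} By the ``only if'' part of Theorem~\ref{thm:cvg_emp_proc}, \eqref{eq:CWC} implies \eqref{eq:CAE}. Since \eqref{eq:lower_domination_nonvanishing} holds and $\cF$ is measurable with a bounded envelope, Theorem~\ref{thm:CAE_to_Donsker} then gives that $\cF$ is $\P$-Donsker. Neither \eqref{eq:han-lrb0} nor weak consistency is needed in this direction.

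The only step requiring care is the translation of the domination condition \eqref{eq:han-lrb0} into \eqref{eq:AUAC_Qn_nu}, which is the two-stage choice of $C$ and then $\delta$ described above. The rest consists of citing Theorem~\ref{thm:cvg_emp_proc}, Theorem~\ref{thm:FD-cov}(ii), Corollary~\ref{cor:domination}, and Theorem~\ref{thm:CAE_to_Donsker}, after checking that their hypotheses (bounded envelope, measurability, deterministic reference measure $\nu=\P$) are met. The standing assumption that $\bG_\P$ is tight when \eqref{eq:CWC} is considered is inherited from the conventions stated before Theorem~\ref{thm:cvg_emp_proc}.
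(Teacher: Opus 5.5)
Your proposal is correct and follows the paper's proof essentially step for step. You use Corollary~\ref{cor:domination} for \eqref{eq:CAE}, the bound $\sup_{\P(A)\le\delta}\Q_n(A)\le C\delta$ on $\{\Q_n\le C\P\}$ to obtain \eqref{eq:AUAC_Qn_nu} with $\nu=\P$ and then Theorem~\ref{thm:FD-cov}(ii), and for the converse Theorem~\ref{thm:cvg_emp_proc} followed by Theorem~\ref{thm:CAE_to_Donsker}; your choice of $C$ before $\delta$ is equivalent to the paper's iterated limit ($n\to\infty$, then $\delta\downarrow0$, then $C\to\infty$).
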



\begin{remark}
A convenient sufficient condition for the comparisons \eqref{eq:han-lrb0} and 
\eqref{eq:lower_domination_nonvanishing} is
\begin{align*}
\Big\lVert
\frac{\d\Q_n}{\d\P}
\Big\rVert_{L^\infty(\P)}
&=
O_{\P_\cO}(1),
\qquad
\Big\lVert
\frac{\d\P}{\d\Q_n}
\Big\rVert_{L^\infty(\Q_n)}
=
O_{\P_\cO}(1).
\end{align*}
The first bound is equivalent to
\eqref{eq:han-lrb0}. The second implies
\eqref{eq:lower_domination_nonvanishing}: by stochastic boundedness, one
can choose a fixed $D<\infty$ such that
\begin{align*}
\liminf_{n\to\infty}
\P_\cO\Big(
\Big\lVert
\frac{\d\P}{\d\Q_n}
\Big\rVert_{L^\infty(\Q_n)}
\le D
\Big)
>0,
\end{align*}
and the event in the preceding display is precisely
$\{\P\le D\Q_n\}$. In particular, both directional comparisons are
verified by the stronger condition that there exist fixed constants
$0<c\le C<\infty$ such that
\begin{align*}
\P_\cO\Big(
\Q_n\ll\P
\text{ and }
c
\le
\frac{\d\Q_n}{\d\P}
\le
C,
\quad
\P\text{-almost surely}
\Big)
\longrightarrow
1.
\end{align*}
\end{remark}

\section{Applications to generative models}
\label{sec:app}

This section applies the general theory developed in
Section~\ref{sec:main} to four representative classes of generative
models. 
These four examples illustrate complementary routes through the general
theory. \emph{Triangular normalizing flows} combine distributional consistency
with global likelihood-ratio domination; \emph{flow matching} combines
Wasserstein consistency with local density control; \emph{score-based
diffusion models} yield total-variation consistency; and \emph{Wasserstein
generative adversarial networks} rely on Wasserstein consistency without
requiring the learned law to admit a density or be dominated by the
target distribution. 

Each subsection concludes with a brief review of the relevant learning
theory and a comparison with our distributional consistency results. Our
aim is \emph{not} to compete with the state-of-the-art learning theory for the
corresponding generative models. Rather, we focus on identifying weak
sufficient conditions under which the associated generative bootstrap
process is consistent in the sense of Section~\ref{sec:main}.


\subsection{Triangular normalizing flows}
\label{sec:triangular_normalizing_flows}

A normalizing flow represents a target distribution as the pushforward
of a simple base distribution through an invertible transformation,
thereby allowing its likelihood to be evaluated by the change-of-variables
formula. Triangular and block-triangular Jacobians underlie many
autoregressive and coupling-based flow architectures
\citep{dinh2014nice,dinh2017density,papamakarios2017masked,
huang2018neural,kingma2018glow}; see \citet{kobyzev2020normalizing}
for a review. Here, we consider a support-matched monotone triangular
construction based on the triangular transport framework of
\citet{irons2022triangular}; the same framework has also been exploited in \cite{tran2026generative}.


\subsubsection{Consistency of normalizing flow bootstrap}

We begin with conditions on the target distribution. Throughout this
subsection, the support of $\P$ refers to the smallest closed set
$\cZ\subseteq\bR^p$ such that $\P(\cZ)=1$.

\begin{assumption}[Data assumption 1: compact support and density regularity]
\label{ass:triangular_data}
The data distribution $\P$ satisfies the following conditions.
\begin{enumerate}[itemsep=-.5ex,label=(\roman*)]
\item The support $\cZ$ of $\P$ is known, compact, and convex with
nonempty interior.

\item The measure $\P$ admits a version of its Lebesgue density $p$
supported on $\cZ$ such that, for some constant $\underline p>0$,
\[
p(z)\ge\underline p,
\qquad
z\in\operatorname{int}(\cZ).
\]

\item The log-density is integrable:
\[
\P[\lvert\log p\rvert]<\infty.
\]
\end{enumerate}
\end{assumption}

We then parameterize the transformation from the data space to the noise
space. 
In the following, for a differentiable map
$S=(S_1,\ldots,S_p)^\top$, write
\[
D_kS_j(z)
:=
\frac{\partial S_j(z)}{\partial z_k},
\qquad
J_S(z)
:=
\big(D_kS_j(z)\big)_{1\le j,k\le p}.
\]

\begin{definition}[Monotone triangular $C^1$-diffeomorphism]
\label{def:triangular_normalizing_flow}
Let $\cZ,\cU\subset\bR^p$ be compact convex sets with nonempty interiors.
A map
\begin{align*}
S=(S_1,\ldots,S_p)^\top:\cZ\to\cU
\end{align*}
is said to be a \emph{monotone triangular $C^1$-diffeomorphism} if the following
conditions hold.
\begin{enumerate}[itemsep=-.5ex,label=(\roman*)]
\item $S$ is a homeomorphism from $\cZ$ onto $\cU$, and both
\[
S|_{\operatorname{int}(\cZ)}
:
\operatorname{int}(\cZ)
\to
\operatorname{int}(\cU)
\]
and
\[
S^{-1}|_{\operatorname{int}(\cU)}
:
\operatorname{int}(\cU)
\to
\operatorname{int}(\cZ)
\]
are continuously differentiable.

\item For each $j\in\zahl{p}$ and every $z\in\cZ$,
\[
S_j(z)=S_j(z_j,\ldots,z_p).
\]

\item For each $j\in\zahl{p}$,
$D_jS_j(z)>0$ for every $z\in\operatorname{int}(\cZ)$, and all
first-order partial derivatives of $S$ admit continuous extensions to
$\cZ$.
\end{enumerate}
\end{definition}

Each such map induces a generator $S^{-1}$ from the noise space to the
data space. Let $\P_U$ be a known noise distribution with support
$\cU$ and Lebesgue density $p_U$. For a monotone triangular
$C^1$-diffeomorphism $S:\cZ\to\cU$, write
\[
\Q_S
:=
S^{-1}\#\P_U.
\]
Since $S^{-1}(\cU)=\cZ$ and $\P_U(\cU)=1$, we have
$\Q_S(\cZ)=1$.

Since compact convex sets with nonempty interiors have Lebesgue-null
boundaries, the change-of-variables formula applied on the interiors
shows that $\Q_S$ admits a version of its Lebesgue density satisfying
\begin{align*}
q_S(z)
&=
p_U\big(S(z)\big)
\lvert\det J_S(z)\rvert
=
p_U\big(S(z)\big)
\prod_{j=1}^pD_jS_j(z),
\qquad
z\in\operatorname{int}(\cZ).
\end{align*}
We set $q_S(z)=0$ for $z\notin\cZ$ and define it arbitrarily on
$\partial\cZ$. The corresponding negative log-likelihood loss is
\begin{align*}
\ell_S(z)
&:=
-\log q_S(z)
=
-\log p_U\big(S(z)\big)
-
\sum_{j=1}^p\log D_jS_j(z),
\qquad
z\in\operatorname{int}(\cZ).
\end{align*}
We extend $\ell_S$ arbitrarily to $\partial\cZ$. Since $\P$ admits a
Lebesgue density and $\lambda(\partial\cZ)=0$, this extension does not
affect any $\P$-integral.

We now pass from a generic triangular transport to the fitted
normalizing-flow model used for bootstrap resampling.

\begin{definition}[Support-matched triangular normalizing flow]
\label{def:triangular_normalizing_flow_model}
Let $\cT_n$ be a sequence of classes of monotone triangular
$C^1$-diffeomorphisms from $\cZ$ onto $\cU$, whose complexity may
increase with $n$. The fitted triangular map $\hat S_n\in\cT_n$ is an
approximate maximum-likelihood estimator satisfying, for some
nonnegative empirical optimization error $\varepsilon_n^{\rm NF}$,
\begin{align*}
\bP_n\ell_{\hat S_n}
\le
\inf_{S\in\cT_n}
\bP_n\ell_S
+
\varepsilon_n^{\rm NF}.
\end{align*}
The learned generator, its induced bootstrap law, and the corresponding
density are given by
\begin{align*}
\hat G_n
&:=
\hat S_n^{-1},
\qquad
\Q_n
:=
\hat G_n\#\P_U
=
\Q_{\hat S_n},
\qquad
q_n
:=
q_{\hat S_n}.
\end{align*}
\end{definition}

We next impose regularity conditions on the triangular model, together
with approximation, generalization, and optimization conditions on the
likelihood fit.

\begin{assumption}[Model assumption 1: triangular-flow model and fitting conditions]
\label{ass:triangular_model}
The triangular normalizing-flow model satisfies the following conditions.
\begin{enumerate}[itemsep=-.5ex,label=(\roman*)]
\item The support of $\P_U$ is a compact convex set
$\cU\subset\bR^p$ with nonempty interior, and $\P_U$ admits a version
of its Lebesgue density $p_U$ satisfying, for some constants
$0<\underline p_U\le\overline p_U<\infty$,
\begin{align*}
\underline p_U
\le
p_U(u)
\le
\overline p_U,
\qquad
u\in\operatorname{int}(\cU).
\end{align*}

\item For every $n$, $\cT_n$ is a class of monotone triangular
$C^1$-diffeomorphisms from $\cZ$ onto $\cU$. There exists a
deterministic constant $M_S<\infty$ such that, for all sufficiently
large $n$,
\begin{align*}
\sup_{S\in\cT_n}
\sup_{z\in\cZ}
\max_{1\le j\le p}
D_jS_j(z)
\le
M_S.
\end{align*}

\item Approximation:
\[
\alpha_n^{\rm NF}
:=
\inf_{S\in\cT_n}
D_{\rm KL}(\P\|\Q_S)
\longrightarrow
0.
\]

\item Generalization: $\ell_S$ is $\P$-integrable for every
$S\in\cT_n$, and
\[
\Delta_n^{\rm NF}
:=
\sup_{S\in\cT_n}
\big\lvert
(\bP_n-\P)\ell_S
\big\rvert
=
o_{\P_\cO}(1).
\]

\item Optimization:
\[
\varepsilon_n^{\rm NF}
=
o_{\P_\cO}(1).
\]
\end{enumerate}
\end{assumption}

These conditions translate directly into KL consistency through
the following oracle inequality.

\begin{lemma}[Likelihood oracle inequality]
\label{lem:triangular_KL_consistency}
Suppose Assumption~\ref{ass:triangular_data} and
Assumption~\ref{ass:triangular_model}(i) hold, and suppose
$\ell_S\in L^1(\P)$ for every $S\in\cT_n$. Then
\begin{align*}
D_{\rm KL}(\P\|\Q_n)
\le
\alpha_n^{\rm NF}
+
2\Delta_n^{\rm NF}
+
\varepsilon_n^{\rm NF}.
\end{align*}
Consequently, under Assumption~\ref{ass:triangular_model}(iii)--(v),
\[
D_{\rm KL}(\P\|\Q_n)
=
o_{\P_\cO}(1).
\]
\end{lemma}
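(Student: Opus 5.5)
The plan is the standard oracle decomposition for maximum likelihood. First, rewrite the KL divergence as an excess risk. Since $\P$ and every $\Q_S$ have Lebesgue densities, $p$ and $q_S$, on $\cZ$, and $\lambda(\partial\cZ)=0$, we have, for every $S\in\cT_n$,
\[
D_{\rm KL}(\P\|\Q_S)=\P[\log p]-\P[\log q_S]=\P\ell_S-\P[-\log p].
\]
The finiteness of each term needs checking. The quantity $\P[\log p]$ is finite by Assumption~\ref{ass:triangular_data}(iii), and $\P\ell_S$ is finite by the assumed integrability $\ell_S\in L^1(\P)$. So the difference is well defined, and it equals $D_{\rm KL}(\P\|\Q_S)\in[0,\infty)$.

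Note also that $\Q_S$ gives full mass to $\cZ$. On $\operatorname{int}(\cZ)$ we have $q_S>0$, since $p_U\ge\underline p_U>0$ on $\operatorname{int}(\cU)$, $S$ maps interior to interior, and $D_jS_j>0$. Hence $\P\ll\Q_S$, and the log-ratio formula for the KL divergence is legitimate. Here Assumption~\ref{ass:triangular_model}(i) is used.

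Writing $H:=\P[-\log p]$, this gives $D_{\rm KL}(\P\|\Q_n)=\P\ell_{\hat S_n}-H$, with $\hat S_n\in\cT_n$.

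Next, fix an arbitrary $S\in\cT_n$ and chain the inequalities:
\[
\P\ell_{\hat S_n}\le \bP_n\ell_{\hat S_n}+\Delta_n^{\rm NF}\le \bP_n\ell_S+\varepsilon_n^{\rm NF}+\Delta_n^{\rm NF}\le \P\ell_S+2\Delta_n^{\rm NF}+\varepsilon_n^{\rm NF}.
\]
The first and third steps use the definition of $\Delta_n^{\rm NF}$ as a supremum over $\cT_n$. The middle step uses the approximate-MLE property of Definition~\ref{def:triangular_normalizing_flow_model}, which gives $\bP_n\ell_{\hat S_n}\le\inf_{S'}\bP_n\ell_{S'}+\varepsilon_n^{\rm NF}\le\bP_n\ell_S+\varepsilon_n^{\rm NF}$.

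Subtracting $H$ yields $D_{\rm KL}(\P\|\Q_n)\le D_{\rm KL}(\P\|\Q_S)+2\Delta_n^{\rm NF}+\varepsilon_n^{\rm NF}$. Taking the infimum over $S\in\cT_n$ gives the stated bound with $\alpha_n^{\rm NF}$.

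The consequence then follows directly. The approximation term $\alpha_n^{\rm NF}$ is deterministic and tends to $0$ by Assumption~\ref{ass:triangular_model}(iii). The terms $\Delta_n^{\rm NF}$ and $\varepsilon_n^{\rm NF}$ are $o_{\P_\cO}(1)$ by parts (iv) and (v). Since $D_{\rm KL}(\P\|\Q_n)\ge0$, the bound gives $D_{\rm KL}(\P\|\Q_n)=o_{\P_\cO}(1)$.

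The main obstacle is mild but real: the first step must be justified carefully. We must verify that the KL divergence equals $\P\ell_S-H$ without any $\infty-\infty$ ambiguity, and that the arbitrary extensions of $p$, $q_S$ and $\ell_S$ on $\partial\cZ$ do not affect any $\P$-integral. Both points follow from $\P\ll\lambda$, $\lambda(\partial\cZ)=0$, and the integrability assumptions. A secondary point is measurability of $\Delta_n^{\rm NF}$, which is covered by the paper's standing measurability convention.
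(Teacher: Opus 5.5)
Your proposal is correct and follows essentially the same route as the paper. The paper also chains the generalization and approximate-minimization bounds to obtain $\P\ell_{\hat S_n}\le\inf_{S\in\cT_n}\P\ell_S+\varepsilon_n^{\rm NF}+2\Delta_n^{\rm NF}$, deduces $\P\ll\Q_S$ from positivity of $q_S$ on $\operatorname{int}(\cZ)$ together with $\P(\partial\cZ)=0$, and then adds the common constant $\P[\log p]$ via $D_{\rm KL}(\P\|\Q_S)=\P[\log p]+\P\ell_S$; the only cosmetic difference is that you fix $S$ before taking the infimum, whereas the paper takes it inside the chain.
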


Lemma~\ref{lem:triangular_KL_consistency} and
Corollary~\ref{cor:divergence_FD_cov} yield Condition \eqref{eq:FD-cov}. Moreover,
Assumptions~\ref{ass:triangular_data}(ii) and
\ref{ass:triangular_model}(i)--(ii) imply, for all sufficiently large
$n$,
\begin{align*}
q_n(z)
\le
\overline p_U M_S^p
\le
\frac{\overline p_U M_S^p}{\underline p}\,p(z)
\end{align*}
for Lebesgue-almost every $z\in\cZ$. Hence
Corollary~\ref{cor:domination} gives conditional asymptotic
$\rho_\P$-equicontinuity for every $\P$-Donsker class with a bounded
envelope. Theorem~\ref{thm:cvg_emp_proc} therefore yields the main
result of this subsection.

\begin{theorem}[Triangular-flow bootstrap process consistency]
\label{thm:triangular_flow_bootstrap_CWC}
Suppose Assumptions~\ref{ass:triangular_data}
and~\ref{ass:triangular_model} hold. Let $\cF$ be a $\P$-Donsker class
with a bounded envelope function. Then the triangular-flow generative
bootstrap process satisfies
\begin{align*}
\sup_{h\in\BL1(\ell^\infty(\cF), 
\lVert \cdot\rVert_\cF
)}
\Big\lvert
\E_{\tilde U}
\big[
h(\tilde\bG_n)
\mid
\cO
\big]
-
\E
\big[
h(\bG_\P)
\big]
\Big\rvert
=
o_{\P_\cO}(1).
\end{align*}
\end{theorem}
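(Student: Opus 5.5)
The plan is to verify the two conditions of Theorem~\ref{thm:cvg_emp_proc}, using the bounded-envelope reduction to \eqref{eq:FD-cov} and \eqref{eq:CAE}. In outline, Lemma~\ref{lem:triangular_KL_consistency} gives reverse-KL consistency, and Corollary~\ref{cor:divergence_FD_cov}(ii) turns this into \eqref{eq:FD-cov}. A uniform likelihood-ratio bound then gives \eqref{eq:CAE} through Corollary~\ref{cor:domination}.

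\textbf{Finite-dimensional part.} Assumption~\ref{ass:triangular_data} and Assumption~\ref{ass:triangular_model}(i) are in force, and Assumption~\ref{ass:triangular_model}(iv) supplies $\ell_S\in L^1(\P)$. Lemma~\ref{lem:triangular_KL_consistency} therefore gives $D_{\rm KL}(\P\|\Q_n)\le\alpha_n^{\rm NF}+2\Delta_n^{\rm NF}+\varepsilon_n^{\rm NF}$. By Assumption~\ref{ass:triangular_model}(iii)--(v), the right-hand side is $o_{\P_\cO}(1)$. Corollary~\ref{cor:divergence_FD_cov}(ii), whose proof goes through Pinsker to $d_{\rm TV}$ and then Theorem~\ref{thm:FD-cov}(ii), now yields \eqref{eq:FD-cov} for every $f,g\in\cF$, since $\cF$ has a bounded envelope. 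By Proposition~\ref{prop:FD_cov_characterization}, this is equivalent to \eqref{eq:FD-CWC}.

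\textbf{Equicontinuity part.} Fix $n$ large enough that Assumption~\ref{ass:triangular_model}(ii) applies. For Lebesgue-a.e.\ $z\in\operatorname{int}(\cZ)$, the change-of-variables formula together with Assumption~\ref{ass:triangular_model}(i)--(ii) gives
\[
q_n(z)=p_U(\hat S_n(z))\prod_{j=1}^p D_j\hat S_{n,j}(z)\le \overline p_U M_S^p .
\]
The bound needs $\hat S_n(z)\in\operatorname{int}(\cU)$ and a positive Jacobian, and both hold because $\hat S_n$ is a monotone triangular diffeomorphism mapping the interior onto the interior.

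Next, $\Q_n(\cZ)=1$, the boundary $\partial\cZ$ is Lebesgue-null, and $p\ge\underline p$ on $\operatorname{int}(\cZ)$ by Assumption~\ref{ass:triangular_data}(ii). Together these give $\Q_n(A)\le(\overline p_U M_S^p/\underline p)\,\P(A)$ for every Borel $A$. This is a deterministic domination holding for all large $n$, so \eqref{eq:han-lrb0} holds trivially with $C=\overline p_U M_S^p/\underline p$. Because $\cF$ is $\P$-Donsker with a bounded envelope, Corollary~\ref{cor:domination} then gives \eqref{eq:CAE}. Combined with the first part, the reverse direction of Theorem~\ref{thm:cvg_emp_proc} yields \eqref{eq:CWC}.

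\textbf{Main obstacle.} The delicate step is the domination argument, specifically the measure-theoretic bookkeeping. Three points need care: the density formula for $\Q_n$ must be justified on the interior, the arbitrary boundary values must be shown to be irrelevant because $\lambda(\partial\cZ)=0$ and $\P\ll\lambda$, and the inequality $\Q_n\le C\P$ must hold as an inequality of measures rather than only almost everywhere on a set that might miss part of $\Q_n$'s mass. I would handle this by writing $\Q_n(A)=\int_{A\cap\operatorname{int}(\cZ)}q_n\,d\lambda$ and $\P(A)=\int_{A\cap\operatorname{int}(\cZ)}p\,d\lambda$, and then comparing the integrands pointwise.
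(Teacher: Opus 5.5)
Your proposal is correct and follows the paper's proof essentially step for step. Lemma~\ref{lem:triangular_KL_consistency} with Corollary~\ref{cor:divergence_FD_cov} gives \eqref{eq:FD-cov}; the deterministic domination $\Q_n\le(\overline p_U M_S^p/\underline p)\,\P$ with Corollary~\ref{cor:domination} gives \eqref{eq:CAE}; and Theorem~\ref{thm:cvg_emp_proc} combines the two. The only difference is that you spell out the boundary and absolute-continuity bookkeeping on $\partial\cZ$, which the paper leaves implicit.
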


The strength of the triangular-flow argument stems from exact support
matching and the resulting global likelihood-ratio control. These
conditions accommodate arbitrary $\P$-Donsker classes with bounded
envelopes, but are most natural for compactly supported targets whose
densities are bounded away from zero. For normalizing flows on the full
Euclidean space, such global likelihood-ratio control is generally not
available automatically.

\begin{remark}
Section~\ref{sec:NF-apdx} will provide a concrete triangular-flow
class for which all the conditions of
Theorem~\ref{thm:triangular_flow_bootstrap_CWC} can be verified.
Moreover, we will show that this class satisfies the stronger conditions of
Corollary~\ref{cor:Donsker_equivalence_weak_consistency}. Consequently,
for this class of normalizing flows, conditional weak convergence of the
triangular-flow generative bootstrap process holds for every class
$\cF$ with a bounded envelope if and only if $\cF$ is $\P$-Donsker.
\end{remark}




\subsubsection{Learning-theoretic review and discussion}

\label{sec:triangular-literature}

The learning theory of triangular flows studies how likelihood-based
fitting translates into guarantees for the learned distribution. Under compact-support,
positivity, and smoothness assumptions, \citet{irons2022triangular} used
empirical-process bounds to establish KL consistency and sample-size rates
for approximate likelihood minimizers over a fixed triangular class. 
\citet{wang2022minimax} refined the likelihood analysis on the unit cube:
for sufficiently smooth positive H\"older densities, they obtain Hellinger
concentration at the minimax density-estimation rate, with corresponding
TV and KL bounds. Their penalized and wavelet-sieve estimators extend this
analysis beyond optimization over a fixed smoothness ball.


\begin{table}[t]
\centering
\caption{Selected statistical guarantees for triangular-flow distribution
estimators. Conditions refer to the indicated results;
KL denotes $D_{\rm KL}(\P\|\Q_n)$.}
\label{tab:triangular-learning-comparison}
\vspace{4pt}
\begingroup
\small
\setlength{\tabcolsep}{4pt}
\setlength{\arrayrulewidth}{0.4pt}
\setlength{\extrarowheight}{2pt}
\renewcommand{\arraystretch}{1.12}
\begin{tabularx}{\textwidth}{@{}
>{\raggedright\arraybackslash}p{0.235\textwidth}|
>{\raggedright\arraybackslash}X|
>{\raggedright\arraybackslash}p{0.31\textwidth}@{}}
\hline
& Key assumptions and model class
& Guarantee for the learned distribution \\[3pt]
\hline
\citet{irons2022triangular}\newline
Theorem~3.6
& Positive smooth target and base densities on compact convex supports;
  fixed triangular class containing the true transport;
  approximate likelihood minimization
& KL consistency in probability; expected KL rates with additional
  optimization-error control \\[3pt]
\hline
\citet{wang2022minimax}\newline
Theorem~2.3,\newline
Corollary~2.4
& Positive $\alpha$-H\"older target density on the unit cube,
  $\alpha>p/2$; smooth positive product base density;
  regular triangular likelihood class
& Hellinger concentration at the minimax rate; corresponding TV
  and KL rates in probability \\[3pt]
\hline
\citet{tran2026generative}\newline
Section~7.4, Step~1c
& Compact supports with the stated support containment;
  derivative-controlled autoregressive layers;
  vanishing empirical log-likelihood gap
& KL and $W_1$ errors tend to zero in probability \\[3pt]
\hline
This subsection
& Support-matched $C^1$ maps on compact sets;
  positive densities and integrable log losses;
  vanishing approximation, generalization, and optimization errors
& Realization-wise KL bound; KL error tends to zero in probability \\[3pt]
\hline
\end{tabularx}
\endgroup
\end{table}

These statistical bounds depend on the approximation error of the map
class. Under analytic-density assumptions on a cube, \citet{zech2022sparse} constructed monotone triangular approximations,
including neural-network constructions, and derive Hellinger, TV,
and Wasserstein approximation bounds as the approximation class grows. These bounds follow from controlling
the maps and, for the density-based distances, their derivatives.
\citet{baptista2025approximation} gave a broader transport-stability
framework that clarifies this dependence on the distance: $L^r$ errors in
sampling maps control $W_r$ error, whereas their KL bounds require
additional derivative and invertibility conditions. Such results quantify
the approximation error of a chosen map class; they do not by themselves
give statistical rates in the number of observations.

Our subsection uses the likelihood approach at the level of sufficient
conditions. Rather than choosing a map class to achieve a particular
statistical rate, we require its best KL approximation error, uniform
log-loss deviation, and numerical optimization error to vanish.
The same likelihood argument as in \citet{irons2022triangular} then gives
$D_{\rm KL}(\P\|\Q_n)\to0$ in $\P_\cO$-probability, allowing
$n$-dependent classes. A related qualitative analysis by
\citet{tran2026generative} combines an assumed empirical log-likelihood
gap with a uniform law for derivative-controlled autoregressive flows. Table~\ref{tab:triangular-learning-comparison} compares selected statistical
guarantees for the learned distributions.

\subsection{Flow matching}
\label{sec:FM}

We next consider flow matching, a continuous-time generative model
defined through a time-dependent vector field and an ordinary
differential equation. Unlike the support-matched triangular flow in
the preceding section, flow matching is naturally compatible
with target distributions having full support. Our argument combines
Wasserstein consistency of the learned distribution with local
Lebesgue-density control, and focuses on the affine conditional interpolation
standard in conditional flow matching \citep{lipman2022flow,albergo2022building,albergo2025stochastic}.

\subsubsection{Consistency of flow matching bootstrap}

We begin by specifying the affine conditional path and the corresponding
marginal flow connecting the base and target distributions. Fix
continuously differentiable functions
$\mu,\sigma:[0,1]\to[0,1]$ satisfying
\begin{align*}
\mu_0=0,
\qquad
\sigma_0=1,
\qquad
\mu_1=1,
\qquad
\sigma_1=0,
\qquad
\sigma_t>0
\quad
\text{for }0\le t<1.
\end{align*}
Let $U\sim\P_U$ and $Z\sim\P$ be independent $\bR^p$-valued random
variables. For $0\le t\le1$, let $\dot\mu_t$ and $\dot\sigma_t$ denote
the derivatives of $\mu_t$ and $\sigma_t$ with respect to $t$,
respectively, and define
\begin{align*}
X_t
:=
\mu_t Z+\sigma_t U,
\qquad
V_t
:=
\dot\mu_t Z+\dot\sigma_t U.
\end{align*}
Let $\P_t$ denote the law of $X_t$. Then
$\P_0=\P_U$ and $\P_1=\P$. Conditional on $Z=z$, the distribution of
$X_t$ is the pushforward of $\P_U$ under the map
$u\mapsto\mu_t z+\sigma_t u$; when $\P_U$ is standard Gaussian, this
recovers the usual Gaussian conditional path. The corresponding
tractable conditional flow-matching target is $V_t$.

The affine construction yields the usual weak continuity equation for
$(\P_t,v_t)$. To avoid imposing separate regularity conditions on the
associated characteristic flow, we assume directly that
$\{\P_t\}_{t\in[0,1]}$ admits a deterministic flow representation.

\begin{assumption}[Data and path assumption 2: second moment and marginal flow]
\label{ass:FM_data}
Assume the target distribution $\P$ has finite second moment:
\begin{align*}
\int_{\bR^p}
\lVert z\rVert^2
\d\P(z)
<
\infty.
\end{align*}
For $0\le t<1$, let
\[
v_t(x)
:=
\E[V_t\mid X_t=x]
\]
be a version of the marginal velocity. Assume that $v_{\cdot}(\cdot)$ admits a jointly
measurable extension to $[0,1]\times\bR^p$, still denoted by $v$, and
that there exists a jointly measurable family of maps
$\{\Phi_t:t\in[0,1]\}$ satisfying
\begin{align*}
\Phi_t(u)
=
u+
\int_0^t
v_s\big(
\Phi_s(u)
\big)
\d s,
\qquad
0\le t\le1,
\end{align*}
and
\[
\P_t
=
\Phi_t\#\P_U,
\qquad
0\le t\le1.
\]
\end{assumption}

In particular,
$\P=\Phi_1\#\P_U$, so the population flow transports the base
distribution to the target distribution. We next specify a regular,
controlled class of candidate vector fields for approximating this
population velocity.


\begin{assumption}[Controlled flow-matching vector fields]
\label{def:controlled_neural_FM_class}
For each $n$, let $\cV_n$ be a nonempty deterministic class of jointly
measurable vector fields
\[
\tilde v:[0,1]\times\bR^p\to\bR^p. 
\]
Assume that, for every
$\tilde v\in\cV_n$, the map $x\mapsto\tilde v_t(x)$ is continuously
differentiable for every $t$, the map
$(t,x)\mapsto\nabla_x\tilde v_t(x)$ is jointly measurable, and there
exist constants $L,M<\infty$, independent of $n$, such that
\begin{align*}
\sup_{t\in[0,1]}
\lVert
\tilde v_t(0)
\rVert
\le M,
\qquad
\sup_{t\in[0,1],\,x\in\bR^p}
\lVert
\nabla_x\tilde v_t(x)
\rVert_{\rm op}
\le L,
\end{align*}
where $\nabla$ refers to the Jacobian, and $\|\cdot\|_{\rm op}$ stands for the matrix operator norm.
\end{assumption}

The bounds in Assumption~\ref{def:controlled_neural_FM_class} imply
\begin{align*}
\lVert
\tilde v_t(x)-\tilde v_t(y)
\rVert
&\le
L\lVert x-y\rVert,
\\
\lVert
\tilde v_t(x)
\rVert
&\le
L\lVert x\rVert+M,
\\
\big\lvert
\nabla\cdot\tilde v_t(x)
\big\rvert
&\le
pL.
\end{align*}
Consequently, every $\tilde v\in\cV_n$ generates a unique global flow
on $[0,1]$ in the sense of Carath\'eodory
\citep[Theorem~54 and Proposition~C.3.8]{sontag1998mathematical}.
The spatial $C^1$ regularity, together with the uniqueness of
the forward and backward equations, further implies that each time-$t$
flow map is a $C^1$-diffeomorphism of $\bR^p$.

We now describe the training procedure and the induced flow-matching
generator. Conditionally on $Z_1,\ldots,Z_n$, we draw
$T_1,\ldots,T_n$ independently from ${\rm Unif}(0,1)$ and
$U_1,\ldots,U_n$ independently from $\P_U$, with the two samples
conditionally independent. Define
\begin{align*}
X_i
:=
\mu_{T_i}Z_i+\sigma_{T_i}U_i,
\qquad
V_i
:=
\dot\mu_{T_i}Z_i+\dot\sigma_{T_i}U_i,
\qquad
1\le i\le n.
\end{align*}
Unconditionally, the triples
$\{(T_i,X_i,V_i)\}_{i=1}^n$ are i.i.d.\ with the same distribution as
$(T,X_T,V_T)$, where
$Z\sim\P$, $U\sim\P_U$, and $T\sim{\rm Unif}(0,1)$ are mutually
independent. The augmentation variables are included in the fitting
information $\cO$.


\begin{assumption}[Augmented-sample flow-matching fit and generator]
\label{def:FM_training_objective}
\label{def:flow_matching_generator}
For each $\tilde v\in\cV_n$, define the population and empirical
conditional flow-matching risks by
\begin{align*}
\cR^{\rm FM}(\tilde v)
&:=
\E\Big[
\big\lVert
\tilde v_T(X_T)-V_T
\big\rVert^2
\Big],
\\
\hat \cR^{\rm FM}_n(\tilde v)
&:=
\frac{1}{n}
\sum_{i=1}^n
\big\lVert
\tilde v_{T_i}(X_i)-V_i
\big\rVert^2.
\end{align*}
Assume that the fitting procedure approximately minimizes
$\hat \cR^{\rm FM}_n$ over $\cV_n$ and returns an
$\cO$-measurable fitted vector field $\hat v_n\in\cV_n$, which
determines the learned flow through
\begin{align*}
\hat\Phi_{n,t}(u)
=
u+
\int_0^t
\hat v_{n,s}\big(
\hat\Phi_{n,s}(u)
\big)
\d s,
\qquad
0\le t\le1.
\end{align*}
The learned generator and its induced bootstrap law are then
\begin{align*}
\hat G_n(u)
:=
\hat\Phi_{n,1}(u),
\qquad
\Q_n
:=
\hat G_n\#\P_U.
\end{align*}
\end{assumption}

The population flow-matching risk admits a useful projection
interpretation. Since $T$ is independent of $(Z,U)$, conditionally on
$T=t$, the pair $(X_T,V_T)$ has the same distribution as $(X_t,V_t)$.
Hence, by the definition of the marginal velocity,
\[
v_T(X_T)
=
\E[V_T\mid T,X_T]
\]
almost surely. The squared-loss projection identity therefore gives,
for every $\tilde v\in\cV_n$,
\begin{align*}
\cR^{\rm FM}(\tilde v)
&=
\cR^{\rm FM}(v)
+
\E\Big[
\lVert
\tilde v_T(X_T)-v_T(X_T)
\rVert^2
\Big]
\\
&=
\cR^{\rm FM}(v)
+
\int_0^1
\lVert
\tilde v_t-v_t
\rVert_{L^2(\P_t)}^2
\d t.
\yestag
\label{eq:CFM_projection_identity}
\end{align*}
Thus, the population conditional flow-matching objective targets the
marginal velocity generating the desired probability path.

Motivated by this projection identity, we next impose statistical and
computational conditions under which the augmented-sample fit
consistently learns the marginal velocity.

\begin{assumption}[Model assumption 2: flow-matching approximation and fitting conditions]
\label{ass:controlled_neural_FM_training}
Assume the base distribution $\P_U$ satisfies
\begin{align*}
\int_{\bR^p}
\lVert u\rVert^2
\d\P_U(u)
<
\infty.
\end{align*}
Moreover, suppose the augmented-sample fit in
Assumption~\ref{def:FM_training_objective} satisfies the following
conditions.
\begin{enumerate}[label=(\roman*),itemsep=-.5ex]
\item Approximation:
\[
\alpha_n^{\rm FM}
:=
\inf_{\tilde v\in\cV_n}
\big\{
\cR^{\rm FM}(\tilde v)-\cR^{\rm FM}(v)
\big\}
=
o(1).
\]

\item Generalization:
\[
\Delta_n^{\rm FM}
:=
\sup_{\tilde v\in\cV_n}
\big\lvert
\hat \cR^{\rm FM}_n(\tilde v)
-
\cR^{\rm FM}(\tilde v)
\big\rvert
=
o_{\P_\cO}(1).
\]

\item Optimization:
\[
\varepsilon_n^{\rm FM}
:=
\hat \cR^{\rm FM}_n(\hat v_n)
-
\inf_{\tilde v\in\cV_n}
\hat \cR^{\rm FM}_n(\tilde v)
=
o_{\P_\cO}(1).
\]
\end{enumerate}
\end{assumption}


Combining the projection identity
\eqref{eq:CFM_projection_identity} with the preceding conditions yields
an integrated $L^2(\P_t)$ error bound for the fitted velocity. A
Gronwall stability argument \citep[Chapter 1]{pachpatte1998inequalities} comparing the learned and population ODE
flows then converts this bound into Wasserstein consistency of the
generated law.

\begin{lemma}[Flow-matching training consistency implies Wasserstein consistency]
\label{lem:FM_training_implies_weak_convergence}
Suppose Assumptions~\ref{ass:FM_data}-\ref{ass:controlled_neural_FM_training} hold. Then
\begin{align*}
\int_0^1
\lVert
\hat v_{n,t}-v_t
\rVert_{L^2(\P_t)}^2
\d t
\le
\alpha_n^{\rm FM}
+
2\Delta_n^{\rm FM}
+
\varepsilon_n^{\rm FM},
\end{align*}
and, realization-wise,
\begin{align*}
W_2(\Q_n,\P)
\le
\exp(L)
\Big(
\alpha_n^{\rm FM}
+
2\Delta_n^{\rm FM}
+
\varepsilon_n^{\rm FM}
\Big)^{1/2}.
\end{align*}
Consequently,
\[
W_2(\Q_n,\P)
=
o_{\P_\cO}(1),
\]
and hence
$\Q_n\rightsquigarrow\P$ in $\P_\cO$-probability.
\end{lemma}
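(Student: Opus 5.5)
The argument has two parts, matching the two displays of Lemma~\ref{lem:FM_training_implies_weak_convergence}. The first is an excess-risk oracle inequality for the fitted vector field. The second is a Gronwall stability argument that turns that $L^2$ velocity error into a $W_2$ bound, using a synchronous coupling of the two ODE flows.

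\emph{Step 1: oracle inequality.} Fix $\tilde v\in\cV_n$. By the projection identity \eqref{eq:CFM_projection_identity}, $\int_0^1\lVert\hat v_{n,t}-v_t\rVert_{L^2(\P_t)}^2\d t=\cR^{\rm FM}(\hat v_n)-\cR^{\rm FM}(v)$. We then decompose
\[
\cR^{\rm FM}(\hat v_n)\le \hat\cR^{\rm FM}_n(\hat v_n)+\Delta_n^{\rm FM}\le \hat\cR^{\rm FM}_n(\tilde v)+\varepsilon_n^{\rm FM}+\Delta_n^{\rm FM}\le \cR^{\rm FM}(\tilde v)+\varepsilon_n^{\rm FM}+2\Delta_n^{\rm FM}.
\]
Subtracting $\cR^{\rm FM}(v)$ and taking the infimum over $\tilde v$ gives the first display. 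The infimum in the definition of $\varepsilon_n^{\rm FM}$ is handled by applying that definition directly, so no attainment is needed. One point must be checked along the way: $\cR^{\rm FM}(v)<\infty$. This follows from $\lVert V_T\rVert^2$ being integrable, which holds because $\dot\mu,\dot\sigma$ are continuous on $[0,1]$ and both $Z$ and $U$ have second moments. The same bound, together with linear growth, shows that $\cR^{\rm FM}(\tilde v)<\infty$.

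\emph{Step 2: stability via Gronwall.} Take $U\sim\P_U$ and set $e_t:=\hat\Phi_{n,t}(U)-\Phi_t(U)$. Then $(\hat G_n(U),\Phi_1(U))$ is a coupling of $(\Q_n,\P)$, and so $W_2(\Q_n,\P)\le(\E\lVert e_1\rVert^2)^{1/2}$. From the integral equations,
\[
\lVert e_t\rVert\le\int_0^t\big\lVert\hat v_{n,s}(\hat\Phi_{n,s}(U))-\hat v_{n,s}(\Phi_s(U))\big\rVert\d s+\int_0^t\big\lVert\hat v_{n,s}(\Phi_s(U))-v_s(\Phi_s(U))\big\rVert\d s.
\]
The first integral is at most $L\int_0^t\lVert e_s\rVert\d s$. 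Gronwall then gives $\lVert e_1\rVert\le e^{L}\int_0^1\lVert(\hat v_{n,s}-v_s)(\Phi_s(U))\rVert\d s$. Squaring and applying Cauchy--Schwarz in $s$ yields $\E\lVert e_1\rVert^2\le e^{2L}\int_0^1\E\lVert(\hat v_{n,s}-v_s)(\Phi_s(U))\rVert^2\d s$. Since $\Phi_s\#\P_U=\P_s$ by Assumption~\ref{ass:FM_data}, the integral equals $\int_0^1\lVert\hat v_{n,s}-v_s\rVert^2_{L^2(\P_s)}\d s$. Combining this with Step~1 gives the second display.

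The main obstacle is the rigor of Step 2. Two points need care. First, Fubini must be justified for the jointly measurable maps, which requires finiteness of the relevant integrals. This comes from Step~1 and from the finite second moment of $\P_s$, the latter inherited from the second moments of $Z$ and $U$. Second, the Gronwall argument has to be run pathwise for fixed $U$. This needs $s\mapsto\lVert e_s\rVert$ to be integrable, which holds because both trajectories are absolutely continuous: the learned one by Carath\'eodory theory, and the population one by its integral representation. We also need $\int_0^1\lVert(\hat v_{n,s}-v_s)(\Phi_s(U))\rVert\d s<\infty$ almost surely, and this follows from its second moment being finite. A further subtlety is that the $W_2$ bound is only meaningful when $\Q_n$ has a finite second moment. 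This holds because $\hat v_n$ has linear growth, so Gronwall gives $\lVert\hat\Phi_{n,1}(u)\rVert\le e^{L}(\lVert u\rVert+M)$.

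\emph{Step 3: conclusion.} Assumption~\ref{ass:controlled_neural_FM_training} makes the right-hand side $o_{\P_\cO}(1)$, and hence $W_2(\Q_n,\P)=o_{\P_\cO}(1)$. Weak consistency of $\Q_n$ then follows from Lemma~\ref{lemma:wq-implies-weak} with $q=2$.
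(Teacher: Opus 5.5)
Your proposal is correct and follows essentially the same route as the paper. Both arguments derive the oracle inequality through the projection identity \eqref{eq:CFM_projection_identity}, then use a synchronous coupling of the learned and population flows driven by a common $U\sim\P_U$, with Gronwall, Cauchy--Schwarz in time, and $\Phi_s\#\P_U=\P_s$, and both conclude via Lemma~\ref{lemma:wq-implies-weak}. Your additional integrability checks, such as $\cR^{\rm FM}(v)<\infty$ and the finite second moment of $\Q_n$ from the linear-growth bound, are consistent with the paper's.
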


The regularity conditions on the learned vector field also control the
Jacobian determinant of the induced flow, leading to the following
density bound.

\begin{lemma}[Density bound for controlled flow-matching generators]
\label{lem:FM_density_bound}
Suppose Assumptions~\ref{def:controlled_neural_FM_class} and
\ref{def:FM_training_objective} hold. Assume further that $\P_U$ admits
a Lebesgue density $p_0$ satisfying
$\lVert p_0\rVert_{L^\infty(\lambda)}<\infty$. Then $\Q_n$ admits a
Lebesgue density $q_n$ and, realization-wise,
\begin{align*}
\lVert q_n\rVert_{L^\infty(\lambda)}
\le
\lVert p_0\rVert_{L^\infty(\lambda)}
\exp(pL),
\end{align*}
where $p$ is the dimension of the data space.
\end{lemma}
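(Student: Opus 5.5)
We prove Lemma~\ref{lem:FM_density_bound} with the change-of-variables formula for the learned flow map, combined with Liouville's formula for the Jacobian determinant along the ODE.

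\textbf{Step 1: regularity of the flow map.} Fix a realization of $\cO$, so that $\hat v_n\in\cV_n$ is a deterministic vector field satisfying the bounds of Assumption~\ref{def:controlled_neural_FM_class}. As noted after that assumption, the time-$1$ map $\hat G_n=\hat\Phi_{n,1}$ is a $C^1$-diffeomorphism of $\bR^p$. Differentiating the integral equation in $u$ (justified by the Carath\'eodory theory, with $\nabla_x\hat v_{n,t}$ bounded and jointly measurable) shows that the Jacobian $J_t(u):=\nabla_u\hat\Phi_{n,t}(u)$ solves the variational equation
\[
J_t(u)=I_p+\int_0^t\nabla_x\hat v_{n,s}\big(\hat\Phi_{n,s}(u)\big)J_s(u)\,\d s .
\]

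\textbf{Step 2: Liouville's formula and the determinant lower bound.} Jacobi's formula applied to this absolutely continuous matrix path gives
\[
\det J_1(u)=\exp\Big(\int_0^1\nabla\cdot\hat v_{n,s}\big(\hat\Phi_{n,s}(u)\big)\,\d s\Big).
\]
The divergence is the trace of $\nabla_x\hat v_{n,s}$, and $\lvert\operatorname{tr}A\rvert\le p\lVert A\rVert_{\rm op}$, so $\lvert\nabla\cdot\hat v_{n,s}\rvert\le pL$. Hence
\[
\det J_1(u)\ge \exp(-pL)>0 \qquad\text{for every } u .
\]

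\textbf{Step 3: change of variables.} Since $\hat G_n$ is a $C^1$-diffeomorphism, for any Borel set $A$ we have
\[
\Q_n(A)=\int \ind_A\big(\hat G_n(u)\big)\,p_0(u)\,\d u .
\]
Substituting $z=\hat G_n(u)$ shows that $\Q_n$ has Lebesgue density
\[
q_n(z)=p_0\big(\hat G_n^{-1}(z)\big)\,\big\lvert\det J_1\big(\hat G_n^{-1}(z)\big)\big\rvert^{-1}.
\]
By Step 2 this is at most $\lVert p_0\rVert_{L^\infty(\lambda)}\exp(pL)$. The essential supremum is preserved because $\hat G_n^{-1}$ maps Lebesgue-null sets to null sets. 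This holds for every realization, which gives the realization-wise claim.

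\textbf{Main obstacle.} The delicate step is justifying the variational equation and Liouville's formula when $\hat v_n$ is only measurable in $t$, so that the flow is a Carath\'eodory solution rather than a classical one. I would handle this by working at a.e.\ $t$ with absolutely continuous paths. Alternatively, I would approximate $\hat v_n$ by vector fields that are continuous in $t$ and satisfy the same bounds, then pass to the limit in the density bound using weak convergence of the pushforwards and lower semicontinuity.
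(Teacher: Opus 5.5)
Your proposal is correct and follows the paper's proof essentially step for step: the variational equation for $J_t(u)$, Liouville's formula combined with $\lvert\nabla\cdot\hat v_{n,s}\rvert\le pL$ to get $\det J_1\ge e^{-pL}$, and the change-of-variables formula for $q_n$. One small imprecision in Step 3: the null-set remark needs $\hat G_n$, not $\hat G_n^{-1}$, to map Lebesgue-null sets to null sets, but both properties hold for a $C^1$-diffeomorphism, so nothing breaks.
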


Lemma~\ref{lem:FM_training_implies_weak_convergence} gives
$\Q_n\rightsquigarrow\P$ in $\P_\cO$-probability, while
Lemma~\ref{lem:FM_density_bound} gives a uniform $L^\infty$ bound on
the learned densities. Hence
Corollary~\ref{cor:localized_Lebesgue_FD_cov} yields
finite-dimensional covariance consistency. The same density bound,
together with weak consistency and the local positivity condition on
$p$, verifies the local metric-transfer condition through
Lemma~\ref{lem:local_density_metric_transfer}. Under
\eqref{eq:unif_entropy_F},
Theorem~\ref{thm:unif_entropy} therefore yields conditional asymptotic
$\rho_\P$-equicontinuity. Applying
Theorem~\ref{thm:cvg_emp_proc} gives the following result.

\begin{theorem}[Flow-matching bootstrap consistency]
\label{thm:FM_bootstrap_CWC}
Suppose Assumptions~\ref{ass:FM_data}-\ref{ass:controlled_neural_FM_training} hold. Assume further
that $\P_U$ admits a Lebesgue density $p_0$ satisfying
\[
\lVert p_0\rVert_{L^\infty(\lambda)}
<
\infty,
\]
and that $\P$ admits a Lebesgue density $p$ satisfying
\[
\operatorname*{essinf}_{z\in B_R}
p(z)
>
0
\qquad
\text{for every }R<\infty.
\]
Let $\cF$ be a measurable class with an envelope bounded by
$B<\infty$ and satisfying the uniform entropy condition
\eqref{eq:unif_entropy_F}; that is,
$J(1,\cF,F)<\infty$ with $F\equiv B$. Then the flow-matching
generative bootstrap process satisfies
\begin{align*}
\sup_{h\in\BL1(\ell^\infty(\cF),\lVert \cdot \rVert_\cF)}
\Big\lvert
\E_{\tilde U}\Big[
h(\tilde\bG_n)
\mid
\cO
\Big]
-
\E\Big[
h(\bG_\P)
\Big]
\Big\rvert
=
o_{\P_\cO}(1).
\end{align*}
\end{theorem}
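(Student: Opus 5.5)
The plan is to invoke Theorem~\ref{thm:cvg_emp_proc}, which (given the bounded envelope) reduces \eqref{eq:CWC} to the covariance condition \eqref{eq:FD-cov} plus conditional asymptotic equicontinuity \eqref{eq:CAE}. Both will be obtained by feeding the two flow-matching lemmas into the general criteria of Section~\ref{sec:main}.

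For \eqref{eq:FD-cov}, Lemma~\ref{lem:FM_training_implies_weak_convergence} gives $W_2(\Q_n,\P)=o_{\P_\cO}(1)$, and hence $\Q_n\rightsquigarrow\P$ in $\P_\cO$-probability. The stated density hypothesis gives $\P\ll\lambda$. Lemma~\ref{lem:FM_density_bound} shows that $\Q_n\ll\lambda$ for every realization, with $\lVert q_n\rVert_{L^\infty(\lambda)}\le K:=\lVert p_0\rVert_{L^\infty(\lambda)}e^{pL}$, where $K$ is deterministic. Taking $L>K$ makes $\int_{B_R}q_n\ind\{q_n>L\}\d\lambda=0$ for every realization, so \eqref{eq:local_Lebesgue_UI} holds trivially. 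Corollary~\ref{cor:localized_Lebesgue_FD_cov} then yields \eqref{eq:FD-cov}.

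For \eqref{eq:CAE}, I will use Theorem~\ref{thm:unif_entropy} with $F\equiv B$. The entropy condition \eqref{eq:unif_entropy_F} is assumed, so it remains to verify the metric-transfer condition \eqref{eq:local_metric_transfer_Qn}, and for this I apply Lemma~\ref{lem:local_density_metric_transfer}. Its hypotheses check out as follows:
\begin{enumerate}[label=(\alph*)]
\item Asymptotic tightness \eqref{eq:Qn_asymptotic_tightness} follows from weak consistency, by the remark after that lemma: choose $R$ with $\P(\partial B_R)=0$ (true for all $R$, since $\P\ll\lambda$) and apply the Portmanteau theorem.
\item Local positivity of $p$ on each ball is assumed.
\item $\Q_n\ll\lambda$ always holds.
\item For any $r_R>1$, $\int_{B_R}q_n^{r_R}\d\lambda\le K^{r_R}\lambda(B_R)$, a deterministic bound, so the $O_{\P_\cO}(1)$ requirement holds.
\end{enumerate}
Combining both parts with Theorem~\ref{thm:cvg_emp_proc} gives the claim.

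Since the argument is an assembly of earlier results, the only real obstacle is checking measurability: Lemma~\ref{lem:FM_density_bound} needs the fitted $\hat v_n$ to lie in $\cV_n$ for every realization, so that the density bound is uniform and deterministic. This is guaranteed by Assumption~\ref{def:FM_training_objective}, which requires $\hat v_n\in\cV_n$ and makes it $\cO$-measurable, together with the constants $L,M$ being independent of $n$. I would also double-check that the uniform entropy condition is stated with the constant envelope, as Theorem~\ref{thm:unif_entropy} requires; this matches the hypothesis verbatim.
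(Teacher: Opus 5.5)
Your proposal is correct and follows essentially the same route as the paper: weak consistency from Lemma~\ref{lem:FM_training_implies_weak_convergence} together with the deterministic $L^\infty$ density bound of Lemma~\ref{lem:FM_density_bound} feeds Corollary~\ref{cor:localized_Lebesgue_FD_cov} for \eqref{eq:FD-cov}, and Lemma~\ref{lem:local_density_metric_transfer} with Theorem~\ref{thm:unif_entropy} for \eqref{eq:CAE}, after which Theorem~\ref{thm:cvg_emp_proc} concludes. The only cosmetic difference is in the local integrability step: the paper takes $r_R=2$ via $\int_{B_R}q_n^2\,\d\lambda\le\lVert q_n\rVert_{L^\infty(\lambda)}\int q_n\,\d\lambda$, whereas you bound $\int_{B_R}q_n^{r_R}\,\d\lambda$ by $K^{r_R}\lambda(B_R)$ for an arbitrary $r_R>1$.
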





\subsubsection{Learning-theoretic review and discussion}
\label{sec:FM-literature}

We review how the flow-matching learning theory controls the
Wasserstein distance between the target and learned distributions, in
either $W_1$ or $W_2$. Two issues are particularly relevant to our
setting: the regularity required for ODE stability and the estimation of
the velocity field from finite training samples.

\begin{table}[t]
\centering
\caption{Selected flow-matching guarantees under the assumptions of the
indicated results. Only key conditions are listed.}
\label{tab:FM-learning-comparison}
\vspace{4pt}
\begingroup
\small
\setlength{\tabcolsep}{4pt}
\setlength{\arrayrulewidth}{0.4pt}
\setlength{\extrarowheight}{2pt}
\renewcommand{\arraystretch}{1.12}
\begin{tabularx}{\textwidth}{@{}
>{\raggedright\arraybackslash}p{0.235\textwidth}|
>{\raggedright\arraybackslash}X|
>{\raggedright\arraybackslash}p{0.30\textwidth}@{}}
\hline
& Key assumptions and model class
& Wasserstein error guarantee \\[3pt]
\hline
\citet{benton2024flow}\newline Theorem~1
& Given integrated velocity-error bound; well-posed smooth flows;
  time-integrable spatial Lipschitz bound on the fitted field
& Non-asymptotic $W_2$ bound in terms of velocity estimation error \\[3pt]
\hline
\citet{fukumizu2025flow}\newline
Section~4.2, Theorems~3--4
& Compact target support; integrated-loss minimization; bounded loss and
  covering-number control; Lipschitz fields on a truncated interval
& Bound for expected $W_2$ error relative to the smoothed target,
  through approximation and loss-class complexity \\[3pt]
\hline
\citet{gao2024continuous}\newline
Theorem~4.4, Gaussian-mixture case
& Centered Gaussian convolution of a bounded law; constrained ReLU
  classes; sampled loss; early stopping and Euler sampling
& Rate for expected $W_2$ error, including numerical sampling error \\[3pt]
\hline
This subsection
& Finite second moments; marginal-flow representation; fixed Lipschitz
  and linear-growth bounds on $[0,1]$; vanishing approximation, generalization and
  optimization errors
& Realization-wise $W_2$ bound; $W_2$ error tends to zero in probability \\[3pt]
\hline
\end{tabularx}

\endgroup
\end{table}


The regularity of the fitted vector field determines how velocity
estimation error propagates to Wasserstein error.
\citet[Proposition~3]{albergo2022building} bounded the squared $W_2$
error by the integrated squared velocity error under a uniform spatial
Lipschitz bound on the fitted field. \citet[Theorem~1]{benton2024flow}
extended this comparison to time-dependent Lipschitz bounds, with the
constant in their $W_2$ bound depending on the time integral of these
bounds.

Learning the velocity field from data additionally requires
approximation and finite-sample control. \citet{gao2024continuous}
analyzed an objective based on sampled times and noise, combining ReLU
approximation and finite-sample bounds with early-stopping and Euler
discretization errors to obtain a rate for the expected $W_2$ error
under their distributional regularity conditions.
\citet{zhou2025flowerror} likewise studied velocity fields learned by deep
least-squares regression and establish $W_2$ convergence for
bounded-support targets. \citet{fukumizu2025flow}, in contrast, analyzed
an integrated loss for positive, compactly supported Besov densities,
combining neural-network approximation with statistical error bounds
under additional boundary and path regularity conditions.


Related work also addresses endpoint regularity in $W_1$ error bounds
\citep{kunkel2025minimax,kunkel2026lipschitz} and flow-matching
estimation for manifold-supported targets \citep{kumar2026manifold}.

In our framework, a fixed spatial Lipschitz bound over $[0,1]$ provides
ODE stability, while separate approximation, generalization, and
optimization conditions yield consistency of the fitted velocity field.
This parallels the empirical-risk and $W_2$-stability analysis of
\citet[Section~4.2]{fukumizu2025flow}, although our generalization
condition is formulated for the sampled, rather than integrated, loss.
Section~\ref{sec:FM-apdx} verifies a uniform law for this sampled loss
over norm-controlled affine-residual networks under sub-Gaussian tail
conditions and a parameter-growth restriction. Approximation still
requires a target-specific closure condition within the constrained
function class. 

Table~\ref{tab:FM-learning-comparison} summarizes
selected guarantees from the literature and compares them with the
conditions used here.

\subsection{Score-based diffusion models}
\label{sec:diffusion}


We next consider score-based diffusion models, which learn the scores of a
continuum of Gaussian-smoothed distributions and generate observations
through a learned reverse-time stochastic differential equation
\citep{song2021scorebased}. Under the score and regularity conditions below,
we establish total-variation consistency of the learned law and transfer
it to the generative bootstrap process.

\subsubsection{Consistency of diffusion bootstrap process}

We use the standard Ornstein--Uhlenbeck (OU) noising path, which avoids
additional notation for the noise schedule. More general
variance-preserving schedules can be handled through a deterministic time
change.

\begin{assumption}[Data assumption 3: density and second moment]
\label{ass:diffusion_data_path}
The target distribution $\P$ admits a Lebesgue density and satisfies
\begin{align*}
\int_{\bR^p}
\lVert z\rVert^2
\d\P(z)
<
\infty.
\end{align*}
\end{assumption}

Let $Z\sim\P$, and let
$W^{\rm fwd}=\{W_t^{\rm fwd}:t\ge0\}$ be a $p$-dimensional Brownian
motion independent of $Z$. Define the forward Ornstein--Uhlenbeck
process by
\begin{align*}
\d X_t
=
-X_t\d t
+
\sqrt{2}\d W_t^{\rm fwd},
\qquad
X_0=Z.
\end{align*}
For each $t>0$, the conditional distribution of $X_t$ given $Z=z$ is
\[
{\rm N}\big(
e^{-t}z,
(1-e^{-2t})I_p
\big).
\]
Equivalently,
\[
X_t
\overset{\rm d}{=}
e^{-t}Z
+
\sqrt{1-e^{-2t}}\xi,
\]
where $\xi\sim\gamma_p:={\rm N}(0,I_p)$ is independent of $Z$.
Let $\P_t$ denote the distribution of $X_t$, let $p_t$ denote its
Lebesgue density, and define the score function by
\begin{align*}
s_t(x)
:=
\nabla_x\log p_t(x).
\end{align*}

Let $\tau_n$ and $T_n$ be deterministic sequences satisfying
$0<\tau_n<T_n$, $\tau_n\downarrow0$, and $T_n\to\infty$. By the
standard time-reversal analysis detailed in
Proposition~\ref{prop:diffusion_OU_reverse_path}, for each $n$, the
time reversal of the forward OU process over $[\tau_n,T_n]$ admits the
representation
\begin{align*}
\d \bar X_{n,r}
=
\big\{
\bar X_{n,r}
+
2s_{T_n-r}(\bar X_{n,r})
\big\}
\d r
+
\sqrt{2}\d \bar W_r^{\rm rev},
\qquad
\bar X_{n,0}
\sim
\P_{T_n},
\end{align*}
for $0\le r\le T_n-\tau_n$, where
$\bar W^{\rm rev}$ is a $p$-dimensional Brownian motion with respect to
the reversed filtration. Moreover,
\begin{align*}
\big\{
\bar X_{n,r}:
0\le r\le T_n-\tau_n
\big\}
\overset{\rm d}{=}
\big\{
X_{T_n-r}:
0\le r\le T_n-\tau_n
\big\}.
\end{align*}

The learned sampler below replaces the initial law $\P_{T_n}$ by
$\gamma_p$ and the score $s_t$ by an estimator $\hat s_{n,t}$.
To estimate the unavailable marginal score $s_t$, we target the
tractable conditional score
\citep{hyvarinen2005estimation,vincent2011connection}. Since the
conditional density of $X_t$ given $Z=z$ is Gaussian, define
\begin{align*}
\psi_t(x,z)
:=
\nabla_x\log p_t(x\mid z)
=
-
\frac{x-e^{-t}z}{1-e^{-2t}}.
\end{align*}
In particular,
\[
\psi_t(X_t,Z)
=
-
\frac{\xi}{\sqrt{1-e^{-2t}}}.
\]
The lower cutoff $\tau_n$ serves as an early-stopping time and keeps
this target away from the singularity at $t=0$.

We now describe the denoising-score fit and the induced reverse-diffusion
generator. Conditionally on $Z_1,\ldots,Z_n$, draw
$T_1,\ldots,T_n$ independently from ${\rm Unif}(\tau_n,T_n)$ and
$\xi_1,\ldots,\xi_n$ independently from $\gamma_p$, with the two
samples conditionally independent. Define
\begin{align*}
X_i
:=
e^{-T_i}Z_i
+
\sqrt{1-e^{-2T_i}}\xi_i,
\qquad
\Psi_i
:=
-
\frac{\xi_i}{\sqrt{1-e^{-2T_i}}},
\qquad
1\le i\le n.
\end{align*}
The augmentation variables are included in the fitting information
$\cO$.


\begin{assumption}[Augmented-sample denoising-score fit and reverse-diffusion generator]
\label{def:diffusion_fit_generator}
For each $n$, let $\cS_n$ be a deterministic class of jointly measurable
score fields
\begin{align*}
\tilde s:
[\tau_n,T_n]\times\bR^p
\to
\bR^p
\end{align*}
such that every $\tilde s\in\cS_n$ satisfies
\begin{align*}
\int_{\tau_n}^{T_n}
\lVert
\tilde s_t
\rVert_{L^2(\P_t)}^2
\d t
<
\infty.
\end{align*}
For $\tilde s\in\cS_n$, define the population and empirical denoising
score-matching (DSM) risks by
\begin{align*}
\cR_n^{\rm DSM}(\tilde s)
&:=
\int_{\tau_n}^{T_n}
\E\Big[
\big\lVert
\tilde s_t(X_t)-\psi_t(X_t,Z)
\big\rVert^2
\Big]
\d t,
\\
\hat \cR_n^{\rm DSM}(\tilde s)
&:=
\frac{T_n-\tau_n}{n}
\sum_{i=1}^n
\big\lVert
\tilde s_{T_i}(X_i)-\Psi_i
\big\rVert^2.
\yestag
\label{eq:diffusion_DSM_objectives}
\end{align*}
Assume the fitting procedure minimizes
$\hat \cR_n^{\rm DSM}$ and returns an $\cO$-measurable score field
$\hat s_n\in\cS_n$.
\end{assumption}

Let $\P_U$ be the law of
$U=(U_0,W^{\rm rev})$, where $U_0\sim\gamma_p$ and
$W^{\rm rev}=\{W_t^{\rm rev}:t\ge0\}$ is a $p$-dimensional Brownian
motion independent of $U_0$. Given $\hat s_n$, define the learned
reverse process by
\begin{align*}
\hat X_{n,r}
=
U_0
+
\int_0^r
\big\{
\hat X_{n,u}
+
2\hat s_{n,T_n-u}(\hat X_{n,u})
\big\}
\d u
+
\sqrt{2}W_r^{\rm rev},
\qquad
0\le r\le T_n-\tau_n.
\yestag
\label{eq:learned_reverse_diffusion}
\end{align*}
The regularity condition imposed below ensures that this equation admits
a unique solution driven by $(U_0,W^{\rm rev})$. Define
\begin{align*}
\hat G_n(U)
:=
\hat X_{n,T_n-\tau_n},
\qquad
\Q_n
:=
\hat G_n\#\P_U.
\end{align*}

The conditional-expectation identity for Gaussian perturbations gives
\[
s_t(X_t)
=
\E\big[
\psi_t(X_t,Z)
\mid
X_t
\big]
\]
for every $t>0$. Consequently, the squared-loss projection identity
gives, for every $\tilde s\in\cS_n$,
\begin{align*}
\cR_n^{\rm DSM}(\tilde s)
-
\cR_n^{\rm DSM}(s)
&=
\int_{\tau_n}^{T_n}
\lVert
\tilde s_t-s_t
\rVert_{L^2(\P_t)}^2
\d t.
\yestag
\label{eq:diffusion_DSM_projection_identity}
\end{align*}
Thus, the population excess denoising-score risk is exactly the
integrated $L^2(\P_t)$ error of the fitted score. We impose this
quantity directly as an output-level condition on the fitted score.

\begin{assumption}[Model assumption 3: integrated score consistency]
\label{ass:diffusion_score_consistency}
The fitted score and reverse diffusion satisfy the following conditions.
\begin{enumerate}[label=(\roman*),itemsep=-.5ex]
\item Conditionally on $\cO$, the fitted score $\hat s_n$ is
diffusion-admissible on $[\tau_n,T_n]$ in the sense of
Definition~\ref{def:diffusion_admissible_score},
$\P_\cO$-almost surely.

\item The integrated score error satisfies
\begin{align*}
\mathcal E_n^{\rm DSM}
:=
\int_{\tau_n}^{T_n}
\lVert
\hat s_{n,t}-s_t
\rVert_{L^2(\P_t)}^2
\d t
=
o_{\P_\cO}(1).
\yestag
\label{eq:diffusion_score_consistency}
\end{align*}
\end{enumerate}
\end{assumption}


Proposition~\ref{prop:diffusion_admissible_regularity} in the appendix
shows that Assumption~\ref{ass:diffusion_score_consistency}(i)
ensures that the learned reverse equation
\eqref{eq:learned_reverse_diffusion} is well posed.
Related score-estimation and sampling guarantees are reviewed in
Section~\ref{sec:diffusion-literature}.

We next use the integrated score error to control the learned terminal
law in KL divergence and thereby establish total-variation consistency.

\begin{lemma}[Score consistency implies total-variation consistency]
\label{lem:diffusion_TV_consistency}
Suppose Assumptions~\ref{ass:diffusion_data_path}
-\ref{ass:diffusion_score_consistency} hold. Then,
realization-wise,
\begin{align*}
D_{\rm KL}(\P_{\tau_n}\|\Q_n)
\le
D_{\rm KL}(\P_{T_n}\|\gamma_p)
+
\mathcal E_n^{\rm DSM}.
\yestag
\label{eq:diffusion_KL_bound}
\end{align*}
Consequently,
\begin{align*}
d_{\rm TV}(\Q_n,\P)
&\le
d_{\rm TV}(\P_{\tau_n},\P)
+
\Bigg\{
\frac12
\Big[
D_{\rm KL}(\P_{T_n}\|\gamma_p)
+
\mathcal E_n^{\rm DSM}
\Big]
\Bigg\}^{1/2},
\yestag
\label{eq:diffusion_TV_decomposition}
\end{align*}
and hence
\[
d_{\rm TV}(\Q_n,\P)
=
o_{\P_\cO}(1).
\]
\end{lemma}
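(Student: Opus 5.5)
The proof of Lemma~\ref{lem:diffusion_TV_consistency} has three steps: a path-space Girsanov comparison, the data-processing inequality, and a triangle inequality for total variation.

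\textbf{Step 1: KL bound via Girsanov.} Work realization-wise, conditionally on $\cO$. Let $\bar\Pi$ be the path law on $C([0,T_n-\tau_n];\bR^p)$ of the exact reversal $\bar X_n$, which starts from $\P_{T_n}$. Let $\hat\Pi$ be the path law of the learned process \eqref{eq:learned_reverse_diffusion}, which starts from $\gamma_p$. The two SDEs have the same diffusion coefficient $\sqrt2 I_p$, and their drifts differ by $2\{s_{T_n-r}-\hat s_{n,T_n-r}\}(x)$.

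By the chain rule for KL, first conditioning on the initial point,
\[
D_{\rm KL}(\bar\Pi\|\hat\Pi)
=D_{\rm KL}(\P_{T_n}\|\gamma_p)
+\E\Big[\tfrac14\int_0^{T_n-\tau_n}\lVert 2(s-\hat s_n)_{T_n-r}(\bar X_{n,r})\rVert^2\d r\Big].
\]
The second term is the Girsanov term, where the factor $\tfrac14$ comes from the diffusion coefficient $\sqrt2$. Since $\bar X_{n,r}\sim\P_{T_n-r}$, this term equals $\int_{\tau_n}^{T_n}\lVert\hat s_{n,t}-s_t\rVert^2_{L^2(\P_t)}\d t=\mathcal E_n^{\rm DSM}$.

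\textbf{Step 2: projection to time zero.} The terminal marginals of $\bar\Pi$ and $\hat\Pi$ are $\P_{\tau_n}$ and $\Q_n$. The data-processing inequality applied to the projection onto the terminal coordinate then gives \eqref{eq:diffusion_KL_bound}.

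\textbf{Main obstacle.} The difficulty is justifying Girsanov rigorously when the drifts may not satisfy Novikov's condition, and when the integrated score error could be infinite for a given realization. I would first use Proposition~\ref{prop:diffusion_OU_reverse_path} and the diffusion-admissibility in Assumption~\ref{ass:diffusion_score_consistency}(i), together with Proposition~\ref{prop:diffusion_admissible_regularity}, for strong well-posedness of the learned equation. I would then use a localization argument: stop at $\tau_k=\inf\{r:\int_0^r\lVert\cdot\rVert^2>k\}$, apply Girsanov on $[0,\tau_k]$, and pass to the limit via lower semicontinuity of KL, as in the standard diffusion-sampling analyses. If $\mathcal E_n^{\rm DSM}=\infty$ the bound is trivial, so only the finite case needs this argument.

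\textbf{Step 3: total variation.} By the triangle inequality and Pinsker's inequality,
\[
d_{\rm TV}(\Q_n,\P)\le d_{\rm TV}(\P_{\tau_n},\P)+\sqrt{D_{\rm KL}(\P_{\tau_n}\|\Q_n)/2},
\]
which gives \eqref{eq:diffusion_TV_decomposition}. It remains to show that each term vanishes.

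\emph{Early-stopping bias.} Since $X_{\tau_n}=e^{-\tau_n}Z+\sqrt{1-e^{-2\tau_n}}\xi$ and $\P$ has a Lebesgue density $p$, write $\P_{\tau_n}$ as $p$ rescaled by $e^{\tau_n}$ and convolved with a shrinking Gaussian. Continuity of translation and dilation in $L^1(\lambda)$ then gives $\lVert p_{\tau_n}-p\rVert_{L^1}\to0$, so $d_{\rm TV}(\P_{\tau_n},\P)\to0$.

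\emph{Initialization error.} By convexity of KL, with $\mathrm N(e^{-T}z,(1-e^{-2T})I)$ denoting the conditional law of $X_T$ given $Z=z$,
\[
D_{\rm KL}(\P_{T}\|\gamma_p)\le\int D_{\rm KL}\big(\mathrm N(e^{-T}z,(1-e^{-2T})I)\,\|\,\gamma_p\big)\d\P(z).
\]
Using the Gaussian KL formula, this is of order $e^{-2T}(\E\lVert Z\rVert^2+p)$ for large $T$, which tends to $0$ as $T_n\to\infty$ by the second-moment condition in Assumption~\ref{ass:diffusion_data_path}.

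\emph{Score error.} $\mathcal E_n^{\rm DSM}=o_{\P_\cO}(1)$ by Assumption~\ref{ass:diffusion_score_consistency}(ii).

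Combining the three terms gives $d_{\rm TV}(\Q_n,\P)=o_{\P_\cO}(1)$.
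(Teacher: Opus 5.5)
Your proposal is correct and follows the paper's proof essentially step for step. Both arguments use a path-space KL bound between the exact and learned reverse diffusions, data processing through the terminal evaluation map, and then Pinsker plus the triangle inequality; the early-stopping bias is handled by $L^1$-continuity of dilations and translations, and the initialization error by convexity of KL and the Gaussian KL formula. The only difference is that the paper does not carry out the Girsanov localization by hand. It instead invokes the localized KL estimate of \citet[Lemma~4.4(i) and Remark~4.5]{lacker2023hierarchies}, whose hypotheses are verified in Propositions~\ref{prop:diffusion_OU_reverse_path} and~\ref{prop:diffusion_admissible_regularity}: well-posedness of the learned equation from every deterministic time and state, and almost-sure finiteness of the pathwise score energy under both path laws.
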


We next transfer the total-variation consistency obtained above to
process convergence under the uniform entropy condition. The following
result is model-independent.

\begin{lemma}[Total-variation consistency implies process convergence under uniform entropy]
\label{lem:TV_uniform_entropy_CWC}
Let $\cF$ be a measurable class with an envelope bounded by $B<\infty$.
Assume that $\cF$ satisfies the uniform entropy condition
\eqref{eq:unif_entropy_F}; that is,
$J(1,\cF,F)<\infty$ with $F\equiv B$. If
\begin{align*}
d_{\rm TV}(\Q_n,\P)
=
o_{\P_\cO}(1),
\end{align*}
then the conditional weak convergence \eqref{eq:CWC} holds.
\end{lemma}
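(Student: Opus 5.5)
The plan is to verify the two conditions of Theorem~\ref{thm:cvg_emp_proc}, using the reduction of \eqref{eq:FD-CWC} to \eqref{eq:FD-cov} that holds in the bounded-envelope setting. Under the uniform entropy condition, equicontinuity will come from Theorem~\ref{thm:unif_entropy}, so only the metric-transfer condition \eqref{eq:local_metric_transfer_Qn} has to be checked.

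\emph{Step 1: finite-dimensional part.} Corollary~\ref{cor:divergence_FD_cov}(i) applies directly. Since $\cF$ has an envelope bounded by $B$ and $d_{\rm TV}(\Q_n,\P)=o_{\P_\cO}(1)$, condition \eqref{eq:FD-cov} holds. The mechanism is the bound
\[
\lvert(\Q_n-\P)h\rvert\le 2\lVert h\rVert_\infty d_{\rm TV}(\Q_n,\P)
\qquad\text{for } h\in\{f,\,g,\,fg\},
\]
where each of these functions is bounded by $B$ or $B^2$. By Proposition~\ref{prop:FD_cov_characterization}, this gives \eqref{eq:FD-CWC}.

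\emph{Step 2: metric transfer.} For $f,g\in\cF$ put $d=f-g$, so that $\lVert d\rVert_\infty\le 2B$. Then
\[
\rho_{\Q_n}(f,g)^2-\rho_\P(f,g)^2=(\Q_n-\P)[d^2]-\big\{(\Q_n d)^2-(\P d)^2\big\}.
\]
The first term is at most $8B^2 d_{\rm TV}$ in absolute value. The second equals $(\Q_n d-\P d)(\Q_n d+\P d)$ and is therefore at most $(4B\,d_{\rm TV})(4B)$ in absolute value. Both bounds are uniform over all pairs $(f,g)$, so
\[
\sup_{\rho_\P(f,g)<\delta}\rho_{\Q_n}(f,g)^2\le \delta^2+C B^2 d_{\rm TV}(\Q_n,\P).
\]
Given $\epsilon>0$, choose $\delta$ with $\delta^2<\epsilon^2/2$. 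The probability that the supremum exceeds $\epsilon$ is then at most $\P_\cO\big(CB^2d_{\rm TV}>\epsilon^2/2\big)\to0$. This is \eqref{eq:local_metric_transfer_Qn}.

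\emph{Step 3: conclusion.} Theorem~\ref{thm:unif_entropy} yields \eqref{eq:CAE}, and Theorem~\ref{thm:cvg_emp_proc} then gives \eqref{eq:CWC}.

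I do not expect a real obstacle, because the total-variation bound is uniform over bounded functions and Step 2 is elementary. The only care needed is that the uniformity over the possibly uncountable set of pairs comes from this function-free TV bound and not from a union argument, and that the suprema are measurable, which the standing measurability convention covers.
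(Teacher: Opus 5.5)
Your proposal is correct and follows the paper's proof essentially step for step. You obtain \eqref{eq:FD-cov} from Corollary~\ref{cor:divergence_FD_cov}, derive \eqref{eq:local_metric_transfer_Qn} from the uniform bound $\sup_{\rho_\P(f,g)<\delta}\rho_{\Q_n}(f,g)^2\le\delta^2+CB^2d_{\rm TV}(\Q_n,\P)$, and conclude via Theorems~\ref{thm:unif_entropy} and~\ref{thm:cvg_emp_proc}. The only cosmetic difference is in the metric-transfer step: the paper uses the one-sided bound $\rho_{\Q_n}(f,g)^2\le\Q_n[(f-g-\P[f-g])^2]$, which gives the constant $32B^2$, whereas you compute the exact two-sided difference of variances.
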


Combining the preceding two lemmas yields the diffusion-bootstrap process
result.

\begin{theorem}[Diffusion bootstrap empirical process convergence]
\label{thm:diffusion_bootstrap_CWC}
Suppose Assumptions~\ref{ass:diffusion_data_path}-\ref{ass:diffusion_score_consistency} hold. Let $\cF$ be a
measurable class with an envelope bounded by $B<\infty$ and satisfying
the uniform entropy condition \eqref{eq:unif_entropy_F}; that is,
$J(1,\cF,F)<\infty$ with $F\equiv B$. Then the diffusion generative
bootstrap process satisfies
\begin{align*}
\sup_{h\in\BL1(\ell^\infty(\cF),\lVert \cdot \rVert_\cF)}
\Big\lvert
\E_{\tilde U}\Big[
h(\tilde\bG_n)
\mid
\cO
\Big]
-
\E\Big[
h(\bG_\P)
\Big]
\Big\rvert
=
o_{\P_\cO}(1).
\end{align*}
\end{theorem}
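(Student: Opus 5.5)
The theorem follows by chaining the two preceding lemmas. First, Assumptions~\ref{ass:diffusion_data_path}--\ref{ass:diffusion_score_consistency} are exactly the hypotheses of Lemma~\ref{lem:diffusion_TV_consistency}, so I would invoke it directly to get
\[
d_{\rm TV}(\Q_n,\P)=o_{\P_\cO}(1).
\]
Next, $\cF$ is measurable, has an envelope bounded by $B$, and satisfies $J(1,\cF,F)<\infty$ with $F\equiv B$. These are the hypotheses of Lemma~\ref{lem:TV_uniform_entropy_CWC}, and together with the total-variation consistency just obtained that lemma yields \eqref{eq:CWC}. This is precisely the displayed conclusion.

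The real content is therefore in the two lemmas, so I sketch how each reduces to earlier results.

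For Lemma~\ref{lem:TV_uniform_entropy_CWC}, I would use the characterization of Theorem~\ref{thm:cvg_emp_proc} in its bounded-envelope form, which requires \eqref{eq:FD-cov} and \eqref{eq:CAE}.
\begin{itemize}
\item Condition \eqref{eq:FD-cov} follows from Corollary~\ref{cor:divergence_FD_cov}(i).
\item Condition \eqref{eq:CAE} follows from Theorem~\ref{thm:unif_entropy}. It then remains only to check the metric transfer \eqref{eq:local_metric_transfer_Qn}. Since $\lvert f-g\rvert\le 2B$, we have $\lvert(\Q_n-\P)[(f-g)^2]\rvert\le 8B^2 d_{\rm TV}$ and $\lvert(\Q_n-\P)[f-g]\rvert\le 4B\,d_{\rm TV}$, uniformly over pairs $(f,g)$. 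Hence $\sup_{f,g}\lvert\rho_{\Q_n}(f,g)^2-\rho_\P(f,g)^2\rvert\lesssim B^2 d_{\rm TV}(\Q_n,\P)=o_{\P_\cO}(1)$. It follows that $\sup_{\rho_\P(f,g)<\delta}\rho_{\Q_n}(f,g)\le(\delta^2+o_{\P_\cO}(1))^{1/2}$, which gives \eqref{eq:local_metric_transfer_Qn}.
\end{itemize}

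For Lemma~\ref{lem:diffusion_TV_consistency}, I would use a Girsanov/data-processing argument.
\begin{itemize}
\item Compare the path laws of the true reversal $\bar X_n$ and the learned process $\hat X_n$ on $[0,T_n-\tau_n]$. Their initial laws are $\P_{T_n}$ and $\gamma_p$, and their drifts differ by $2(s-\hat s_n)$ with diffusion coefficient $\sqrt2$. The chain rule for KL and Girsanov then give
\[
D_{\rm KL}\le D_{\rm KL}(\P_{T_n}\|\gamma_p)+\tfrac14\int\E\lVert 2(s-\hat s_n)\rVert^2\,\d r=D_{\rm KL}(\P_{T_n}\|\gamma_p)+\mathcal E_n^{\rm DSM}.
\]
\item Projecting to the terminal time can only decrease KL, which gives \eqref{eq:diffusion_KL_bound}.
\item Applying Pinsker and the triangle inequality gives \eqref{eq:diffusion_TV_decomposition}.
\item For the remaining terms: $d_{\rm TV}(\P_{\tau_n},\P)\to0$ because Gaussian smoothing is $L^1$-continuous for densities (after rescaling by $e^{-\tau_n}$). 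The term $D_{\rm KL}(\P_{T_n}\|\gamma_p)\to0$ follows from exponential OU convergence, using the finite second moment together with the regularization supplied by $\P_1$.
\end{itemize}

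The main obstacle is the rigorous Girsanov step. It needs the Novikov-type or admissibility conditions packaged in Definition~\ref{def:diffusion_admissible_score}, and it needs the true reversal's drift to be in $L^2$; the latter I would handle via $\int\lVert s_t\rVert^2_{L^2(\P_t)}\d t<\infty$ on $[\tau_n,T_n]$. The theorem itself is an immediate composition once both lemmas are available.
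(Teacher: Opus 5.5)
Your proof is correct and is exactly the paper's: the theorem is obtained by feeding the total-variation consistency of Lemma~\ref{lem:diffusion_TV_consistency} into Lemma~\ref{lem:TV_uniform_entropy_CWC}. Your sketches of those two lemmas also match the appendix up to cosmetic differences: total variation gives \eqref{eq:FD-cov} via Corollary~\ref{cor:divergence_FD_cov} and the metric transfer needed for Theorem~\ref{thm:unif_entropy}, and the diffusion lemma rests on a path-space KL bound followed by data processing, Pinsker, and the two endpoint limits.

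One small remark on the Girsanov step: the paper does not use a Novikov condition, which admissibility does not provide. It instead uses the localized KL estimate of \citet[Lemma~4.4(i)]{lacker2023hierarchies}, which needs only well-posedness of the learned SDE and almost-sure finiteness of the pathwise score energy, and both are supplied by Definition~\ref{def:diffusion_admissible_score}.
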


\begin{remark}[Relation to statistic-specific diffusion-bootstrap theory]
\citet{liang2026diffusion} studied the diffusion bootstrap for OLS in
both fixed-dimensional and proportional high-dimensional regimes.
Their analysis imposes stronger score and regularity conditions
\citep[Assumptions~2.2, 2.5, and 2.6]{liang2026diffusion} to control
moments and the inverse Gram matrix arising in the OLS analysis. For
the process-level result established here, the integrated score error
in \eqref{eq:diffusion_score_consistency} is sufficient.
\end{remark}

\begin{remark}[Numerical discretization]
Theorem~\ref{thm:diffusion_bootstrap_CWC} concerns the continuous-time
reverse diffusion in \eqref{eq:learned_reverse_diffusion}. If a
discretized sampler induces a law $\Q_n^{\rm disc}$ satisfying
\begin{align*}
d_{\rm TV}(\Q_n^{\rm disc},\Q_n)
=
o_{\P_\cO}(1),
\end{align*}
then the same conclusion holds for the corresponding discretized
generative bootstrap process by the triangle inequality and
Lemma~\ref{lem:TV_uniform_entropy_CWC}.
\end{remark}



\subsubsection{Learning-theoretic review and discussion}
\label{sec:diffusion-literature}

The learning theory of score-based diffusion models addresses two closely
related questions: how accurately the score functions can be estimated
from data, and how score-estimation error translates into guarantees for
the generated distribution. We briefly review the former before turning
to the latter, which is more directly related to our setting.

\begin{table}[t]
\centering
\caption{Selected reverse-SDE results under the score-error assumptions of
 each cited theorem. Additional assumptions are abbreviated.}
\label{tab:diffusion-learning-comparison}
\vspace{4pt}
\begingroup
\small
\setlength{\tabcolsep}{4pt}
\setlength{\arrayrulewidth}{0.4pt}
\setlength{\extrarowheight}{2pt}
\renewcommand{\arraystretch}{1.15}
\begin{tabularx}{\textwidth}{@{}
>{\raggedright\arraybackslash}p{0.20\textwidth}|
>{\raggedright\arraybackslash}p{0.235\textwidth}|
>{\centering\arraybackslash}p{0.105\textwidth}|
>{\raggedright\arraybackslash}p{0.215\textwidth}|
>{\raggedright\arraybackslash}X@{}}
\hline
& Key additional assumptions &
Novikov\newline assumed? & Error controlled & Type of guarantee \\[3pt]
\hline
\citet{song2021maximum}\newline Theorem~1\newline Continuous SDE
& Smooth densities; Lipschitz true and fitted scores
& $\checkmark$
& $D_{\rm KL}(\P\|\Q)$
& Non-asymptotic upper bound \\[3pt]
\hline
\citet{chen2023sampling}\newline Theorem~2\newline Discretized SDE
& Finite second moment; $D_{\rm KL}(\P\|\gamma_p)<\infty$; Lipschitz true score
& $\times$
& $d_{\rm TV}(\P,\Q)$
& Non-asymptotic upper bound \\[3pt]
\hline
\citet{chen2023improved}\newline Theorem~2.2\newline Discretized SDE
& Finite second moment; early stopping
& $\times$
& $D_{\rm KL}(\P_\tau\|\Q)$
& Non-asymptotic upper bound \\[3pt]
\hline
\citet{benton2024nearly}\newline Theorem~2, App.~C\newline Discretized SDE
& Finite second moment; early stopping
& $\times$
& $D_{\rm KL}(\P_\tau\|\Q)$
& Non-asymptotic upper bound \\[3pt]
\hline
This subsection\newline Continuous SDE
& Density; finite second moment; admissible fitted score
& $\times$
& $D_{\rm KL}(\P_{\tau_n}\|\Q_n)$;\newline $d_{\rm TV}(\P,\Q_n)$
& Non-asymptotic KL bound; TV consistency \\[3pt]
\hline
\end{tabularx}
\par\smallskip
\begin{minipage}{\textwidth}
\small
Here $\P$ is the target, $\P_\tau$ its forward-noised law at $\tau>0$, and
$\Q$ the sampler output law. All upper bounds quantify initialization and
score errors; discretized samplers also have a time-discretization term.
These bounds imply convergence when their error terms vanish.
For random fitted scores, $\Q$ is conditional on fitting, and convergence
in probability refers to fitting randomness.
\end{minipage}
\endgroup
\end{table}

For score estimation, \citet{oko2023diffusion} analyzed neural denoising
score matching and obtain nearly minimax total-variation rates for
compactly supported Besov densities under positivity and boundary
regularity conditions. \citet{zhang2024minimax} obtained nearly minimax
total-variation rates using a regularized kernel estimator for
sub-Gaussian Sobolev densities with $0<\beta\le2$, without requiring a
density lower bound. \citet{stephanovitch2025generalization} established
nearly minimax $W_1$ rates for neural score-based generators under
H\"older and shape constraints, covering both stochastic and
deterministic samplers. Related score-estimation bounds for specific
distribution classes, neural-network architectures, and training
procedures were developed by
\citet{chen2023score}, \cite{han2024neural}, \cite{fu2026approximation}, among many others.

Given an approximate score, a complementary line of work quantifies the
accuracy of reverse-diffusion sampling. Below is a very brief summary.

\begin{enumerate}[label=(\roman*),itemsep=-.5ex]

\item \citet[Theorem~1]{song2021maximum} bounded the KL divergence from
the target distribution to the generated law by a likelihood-weighted
integrated score error and an initialization error. Their Appendix~A
assumes globally Lipschitz true and fitted scores together with a
Novikov condition.

\item \citet{chen2023sampling} derived total-variation bounds under
$L^2$ score accuracy and globally Lipschitz true scores, without a
log-Sobolev assumption. Their localization argument avoids imposing a
global Novikov condition. \citet{lee2023convergence} established
polynomial-complexity Wasserstein guarantees under support or tail
conditions, as well as total-variation guarantees under additional
smoothness assumptions.

\item \citet[Theorem~2.2]{chen2023improved} obtained KL bounds for a
smoothed target under a finite-second-moment condition and time-averaged
$L^2$ score accuracy, without assuming global Lipschitz regularity of
the true score. Their differential-inequality argument also bypasses a
Novikov condition. \citet{benton2024nearly} further sharpened the
dimension dependence in the sampling complexity using stochastic
localization.

\end{enumerate}

Our result assumes integrated score consistency directly. For the
early-stopped OU sampler, we apply the KL comparison of
\citet{lacker2023hierarchies} under
Assumptions~\ref{ass:diffusion_data_path} and
\ref{ass:diffusion_score_consistency}; see
Section~\ref{sec:diffusion-apdx}. This route requires neither global
Lipschitz regularity of the true score nor a Novikov condition. The
resulting KL bound is relative to the smoothed target
$\P_{\tau_n}$, while total-variation consistency for $\P$ follows in
$\P_\cO$-probability as the smoothing error vanishes.
Table~\ref{tab:diffusion-learning-comparison} compares the main
assumptions and guarantees across these results.

\subsection{Wasserstein generative adversarial networks}
\label{sec:WGAN}

Lastly, we consider Wasserstein generative adversarial networks (WGANs) proposed in 
\cite{arjovsky2017wasserstein}. Unlike the preceding examples, the
argument here requires neither density nor domination assumptions on the
target or learned distribution. Instead, process convergence is obtained
directly from Wasserstein consistency together with additional regularity
of the indexing class $\cF$.

This distinction is particularly relevant when a low-dimensional latent
variable is mapped into a higher-dimensional data space. The generated
law may then be concentrated on a lower-dimensional set while remaining
close to the target distribution in Wasserstein distance.
Wasserstein-based training is naturally suited to such support-mismatched
settings \citep{arjovsky2017towards}.

\subsubsection{Consistency of WGAN bootstrap process}

At the distributional level, we impose only a finite first-moment
condition on the target law.

\begin{assumption}[Data assumption 4: finite first moment]
\label{ass:WGAN_data}
The data distribution $\P$ satisfies
\begin{align*}
\int_{\bR^p}
\lVert z\rVert
\d\P(z)
<
\infty.
\end{align*}
\end{assumption}

To formulate our result, we use the following Lipschitz notation. For a
map $\varphi:\bR^a\to\bR^b$, write
\begin{align*}
{\rm Lip}(\varphi)
:=
\sup_{x\ne y}
\frac{
\lVert \varphi(x)-\varphi(y)\rVert
}{
\lVert x-y\rVert
}.
\end{align*}
Define the anchored unit-Lipschitz class
\begin{align*}
{\rm Lip}_1^0(\bR^p)
:=
\big\{
\varphi:\bR^p\to\bR:
\varphi(0)=0,\
{\rm Lip}(\varphi)\le1
\big\}.
\end{align*}
We now formulate the empirical adversarial criterion and define the
resulting fitted generator--critic pair.


\begin{assumption}[Empirical Wasserstein-GAN criterion and fitted pair]
\label{def:WGAN_generator}
Assume $\P_U$ to be a known noise distribution on $\bR^q$. Let
$U_1,\ldots,U_n$ denote the latent variables used to fit the
Wasserstein GAN, sampled i.i.d.\ from $\P_U$ independently of
$Z_1,\ldots,Z_n$ and included in the fitting information $\cO$. Define
their empirical distribution by
\[
\bP_{U,n}
:=
\frac{1}{n}
\sum_{i=1}^n
\delta_{U_i}.
\]
Let $\cG_n$ be a deterministic class of measurable generators from
$\bR^q$ to $\bR^p$. For any $G\in\cG_n$ and real-valued measurable
function $D:\bR^p\to\bR$, define the empirical adversarial criterion
\begin{align*}
\widehat{\mathcal L}_n(G,D)
&:=
\big(
\bP_n-G\#\bP_{U,n}
\big)[D]
=
\frac{1}{n}
\sum_{i=1}^n
\big\{
D(Z_i)
-
D\big(
G(U_i)
\big)
\big\}.
\yestag
\label{eq:empirical_WGAN_objective}
\end{align*}
Assume $\mathcal D_n$ to be a nonempty deterministic critic class satisfying
\begin{align*}
\mathcal D_n
\subseteq
{\rm Lip}_1^0(\bR^p).
\yestag
\label{eq:WGAN_critic_class}
\end{align*}
During WGAN training, the critic is restricted to
$D\in\mathcal D_n$ and seeks to maximize
$\widehat{\mathcal L}_n(G,D)$, whereas the generator seeks to minimize
the resulting critic value. Lastly, assume the fitting procedure returns an
$\cO$-measurable pair
\begin{align*}
\big(
\hat G_n,\hat D_n
\big)
\in
\cG_n\times\mathcal D_n.
\end{align*}
The induced bootstrap law is then
\[
\Q_n
:=
\hat G_n\#\P_U.
\]
Any additional randomness arising from the numerical fitting algorithm
is included in $\cO$.
\end{assumption}

The criterion in \eqref{eq:empirical_WGAN_objective} is the
finite-latent-sample analogue of the Kantorovich--Rubinstein dual
objective. Appendix~\ref{sec:WGAN-apdx} derives this criterion from the
ideal $W_1$ minimum-distance objective and provides a simple
norm-controlled neural-network design for the generator and critic
classes. When the generator and critic are finitely parameterized neural
networks, \eqref{eq:empirical_WGAN_objective} defines a
finite-dimensional saddle-point problem that is generally solved only
approximately.

In practice, the Lipschitz constant of the critic is commonly controlled
or regularized through weight clipping \citep{arjovsky2017wasserstein},
gradient penalties \citep{gulrajani2017improved}, or architectural norm
constraints such as spectral normalization \citep{miyato2018spectral}.
We abstract away from these implementation details and impose the exact
critic-class condition \eqref{eq:WGAN_critic_class}.

We now impose output-level conditions on the fitted generator--critic
pair.

\begin{assumption}[Model assumption 4: well-trained WGAN]
\label{ass:WGAN_model}
The fitted Wasserstein GAN satisfies the following conditions.
\begin{enumerate}[itemsep=-.5ex,label=(\roman*)]

\item The noise distribution has finite first moment:
\[
\P_U[\lVert U\rVert]
<
\infty.
\]
Moreover, $\hat G_n$ is globally Lipschitz
$\P_\cO$-almost surely.

\item The fitted critic is asymptotically adequate at the fitted
generator:
\begin{align*}
\sup_{\varphi\in{\rm Lip}_1^0(\bR^p)}
\widehat{\mathcal L}_n
\big(
\hat G_n,\varphi
\big)
-
\widehat{\mathcal L}_n
\big(
\hat G_n,\hat D_n
\big)
=
o_{\P_\cO}(1).
\yestag
\label{eq:WGAN_critic_adequacy}
\end{align*}

\item The fitted empirical adversarial value satisfies
\begin{align*}
\Big\lvert
\widehat{\mathcal L}_n
\big(
\hat G_n,\hat D_n
\big)
\Big\rvert
=
o_{\P_\cO}(1).
\yestag
\label{eq:WGAN_empirical_value}
\end{align*}

\item The fitted generator satisfies the latent-sample stability
condition
\begin{align*}
{\rm Lip}(\hat G_n)
W_1
\big(
\bP_{U,n},
\P_U
\big)
=
o_{\P_\cO}(1).
\yestag
\label{eq:WGAN_latent_stability}
\end{align*}

\end{enumerate}
\end{assumption}

Conditions~\eqref{eq:WGAN_critic_adequacy}
and~\eqref{eq:WGAN_empirical_value} are output-level well-trainedness conditions on the fitted generator--critic pair. 
The latent-sample stability condition
\eqref{eq:WGAN_latent_stability} controls the discrepancy between the
distribution generated from the finite latent training sample and that
generated from a fresh latent draw. It allows
${\rm Lip}(\hat G_n)$ to increase with $n$, provided that this increase
is sufficiently slow relative to the approximation of $\P_U$ by
$\bP_{U,n}$.

The following lemma establishes $W_1$-consistency of the learned bootstrap law under these conditions.

\begin{lemma}[Wasserstein consistency of a well-trained WGAN]
\label{lem:WGAN_training_implies_W1}
Suppose Assumptions~\ref{ass:WGAN_data}-\ref{ass:WGAN_model} hold. Then, realization-wise,
\begin{align*}
W_1(\Q_n,\P)
\le
W_1(\bP_n,\P)
+
W_1
\big(
\bP_n,
\hat G_n\#\bP_{U,n}
\big)
+
{\rm Lip}(\hat G_n)
W_1
\big(
\bP_{U,n},
\P_U
\big).
\yestag
\label{eq:WGAN_consistency_decomposition}
\end{align*}
Consequently,
\[
W_1(\Q_n,\P)
=
o_{\P_\cO}(1),
\]
and hence
$\Q_n\rightsquigarrow\P$ in $\P_\cO$-probability.
\end{lemma}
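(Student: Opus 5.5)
The bound \eqref{eq:WGAN_consistency_decomposition} comes from a three-term triangle inequality for $W_1$, inserting $\bP_n$ and $\hat G_n\#\bP_{U,n}$:
\[
W_1(\Q_n,\P)\le W_1(\P,\bP_n)+W_1\big(\bP_n,\hat G_n\#\bP_{U,n}\big)+W_1\big(\hat G_n\#\bP_{U,n},\hat G_n\#\P_U\big).
\]
All measures involved have finite first moment. For $\P$ this is Assumption~\ref{ass:WGAN_data}. For $\Q_n$ it follows from Assumption~\ref{ass:WGAN_model}(i), since $\|\hat G_n(u)\|\le\|\hat G_n(0)\|+{\rm Lip}(\hat G_n)\|u\|$. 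The empirical measures are trivially fine. So every $W_1$ is finite.

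For the last term I use a coupling. Take an optimal (or $\eta$-optimal) coupling $\pi$ of $(\bP_{U,n},\P_U)$ and push it forward by $(u,u')\mapsto(\hat G_n(u),\hat G_n(u'))$. This gives
\[
W_1\big(\hat G_n\#\bP_{U,n},\Q_n\big)\le{\rm Lip}(\hat G_n)\,W_1(\bP_{U,n},\P_U).
\]
Together with the display above, this yields \eqref{eq:WGAN_consistency_decomposition} realization-wise.

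For consistency I treat each term separately.

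\emph{First term.} $W_1(\bP_n,\P)\to0$ almost surely, and hence in probability. By Varadarajan's theorem, $\bP_n\rightsquigarrow\P$ almost surely. By the strong law, $\bP_n\|z\|\to\P\|z\|$. Weak convergence plus convergence of first moments is equivalent to $W_1$ convergence.

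\emph{Third term.} This is $o_{\P_\cO}(1)$ directly by \eqref{eq:WGAN_latent_stability}.

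\emph{Middle term (main step).} By Kantorovich--Rubinstein duality,
\[
W_1\big(\bP_n,\hat G_n\#\bP_{U,n}\big)=\sup_{\varphi:{\rm Lip}(\varphi)\le1}(\bP_n-\hat G_n\#\bP_{U,n})\varphi.
\]
Both measures are probability measures, so constants integrate to zero against their difference. Hence the supremum may be restricted to ${\rm Lip}_1^0(\bR^p)$, and it equals $\sup_{\varphi\in{\rm Lip}_1^0}\widehat{\mathcal L}_n(\hat G_n,\varphi)$. This value is nonnegative (take $\varphi=0$). It is bounded above by
\[
\Big[\sup_{\varphi}\widehat{\mathcal L}_n(\hat G_n,\varphi)-\widehat{\mathcal L}_n(\hat G_n,\hat D_n)\Big]+\big|\widehat{\mathcal L}_n(\hat G_n,\hat D_n)\big|,
\]
which is $o_{\P_\cO}(1)$ by \eqref{eq:WGAN_critic_adequacy} and \eqref{eq:WGAN_empirical_value}.

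Summing the three terms gives $W_1(\Q_n,\P)=o_{\P_\cO}(1)$. The final claim, $\Q_n\rightsquigarrow\P$ in $\P_\cO$-probability, then follows from Lemma~\ref{lemma:wq-implies-weak} with $q=1$.

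The step needing the most care is the middle-term identification. Two points must be checked: that the anchoring $\varphi(0)=0$ loses nothing in the duality, and that both measures have finite first moment so that duality applies. The first holds because adding a constant to $\varphi$ does not change $(\bP_n-\hat G_n\#\bP_{U,n})\varphi$. The second holds because both measures are finitely supported. Measurability of the suprema is covered by the paper's standing convention.
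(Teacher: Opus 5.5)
Your proposal is correct and follows the paper's proof essentially step for step. The shared steps are the three-term triangle inequality, the pushforward-coupling contraction bound for the latent term, and almost-sure empirical $W_1$ consistency via Varadarajan plus the strong law. The paper also identifies the middle term through anchored Kantorovich--Rubinstein duality, bounds it by critic adequacy plus the vanishing fitted value, and invokes Lemma~\ref{lemma:wq-implies-weak} for the weak-convergence conclusion.
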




We next transfer Wasserstein consistency to the bootstrap
process under a common concave modulus of continuity for the indexing
class. The following result is model-independent.

\begin{lemma}[Wasserstein consistency for classes with a common modulus]
\label{lem:W1_modulus_CWC}
Assume the following hold true. 
\begin{enumerate}[itemsep=-.5ex,label=(\roman*)]
\item $\cF$ is a measurable class with an envelope function bounded by
some constant $B>0$. 
\item There exists a nondecreasing, concave,
continuous function $\omega:[0,\infty)\to[0,\infty)$ satisfying
$\omega(0)=0$ such that
\begin{align*}
\lvert f(x)-f(y)\rvert
\le
\omega\big(
\lVert x-y\rVert
\big)
\end{align*}
for every $f\in\cF$ and every $x,y\in\bR^p$. 
\item The $\P$-Brownian bridge $\bG_\P$ is a tight, separable, Borel random
element of $\ell^\infty(\cF)$ and that $\cF$ satisfies
\eqref{eq:unif_entropy_F}, with $F\equiv B$ and
$J(1,\cF,F)<\infty$.
\item $W_1(\Q_n,\P)$ is well defined and
$W_1(\Q_n,\P)=o_{\P_\cO}(1)$.
\end{enumerate}
We then have that the conditional weak convergence \eqref{eq:CWC} holds true.
\end{lemma}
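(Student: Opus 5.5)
The plan is to verify the two conditions of Theorem~\ref{thm:cvg_emp_proc}, using the bounded-envelope reduction of \eqref{eq:FD-CWC} to \eqref{eq:FD-cov}. For the covariance condition, every $f\in\cF$ is continuous because of the common modulus $\omega$. By Lemma~\ref{lemma:wq-implies-weak}, the assumption $W_1(\Q_n,\P)=o_{\P_\cO}(1)$ gives $\Q_n\rightsquigarrow\P$ in $\P_\cO$-probability. Theorem~\ref{thm:FD-cov}(i) then yields \eqref{eq:FD-cov}. Alternatively, one can argue directly: for $h\in\{f,g,fg\}$, $h$ is bounded and uniformly continuous with modulus at most $2B\omega$, so $|(\Q_n-\P)h|\to0$ in probability.

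For conditional asymptotic equicontinuity, the plan is to invoke Theorem~\ref{thm:unif_entropy}, which leaves only the metric-transfer condition \eqref{eq:local_metric_transfer_Qn} to check. The key quantitative step is a uniform comparison of the form
\[
\sup_{f,g\in\cF}\big|\rho_{\Q_n}(f,g)^2-\rho_\P(f,g)^2\big|
\le C_B\,\omega\big(W_1(\Q_n,\P)\big),
\]
or a comparable bound that vanishes as $W_1(\Q_n,\P)\to0$. This bound immediately gives \eqref{eq:local_metric_transfer_Qn}: on the event where the right side is at most $\epsilon^2/2$, any pair with $\rho_\P(f,g)<\delta\le\epsilon/\sqrt2$ satisfies $\rho_{\Q_n}(f,g)\le\epsilon$.

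To prove the comparison, let $\pi$ be an optimal coupling of $(\Q_n,\P)$ with $(X,Y)\sim\pi$. Write $d=f-g$, which is bounded by $2B$ and has modulus $2\omega$. Then $|d(X)^2-d(Y)^2|\le 4B\cdot2\omega(\|X-Y\|)$, and Jensen's inequality for concave $\omega$ gives
\[
|\Q_n d^2-\P d^2|\le 8B\,\E\,\omega(\|X-Y\|)\le 8B\,\omega\big(W_1(\Q_n,\P)\big).
\]
Similarly $|\Q_n d-\P d|\le 2\omega(W_1)$, so $|(\Q_n d)^2-(\P d)^2|\le 8B\omega(W_1)$. All of these bounds are uniform over $f,g$, which gives the display. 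If $W_1$ is only finite in probability, the argument is carried out on the event where it is small, using continuity of $\omega$ at $0$.

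The main obstacle I anticipate is not the comparison inequality but the bookkeeping needed to apply Theorem~\ref{thm:unif_entropy} and Theorem~\ref{thm:cvg_emp_proc}. Specifically, I expect to need to make sure the bounded-envelope reduction to \eqref{eq:FD-cov} and the tightness of $\bG_\P$ (assumed in (iii)) are in force. I also need to handle measurability of the supremum in \eqref{eq:local_metric_transfer_Qn}; the deterministic uniform bound sidesteps this, since the supremum is dominated by a measurable function of $W_1(\Q_n,\P)$.
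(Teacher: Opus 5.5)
Your proposal is correct and matches the paper's proof. The paper likewise uses a (near-)optimal coupling and Jensen's inequality for the concave modulus to get $\lvert(\Q_n-\P)\psi\rvert\le a\,\omega(W_1(\Q_n,\P))$. From this it deduces $\lvert\rho_{\Q_n}(f,g)^2-\rho_\P(f,g)^2\rvert\le 16B\,\omega(W_1(\Q_n,\P))$ uniformly in $f,g$, and concludes via Theorem~\ref{thm:unif_entropy} and Theorem~\ref{thm:cvg_emp_proc}.

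The only minor difference concerns \eqref{eq:FD-cov}. The paper applies the same transport bound directly to $f$, $g$, $fg$, which is your \emph{alternative} route and yields the quantitative bound $4B\,\omega(W_1(\Q_n,\P))$. Your primary route through Lemma~\ref{lemma:wq-implies-weak} and Theorem~\ref{thm:FD-cov}(i) is equally valid.
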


The common-modulus assumption is stronger than pointwise continuity but
strictly weaker than a uniform Lipschitz bound. For example, it allows a
finite collection of bounded functions satisfying a common H\"older
condition with exponent in $(0,1)$.

Combining Lemmas \ref{lem:WGAN_training_implies_W1} and \ref{lem:W1_modulus_CWC} yields the following WGAN-bootstrap
process result.

\begin{theorem}[Wasserstein-GAN bootstrap empirical process convergence]
\label{thm:WGAN_bootstrap_CWC}
Suppose Assumptions~\ref{ass:WGAN_data}-\ref{ass:WGAN_model} hold. Let $\cF$ satisfy all the function-class
conditions of Lemma~\ref{lem:W1_modulus_CWC}. Then the
Wasserstein-GAN generative bootstrap process satisfies
\begin{align*}
\sup_{h\in\BL1(\ell^\infty(\cF),\lVert \cdot \rVert_\cF)}
\Big\lvert
\E_{\tilde U}
\Big[
h(\tilde\bG_n)
\mid
\cO
\Big]
-
\E
\Big[
h(\bG_\P)
\Big]
\Big\rvert
=
o_{\P_\cO}(1).
\end{align*}
In particular, the conclusion holds if there exists $L<\infty$ such
that
\[
\sup_{f\in\cF}
{\rm Lip}(f)
\le
L.
\]
\end{theorem}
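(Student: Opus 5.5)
The proof combines Lemma~\ref{lem:WGAN_training_implies_W1} with Lemma~\ref{lem:W1_modulus_CWC}. Under Assumptions~\ref{ass:WGAN_data}--\ref{ass:WGAN_model}, Lemma~\ref{lem:WGAN_training_implies_W1} gives $W_1(\Q_n,\P)=o_{\P_\cO}(1)$. We need $W_1(\Q_n,\P)$ to be well defined, meaning $\Q_n$ has a finite first moment $\P_\cO$-almost surely. This holds because $\hat G_n$ is globally Lipschitz almost surely and $\P_U[\lVert U\rVert]<\infty$ by Assumption~\ref{ass:WGAN_model}(i):
\[
\Q_n[\lVert\cdot\rVert]\le \lVert\hat G_n(0)\rVert+{\rm Lip}(\hat G_n)\P_U[\lVert U\rVert]<\infty.
\]
Assumption~\ref{ass:WGAN_data} gives the corresponding moment for $\P$. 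This verifies condition (iv) of Lemma~\ref{lem:W1_modulus_CWC}. Conditions (i)--(iii) are imposed on $\cF$ by hypothesis, so that lemma yields \eqref{eq:CWC}, which is the displayed conclusion.

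For the ``in particular'' clause, suppose $\sup_{f\in\cF}{\rm Lip}(f)\le L$. Take $\omega(t):=Lt$. It is nondecreasing, concave (linear), continuous, and satisfies $\omega(0)=0$, and we have $\lvert f(x)-f(y)\rvert\le L\lVert x-y\rVert$ for all $f\in\cF$. Hence condition (ii) of Lemma~\ref{lem:W1_modulus_CWC} holds. I read the clause as keeping the remaining function-class requirements of that lemma: bounded envelope, a tight Brownian bridge, and the uniform entropy bound \eqref{eq:unif_entropy_F}. The uniform Lipschitz bound only replaces the common-modulus hypothesis. The conclusion then follows exactly as before.

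The main point needing care is that Lemma~\ref{lem:W1_modulus_CWC} requires $W_1(\Q_n,\P)$ to be well defined. The finite-first-moment argument above handles this. Everything else is direct citation of the two lemmas.
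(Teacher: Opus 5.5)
Your proposal is correct and follows the paper's own proof. Lemma~\ref{lem:WGAN_training_implies_W1} gives $W_1(\Q_n,\P)=o_{\P_\cO}(1)$, with the finite first moment of $\Q_n$ checked inside that lemma's proof as you do; Lemma~\ref{lem:W1_modulus_CWC} then yields the conclusion, and the uniformly Lipschitz case follows by taking $\omega(t)=Lt$.
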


The preceding result requires neither the target distribution $\P$ nor
the learned distribution $\Q_n$ to admit a Lebesgue density, nor does
it require $\Q_n$ to be dominated by $\P$. This flexibility is
particularly relevant for low-dimensional generators, as illustrated
by the following remark.

\begin{remark}[Low-dimensional generators may remain singular]
\label{rem:WGAN_singular_generator}
Suppose $\P\ll\lambda_p$, the latent dimension satisfies $q<p$, and,
for $\P_\cO$-almost every realization of the fitting information
$\cO$, the fitted generator
$\hat G_n:\bR^q\to\bR^p$ is locally Lipschitz. By
Lemma~\ref{lem:lower_dimensional_Lipschitz_image}, the range
$\hat G_n(\bR^q)$ is Borel and satisfies
\[
\lambda_p\big(
\hat G_n(\bR^q)
\big)
=
0.
\]
Hence,
\[
\P\big(
\hat G_n(\bR^q)
\big)
=
0,
\qquad
\Q_n\big(
\hat G_n(\bR^q)
\big)
=
1.
\]
It follows that $\Q_n\perp\P$ realization-wise and therefore
\begin{align*}
d_{\rm TV}(\Q_n,\P)
=
1,
\qquad
D_{\rm KL}(\Q_n\|\P)
=
D_{\rm KL}(\P\|\Q_n)
=
\infty.
\end{align*}
These conclusions are compatible with
$W_1(\Q_n,\P)=o_{\P_\cO}(1)$, since Wasserstein convergence does not
preserve absolute continuity or support dimension. Indeed,
low-dimensional Lipschitz pushforward distributions can approximate
full-dimensional target distributions arbitrarily closely in $W_1$,
provided that the generator classes are allowed to become increasingly
complex; see
\citet[Proposition~13]{stephanovitch2024optimal}.
\end{remark}




\subsubsection{Learning-theoretic review and discussion}
\label{sec:WGAN-literature}

The learning theory of WGANs studies how the empirical adversarial
objective controls the discrepancy between the target and learned
distributions. The main questions concern critic approximation, other
sources of estimation error, and the effect of structural properties of
the target distribution on convergence rates.

\begin{table}[t]
\centering
\caption{Selected $W_1$ error guarantees for adversarial estimators.
The listed assumptions are abbreviated and refer to the indicated results.}
\label{tab:WGAN-learning-comparison}
\vspace{4pt}
\begingroup
\small
\setlength{\tabcolsep}{4pt}
\setlength{\arrayrulewidth}{0.4pt}
\setlength{\extrarowheight}{2pt}
\renewcommand{\arraystretch}{1.12}
\begin{tabularx}{\textwidth}{@{}
>{\raggedright\arraybackslash}p{0.255\textwidth}|
>{\raggedright\arraybackslash}X|
>{\raggedright\arraybackslash}p{0.285\textwidth}@{}}
\hline
& Key assumptions and model class
& Guarantee for $W_1$ error \\[3pt]
\hline
\citet{huang2022error}\newline
$W_1$ consequence of\newline Theorem~5 and Remark~6
& Target and generated laws on a common cube; absolutely continuous scalar
noise; suitable ReLU capacity and latent sample size
& Rate in expectation, plus optimization error \\[3pt]
\hline
\citet{gao2023approximating}\newline
Theorem~3.8
& Finite third moments; tail and generator-approximation conditions;
GroupSort critics
& Non-asymptotic bound in expectation \\[3pt]
\hline
\citet{stephanovitch2024wasserstein}\newline
Theorem~5.8
& Smooth, nondegenerate manifold generator; tailored neural classes
& Minimax rate in expectation, up to logarithmic factors \\[3pt]
\hline
\citet{tran2026generative}\newline
Section~7.3, Step~2
& Compact data and latent supports; fixed generator norm bounds;
well-trained pair
& Convergence to zero in probability \\[3pt]
\hline
This subsection
& Finite first moments; Lipschitz fitted generator; critic adequacy,
vanishing fitted value, and latent-sample stability
& Realization-wise bound; convergence to zero in probability \\[3pt]
\hline
\end{tabularx}
\par\smallskip
\begin{minipage}{\textwidth}
\small
$W_1$ error means $W_1(\P,\Q_n)$ for the target and learned laws,
with expectations taken over the training samples. 
\end{minipage}
\endgroup
\end{table}

\begin{enumerate}[label=(\roman*),itemsep=-.5ex]

\item Critic approximation determines how well the restricted critic
distance controls the $W_1$ error.
\citet{arora2017generalization} showed that a small neural-network
distance need not imply a small $W_1$ distance.
\citet{biau2021some} studied the gap between a restricted critic
distance and $W_1$, balancing critic-approximation error against
statistical error. One approach to controlling this gap is to
approximate the unit-Lipschitz functions defining $W_1$.
\citet{tanielian2021approximating} derived quantitative GroupSort
approximations on compact domains while preserving the Lipschitz
constraint, thereby providing a means of controlling the
critic-approximation error.

\item The $W_1$ error also depends on generator approximation, finite
data and latent samples, and numerical optimization.
\citet[Lemma~12]{liang2021how} separated generator approximation from
data- and latent-sampling errors in the restricted critic distance.
For ReLU networks, \citet{huang2022error} combined these sources of
error with critic approximation and numerical optimization to obtain
rates for the expected $W_1$ error on compact domains. For GroupSort
critics, \citet{gao2023approximating} used the approximation results
above together with empirical Wasserstein bounds to control the
expected $W_1$ error, with separate contributions from generator
approximation, critic approximation, data sampling, and latent sampling
under their moment, tail, and generator-approximation conditions.

\item Structural properties of the target distribution affect the rate
at which the expected $W_1$ error decreases with the training sample
size. \citet[Theorem~1]{schreuder2021statistical} bounded the expected
$W_1$ error for minimum-$W_1$ estimation in noiseless Lipschitz
pushforward models by the sum of a generator-approximation error and a
sampling term whose rate depends on the latent dimension rather than the
ambient data dimension. Their target distribution need not admit a
Lebesgue density on $\bR^p$. For smooth, nondegenerate manifold models,
\citet{stephanovitch2024wasserstein} exploited smoothness and intrinsic
dimension to construct neural generators and critics attaining minimax
rates for the expected $W_1$ error, up to logarithmic factors.

\end{enumerate}

In our framework, the approximation and optimization requirements above
are summarized through conditions on the fitted generator--critic pair,
while the data- and latent-sampling errors are controlled separately.
As in \citet[Section~5.1]{tran2026generative}, critic adequacy together
with a vanishing empirical adversarial value ensures
\[
W_1(\bP_n,\hat G_n\#\bP_{U,n})
=
o_{\P_\cO}(1).
\]
Section~\ref{sec:WGAN-apdx} provides sufficient conditions in terms of
generator approximation, critic approximation, and numerical
optimization. Together with these fitted-pair conditions, finite first
moments and \eqref{eq:WGAN_latent_stability} yield qualitative $W_1$
consistency. The latent-sample argument uses Wasserstein contraction,
as also used in Equation~(3.4) of \citet{gao2023approximating}, in place
of the compact-support uniform-law argument of
\citet[Section~7.3]{tran2026generative}. This allows unbounded data and
latent supports without requiring a smooth pushforward or manifold
model. Table~\ref{tab:WGAN-learning-comparison} compares selected
theoretical guarantees.

\section*{Acknowledgments}

The authors acknowledge the use of ChatGPT (OpenAI) in the preparation of this manuscript. In addition to assistance with writing and exposition, ChatGPT was used as a tool for exploring proof strategies, scrutinizing assumptions and technical arguments, and identifying connections with related theoretical results. These interactions informed the development and refinement of parts of the manuscript. All mathematical results, arguments, and references were independently verified by the authors, who take full responsibility for the content of the paper.


\appendix

\section{Proofs of the main results}

\subsection{Proofs of the results in Section~\ref{sec:CWC_characterization}}

\subsubsection{Proof of Theorem~\ref{thm:cvg_emp_proc}}

\begin{proof}
We first prove necessity. Suppose \eqref{eq:CWC} holds. For any finite set of fixed functions $f_1, \ldots, f_k \in \cF$, define the coordinate projection $\pi: \ell^\infty(\cF) \to \bR^k$ by
\begin{align*}
    \pi(x) := (x(f_1), \ldots, x(f_k)).
\end{align*}
Then for any $h' \in \BL1(\bR^k,\lVert \cdot\rVert_\infty)$, $h:= h' \circ \pi \in \BL1(\ell^\infty(\cF),\lVert \cdot \rVert_\cF)$, as
\begin{align*}
    \sup_{x_1 \neq x_2} \frac{\lvert h(x_1) - h(x_2) \rvert}{\lVert x_1 - x_2 \rVert_{\cF}}  = \sup_{x_1 \neq x_2} \frac{\lvert h' \circ \pi(x_1) - h' \circ \pi(x_2) \rvert}{\lVert x_1 - x_2 \rVert_{\cF}} 
    & \le \sup_{x_1 \neq x_2} \frac{\lVert \pi(x_1) - \pi(x_2) \rVert_{\infty}}{\lVert x_1 - x_2 \rVert_{\cF}} \\
    & \le \sup_{x_1 \neq x_2} \frac{\max_{1\le j \le k}\lvert x_1(f_j) - x_2(f_j) \rvert}{\lVert x_1 - x_2 \rVert_{\cF}} \le 1;
\end{align*}
we then have 
\begin{align*}
    & \sup_{h' \in \BL1(\bR^k,\lVert \cdot\rVert_\infty)} \Big\lvert \E_{\tilde U}\big[h'(\tilde \bG_n[f_1], \ldots, \tilde \bG_n[f_k])\mid \cO\big]
    - \E\big[h'(\bG_\P[f_1], \ldots, \bG_\P[f_k])\big] \Big\rvert \\ 
    & \le \sup_{h \in \BL1(\ell^\infty(\cF),\lVert \cdot \rVert_\cF)} \Big\lvert \E_{\tilde U}[h(\tilde \bG_n)\mid \cO] - \E[h(\bG_\P)] \Big\rvert = o_{\P_\cO}(1).
\end{align*}
To prove conditional asymptotic $\rho_{\P}$-equicontinuity, for any $x \in \ell^\infty(\cF)$ and fixed $\delta>0$, denote 
\begin{align*}
    \omega_{\delta}(x) := \sup_{\rho_{\P}(f,g)<\delta} \lvert x(f) - x(g) \rvert.
\end{align*}
We note $\omega_{\delta}$ is Lipschitz with respect to the sup norm on $\ell^\infty(\cF)$: for any $x_1,x_2 \in \ell^\infty(\cF)$ such that $x_1(f)-x_2(f) = (x_1-x_2)(f)$,
\begin{align*}
    \big\lvert \omega_{\delta}(x_1) - \omega_{\delta}(x_2) \big\rvert 
    &\le \sup_{\rho_{\P}(f,g)<\delta} \big\lvert \{ x_1(f) - x_1(g) \} - \{x_2(f) - x_2(g)\} \big\rvert \\ 
    & \le 
    \sup_{\rho_{\P}(f,g)<\delta}\big\{ \big\lvert x_1(f) - x_2(f) \big\rvert
    +
    \big\lvert x_1(g) - x_2(g) \big\rvert \big\}
    \le 2 \big\lVert x_1-x_2 \big\rVert_{\cF}.
\end{align*}
Moreover, we can construct $\psi_{\epsilon}:\bR_+ \to [0,1]$ such that
\begin{align*}
    \ind\big(t \ge \epsilon\big)
    \le
    \psi_{\epsilon}(t) 
    := \ind(t \ge \epsilon) 
    + \Big\{\frac{2t}{\epsilon}-1\Big\} \ind\big( \frac{\epsilon}{2}<t<\epsilon \big)
    \le 
    \ind\Big(t \ge \frac{\epsilon}{2}\Big).
\end{align*}
This way, we define $h_{\epsilon, \delta} := \psi_{\epsilon} \circ \omega_{\delta}$, which is a $1$-bounded Lipschitz function with
\begin{align*}
\sup_{x_1 \neq x_2} \frac{\lvert h_{\epsilon, \delta}(x_1) - h_{\epsilon, \delta}(x_2) \rvert}{\big\lVert x_1-x_2 \big\rVert_{\cF} } 
\le 
\frac{4}{\epsilon}.
\end{align*}
By scaling and re-scaling $h_{\epsilon, \delta}$ while applying \eqref{eq:CWC} gives
\begin{align*}
    \P_{\tilde U}
    \big( \omega_{\delta}(\tilde \bG_n) > \epsilon \mid \cO \big) 
    \le
    \E_{\tilde U} \big[ h_{\epsilon, \delta}(\tilde \bG_n) \mid \cO \big] = \E \big[ h_{\epsilon, \delta}(\bG_{\P}) \big] + o_{\P_\cO}(1).
    \yestag \label{eq:thm1,h_epdeltaWC}
\end{align*}
For the right hand side, by Example 1.5.10 of \cite{MR1385671}, the tight Gaussian random element $\bG_{\P}$ in $\ell^{\infty}(\cF)$ has almost all paths uniformly $\rho_2$-continuous, i.e.
\begin{align*}
    \sup_{\rho_2(f,g)<\delta} \big\lvert \bG_{\P}[f] - \bG_{\P}[g] \big\rvert \stackrel{\sf a.s.}{\longrightarrow} 0 ~\text{ as }~ \delta \downarrow 0;
\end{align*}
where
\begin{align*}
    \rho_2^2(f,g) 
    &:= \E \big[(\bG_{\P}[f] - \bG_{\P}[g])^2\big]
    = \Var \big[\bG_{\P}[f] - \bG_{\P}[g]\big] \\
    &= \Var_{\P}[f(Z)-g(Z)] 
    = \P\big[(f-g)^2\big] - (\P[(f-g)])^2 
    = \rho_{\P}^2(f,g);
\end{align*}
then $\bG_{\P}$ also has almost all paths uniformly $\rho_{\P}$-continuous:
\begin{align*}
    \omega_{\delta}( \bG_{\P}) = \sup_{\rho_{\P}(f,g)<\delta} \big\lvert \bG_{\P}[f] - \bG_{\P}[g] \big\rvert \stackrel{\sf a.s.}{\longrightarrow} 0 ~\text{ as }~ \delta \downarrow 0;
\end{align*}
which implies 
\begin{align*}
    \E \big[ h_{\epsilon, \delta}(\bG_{\P}) \big] 
    \le
    \P \big(\omega_{\delta}(\bG_{\P}) \ge \epsilon/2 \big) \to 0 ~\text{ as }~ \delta \downarrow 0.
    \yestag
    \label{eq:thm1,Gp_unif_cont_prob}
\end{align*}
Combining \eqref{eq:thm1,h_epdeltaWC}-\eqref{eq:thm1,Gp_unif_cont_prob} and applying the union bound to probability quantity in the left hand side of \eqref{eq:CAE} then yields the desired result.

For the opposite direction, assume \eqref{eq:FD-CWC} and \eqref{eq:CAE} hold. Note tightness of $\bG_{\P}$ in $\ell^{\infty}(\cF)$ also implies $(\cF, \rho_{\P})$ is totally bounded. For any $\delta>0$, we can take a finite $\rho_{\P}$-net $\{f_1, \ldots, f_N\}$ for $\cF$ such that an associated $\pi_{\delta}$ that maps $f$ to a element in $\rho_{\P}$-net with $\rho_{\P}(f,\pi_{\delta}f)<\delta$ is defined. Define the projection operator $\Pi_\delta: \ell^\infty(\cF) \to \ell^\infty(\cF)$ with $(\Pi_\delta x)[f] := x(\pi_{\delta}f)$; then 
\begin{align*}
    \big\lvert \E_{\tilde U}\big[h(\tilde \bG_n)\mid \cO\big] - \E\big[h(\bG_\P)\big] \big\rvert  
    &\le 
    \big\lvert \E_{\tilde U}\big[h(\tilde \bG_n)\mid \cO\big] - \E_{\tilde U}\big[h(\Pi_\delta \tilde \bG_n)\mid \cO\big] \big\rvert \\
    &~~ + 
    \big\lvert \E_{\tilde U}\big[h(\Pi_\delta \tilde \bG_n)\mid \cO\big]  - \E\big[h(\Pi_\delta \bG_\P)\big] \big\rvert \\
    &~~ + 
    \big\lvert  \E\big[h(\Pi_\delta \bG_\P)\big] - \E\big[h(\bG_\P)\big]  \big\rvert \\
    & := A_{n,\delta}(h) + B_{n,\delta}(h) + C_{\delta}(h).
    \yestag
    \label{eq:thm1,moment_tri_ineq}
\end{align*}
For the first term $A_{n,\delta}(h)$ in \eqref{eq:thm1,moment_tri_ineq}, by the definition of $h$, we have for any $t>0$,
\begin{align*}
    A_{n,\delta}(h) 
    \le 
    \E_{\tilde U}\big[ 2 \wedge \lVert \tilde \bG_n - \Pi_\delta \tilde \bG_n \rVert_{\cF} \mid \cO\big]
    \le 
    \E_{\tilde U}\big[ 2 \wedge \omega_{\delta}(\tilde \bG_n) \mid \cO \big] 
    \le 
    t + 2 \P_{\tilde U}\big( \omega_{\delta}(\tilde \bG_n) > t \mid \cO \big);
\end{align*}
then $A_{n,\delta}(h)$ can be made arbitrarily small with sufficiently large $n$ and sufficiently small $\delta$, uniformly for $h \in \BL1(\ell^\infty(\cF),\lVert \cdot \rVert_\cF)$.
For the second term $B_{n,\delta}(h)$ in \eqref{eq:thm1,moment_tri_ineq}, we can write
\begin{align*}
    h(\Pi_\delta \tilde \bG_n) = h \circ \pi_{N} (\tilde \bG_n[f_1], \ldots, \tilde \bG_n[f_N]),
    \yestag
    \label{eq:thm1,proj_to_finite}
\end{align*}
where $\pi_{N}: \bR^N \to \ell^\infty(\cF)$ and
\begin{align*}
    (\pi_{N}y)(f) := y_j ~\text{ if }~ \pi_{\delta}f = f_j.
\end{align*}
Note $h \circ \pi_{N} \in \BL1(\bR^N,\lVert \cdot\rVert_\infty)$ as for any $y_1 \neq y_2 \in \bR^N$ one has
\begin{align*}
    \big\lvert h \circ (\pi_{N}y_1) - h \circ (\pi_{N}y_2) \big\rvert 
    \le 
    \lVert \pi_{N}y_1 - \pi_{N}y_2 \rVert_{\cF} 
    \le 
    \lVert y_1 - y_2 \rVert_{\infty};
\end{align*}
then a similar result representation as \eqref{eq:thm1,proj_to_finite} for $h(\Pi_\delta \bG_\P)$ and \eqref{eq:FD-CWC} implies
\begin{align*}
    \sup_{h \in \BL1(\ell^\infty(\cF),\lVert \cdot \rVert_\cF)} B_{n,\delta}(h) = o_{\P_\cO}(1).
\end{align*}
For the third term $C_{\delta}(h)$ in \eqref{eq:thm1,moment_tri_ineq}, similarly we have uniformly for $h \in \BL1(\ell^\infty(\cF),\lVert \cdot \rVert_\cF)$, 
\begin{align*}
    C_{\delta}(h) 
    \le 
    \E \big[ 2 \wedge \lVert \bG_\P - \Pi_\delta \bG_\P \rVert_{\cF} \big] 
    \le 
    \E \big[ 2 \wedge \omega_{\delta}(\bG_\P)  \big], 
\end{align*}
then path uniform $\rho_\P$-continuity of $\bG_{\P}$ and Lebesgue dominated convergence theorem implies $C_{\delta}(h)$ can be arbitrarily small when $\delta \downarrow 0$. Combining these gives the desired \eqref{eq:CWC}.
\end{proof}

We next record the finite-dimensional reduction used in
Section~\ref{sec:CWC_characterization} to replace
\eqref{eq:FD-CWC} by covariance consistency. The result is stated under
a second-moment tail condition, which also makes explicit how the
bounded-envelope restriction adopted in the main text can be relaxed.

For a function class $\cF$, define its finite linear span by
\begin{align*}
\operatorname{span}(\cF)
:=
\Big\{
\sum_{j=1}^k
a_jf_j
:
k\in\bN,\ 
a_1,\ldots,a_k\in\bR,\ 
f_1,\ldots,f_k\in\cF
\Big\}.
\end{align*}

\begin{proposition}[Covariance characterization of finite-dimensional conditional weak convergence]
\label{prop:FD_cov_characterization}
Suppose $\cF\subset L^2(\P)$ and, for every
$h\in\operatorname{span}(\cF)$ and every $n$,
$\Q_n[h^2]<\infty$ $\P_\cO$-almost surely. Assume further that, for every
$h\in\operatorname{span}(\cF)$ and every $\eta>0$,
\begin{align*}
\lim_{K\to\infty}
\limsup_{n\to\infty}
\P_\cO
\Big(
\Q_n
\big\{ (h-\Q_n[h])^2
\ind\big(
\big\lvert h-\Q_n[h] \big\rvert > K \big)
\big\}
> \eta
\Big)
=
0.
\yestag
\label{eq:conditional_UI2}
\end{align*}
Then the finite-dimensional conditional weak convergence condition
\eqref{eq:FD-CWC} holds if and only if the covariance consistency condition
\eqref{eq:FD-cov} holds. In particular, this equivalence holds whenever
$\cF$ has an envelope bounded by a finite constant.
\end{proposition}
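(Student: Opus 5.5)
Both directions go through the Cramér--Wold device together with a conditional Lindeberg central limit theorem for triangular arrays, applied realization-wise along subsequences. Fix $f_1,\ldots,f_k\in\cF$ and write $\Sigma_n$ for the $k\times k$ matrix with entries $\Q_n[f_if_j]-\Q_n[f_i]\Q_n[f_j]$, and $\Sigma$ for its $\P$-analogue. Conditional on $\cO$, the vector $(\tilde\bG_n[f_1],\ldots,\tilde\bG_n[f_k])$ is a normalized sum of $n$ i.i.d.\ centered vectors with covariance $\Sigma_n$, which is finite almost surely because $\Q_n[h^2]<\infty$ for $h\in\operatorname{span}(\cF)$. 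For $a\in\bR^k$, set $h_a:=\sum_j a_jf_j\in\operatorname{span}(\cF)$. Then $a^\top\Sigma_n a=\Q_n[(h_a-\Q_n h_a)^2]$, and the scalar projection is $\tilde\bG_n[h_a]$.

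For sufficiency, assume \eqref{eq:FD-cov}, so $\Sigma_n\to\Sigma$ in $\P_\cO$-probability entrywise. I would use the subsequence characterization of convergence in probability. Given any subsequence, I extract a further subsequence along which $\Sigma_n\to\Sigma$ almost surely and, for a countable dense set of $a$'s and $K\in\bN$, the truncated second moments in \eqref{eq:conditional_UI2} are controlled almost surely. Getting this control from the $\lim_K\limsup_n$ form needs a diagonal argument: choose $K_m\uparrow\infty$ and $n_m$ so that the bad-event probability is at most $2^{-m}$, then apply Borel--Cantelli. The Lindeberg quantity is $\Q_n\{(h_a-\Q_nh_a)^2\ind(|h_a-\Q_nh_a|>\epsilon\sqrt n)\}$. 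It is bounded by the tail at level $K$ once $\epsilon\sqrt n>K$, and hence tends to zero along the subsequence. The Lindeberg--Feller CLT then gives $\tilde\bG_n[h_a]\rightsquigarrow N(0,a^\top\Sigma a)$ conditionally, almost surely. Cramér--Wold (countable dense $a$ plus tightness from bounded second moments) upgrades this to the vector, and weak convergence in $\bR^k$ is equivalent to BL$_1$ convergence. Hence the supremum in \eqref{eq:FD-CWC} tends to $0$ almost surely along the sub-subsequence, which gives convergence in probability.

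For necessity, assume \eqref{eq:FD-CWC}. Along a sub-subsequence the conditional laws of $\tilde\bG_n[h_a]$ converge weakly to $N(0,a^\top\Sigma a)$ almost surely, and this is the main obstacle: I need to pass from weak convergence to convergence of second moments, $\Q_n[(h_a-\Q_nh_a)^2]\to a^\top\Sigma a$. Weak convergence plus uniform integrability of $\tilde\bG_n[h_a]^2$ would suffice, so I would derive conditional uniform integrability of the squared normalized sums from \eqref{eq:conditional_UI2}. To do so, truncate the summands at $K$. The truncated part has fourth moment at most $3\sigma^4+K^2\sigma^2/n$ by direct expansion, so it is uniformly integrable. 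The remainder has second moment equal to the tail term in \eqref{eq:conditional_UI2}, which is small for large $K$ after centering corrections (the truncated mean is bounded by the tail via Cauchy--Schwarz). Along the almost-sure sub-subsequence this gives $a^\top\Sigma_na\to a^\top\Sigma a$ for every $a$ in a countable set. Polarization with $a=e_i$, $e_j$ and $e_i+e_j$ recovers the entries, and thus \eqref{eq:FD-cov}. The variance bound $\limsup a^\top\Sigma_na<\infty$, needed for tightness, also follows from \eqref{eq:conditional_UI2}, because the variance is at most $K^2+\eta$.

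Finally, for the bounded-envelope case, if $|f|\le B$ on $\cF$ then $|h_a-\Q_nh_a|\le 2B\sum_j|a_j|$. The indicator in \eqref{eq:conditional_UI2} therefore vanishes identically for $K$ larger than this bound, and the condition holds trivially, as do the moment assumptions.
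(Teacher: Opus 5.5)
Your sufficiency direction is sound. It differs from the paper in one respect: you apply the Lindeberg--Feller theorem realization-wise along a Borel--Cantelli diagonal subsequence, which needs $K_m\uparrow\infty$ with $K_m/\sqrt{n_m}\to0$. The paper instead derives the expansion $\E_{\tilde U}[\exp\{it\tilde\bG_n[h]\}\mid\cO]=\exp(-t^2v_{n,h}/2)+o_{\P_\cO}(1)$, with $v_{n,h}:=\Q_n[(h-\Q_n[h])^2]$. It obtains this directly from \eqref{eq:conditional_UI2}, in $\P_\cO$-probability, and uses the one expansion for both directions.

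The gap is in the necessity direction. Your uniform-integrability step needs the conditional variances $s_m^2$ of the truncated centered summands (equivalently $v_{n_m,h_a}$) to be bounded \emph{realization-wise} along the extracted subsequence. Only then does the fourth-moment bound $3s_m^4+O(K_m^2s_m^2/n_m)$ give uniform integrability of $A_m^2$. Your justification, that the variance is at most $K^2+\eta$, yields only $v_{n,h}=O_{\P_\cO}(1)$. Along your diagonal subsequence it gives $v_{n_m,h_a}\le K_m^2+1/m$, which is unbounded. Condition \eqref{eq:conditional_UI2} alone cannot do better. Take $\cF=\{z\mapsto z\}$ and $\Q_n=\frac12(\delta_{X_n}+\delta_{-X_n})$, with $X_n$ i.i.d.\ $\cO$-measurable variables of unbounded support. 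Then \eqref{eq:conditional_UI2} holds, yet $v_{n_m}=X_{n_m}^2$ is almost surely unbounded along every subsequence. So the boundedness has to come from \eqref{eq:FD-CWC}.

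You can repair this within your framework. Write the normalized sum as $A_m+B_m$ with $\E_{\tilde U}[B_m^2\mid\cO]\to0$; the almost-sure weak convergence then makes $A_m$ conditionally tight. Paley--Zygmund with your fourth-moment bound gives $\P_{\tilde U}(A_m^2>s_m^2/2\mid\cO)\ge 1/13$ once $s_m\ge1$ and $m$ is large. Hence $s_m$ cannot diverge along any further subsequence. Alternatively, follow the paper: compare $\exp(-t^2v_{n,h}/2)+o_{\P_\cO}(1)$ with the limit $\exp(-t^2v_h/2)$ supplied by \eqref{eq:FD-CWC}, then invert $x\mapsto-2t^{-2}\log x$. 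That route needs only $v_{n,h}=O_{\P_\cO}(1)$ and avoids uniform integrability of squares entirely.
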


\begin{proof}
We first derive a conditional characteristic-function expansion. Fix
$h\in\operatorname{span}(\cF)$ and define
\begin{align*}
Y_{n,h} := h(\tilde Z)-\Q_n[h],
\qquad
v_{n,h}:= \Q_n[Y_{n,h}^2].
\end{align*}
For every $K>0$,
\begin{align*}
v_{n,h}
\le
K^2
+
\Q_n
\big\{
Y_{n,h}^2
\ind\big( \lvert Y_{n,h}\rvert>K \big)
\big\}.
\end{align*}
It follows from \eqref{eq:conditional_UI2} that
$v_{n,h}=O_{\P_\cO}(1)$.

Let $r(u) := \exp(iu)-1-iu+{u^2}/{2}$.
There exists a constant $C<\infty$ such that 
$\lvert r(u)\rvert
\le
C u^2 (1\wedge\lvert u\rvert )$ for every $u\in\bR$.
For fixed $t\in\bR$, set
\begin{align*}
R_{n,h}(t)
:= \Q_n
\Big\{
r\Big(
\frac{tY_{n,h}}{\sqrt n}
\Big)
\Big\}.
\end{align*}
Then for every $K>0$,
\begin{align*}
n
\lvert R_{n,h}(t)\rvert
&\le
Ct^2
\Q_n
\big\{
Y_{n,h}^2
\big(
1\wedge {\lvert tY_{n,h}\rvert}/{\sqrt n}
\big)
\big\}
\\
&\le
Ct^2
\Q_n
\big\{
Y_{n,h}^2
\big(
1\wedge {\lvert t\rvert K}/{\sqrt n}
\big) \ind(\lvert Y_{n,h}\rvert \le K)
+
Y_{n,h}^2
\big(
1\wedge {\lvert tY_{n,h}\rvert}/{\sqrt n}
\big) \ind(\lvert Y_{n,h}\rvert>K)
\big\}
\\
&\le
Ct^2
\Big(
\frac{\lvert t\rvert K}{\sqrt n}
v_{n,h}
+
\Q_n
\big\{
Y_{n,h}^2
\ind(\lvert Y_{n,h}\rvert>K)
\big\}
\Big).
\end{align*}
First letting $n\to\infty$ and then $K\to\infty$, using
\eqref{eq:conditional_UI2}, gives
\begin{align*}
nR_{n,h}(t)
=
o_{\P_\cO}(1).
\yestag
\label{eq:conditional_CF_remainder}
\end{align*}
Since $\Q_n[Y_{n,h}]=0$, conditionally on $\cO$ one has
\begin{align*}
\E_{\tilde U}
\big[
\exp\{ it\tilde\bG_n[h]\}
\mid \cO
\big]
& =
\Big(
\Q_n
\Big\{
\exp\Big(
\frac{itY_{n,h}}{\sqrt n}
\Big)
\Big\}
\Big)^n
\\
& =
\Big( 
1 
+ \frac{it}{\sqrt{n}} \Q_n[Y_{n,h}]
- \frac{t^2}{2n} \Q_n[Y_{n,h}^2]
+ R_{n,h}(t)
\Big)^n
\\
& =
\Big( 
1 - \frac{t^2v_{n,h}}{2n}
+ R_{n,h}(t)
\Big)^n
=
\exp\Big(
-
\frac{t^2v_{n,h}}{2}
\Big)
+
o_{\P_\cO}(1),
\yestag
\label{eq:conditional_CF_expansion}
\end{align*}
where the last equality follows from
$v_{n,h}=O_{\P_\cO}(1)$ and
\eqref{eq:conditional_CF_remainder}. Note conditionally on $\cO$, the leading term on the right hand side is the characteristic function of ${\rm N}(0,v_{n,h})$.

Now, suppose first that \eqref{eq:FD-cov} holds. Fix
$k\in\bN$ and $f_1,\ldots,f_k\in\cF$, and write
\begin{align*}
S_n
:=
\big(
\tilde\bG_n[f_1],\ldots,\tilde\bG_n[f_k]
\big)^\top,
\qquad
G:=
\big(
\bG_\P[f_1],\ldots,\bG_\P[f_k]
\big)^\top,
\end{align*}
with 
\begin{align*}
\Sigma_n
:=
\big\{
\Q_n[f_if_j]-\Q_n[f_i]\Q_n[f_j]
\big\}_{i,j=1}^k,
\qquad
\Sigma :=
\big\{
\P[f_if_j]-\P[f_i]\P[f_j]
\big\}_{i,j=1}^k.
\end{align*}
Then $\Sigma_n=\Sigma+o_{\P_\cO}(1)$ entrywise. For any
$a=(a_1,\ldots,a_k)^\top\in\bR^k$, write $h_a := \sum_{j=1}^k a_jf_j$.
Since $a^\top S_n=\tilde\bG_n[h_a]$ and
$v_{n,h_a}=a^\top\Sigma_na$, \eqref{eq:conditional_CF_expansion} and continuous mapping theorem gives,
for every $t\in\bR$,
\begin{align*}
\E_{\tilde U}
\big[
\exp\{ it a^\top S_n\}
\mid 
\cO
\big]
=
\exp\Big(
-
\frac{t^2}{2}
a^\top\Sigma_n a
\Big)
+
o_{\P_\cO}(1)
=
\exp\Big(
-
\frac{t^2}{2}
a^\top\Sigma a
\Big)
+
o_{\P_\cO}(1).
\yestag
\label{eq:conditional_CW_characteristic_convergence}
\end{align*}
To conclude \eqref{eq:FD-CWC}, take an arbitrary subsequence. By
Lemma~\ref{lemma:subseq_prob_cvg} and a diagonal extraction over a
countable dense subset of $\bR^k$, there exists a further subsequence along
which $\Sigma_n\to\Sigma$ and the conditional characteristic functions in
\eqref{eq:conditional_CW_characteristic_convergence} converge almost surely
on that dense subset. Along this further subsequence,
\begin{align*}
\E_{\tilde U}
\big[
\lVert S_n\rVert^2
\mid
\cO
\big]
=
\tr(\Sigma_n)
\end{align*}
is almost surely bounded. Hence the conditional laws of $S_n$ are tight.
Every subsequential weak limit of the conditional laws along this further subsequence has the characteristic function
of ${\rm N}(0,\Sigma)$ on a dense subset, and therefore everywhere. Thus,
for almost every realization along the further subsequence, the conditional
law of $S_n$ converges weakly to ${\rm N}(0,\Sigma)$. Since the
bounded-Lipschitz distance metrizes weak convergence on $\bR^k$, the
bounded-Lipschitz distance in \eqref{eq:FD-CWC} converges to zero almost
surely along this further subsequence. Applying
Lemma~\ref{lemma:subseq_prob_cvg} again proves \eqref{eq:FD-CWC}.

Conversely, suppose \eqref{eq:FD-CWC} holds. Fix
$h\in\operatorname{span}(\cF)$, and choose a representation $h = \sum_{j=1}^k a_jf_j$
with $a=(a_1,\ldots,a_k)^\top\in\bR^k$ and
$f_1,\ldots,f_k\in\cF$. With $S_n$ and $G$ defined as above,
\eqref{eq:FD-CWC} implies, for every $t\in\bR$,
\begin{align*}
\E_{\tilde U}
\big[
\exp\{ it\tilde\bG_n[h]\}
\mid \cO
\big] 
&= 
\E_{\tilde U}
\big[
\exp\big\{ 
 it a^\top S_n
\big\}
\mid
\cO
\big]
\\
&=
\E_{\tilde U}
\big[
\cos (t a^\top S_n)
\mid \cO
\big]
+
i \E_{\tilde U}
\big[
\sin (t a^\top S_n)
\mid \cO
\big]
\\
&=
\E
\big[
\cos (t a^\top G)
\big]
+
o_{\P_\cO}(1)
+
i \big \{ 
\E
\big[
\sin (t a^\top G)
\big]
+
o_{\P_\cO}(1)
\big\}
\\
&=
\E
\big[
\exp\{ it a^\top G \}
\big]
+
o_{\P_\cO}(1).
\yestag
\label{eq:FD_CWC_implies_CF}
\end{align*}
In the third equality here, the real and imaginary parts follow by applying
\eqref{eq:FD-CWC} to suitably rescaled cosine and sine functions. Writing $v_h := \P[\{ h-\P[h]\}^2]$,
one has $a^\top G \sim {\rm N}(0,v_h)$ and 
\begin{align*}
\E \big[ \exp\{  it a^\top G \} \big]
=
\exp\Big(
- \frac{t^2v_h}{2}
\Big).
\end{align*}
Combining \eqref{eq:conditional_CF_expansion} and
\eqref{eq:FD_CWC_implies_CF} gives
\begin{align*}
\exp\Big(
-
\frac{t^2v_{n,h}}{2}
\Big)
=
\exp\Big(
-
\frac{t^2v_h}{2}
\Big)
+
o_{\P_\cO}(1).
\end{align*}
For any fixed $t\ne0$, continuity of the inverse map
$x\mapsto-2t^{-2}\log x$ on $(0,1]$ therefore yields
\begin{align*}
\Var_{\Q_n}(h)
=
v_{n,h}
=
v_h
+
o_{\P_\cO}(1)
=
\Var_\P(h)
+
o_{\P_\cO}(1).
\yestag
\label{eq:span_variance_consistency}
\end{align*}
Applying \eqref{eq:span_variance_consistency} to $h=f$, $g$, and $f+g$, and using the polarization identity, gives
\begin{align*}
\Q_n[fg]-\Q_n[f]\Q_n[g] 
&=
\frac{1}{2}
\{
\Var_{\Q_n}(f+g)
-
\Var_{\Q_n}(f)
-
\Var_{\Q_n}(g)
\}
\\
&=
\frac{1}{2}
\{
\Var_\P(f+g)
-
\Var_\P(f)
-
\Var_\P(g)
\}
+
o_{\P_\cO}(1)
\\
&=
\P[fg]-\P[f]\P[g]
+
o_{\P_\cO}(1),
\end{align*}
which is \eqref{eq:FD-cov}.

Finally, suppose $\cF$ has an envelope bounded by $B<\infty$. For every
$h=\sum_{j=1}^k a_jf_j\in\operatorname{span}(\cF)$,
\begin{align*}
\big\lvert
h-\Q_n[h]
\big\rvert
\le
2B
\sum_{j=1}^k
\lvert a_j\rvert.
\end{align*}
Thus \eqref{eq:conditional_UI2} holds. This proves the final
assertion.
\end{proof}

Condition~\eqref{eq:conditional_UI2} is formulated directly in terms of
finite linear combinations of functions in $\cF$. A more convenient envelope-level sufficient
condition is the existence of a measurable envelope $F$ of $\cF$ such that
\begin{align*}
\P[F^2]
<
\infty,
\qquad
\Q_n[F^2]
<
\infty
\quad
\P_\cO\text{-almost surely for every }n,
\end{align*}
and, for every $\eta>0$,
\begin{align*}
\lim_{K\to\infty}
\limsup_{n\to\infty}
\P_\cO
\Big\{
\Q_n
\big[
F^2
\ind(F>K)
\big]
>
\eta
\Big\}
=
0.
\yestag
\label{eq:envelope_UI2}
\end{align*}
To see this, fix
$h=\sum_{j=1}^k a_jf_j\in\operatorname{span}(\cF)$ and write
$A_h:=\sum_{j=1}^k\lvert a_j\rvert$. Then $\lvert h\rvert\le A_hF$, so
\eqref{eq:envelope_UI2} implies
$V_{n,h}:=\Q_n[h^2]=O_{\P_\cO}(1)$. On the event
$\{V_{n,h}\le M\}$, one has
$\lvert\Q_n[h]\rvert\le\sqrt M$. Hence for $K>2\sqrt M$,
\begin{align*}
\big\{
\lvert h-\Q_n[h]\rvert>K
\big\}
\subseteq
\big\{
\lvert h\rvert>K/2
\big\}.
\end{align*}
Moreover, by Markov's inequality,
\begin{align*}
& \Q_n
\big[
\{h-\Q_n[h]\}^2
\ind\big(
\lvert h-\Q_n[h] \rvert
>K \big)
\big]
\\
&~~ \le
2 \Q_n
\big[
h^2
\ind\big(
\lvert h-\Q_n[h] \rvert
>K \big)
\big]
+
2 \Q_n [h^2]
\Q_n\big(
\lvert h-\Q_n[h] \rvert >K \big)
\\
&~~ \le
2
\Q_n
\big[
h^2
\ind\big(
\lvert h\rvert> K/2
\big)
\big]
+
\frac{8M^2}{K^2}
\\
&~~ \le
2A_h^2
\Q_n
\big[
F^2
\ind\big( F>{K}/{2A_h} \big)
\big]
+
\frac{8M^2}{K^2},
\end{align*}
with the case $A_h=0$ being trivial. First choose $M$ so that
$\P_\cO(V_{n,h}>M)$ is asymptotically small, and then let $K\to\infty$.
This proves \eqref{eq:conditional_UI2}. Thus, the bounded-envelope assumption used in the main text may be
replaced by
\eqref{eq:envelope_UI2}. The condition $\P[F^2]<\infty$ alone is not
sufficient when the random laws $\Q_n$ vary with $n$; the corresponding
second-moment tails must also be controlled under $\Q_n$.

\subsection{Proofs of the results in Section~\ref{sec:suff_cond_FD_cov}}

For any real-valued function $h$, let $D_h$ denote the set of discontinuity points of $h$.

\subsubsection{Proof of Lemma~\ref{lem:continuous_approximation_principle}}

\begin{proof}
Fix $h\in\cG$ and let $\{h_m:m\ge1\}$  be the sequence of described bounded continuous functions that satisfies
\eqref{eq:CAP_P_approximation} and
\eqref{eq:CAP_Qn_approximation}. For every $m\ge1$,
\begin{align*}
\lvert(\Q_n-\P)h\rvert
\le
\lvert(\Q_n-\P)h_m\rvert
+
\Q_n[\lvert h-h_m\rvert]
+
\P[\lvert h-h_m\rvert].
\end{align*}
Hence, for every $\epsilon>0$ and every fixed $m$,
\begin{align*}
&\P_\cO\big(
\lvert(\Q_n-\P)h\rvert>\epsilon
\big)
\\
&\quad\le
\P_\cO\Big(
\lvert(\Q_n-\P)h_m\rvert>\frac{\epsilon}{3}
\Big)
+
\P_\cO\Big(
\Q_n[\lvert h-h_m\rvert]>\frac{\epsilon}{3}
\Big)
+
\ind\Big\{
\P[\lvert h-h_m\rvert]>\frac{\epsilon}{3}
\Big\}.
\end{align*}
For every fixed $m$, weak convergence of the random measures gives
$(\Q_n-\P)h_m=o_{\P_\cO}(1)$. Therefore,
\begin{align*}
\limsup_{n\to\infty}
\P_\cO\Big(
\lvert(\Q_n-\P)h\rvert>\epsilon
\Big)
\le
\limsup_{n\to\infty}
\P_\cO\Big(
\Q_n[\lvert h-h_m\rvert]>\frac{\epsilon}{3}
\Big)
+
\ind\Big\{
\P[\lvert h-h_m\rvert]>\frac{\epsilon}{3}
\Big\}.
\end{align*}
The left-hand side does not depend on $m$. Letting $m\to\infty$ and using
\eqref{eq:CAP_P_approximation}-\eqref{eq:CAP_Qn_approximation} shows that it
equals zero. Thus $(\Q_n-\P)h=o_{\P_\cO}(1)$.
\end{proof}

\subsubsection{Proof of Theorem~\ref{thm:FD-cov}}

\begin{proof}
We first prove the basic implication. Define
\begin{align*}
\cG_\cF
:=
\cF\cup\{fg:f,g\in\cF\},
\end{align*}
then that the finite-dimensional covariance consistency condition \eqref{eq:FD-cov} holds if for every $h\in\cG_\cF$, 
\begin{align*}
(\Q_n-\P)h=o_{\P_\cO}(1).
\yestag
\label{eq:pointwise_moment_consistency}
\end{align*}
As one observe, fix any $f,g\in\cF$, write $\Gamma_\Q(f,g) := \Q[fg]-\Q[f]\Q[g]$
for any probability measure $\Q$, then
\begin{align*}
\big\lvert \Gamma_{\Q_n}(f,g)-\Gamma_\P(f,g)\big\rvert
&\le
\big\lvert(\Q_n-\P)[fg]\big\rvert
+
\big\lvert \Q_n[f]\Q_n[g]-\P[f]\P[g]\big\rvert
\\
&\le
\big\lvert(\Q_n-\P)[fg]\big\rvert
+
\big\lvert\Q_n[f]\big\rvert
\big\lvert(\Q_n-\P)[g]\big\rvert
+
\big\lvert\P[g]\big\rvert
\big\lvert(\Q_n-\P)[f]\big\rvert
\\
&\le
\big\lvert(\Q_n-\P)[fg]\big\rvert
+
B\big\lvert(\Q_n-\P)[g]\big\rvert
+
B\big\lvert(\Q_n-\P)[f]\big\rvert,
\end{align*}
where $B<\infty$ satisfies $\sup_{f\in\cF}
\lvert f(z)\rvert \le B$ for every $z\in\bR^p$. Thus \eqref{eq:pointwise_moment_consistency} implies \eqref{eq:FD-cov}.

Suppose condition~(i) holds, then $\Q_n \rightsquigarrow \P$ in $\P_\cO$-probability. Let $h$ be a bounded measurable function such that $\P(D_h)=0$. Then
\begin{align*}
    (\Q_n-\P)h=o_{\P_\cO}(1).
    \yestag
    \label{eq:bounded_P_as_cont_integral_cvg}
\end{align*}
To see this, take any subsequence of $n$. 
Since $\Q_n\rightsquigarrow\P$ in $\P_\cO$-probability, a diagonal
application of Lemma~\ref{lemma:subseq_prob_cvg} to a countable
convergence-determining class of bounded continuous functions yields a
further subsequence along which
$\Q_n\rightsquigarrow\P$ almost surely. For almost every realization along this further subsequence, Lemma~\ref{lemma:portmanteau_as_continuous} gives
\begin{align*}
    \Q_n h\to \P h.
\end{align*}
Thus every subsequence admits a further subsequence along which $(\Q_n-\P)h\to0$ almost surely. Applying the subsequence criterion again proves \eqref{eq:bounded_P_as_cont_integral_cvg}.

For any $f,g\in\cF$. By assumption, $\P(D_f)=\P(D_g)=0$. Since $f$ and $g$ are bounded, the product $fg$ is continuous at every point at which both $f$ and $g$ are continuous. Therefore $D_{fg}\subset D_f\cup D_g$ and hence $\P(D_{fg})=0$.
Applying \eqref{eq:bounded_P_as_cont_integral_cvg} gives
\begin{align*}
\big\lvert(\Q_n-\P)f\big\rvert
\vee \big\lvert(\Q_n-\P)g\big\rvert
\vee \big\lvert(\Q_n-\P)(fg)\big\rvert=o_{\P_\cO}(1),
\end{align*}
which proves \eqref{eq:pointwise_moment_consistency} and hence \eqref{eq:FD-cov}.

Now suppose condition~(ii) holds. Fix $h\in\cG_\cF$ and set
$M:=\lVert h\rVert_\infty$. The case $M=0$ is trivial, so assume $M>0$.
By Lusin's theorem
\cite[Theorems~7.1.3 and~7.5.2]{Dudley_2002}, for every $m\ge1$ there exists a closed set $K_m\subset\bR^p$ such that
$\nu(K_m^c) \le {1}/{m}$
and the restriction of $h$ to $K_m$ is continuous. By the Tietze extension
theorem \cite[Theorem~2.6.4(a)]{Dudley_2002}, there exists a bounded
continuous function $h_m$ on $\bR^p$ satisfying $h_m=h$ on $K_m$ and $\lVert h_m\rVert_\infty\le M$.
Since $\P\ll\nu$ and $\nu(K_m^c)\to0$, one has
$\P(K_m^c)\to0$. Therefore,
\begin{align*}
\P[\lvert h-h_m\rvert]
&\le
2M\P(K_m^c)
\longrightarrow0.
\end{align*}
Moreover,
\begin{align*}
\Q_n[\lvert h-h_m\rvert]
\le
2M\Q_n(K_m^c)
\le
2M
\sup_{A:\nu(A)\le1/m}
\Q_n(A).
\end{align*}
Consequently, for every $\epsilon>0$,
\begin{align*}
\lim_{m\to\infty}
\limsup_{n\to\infty}
\P_\cO\big(
\Q_n[\lvert h-h_m\rvert]>\epsilon
\big)
\le
\lim_{m\to\infty}
\limsup_{n\to\infty}
\P_\cO\Big(
\sup_{A:\nu(A)\le1/m}
\Q_n(A)>
\frac{\epsilon}{2M}
\Big)
=0
\end{align*}
by \eqref{eq:AUAC_Qn_nu}. Lemma~\ref{lem:continuous_approximation_principle}
therefore gives $(\Q_n-\P)h=o_{\P_\cO}(1)$ for every
$h\in\cG_\cF$, which proves \eqref{eq:pointwise_moment_consistency} and hence \eqref{eq:FD-cov}.
\end{proof}

\subsubsection{Proof of Corollary~\ref{cor:divergence_FD_cov}}

\begin{proof}

We first prove the total-variation assertion. Since
\begin{align*}
\sup_{h\in\BL1(\bR^p,\lVert \cdot\rVert)}
\big\lvert
(\Q_n-\P)h
\big\rvert
\le
2d_{\rm TV}(\Q_n,\P),
\end{align*}
total-variation convergence implies
$\Q_n\rightsquigarrow\P$ in $\P_\cO$-probability. Moreover, for every
$\delta>0$,
\begin{align*}
\sup_{A:\P(A)\le\delta}
\Q_n(A)
\le
\sup_{A:\P(A)\le\delta}
\big\{
\P(A)+\lvert\Q_n(A)-\P(A)\rvert
\big\}
\le
\delta+d_{\rm TV}(\Q_n,\P).
\end{align*}
Consequently, for every $\epsilon>0$ and every
$0<\delta<\epsilon/2$,
\begin{align*}
\P_\cO\Big(
\sup_{A:\P(A)\le\delta}
\Q_n(A)>\epsilon
\Big)
\le
\P_\cO\Big(
d_{\rm TV}(\Q_n,\P)>\frac{\epsilon}{2}
\Big)
\longrightarrow0.
\end{align*}
Thus \eqref{eq:AUAC_Qn_nu} holds with $\nu=\P$. Since $\P$ is a finite
Borel measure and $\P\ll\P$, Theorem~\ref{thm:FD-cov}(ii) yields
\eqref{eq:FD-cov}.

It remains to verify that each displayed divergence condition implies
total-variation convergence. Pinsker's inequality gives
\begin{align*}
d_{\rm TV}(\Q_n,\P)
\le
\Big\{\frac{1}{2}D_{\rm KL}(\Q_n\|\P)\Big\}^{1/2} \wedge
\Big\{\frac{1}{2}D_{\rm KL}(\P\|\Q_n)\Big\}^{1/2}.
\end{align*}
Moreover, write $\mu:=\P+\Q_n$, $p:=\d\P/\d\mu$, and $q_n:=\d\Q_n/\d\mu$. Since
\begin{align*}
H^2(\Q_n,\P)
:=
\frac{1}{2}\int \big(\sqrt{q_n}-\sqrt p\big)^2\d\mu,
\end{align*}
we have by Cauchy–Schwarz inequality,
\begin{align*}
d_{\rm TV}(\Q_n,\P)
&=
\frac{1}{2}\int \lvert q_n-p \rvert \d\mu
=
\frac{1}{2}\int
\lvert \sqrt{q_n}-\sqrt p \rvert (\sqrt{q_n}+\sqrt p) \d\mu
\\
&\le
\frac{1}{2}
\Big\{ \int (\sqrt{q_n}-\sqrt p)^2\d\mu \Big\}^{1/2}
\Big\{ \int (\sqrt{q_n}+\sqrt p)^2\d\mu \Big\}^{1/2}
\le
\sqrt 2H(\Q_n,\P).
\end{align*}
Finally, on the event $\{\Q_n\ll\P\}$,
\begin{align*}
2d_{\rm TV}(\Q_n,\P)
=
\int
\Big\lvert
\frac{\d\Q_n}{\d\P}-1
\Big\rvert
\d\P
\le
\Big\{
\int
\Big(
\frac{\d\Q_n}{\d\P}-1
\Big)^2
\d\P
\Big\}^{1/2}
=
\big\{\chi^2(\Q_n\|\P)\big\}^{1/2}.
\end{align*}

Thus convergence to zero of any of the displayed divergences implies
$d_{\rm TV}(\Q_n,\P)=o_{\P_\cO}(1)$ and hence the \eqref{eq:FD-cov}.
\end{proof}

\subsubsection{Proof of Lemma~\ref{lem:nu_UI_AUAC}}

\begin{proof}
We first prove that \eqref{eq:nu_UI_density} implies
\eqref{eq:AUAC_Qn_nu}. On the event $\{\Q_n\ll\nu\}$, for every Borel
set $A$ and every $L>0$,
\begin{align*}
\Q_n(A)
=
\int_A q_n\d\nu
\le
L\nu(A)
+
\int q_n\ind\{q_n>L\}\d\nu.
\end{align*}
Hence
\begin{align*}
\sup_{A:\nu(A)\le\delta}
\Q_n(A)
\le
L\delta
+
\int q_n\ind\{q_n>L\}\d\nu
\end{align*}
on the same event. Fix $\epsilon>0$. For every fixed $L>0$ and every
$0<\delta\le\epsilon/(2L)$,
\begin{align*}
\P_\cO\Big(
\sup_{A:\nu(A)\le\delta}
\Q_n(A)
>
\epsilon
\Big)
\le
\P_\cO(\Q_n\not\ll\nu)
+
\P_\cO\Big(
\int q_n\ind\{q_n>L\}\d\nu
>
\frac{\epsilon}{2}
\Big).
\end{align*}
Therefore, for every fixed $\epsilon>0$ and $L>0$,
\begin{align*}
\lim_{\delta\downarrow0}
\limsup_{n\to\infty}
\P_\cO\Big(
\sup_{A:\nu(A)\le\delta}
\Q_n(A)
>
\epsilon
\Big)
\le
\limsup_{n\to\infty}
\P_\cO\Big(
\int q_n\ind\{q_n>L\}\d\nu
>
\frac{\epsilon}{2}
\Big).
\end{align*}
Letting $L\to\infty$ proves
\eqref{eq:AUAC_Qn_nu}.

Conversely, suppose \eqref{eq:AUAC_Qn_nu} holds. For $L>0$, define
\begin{align*}
A_{n,L}
:=
\big\{z:q_n(z)>L\big\}.
\end{align*}
On the event $\{\Q_n\ll\nu\}$,
\begin{align*}
L\nu(A_{n,L})
\le
\int_{A_{n,L}}q_n\d\nu
\le
1,
\end{align*}
so that $\nu(A_{n,L})\le1/L$. Moreover,
\begin{align*}
\int q_n\ind\{q_n>L\}\d\nu
&=
\Q_n(A_{n,L})
\le
\sup_{A:\nu(A)\le1/L}
\Q_n(A).
\end{align*}
Thus, for every $\epsilon>0$,
\begin{align*}
&\P_\cO\Big(
\int q_n\ind\{q_n>L\}\d\nu
>
\epsilon
\Big)
\le
\P_\cO(\Q_n\not\ll\nu)
+
\P_\cO\Big(
\sup_{A:\nu(A)\le1/L}
\Q_n(A)
>
\epsilon
\Big).
\end{align*}
Taking the $\limsup$ as $n\to\infty$ and then letting $L\to\infty$ proves
\eqref{eq:nu_UI_density}.
\end{proof}

\subsubsection{Proof of Corollary~\ref{cor:superlinear_AUAC}}

\begin{proof}
Suppose first that \eqref{eq:reference_orlicz_bound} holds. For all sufficiently large $L$, set
$a_L
:=
\sup_{t>L} {t}/{\Phi(t)}$.
The assumption gives $a_L\to0$. On the event
$\{\Q_n\ll\nu\}$,
\begin{align*}
\int q_n\ind\{q_n>L\}\d\nu
\le
 a_L
\int \Phi(q_n)\d\nu.
\end{align*}
Fix $\epsilon,\eta>0$. By stochastic boundedness, there exists $M<\infty$ such that, for
all sufficiently large $n$,
\begin{align*}
\P_\cO\Big(
\Q_n\not\ll\nu
\ \text{or}\ 
\int\Phi(q_n)\d\nu>M
\Big)
<
\eta.
\end{align*}
Choose $L$ sufficiently large that $a_L<M^{-1}\epsilon$. Then
\begin{align*}
\limsup_{n\to\infty}
\P_\cO\Big(
\int q_n\ind\{q_n>L\}\d\nu
>
\epsilon
\Big)
\le
\eta.
\end{align*}
Since $\eta>0$ is arbitrary, \eqref{eq:nu_UI_density} follows.

For $r\in(1,\infty)$, take $\Phi(t)=t^r$ and \eqref{eq:reference_orlicz_bound} yields the claim. For the case where $r=\infty$, notice on the event
\begin{align*}
\big\{
\Q_n\ll\nu,
\ \lVert q_n\rVert_{L^\infty(\nu)}\le L
\big\},
\end{align*}
one has
\begin{align*}
\int q_n\ind\{q_n>L\}\d\nu
=0.
\end{align*}
Therefore, for every $\epsilon>0$,
\begin{align*}
\P_\cO\Big(
\int q_n\ind\{q_n>L\}\d\nu
>
\epsilon
\Big)
\le
\P_\cO(\Q_n\not\ll\nu)
+
\P_\cO\big(
\lVert q_n\rVert_{L^\infty(\nu)}>L
\big).
\end{align*}
Taking the $\limsup$ as $n\to\infty$ and then letting $L\to\infty$ proves
\eqref{eq:nu_UI_density}.
\end{proof}

\subsubsection{Proof of Corollary~\ref{cor:localized_Lebesgue_FD_cov}}

\begin{proof}
We verify the assumptions of Lemma~\ref{lem:continuous_approximation_principle}. 
Fix $h \in \cG_\cF$ and let $M:= \lVert h \rVert_{\infty}$. The case $M=0$ is trivial, so we suppose $M>0$.
Choose a sequence $R_m\uparrow\infty$ such that
\begin{align*}
    \P(B_{R_m}^c)\le \frac{1}{m}
    ~~\text{ and }~~
    \P(\partial B_{R_m})=0.
\end{align*}
Such a sequence exists as $\P$ has continuity from below and is absolutely continuous to $\lambda$. 

Since $\P\ll\lambda$ and $\P$ is a finite measure, the integral with density $\d\P/\d\lambda$ is uniformly absolutely continuous with respect to $\lambda$. Then for every $m\ge1$ there exists $\delta_m^{(1)}>0$ such that $\lambda(A)\le\delta_m^{(1)}$ implies $\P(A)\le{1}/{m}$ for all $A$. 

Condition~\eqref{eq:local_Lebesgue_UI} implies the corresponding
local asymptotic uniform absolute continuity statement. Indeed, fix
$R>0$ and $\epsilon>0$. On the event $\{\Q_n\ll\lambda\}$, for every
$L>0$ and every $\delta>0$,
\begin{align*}
\sup_{A\subseteq B_R,\lambda(A)\le\delta}
\Q_n(A)
\le
L\delta
+
\int_{B_R}
q_n\ind\{q_n>L\}
\d\lambda.
\end{align*}
Hence whenever $0<\delta\le\epsilon/(2L)$,
\begin{align*}
\P_\cO\Big(
\Q_n\not\ll\lambda
\ \text{or}\
\sup_{\substack{A\subseteq B_R,\lambda(A)\le\delta}}
\Q_n(A)>\epsilon
\Big)
\le
\P_\cO(\Q_n\not\ll\lambda)
+
\P_\cO\Big(
\int_{B_R}
q_n\ind\{q_n>L\}
\d\lambda
>
\frac{\epsilon}{2}
\Big).
\end{align*}
Taking the $\limsup$ as $n\to\infty$, then letting
$\delta\downarrow0$ and finally $L\to\infty$, gives
\begin{align*}
\lim_{\delta\downarrow0}
\limsup_{n\to\infty}
\P_\cO\Big(
\Q_n\not\ll\lambda
\ \text{or}\
\sup_{A\subseteq B_R,\lambda(A)\le\delta}
\Q_n(A)>\epsilon
\Big)
=0.
\end{align*}
Applying this statement with $R=R_m$ and $\epsilon=1/m$, choose
$\delta_m^{(2)}>0$ such that
\begin{align*}
\limsup_{n\to\infty}
\P_\cO\Big(
\Q_n\not\ll\lambda
\ \text{or}\
\sup_{A\subseteq B_{R_m},
\lambda(A)\le\delta_m^{(2)}}
\Q_n(A)>\frac{1}{m}
\Big)
\le
\frac{1}{m}.
\yestag
\label{eq:localized_Qn_lambda_AUAC}
\end{align*}
Set $\delta_m:=\delta_m^{(1)}\wedge\delta_m^{(2)}$, by Lusin's theorem \cite[Theorems~7.1.3 and~7.5.2]{Dudley_2002} applied to the restriction of Lebesgue measure to $B_{R_m}$, there exists a compact set $K_m\subset B_{R_m}$ such that $\lambda(B_{R_m}\setminus K_m)\le\delta_m$
and the restriction of $h$ to $K_m$ is continuous. Applying the Tietze extension theorem \cite[Theorem~2.6.4(a)]{Dudley_2002}, there exists a bounded continuous function $h_m$ on $\bR^p$ such that $h_m=h$ on $K_m$ and $\lVert h_m\rVert_\infty\le M$. For such $h_m$,
\begin{align*}
\P[\lvert h-h_m\rvert]
\le
2M\P(K_m^c)
\le
2M\big\{ \P(B_{R_m}^c) + \P(B_{R_m}\setminus K_m) \big\}
\le
\frac{4M}{m}
\to0
\end{align*}
and
\begin{align*}
\Q_n[\lvert h-h_m\rvert]
\le
2M\Q_n(K_m^c)
\le
2M\big\{ \Q_n(B_{R_m}^c) + \Q_n(B_{R_m}\setminus K_m)
\big\}.
\end{align*}
For the first term here, since $\P(\partial B_{R_m})=0$, the weak convergence $\Q_n\rightsquigarrow\P$ in $\P_\cO$-probability implies, for every fixed $m$,
\begin{align*}
\Q_n(B_{R_m}^c)-\P(B_{R_m}^c)
=
o_{\P_\cO}(1);
\end{align*}
together with $\P(B_{R_m}^c)\le 1/m$, this implies
\begin{align*}
    \limsup_{n\to\infty}
    \P_\cO\Big(\Q_n(B_{R_m}^c)>\frac{2}{m}\Big)=0.
\end{align*}
For the second term, by the choice of $K_m$, as $B_{R_m}\setminus K_m\subset B_{R_m}$ and $\lambda(B_{R_m}\setminus K_m)\le\delta_m^{(2)}$, condition \eqref{eq:localized_Qn_lambda_AUAC} gives
\begin{align*}
\limsup_{n\to\infty}
\P_\cO\Big(
\Q_n(B_{R_m}\setminus K_m)>\frac{1}{m}
\Big)
\le
\frac{1}{m}.
\end{align*}
Therefore, for every $m$ such that $6M/m<\epsilon$,
\begin{align*}
    &\limsup_{n\to\infty}
    \P_\cO\big(\Q_n[\lvert h-h_m\rvert]>\epsilon\big)
    \\
    &~~\le 
    \limsup_{n\to\infty} \P_\cO\Big(2M\big\{ \Q_n(B_{R_m}^c) + \Q_n(B_{R_m}\setminus K_m)
    \big\}>\frac{6M}{m}\Big)
    \\
    &~~\le
    \limsup_{n\to\infty}
    \P_\cO\Big(\Q_n(B_{R_m}^c)>\frac{2}{m}\Big)
    +
    \limsup_{n\to\infty}
    \P_\cO\Big(\Q_n(B_{R_m}\setminus K_m)>\frac{1}{m}\Big)
    \le \frac{1}{m}.
\end{align*}
Letting $m\to\infty$ yields for every $\epsilon>0$,
\begin{align*}
\lim_{m\to\infty}
\limsup_{n\to\infty}
\P_\cO\big(
\Q_n[\lvert h-h_m\rvert]>\epsilon
\big)
=0.
\end{align*}
Thus the assumptions of Lemma~\ref{lem:continuous_approximation_principle} hold and the finite-dimensional covariance consistency condition follows.
\end{proof}

\subsection{Proofs of the results in Section~\ref{sec:suff_cond_CAE}}

\subsubsection{Proof of Theorem~\ref{thm:reference_measure}}

\begin{proof}
For $\delta>0$, write
\begin{align*}
Z_{n,\delta}
:=
\sup_{\rho_\P(f,g)<\delta}
\big\lvert
\tilde\bG_n[f]-\tilde\bG_n[g]
\big\rvert,
\end{align*}
and let
\begin{align*}
M_{n,\delta}(\cO)
:=
\E_{\tilde U}\big[
Z_{n,\delta}
\mid \cO
\big]
=
\E_{\Q_n}\big[
\big\lVert
\bG_n^{\Q_n}
\big\rVert_{\cF_\delta} \big].
\end{align*}
Conditional Markov's inequality gives, for every
$\epsilon,\eta>0$,
\begin{align*}
\P_\cO\big\{
\P_{\tilde U}\big(
Z_{n,\delta}>\epsilon
\mid \cO
\big)>\eta \big\}
\le
\P_\cO\big(
M_{n,\delta}(\cO)>\epsilon\eta
\big).
\yestag
\label{eq:CAE_by_local_mean}
\end{align*}
Fix $1\le C<\infty$ and let $\Omega_n(C) := \{\Q_n\le C\nu\}$.
On $\Omega_n(C)$, apply
Lemma~\ref{lem:centered_empirical_process_comparison} conditionally on $\cO$
with $\mu=\Q_n$, reference measure $\nu$, and
$\cA=\cF_\delta$. Since $\cF_\delta$ has envelope bounded by
$2B$,
\begin{align*}
M_{n,\delta}(\cO)
\le
K_C
\E_\nu\big[
\big\lVert\bG_n^\nu\big\rVert_{\cF_\delta}
\big]
+
8B\sqrt n
\exp\Big(
\frac{-n}{4}
\Big).
\end{align*}
Therefore,
\begin{align*}
\P_\cO\big(
M_{n,\delta}(\cO)>\epsilon\eta
\big)
\le
\P_\cO\big(
\Q_n\nleq C\nu
\big)
+
\ind\Big\{
K_C
\E_\nu\big[
\big\lVert\bG_n^\nu\big\rVert_{\cF_\delta} \big]
+
8B\sqrt n
\exp\Big(
\frac{-n}{4}
\Big)
>
\epsilon\eta
\Big\}.
\end{align*}
For each fixed $C$, condition \eqref{eq:local_mod_mean_ref_meas} implies
\begin{align*}
\lim_{\delta\downarrow0}
\limsup_{n\to\infty}
\ind\Big\{
K_C
\E_\nu\big[
\big\lVert\bG_n^\nu\big\rVert_{\cF_\delta} \big]
+
8B\sqrt n
\exp\Big(
\frac{-n}{4}
\Big)
>
\epsilon\eta
\Big\}
=0.
\end{align*}
Combining this display with \eqref{eq:CAE_by_local_mean} gives
\begin{align*}
&\lim_{\delta\downarrow0}
\limsup_{n\to\infty}
\P_\cO\big\{
\P_{\tilde U}\big(
Z_{n,\delta}>\epsilon
\mid \cO
\big)>\eta
\big\}
\le
\limsup_{n\to\infty}
\P_\cO\big(
\Q_n\nleq C\nu
\big).
\end{align*}
Letting $C\to\infty$ and using
\eqref{eq:stochastic_reference_domination} proves \eqref{eq:CAE}.
\end{proof}

\subsubsection{Proof of Corollary~\ref{cor:domination}}

\begin{proof}
If $\cF$ is $\P$-Donsker, then Corollary~2.3.12 of
\cite{MR1385671} gives
\begin{align*}
\lim_{\delta\downarrow0}
\limsup_{m\to\infty}
\E_\P\big[ \big\lVert\bG_m^\P\big\rVert_{\cF_\delta} \big]
=0.
\end{align*}
Thus condition \eqref{eq:local_mod_mean_ref_meas} holds with $\nu=\P$. The conclusion follows from Theorem~\ref{thm:reference_measure}.
\end{proof}

\subsubsection{Proof of Corollary~\ref{cor:Lebegue_density}}

\begin{proof}
Let $c=\operatorname*{essinf}_{z\in S}p(z)>0$. For any fixed $M<\infty$, define
\begin{align*}
A_n(M)
:=
\big\{
\Q_n\ll\lambda,\ 
\Q_n(S)=1,\ 
\lVert q_n\rVert_{L^\infty(\lambda)}\le M
\big\}.
\end{align*}
On $A_n(M)$, one has $q_n=0$ Lebesgue-almost everywhere on $S^c$ and for
every Borel set $A$,
\begin{align*}
\Q_n(A)
=
\int_{A\cap S}q_n(z)\d\lambda(z)
\le
M\lambda(A\cap S)
\le
\frac{M}{c}
\int_{A\cap S}p(z)\d\lambda(z)
\le
\frac{M}{c}\P(A).
\end{align*}
Thus $\Q_n\le(M/c)\P$ on $A_n(M)$, and hence
\begin{align*}
\P_\cO\Big(\Q_n\nleq \frac{M}{c}\P \Big)
&\le
\P_\cO\big(A_n(M)^c\big)
\\
&\le
\P_\cO\big( 
\big\{\Q_n\ll\lambda,\Q_n(S)=1\big\}^c 
\big)
+
\P_\cO\big( 
\lVert q_n\rVert_{L^\infty(\lambda)}>M
\big)
\end{align*}
The assumed stochastic boundedness of
$\lVert q_n\rVert_{L^\infty(\lambda)}$ with
$\P_\cO(\Q_n\ll\lambda,\Q_n(S)=1)\to1$ implies
\begin{align*}
\lim_{M\to\infty}
\limsup_{n\to\infty}
\P_\cO\big(
A_n(M)^c
\big)
=0;
\end{align*}
consequently,
\begin{align*}
\lim_{C\to\infty}
\limsup_{n\to\infty}
\P_\cO\big(\Q_n\nleq C\P\big)=0.
\end{align*}
The result follows from Corollary~\ref{cor:domination}.
\end{proof}

\subsubsection{Proof of Theorem~\ref{thm:unif_entropy}}

\begin{proof}
For a deterministic probability measure $\Q$, define
\begin{align*}
\cC_{\Q,\delta}
:=
\big\{
(f-g)-\Q[f-g]:
f,g\in\cF,\ \rho_\P(f,g)<\delta
\big\}.
\end{align*}
Then write $\mathbb P_n^\Q$ as the empirical measure of an i.i.d. $\Q$-sample and $\bG_n^\Q:=\sqrt n(\mathbb P_n^\Q-\Q)$,
\begin{align*}
\sup_{\rho_\P(f,g)<\delta}
\big\lvert \bG_n^\Q[f]-\bG_n^\Q[g]\big\rvert
=
\lVert \bG_n^\Q\rVert_{\cC_{\Q,\delta}}.
\end{align*}
The class $\cC_{\Q,\delta}$ has envelope bounded by $4B$.

We first record a covering number bound for $\cC_{\Q,\delta}$. Fix an arbitrary probability measure $\Lambda$ and set $\bar\Lambda:=\frac{1}{2}(\Lambda+\Q)$.
Let $\{f_1,\ldots,f_N\}$ be an $\epsilon B$-net of $\cF$ in $L^2(\bar\Lambda)$. For any $f,g\in\cF$, choose $f_j,f_k$ such that $\lVert f-f_j\rVert_{\bar\Lambda,2}\le \epsilon B$ and $\lVert g-f_k\rVert_{\bar\Lambda,2}\le \epsilon B$. Then
\begin{align*}
&\big\lVert
\{f-g-\Q[f-g]\}
-
\{f_j-f_k-\Q[f_j-f_k]\}
\big\rVert_{\Lambda,2}
\\
&~~\le
\lVert f-f_j\rVert_{\Lambda,2}
+
\lVert g-f_k\rVert_{\Lambda,2}
+
\big\lvert \Q[f-f_j]\big\rvert
+
\big\lvert \Q[g-f_k]\big\rvert
\\
&~~\le
\sqrt 2\lVert f-f_j\rVert_{\bar\Lambda,2}
+
\sqrt 2\lVert g-f_k\rVert_{\bar\Lambda,2}
+
\sqrt 2\lVert f-f_j\rVert_{\bar\Lambda,2}
+
\sqrt 2\lVert g-f_k\rVert_{\bar\Lambda,2}
\\
&~~\le
4\sqrt 2\epsilon B.
\end{align*}
Therefore,
\begin{align*}
\cN \big(4\sqrt2\epsilon B,\cC_{\Q,\delta},L^2(\Lambda)\big)
\le
\cN \big(\epsilon B,\cF,L^2(\bar\Lambda)\big)^2.
\end{align*}
In particular, the uniform entropy integral of $\cC_{\Q,\delta}$ is controlled by that of $\cF$, uniformly in $\Q$ and $\delta$. To see this, let the normalized class $\cC_{\Q,\delta}':= (4B)^{-1}\cC_{\Q,\delta}$ such that it has envelope bounded by one, then for any $0<u\le 1$ such that $\epsilon=u/\sqrt 2$,
\begin{align*}
\cN(u,\cC_{\Q,\delta}',L^2(\Lambda))
=
\cN(4Bu,\cC_{\Q,\delta},L^2(\Lambda)) \le
\cN({uB}/{\sqrt 2},\cF, L^2(\bar\Lambda))^2,
\end{align*}
therefore
\begin{align*}
\sqrt{1+\log \cN(u,\cC_{\Q,\delta}',L^2(\Lambda))}
\le
\sqrt 2
\sqrt{1+\log \cN({uB}/{\sqrt 2},\mathcal F,L^2(\bar\Lambda))}.
\end{align*}
and consequently for any $0<u\le 1$ one has
\begin{align*}
J(u,\cC_{\Q,\delta}',1)
\le
2J(u/\sqrt2,\mathcal F,F)
\le
2J(u,\mathcal F,F).
\end{align*}
Denote
\begin{align*}
\sigma_\Q(\delta)
:=
\sup_{\rho_\P(f,g)<\delta}\rho_\Q(f,g)
=
\sup_{h\in\cC_{\Q,\delta}}\lVert h\rVert_{\Q,2};
\end{align*}
Theorem~2.1 in \cite{van2011local} applied to the normalized class $\cC_{\Q,\delta}'$ then implies there exists a universal constant $K<\infty$ such that, for every $0<\gamma<4B$, on the event $\sigma_\Q(\delta)\le \gamma$,
\begin{align*}
\E_\Q\big[
\lVert \bG_n^\Q\rVert_{\cC_{\Q,\delta}}\big]
\le
K B
J({\gamma}/{4B},\cF,F)
\Big(
1+\frac{B^2J({\gamma}/{4B},\cF,F)}{\gamma^2\sqrt n}
\Big).
\yestag
\label{eq:global_entropy_maximal_bound}
\end{align*}
Here $\E_\Q$ denotes expectation over an i.i.d. $\Q$-sample of size $n$. Now define
\begin{align*}
A_{n,\delta}(\cO)
:=
\P_{\tilde U}\Big(
\sup_{\rho_\P(f,g)<\delta}
\big\lvert \tilde\bG_n[f]-\tilde\bG_n[g]\big\rvert>\epsilon
\mid \cO
\Big);
\end{align*}
On the event $E_{n,\delta}(\gamma)
:=\{ \sup_{\rho_\P(f,g)<\delta}\rho_{\Q_n}(f,g)\le \gamma\}$, apply the bound \eqref{eq:global_entropy_maximal_bound} with $\Q=\Q_n$ gives
\begin{align*}
A_{n,\delta}(\cO)
&\le
\frac{1}{\epsilon}
\E_{\tilde U}\big[
\lVert \tilde\bG_n\rVert_{\cC_{\Q_n,\delta}}
\mid \cO
\big]
\le
\frac{K B}{\epsilon}
J({\gamma}/{4B},\cF,F)
\Big(1+ \frac{B^2 J({\gamma}/{4B},\cF,F)}{\gamma^2\sqrt n} \Big).
\end{align*}
Hence, for every $\eta>0$,
\begin{align*}
\P_\cO\big(A_{n,\delta}(\cO)>\eta \big)
&\le
\P_\cO\big(E_{n,\delta}(r)^c\big)
+
\ind\Big\{
\frac{K B}{\epsilon}
J({\gamma}/{4B},\cF,F)
\Big( 1+\frac{B^2 J({\gamma}/{4B},\cF,F)}{\gamma^2\sqrt n} \Big)>\eta
\Big\};
\end{align*}
taking the $n$-limit on both sides then gives
\begin{align*}
\limsup_{n\to\infty}\P_\cO \big(A_{n,\delta}(\cO)>\eta \big)
&\le
\limsup_{n\to\infty}
\P_\cO\Big(
\sup_{\rho_\P(f,g)<\delta}\rho_{\Q_n}(f,g)>r
\Big) +
\ind\Big( \frac{K B}{\epsilon}
J({\gamma}/{4B},\cF,F)
>\eta \Big).
\end{align*}
Now first let $\delta$ goes to $0$. The first term vanishes by \eqref{eq:local_metric_transfer_Qn}. Then let $\gamma$ goes to $0$ and $J({\gamma}/{4B},\cF,F)\to0$ by \eqref{eq:unif_entropy_F}. Therefore we obtained the desired  conditional asymptotic $\rho_\P$-equicontinuity \eqref{eq:CAE}.
\end{proof}

\subsubsection{Proof of Lemma~\ref{lem:local_density_metric_transfer}}

\begin{proof}
Let $h_{f,g} := f-g-\P[f-g]$ for $f,g\in\cF$.
Then
\begin{align*}
\lVert h_{f,g}\rVert_\infty
\le 4B,
\qquad
\P h_{f,g}^2
=
\rho_\P(f,g)^2.
\end{align*}
Moreover, since $c\mapsto\Q_n(f-g-c)^2$ is minimized at
$c=\Q_n[f-g]$,
\begin{align*}
\rho_{\Q_n}(f,g)^2
=
\Q_n\big\{
 f-g-\Q_n[f-g]
\big\}^2
\le
\Q_n h_{f,g}^2.
\end{align*}
Fix $R<\infty$. On the event $\{\Q_n\ll\lambda\}$,
\begin{align*}
\Q_n h_{f,g}^2
&=
\int_{B_R}h_{f,g}(z)^2q_n(z)\d z
+
\int_{B_R^c}h_{f,g}(z)^2\d\Q_n(z)
\\
&\le
\int_{B_R}h_{f,g}(z)^2q_n(z)\d z
+
16B^2\Q_n(B_R^c).
\end{align*}
Write $r_R':=r_R/(r_R-1)$. Hölder's inequality bounds the first term by
\begin{align*}
\int_{B_R}h_{f,g}^2q_n\d z
&\le
\Big\{
\int_{B_R}q_n^{r_R}\d z
\Big\}^{1/r_R}
\Big\{
\int_{B_R}\lvert h_{f,g}\rvert^{2r_R'}\d z
\Big\}^{1/r_R'}
\\
&\le
\Big\{
\int_{B_R}q_n^{r_R}\d z
\Big\}^{1/r_R}
\Big\{(4B)^{2r_R'-2}
\int_{B_R}h_{f,g}^2\d z\Big\}^{1/r_R'}.
\end{align*}
Since $p(z) \ge c_R :=\operatorname*{essinf}_{z\in B_R}p(z)$ Lebesgue-almost everywhere on $B_R$, whenever
$\rho_\P(f,g)<\delta$,
\begin{align*}
\int_{B_R}h_{f,g}^2\d z
\le
c_R^{-1}\P h_{f,g}^2
<
c_R^{-1}\delta^2.
\end{align*}
Then for every fixed $R<\infty$,
\begin{align*}
\sup_{\rho_\P(f,g)<\delta}
\int_{B_R}h_{f,g}^2q_n\d z
\le
\Big\{
\int_{B_R}q_n^{r_R}\d z
\Big\}^{1/r_R}
\big\{
(4B)^{2r_R'-2}
c_R^{-1}\delta^2
\big\}^{1/r_R'}
=
O_{\P_\cO}\big(
\delta^{2/r_R'}
\big);
\end{align*}
which implies, for every $\eta>0$ and fixed $R$,
\begin{align*}
\lim_{\delta\downarrow0}
\limsup_{n\to\infty}
\P_\cO\Big(
\sup_{\rho_\P(f,g)<\delta}
\int_{B_R}h_{f,g}^2q_n\d z
>\eta
\Big)
=0.
\end{align*}
Consequently, for any fixed $\epsilon>0$, 
\begin{align*}
&\P_\cO\Big(
\sup_{\rho_\P(f,g)<\delta}
\rho_{\Q_n}(f,g)>\epsilon
\Big)
\\
&~~\le
\P_\cO(\Q_n\not\ll\lambda)
+
\P_\cO\Big(
\sup_{\rho_\P(f,g)<\delta}
\int_{B_R}h_{f,g}^2q_n\d z
>
\frac{\epsilon^2}{2}
\Big)
+
\P_\cO\Big(
\Q_n(B_R^c)
>
\frac{\epsilon^2}{32B^2}
\Big).
\end{align*}
Therefore for every fixed $R$,
\begin{align*}
\lim_{\delta\downarrow0}
\limsup_{n\to\infty}
\P_\cO\Big(
\sup_{\rho_\P(f,g)<\delta}
\rho_{\Q_n}(f,g)>\epsilon
\Big)
\le
\limsup_{n\to\infty}
\P_\cO\Big(
\Q_n(B_R^c)
>
\frac{\epsilon^2}{32B^2}
\Big).
\end{align*}
Letting $R\to\infty$ and applying
\eqref{eq:Qn_asymptotic_tightness} proves
\eqref{eq:local_metric_transfer_Qn}.
\end{proof}

\subsection{Proofs of the results in Section~\ref{subsec:relation_Donsker}}

\subsubsection{Proof of Theorem~\ref{thm:CAE_to_Donsker}}

\begin{proof}
For $\delta>0$, define
\begin{align*}
L_{n,\delta}
:=
\E_\P\big[
\big\lVert
\bG_n^\P
\big\rVert_{\cF_\delta}
\big],
\qquad
M_{n,\delta}
:=
\E_{\Q_n}\big[
\big\lVert
\bG_n^{\Q_n}
\big\rVert_{\cF_\delta}
\big],
\qquad
r_n
:=
8B\sqrt n
\exp\Big(
-\frac n4
\Big).
\end{align*}
Here $L_{n,\delta}$ is deterministic, whereas $M_{n,\delta}$ is random
through its dependence on $\cO$.

By Lemma~\ref{lem:CAE_local_mean_equivalence}, condition \eqref{eq:CAE}
implies that, for every $a>0$,
\begin{align*}
\lim_{\delta\downarrow0}
\limsup_{n\to\infty}
\P_\cO\big(
M_{n,\delta}>a
\big)
=
0.
\yestag
\label{eq:reverse_transfer_Q_local_mean}
\end{align*}
Write $\Omega_n(D) := \{\P\le D\Q_n\}$ and denote $\alpha:=\liminf_{n\to\infty}
\P_\cO(\Omega_n(D))>0$. On $\Omega_n(D)$, Lemma~\ref{lem:centered_empirical_process_comparison},
applied conditionally on $\cO$ with
$\mu=\P$, $\nu=\Q_n$, and $\cA=\cF_\delta$, gives
\begin{align*}
L_{n,\delta}
\le
K_D M_{n,\delta}
+
r_n.
\end{align*}
Fix $a>0$. For all sufficiently large $n$,
\begin{align*}
\P_\cO\big(
\Omega_n(D)
\big)
\ge
\frac{\alpha}{2},
\qquad
r_n
\le
K_Da.
\end{align*}
For such $n$, since $L_{n,\delta}$ is deterministic,
\begin{align*}
\frac{\alpha}{2}
\ind\big\{
L_{n,\delta}>2K_Da
\big\}
\le
\P_\cO\Big(
\Omega_n(D)
\cap
\big\{
L_{n,\delta}>2K_Da
\big\}
\Big)
\le
\P_\cO\big(
M_{n,\delta}>a
\big).
\end{align*}
Taking the $\limsup$ as $n\to\infty$ gives
\begin{align*}
\frac{\alpha}{2}
\limsup_{n\to\infty}
\ind\big\{
L_{n,\delta}>2K_Da
\big\}
\le
\limsup_{n\to\infty}
\P_\cO\big(
M_{n,\delta}>a
\big).
\end{align*}
The limsup of the indicator on the left belongs to $\{0,1\}$.
Equation \eqref{eq:reverse_transfer_Q_local_mean} therefore implies that,
for all sufficiently small $\delta$,
\begin{align*}
\limsup_{n\to\infty}
L_{n,\delta}
\le
2K_Da.
\end{align*}
Hence
\begin{align*}
\lim_{\delta\downarrow0}
\limsup_{n\to\infty}
L_{n,\delta}
\le
2K_Da.
\end{align*}
Since $a>0$ is arbitrary,
\begin{align*}
\lim_{\delta\downarrow0}
\limsup_{n\to\infty}
L_{n,\delta}
=
0.
\yestag
\label{eq:reverse_transfer_P_local_mean}
\end{align*}
By Markov's inequality, \eqref{eq:reverse_transfer_P_local_mean} implies
asymptotic $\rho_\P$-equicontinuity of the ordinary $\P$-empirical process.
Its finite-dimensional distributions converge to those of $\bG_\P$ by the
multivariate central limit theorem. Moreover, the canonical semimetric of
$\bG_\P$ is $\rho_\P$, since
\begin{align*}
\E\big[
\big\{
\bG_\P[f]
-
\bG_\P[g]
\big\}^2
\big]
=
\rho_\P^2(f,g).
\end{align*}
The standing tightness assumption on $\bG_\P$ implies that
$(\cF,\rho_\P)$ is totally bounded and that $\bG_\P$ has uniformly
$\rho_\P$-continuous sample paths; see Example~1.5.10 of
\cite{MR1385671}. Theorems~1.5.4 and~1.5.7 of \cite{MR1385671}
therefore yield $\bG_n^\P \rightsquigarrow \bG_\P$ in $\ell^\infty(\cF)$.
Thus $\cF$ is $\P$-Donsker.
\end{proof}

\subsubsection{Proof of Corollary~\ref{cor:Donsker_equivalence_weak_consistency}}

\begin{proof}
Suppose first that $\cF$ is $\P$-Donsker. By
Corollary~\ref{cor:domination} and
\eqref{eq:han-lrb0}, the conditional asymptotic
$\rho_\P$-equicontinuity condition \eqref{eq:CAE} holds.

It remains to verify finite-dimensional covariance consistency. Fix
$\epsilon>0$ and $C<\infty$. On the event $\{\Q_n\le C\P\}$,
\begin{align*}
\sup_{A:\P(A)\le\delta}
\Q_n(A)
\le
C\delta.
\end{align*}
Consequently, whenever $0<\delta<\epsilon/C$,
\begin{align*}
\P_\cO\Big(
\sup_{A:\P(A)\le\delta}
\Q_n(A)
>
\epsilon
\Big)
\le
\P_\cO\big(
\Q_n\nleq C\P
\big).
\end{align*}
Taking the $\limsup$ as $n\to\infty$, then letting
$\delta\downarrow0$ and finally $C\to\infty$, gives
\begin{align*}
\lim_{\delta\downarrow0}
\limsup_{n\to\infty}
\P_\cO\Big(
\sup_{A:\P(A)\le\delta}
\Q_n(A)
>
\epsilon
\Big)
=
0.
\end{align*}
Thus the asymptotic uniform absolute continuity condition
\eqref{eq:AUAC_Qn_nu} holds with $\nu=\P$. Together with
$\Q_n\rightsquigarrow\P$ in $\P_\cO$-probability,
Theorem~\ref{thm:FD-cov}(ii) yields
\eqref{eq:FD-cov}. Theorem~\ref{thm:cvg_emp_proc} now gives
\eqref{eq:CWC}.

Conversely, suppose that \eqref{eq:CWC} holds. By
Theorem~\ref{thm:cvg_emp_proc}, condition \eqref{eq:CAE} holds.
The lower-coverage condition
\eqref{eq:lower_domination_nonvanishing} and
Theorem~\ref{thm:CAE_to_Donsker} then imply that $\cF$ is
$\P$-Donsker.
\end{proof}

\section{Proofs of the application results}

\subsection{Proofs of the results in Section~\ref{sec:triangular_normalizing_flows}}

\subsubsection{Proof of Lemma~\ref{lem:triangular_KL_consistency}}

\begin{proof}
By the approximate empirical minimization condition,
\begin{align*}
\P[\ell_{\hat S_n}]
\le
\bP_n[\ell_{\hat S_n}]
+
\Delta_n^{\rm NF}
\le
\inf_{S\in\cT_n}
\bP_n[\ell_S]
+
\varepsilon_n^{\rm NF}
+
\Delta_n^{\rm NF}
\le
\inf_{S\in\cT_n}
\P[\ell_S]
+
\varepsilon_n^{\rm NF}
+
2\Delta_n^{\rm NF}.
\end{align*}

For every $S\in\cT_n$, Definition~\ref{def:triangular_normalizing_flow}
and Assumption~\ref{ass:triangular_model}(i) give
\begin{align*}
q_S(z)
=
p_U\big(S(z)\big)
\prod_{j=1}^p D_jS_j(z)
>
0
\end{align*}
for Lebesgue-almost every
$z\in\operatorname{int}(\cZ)$. Because $\cZ$ is convex with nonempty
interior, $\lambda(\partial\cZ)=0$ and $\P(\partial\cZ)=0$. Since $\P\ll\lambda$ and is supported
on $\cZ$, it follows that
\begin{align*}
    \P\big(\{q_S=0\}\big)
    = 
    \P\big(\{q_S=0\}\cap \cZ\big)
    \le 
    \P\big(\partial\cZ\big) + \P\big(\{q_S=0\}\cap \operatorname{int}(\cZ)\big) = 0;
\end{align*}
which is equivalent to $\P\ll\Q_S$ as they are both absolutely continuous to $\lambda$.

Consequently, Assumption~\ref{ass:triangular_data}(iii) and
$\ell_S\in L^1(\P)$ give
\begin{align*}
D_{\rm KL}(\P\|\Q_S)
=
\int_{\cZ} p(z)\log\frac{p(z)}{q_S(z)}dz
=
\P[\log p]
+
\P[\ell_S].
\end{align*}
Adding the common constant $\P[\log p]$ to the preceding oracle inequality
gives
\begin{align*}
D_{\rm KL}(\P\|\Q_n)
\le
\inf_{S\in\cT_n}
D_{\rm KL}(\P\|\Q_S)
+
\varepsilon_n^{\rm NF}
+
2\Delta_n^{\rm NF}
=
\alpha_n^{\rm NF}
+
2\Delta_n^{\rm NF}
+
\varepsilon_n^{\rm NF}.
\end{align*}
The final assertion follows from
Assumption~\ref{ass:triangular_model}(iii)-(v).
\end{proof}

\subsubsection{Proof of Theorem~\ref{thm:triangular_flow_bootstrap_CWC}}

\begin{proof}
We verify the two conditions in Theorem~\ref{thm:cvg_emp_proc}.

First, we verify conditional asymptotic $\rho_\P$-equicontinuity. A version of
the density of $\Q_n=\Q_{\hat S_n}$ is
\begin{align*}
q_n(z)
=
p_U\big(\hat S_n(z)\big)
\prod_{j=1}^pD_j\hat S_{n,j}(z)
\ind\{z\in\cZ\}.
\end{align*}
By Assumption~\ref{ass:triangular_model}(i)-(ii),
\begin{align*}
q_n(z)
\le
\overline p_U M_S^p\ind\{z\in\cZ\}.
\end{align*}
Since $p(z)\ge\underline p$ for Lebesgue-almost every $z\in\cZ$
and $\P(\cZ^c) = \int_{\cZ^c} p(z)\d\lambda = 0$ implies $p(z)=0$ for Lebesgue-almost every $z\in\cZ^c$, it follows that
\begin{align*}
q_n(z)
\le
\frac{\overline p_U M_S^p}{\underline p}p(z)
\end{align*}
for Lebesgue-almost every $z\in\bR^p$. Consequently, $\Q_n
\le
\frac{\overline p_U M_S^p}{\underline p}\P$
and hence
\begin{align*}
\Q_n\ll\P,
\qquad
\Big\lVert
\frac{\d\Q_n}{\d\P}
\Big\rVert_{\infty}
\le
\frac{\overline p_U M_S^p}{\underline p}.
\end{align*}
Corollary~\ref{cor:domination} then gives the conditional asymptotic
$\rho_\P$-equicontinuity condition~\eqref{eq:CAE}.

Second, Lemma~\ref{lem:triangular_KL_consistency} gives $D_{\rm KL}(\P\|\Q_n) = o_{\P_\cO}(1)$.
Corollary~\ref{cor:divergence_FD_cov} therefore yields the finite-dimensional
covariance consistency condition~\eqref{eq:FD-cov}. The conclusion follows
from Theorem~\ref{thm:cvg_emp_proc}.
\end{proof}

\subsection{Proofs of the results in Section~\ref{sec:FM}}

\subsubsection{Proof of Lemma~\ref{lem:FM_training_implies_weak_convergence}}

\begin{proof}
For every $\tilde v\in\cV_n$, the definitions of $\Delta_n^{\rm FM}$ and $\varepsilon_n^{\rm FM}$
give
\begin{align*}
\cR^{\rm FM}(\hat v_n)
\le
\hat \cR^{\rm FM}_n(\hat v_n)+\Delta_n^{\rm FM}
\le
\hat \cR^{\rm FM}_n(\tilde v)
+\varepsilon_n^{\rm FM}+\Delta_n^{\rm FM}
\le
\cR^{\rm FM}(\tilde v)
+\varepsilon_n^{\rm FM}+2\Delta_n^{\rm FM}.
\end{align*}
Taking the infimum over $\tilde v\in\cV_n$ and using
\eqref{eq:CFM_projection_identity} yields
\begin{align*}
\int_0^1
\lVert
\hat v_{n,t}-v_t
\rVert_{L^2(\P_t)}^2
\d t
=
\cR^{\rm FM}(\hat v_n)-\cR^{\rm FM}(v)
\le
\alpha_n^{\rm FM}+2\Delta_n^{\rm FM}+\varepsilon_n^{\rm FM}.
\end{align*}

For the Wasserstein bound, let $U^\circ\sim\P_U$ be independent
of $\cO$, and write
\begin{align*}
Y_t:=\Phi_t(U^\circ),
\qquad
\hat Y_t:=\hat\Phi_{n,t}(U^\circ).
\end{align*}
Conditionally on $\cO$, we have $Y_t\sim\P_t$ and
$\hat Y_1\sim\Q_n$. Thus $(Y_1,\hat Y_1)$ is a coupling of
$\P$ and $\Q_n$. The linear-growth bound gives
\begin{align*}
\lVert \hat Y_t\rVert
\le
\exp(Lt)\big(\lVert U^\circ\rVert+Mt\big),
\quad 0\le t\le1.
\end{align*}
Since $\P_U$ has a finite second moment, so does $\Q_n$.
Together with the second-moment assumption on $\P$, this ensures
that $W_2(\Q_n,\P)$ is finite.

Since $\hat v_n\in\cV_n$, for every $t\in[0,1]$,
\begin{align*}
\lVert \hat Y_t-Y_t\rVert
&\le
\int_0^t
\lVert \hat v_{n,s}(\hat Y_s)-\hat v_{n,s}(Y_s)\rVert
\d s
+
\int_0^t
\lVert \hat v_{n,s}(Y_s)-v_s(Y_s)\rVert
\d s
\\
&\le
L\int_0^t \lVert \hat Y_s-Y_s\rVert \d s
+
\int_0^t
\lVert \hat v_{n,s}(Y_s)-v_s(Y_s)\rVert
\d s.
\end{align*}
Gronwall's inequality therefore gives
\begin{align*}
\lVert \hat Y_1-Y_1\rVert
\le
\exp(L)
\int_0^1
\lVert \hat v_{n,t}(Y_t)-v_t(Y_t)\rVert
\d t.
\end{align*}
Squaring this inequality, applying Cauchy-Schwarz in time,
and taking conditional expectation over $U^\circ$ yield
\begin{align*}
W_2^2(\Q_n,\P)
&\le
\E_{U^\circ}\big[
\lVert \hat Y_1-Y_1\rVert^2
\mid\cO
\big]
\\
&\le
\exp(2L)
\E_{U^\circ}\Big[
\Big(
\int_0^1
\lVert \hat v_{n,t}(Y_t)-v_t(Y_t)\rVert
\d t
\Big)^2
\mid\cO
\Big]
\\
&\le
\exp(2L)
\int_0^1
\E_{U^\circ}\big[
\lVert \hat v_{n,t}(Y_t)-v_t(Y_t)\rVert^2
\mid\cO
\big]
\d t
\\
&=
\exp(2L)
\int_0^1
\lVert \hat v_{n,t}-v_t\rVert_{L^2(\P_t)}^2
\d t
\le
\exp(2L)
\Big(
\alpha_n^{\rm FM}
+
2\Delta_n^{\rm FM}
+
\varepsilon_n^{\rm FM}
\Big).
\end{align*}
Taking square roots proves the claimed Wasserstein bound.
The consistency conclusion follows from the approximation,
generalization, and optimization conditions, and the
weak-convergence conclusion follows from
Lemma~\ref{lemma:wq-implies-weak}.

\end{proof}

\subsubsection{Proof of Lemma~\ref{lem:FM_density_bound}}

\begin{proof}
By Assumption~\ref{def:controlled_neural_FM_class}, the learned time-$t$ map
$\hat\Phi_{n,t}$ is a $C^1$-diffeomorphism. Let
\begin{align*}
J_{n,t}(u)
:=
D_u\hat\Phi_{n,t}(u).
\end{align*}
For almost every $t$, the Jacobian satisfies
\begin{align*}
\frac{\d}{\d t}J_{n,t}(u)
=
D \hat v_{n,t}(\hat\Phi_{n,t}(u))
J_{n,t}(u)
\quad\text{with initial condition}\quad
J_{n,0}(u)=I_p.
\end{align*}
By Liouville's formula,
\begin{align*}
\det J_{n,t}(u)
=
\exp\Big\{
\int_0^t
\tr\big(D \hat v_{n,s}(\hat\Phi_{n,s}(u))\big) \d s \Big\}
=
\exp\Big\{
\int_0^t
\nabla\cdot\hat v_{n,s}(\hat\Phi_{n,s}(u))\d s \Big\}>0.
\end{align*}
The change-of-variables formula therefore yields, for Lebesgue-almost every
$u$,
\begin{align*}
q_n\big(
\hat\Phi_{n,1}(u)
\big)
=
p_0(u)
\exp\Big\{
-
\int_0^1
\nabla\cdot\hat v_{n,t}\big(
\hat\Phi_{n,t}(u)
\big)
\d t
\Big\}.
\end{align*}
Since
$\lvert\nabla\cdot\hat v_{n,t}(x)\rvert\le pL$, the result follows.
\end{proof}

\subsubsection{Proof of Theorem~\ref{thm:FM_bootstrap_CWC}}

\begin{proof}
Lemma~\ref{lem:FM_training_implies_weak_convergence} gives
$W_2(\Q_n,\P)=o_{\P_\cO}(1)$ and hence
$\Q_n\rightsquigarrow\P$ in $\P_\cO$-probability. In particular,
$\Q_n$ is asymptotically tight in $\P_\cO$-probability. Moreover,
Lemma~\ref{lem:FM_density_bound} gives a deterministic constant $C<\infty$
such that
\begin{align*}
\lVert q_n\rVert_{L^\infty(\lambda)}
&\le C
\end{align*}
realization-wise. Hence, for every $R<\infty$,
\begin{align*}
\int_{B_R}
q_n(z)^2
\d z
&\le
\lVert q_n\rVert_{L^\infty(\lambda)}
\int_{B_R}
q_n(z)
\d z
\le C.
\end{align*}
The target-density lower bound and
Lemma~\ref{lem:local_density_metric_transfer} therefore imply that, for every
$\epsilon >0$,
\begin{align*}
\lim_{\delta\downarrow0}
\limsup_{n\to\infty}
\P_\cO\big(
\sup_{\rho_\P(f,g)<\delta}
\rho_{\Q_n}(f,g)>\epsilon
\big)
=0.
\end{align*}
Theorem~\ref{thm:unif_entropy} then gives conditional asymptotic
$\rho_\P$-equicontinuity.

The same density bound implies the localized density-uniform-integrability
condition in Corollary~\ref{cor:localized_Lebesgue_FD_cov}. Together with
$\Q_n\rightsquigarrow\P$ in $\P_\cO$-probability, this yields finite-dimensional
covariance consistency. The conclusion now follows from
Theorem~\ref{thm:cvg_emp_proc}.
\end{proof}

\subsection{Proofs of the results in Section~\ref{sec:diffusion}}

\subsubsection{Proof of Lemma~\ref{lem:diffusion_TV_consistency}}

\begin{proof}
Conditionally on $\cO$,
Corollary~\ref{cor:diffusion_fitted_reverse_KL} applied
realization-wise gives \eqref{eq:diffusion_KL_bound}. 
Pinsker's inequality and the triangle
inequality then give \eqref{eq:diffusion_TV_decomposition}.

It remains to verify that the two deterministic endpoint errors vanish. With a slight abuse of notations, let
$p$ also denote a Lebesgue density of $\P$ and write $a_t := e^{-t}$ with $\sigma_t := \sqrt{1-e^{-2t}}$. The density of $\P_t$ can be
written as 
\begin{align*}
p_t
=
\mathsf D_{a_t}p
*
\varphi_{\sigma_t},
\qquad
(\mathsf D_a p)(x)
:=
a^{-p}p(x/a),
\end{align*}
where the convolution follows from the independence of $Z$ and $\xi$, and
$\varphi_\sigma$ denotes the density of
${\rm N}(0,\sigma^2I_p)$.
The triangle inequality and Young's convolution inequality give
\begin{align*}
\lVert p_t-p\rVert_{L^1(\lambda)}
&\le
\big\lVert
(D_{a_t}p-p)*\varphi_{\sigma_t}
\big\rVert_{L^1(\lambda)}
+
\big\lVert p*\varphi_{\sigma_t}-p\big\rVert_{L^1(\lambda)}
\\
&\le
\lVert D_{a_t}p-p\rVert_{L^1(\lambda)}
+
\lVert p*\varphi_{\sigma_t}-p\rVert_{L^1(\lambda)}.
\end{align*}
We now verify that both terms here vanish. Fix
$\epsilon>0$ and choose a continuous compactly supported
function $g:\bR^p\to\bR$ such that
$\lVert p-g\rVert_{L^1(\lambda)} < \epsilon$. For $y\in\bR^p$, define $(\tau_y f)(x) := f(x-y)$.
Both $\mathsf D_a$ and $\tau_y$ are isometries on $L^1(\lambda)$.
Consequently,
\begin{align*}
\lVert \mathsf D_a p-p\rVert_{L^1(\lambda)}
&\le
\lVert \mathsf D_a(p-g)\rVert_{L^1(\lambda)}
+
\lVert \mathsf D_ag-g\rVert_{L^1(\lambda)}
+
\lVert p-g\rVert_{L^1(\lambda)}
\\
&=
\lVert \mathsf D_ag-g\rVert_{L^1(\lambda)}
+
2\lVert p-g\rVert_{L^1(\lambda)},
\end{align*}
where the first term vanishes as $a\to 1$ by the dominated convergence theorem. Similarly 
\begin{align*}
\lVert \tau_yp-p\rVert_{L^1(\lambda)}
&\le
\lVert \tau_y(p-g)\rVert_{L^1(\lambda)}
+
\lVert \tau_yg-g\rVert_{L^1(\lambda)}
+
\lVert p-g\rVert_{L^1(\lambda)}
\\
&=
\lVert \tau_yg-g\rVert_{L^1(\lambda)}
+
2\lVert p-g\rVert_{L^1(\lambda)}.
\end{align*}
and the first term on the right hand side vanishes as $y\to0$. Since $\epsilon>0$ is arbitrary, it follows that
\begin{align*}
\lVert \mathsf D_ap-p\rVert_{L^1(\lambda)}
&\longrightarrow
0
\qquad
\text{as }a\to1,
\yestag
\label{eq:L1_dilation_continuity}
\\
\lVert \tau_yp-p\rVert_{L^1(\lambda)}
&\longrightarrow
0
\qquad
\text{as }y\to0.
\yestag
\label{eq:L1_translation_continuity}
\end{align*}
For the Gaussian convolution, Minkowski's integral inequality gives
\begin{align*}
\lVert p*\varphi_\sigma-p\rVert_{L^1(\lambda)}
&\le
\int_{\bR^p}
\lVert \tau_yp-p\rVert_{L^1(\lambda)}
\varphi_\sigma(y)\d y.
\end{align*}
Given $\epsilon>0$, equation
\eqref{eq:L1_translation_continuity} yields some $\delta>0$ such that
\begin{align*}
\sup_{\lVert y\rVert<\delta}
\lVert \tau_yp-p\rVert_{L^1(\lambda)}
<
\epsilon.
\end{align*}
Since $\lVert \tau_yp-p\rVert_{L^1(\lambda)}
\le 2\lVert p\rVert_{L^1(\lambda)}=2$, we obtain
\begin{align*}
\lVert p*\varphi_\sigma-p\rVert_{L^1(\lambda)}
\le
\epsilon
+
2
\int_{\lVert y\rVert\ge\delta}
\varphi_\sigma(y)\d y
&=
\epsilon
+
2
\int_{\lVert z\rVert\ge\delta/\sigma}
\varphi_1(z)\d z
\longrightarrow
\epsilon
\end{align*}
as $\sigma\downarrow0$. Letting $\epsilon\downarrow0$ gives
\begin{align*}
\lVert p*\varphi_\sigma-p\rVert_{L^1(\lambda)}
\longrightarrow
0
\qquad
\text{as }\sigma\downarrow0.
\yestag
\label{eq:Gaussian_approximate_identity}
\end{align*}
Because $a_t\to1$ and $\sigma_t\to0$ as $t\downarrow0$,
\eqref{eq:L1_dilation_continuity} and
\eqref{eq:Gaussian_approximate_identity} imply
$\lVert p_t-p\rVert_{L^1(\lambda)}
\to 0$.
Hence
$d_{\rm TV}(\P_{\tau_n},\P)\to0$.

For the other endpoint, convexity of KL divergence in its first argument and
the Gaussian KL formula give, for every $t>0$,
\begin{align*}
D_{\rm KL}(\P_t\|\gamma_p)
&\le
\E\big[
D_{\rm KL}\big(
{\rm N}\big(
e^{-t}Z,(1-e^{-2t})I_p
\big)
\big\|
{\rm N}(0,I_p)
\big)
\big]
\\
&=
\frac{e^{-2t}}{2}
\P\big[
\lVert Z\rVert^2
\big]
+
\frac{p}{2}
\big\{
-e^{-2t}
-
\log(1-e^{-2t})
\big\}
\longrightarrow
0
\end{align*}
as $t\to\infty$. Thus
$D_{\rm KL}(\P_{T_n}\|\gamma_p)\to0$. Combining these endpoint limits with
\eqref{eq:diffusion_score_consistency} yields the result.
\end{proof}

\subsubsection{Proof of Lemma~\ref{lem:TV_uniform_entropy_CWC}}

\begin{proof}
By Corollary~\ref{cor:divergence_FD_cov}, total-variation consistency implies
the finite-dimensional covariance consistency condition
\eqref{eq:FD-cov}. It remains to verify the metric-transfer condition
\eqref{eq:local_metric_transfer_Qn}.

Fix $f,g\in\cF$ and write $h_{f,g} := f-g- \P[f-g]$. By definition,
\begin{align*}
\P[h_{f,g}^2]
=
\rho_\P(f,g)^2,
\qquad
\lVert h_{f,g}\rVert_\infty
\le
4B.
\end{align*}
Moreover, since one has $\rho_{\Q_n}(f,g)^2
\le
\Q_n[h_{f,g}^2]$,
\begin{align*}
\rho_{\Q_n}(f,g)^2
\le
\rho_\P(f,g)^2
+
\big\lvert
(\Q_n-\P)[h_{f,g}^2]
\big\rvert
\le
\rho_\P(f,g)^2
+
32B^2
d_{\rm TV}(\Q_n,\P).
\end{align*}
Consequently, for every $\delta>0$,
\begin{align*}
\sup_{\rho_\P(f,g)<\delta}
\rho_{\Q_n}(f,g)^2
\le
\delta^2
+
32B^2
d_{\rm TV}(\Q_n,\P).
\end{align*}
This proves \eqref{eq:local_metric_transfer_Qn}. Theorem~\ref{thm:unif_entropy}
therefore gives conditional asymptotic $\rho_\P$-equicontinuity, and the
conclusion follows from Theorem~\ref{thm:cvg_emp_proc}.
\end{proof}

\subsubsection{Proof of Theorem~\ref{thm:diffusion_bootstrap_CWC}}

\begin{proof}
Lemma~\ref{lem:diffusion_TV_consistency} gives
$d_{\rm TV}(\Q_n,\P)=o_{\P_\cO}(1)$. The conclusion follows from
Lemma~\ref{lem:TV_uniform_entropy_CWC}.
\end{proof}

\subsection{Proofs of the results in Section~\ref{sec:WGAN}}

\subsubsection{Proof of Lemma~\ref{lem:WGAN_training_implies_W1}}

\begin{proof}
We first note that $\Q_n$ has a finite first moment: conditionally
on $\cO$, Assumption~\ref{ass:WGAN_model}(i) gives
\begin{align*}
\P_U
\big[
\big\lVert
\hat G_n(U)
\big\rVert
\big]
\le
\big\lVert
\hat G_n(0)
\big\rVert
+
{\rm Lip}(\hat G_n)
\P_U
\big[
\lVert U\rVert
\big]
<
\infty.
\end{align*}

We next use the contraction property of Wasserstein distance under a
Lipschitz map. Fix a realization of $\cO$, and let $\pi$ be any coupling
of $\bP_{U,n}$ and $\P_U$. The image of $\pi$ under the map
$(u,v)
\mapsto
(\hat G_n(u), \hat G_n(v))$
is a coupling of
$\hat G_n\#\bP_{U,n}$ and $\hat G_n\#\P_U$, hence
\begin{align*}
W_1
\big(
\hat G_n\#\bP_{U,n},
\hat G_n\#\P_U
\big)
\le
\int
\big\lVert
\hat G_n(u)
-
\hat G_n(v)
\big\rVert
\d\pi(u,v)
\le
{\rm Lip}(\hat G_n)
\int
\lVert u-v\rVert
\d\pi(u,v).
\end{align*}
Taking the infimum over all such couplings gives
\begin{align*}
W_1
\big(
\hat G_n\#\bP_{U,n},
\hat G_n\#\P_U
\big)
\le
{\rm Lip}(\hat G_n)
W_1
\big(
\bP_{U,n},
\P_U
\big).
\yestag
\label{eq:WGAN_pushforward_contraction}
\end{align*}
Since $\Q_n=\hat G_n\#\P_U$, the triangle inequality and
\eqref{eq:WGAN_pushforward_contraction} yield
\eqref{eq:WGAN_consistency_decomposition}. By
Lemma~\ref{lem:empirical_W1_consistency}, $W_1(\bP_n,\P) \to0$ almost surely. 
For the second term in
\eqref{eq:WGAN_consistency_decomposition}, by Lemma~\ref{lem:Kantorovich_Rubinstein} and
\eqref{eq:empirical_WGAN_objective},
\begin{align*}
&W_1
\big(
\bP_n,
\hat G_n\#\bP_{U,n}
\big)
=
\sup_{\varphi\in{\rm Lip}_1^0(\bR^p)}
\widehat{\mathcal L}_n
\big(
\hat G_n,\varphi
\big)
\\
&~~\le
\Big\{
\sup_{\varphi\in{\rm Lip}_1^0(\bR^p)}
\widehat{\mathcal L}_n
\big(
\hat G_n,\varphi
\big)
-
\widehat{\mathcal L}_n
\big(
\hat G_n,\hat D_n
\big)
\Big\}
+
\big\lvert
\widehat{\mathcal L}_n
\big(
\hat G_n,\hat D_n
\big)
\big\rvert
=
o_{\P_\cO}(1)
\end{align*}
by Assumption~\ref{ass:WGAN_model}(ii)-(iii).
The third is $o_{\P_\cO}(1)$ by Assumption~\ref{ass:WGAN_model}(iv). Thus
$W_1(\Q_n,\P)=o_{\P_\cO}(1)$. The weak-convergence conclusion follows from
Lemma~\ref{lemma:wq-implies-weak}.
\end{proof}

\subsubsection{Proof of Lemma~\ref{lem:W1_modulus_CWC}}

\begin{proof}
Write $d_n
:=
W_1(\Q_n,\P)$.
We first record the transport bound used below. Suppose a function
$\psi:\bR^p\to\bR$ satisfies
$\lvert\psi(x)-\psi(y)\rvert
\le
a\omega (\lVert x-y\rVert)$ for some $a<\infty$. For every $\epsilon>0$, choose a coupling
$\pi_{n,\epsilon}$ of $\Q_n$ and $\P$ satisfying
\begin{align*}
\int
\lVert x-y\rVert
\d\pi_{n,\epsilon}(x,y)
\le
d_n+\epsilon.
\end{align*}
By monotonicity, concavity, and Jensen's inequality,
\begin{align*}
\big\lvert
(\Q_n-\P)[\psi]
\big\rvert
\le
a
\int
\omega\big(
\lVert x-y\rVert
\big)
\d\pi_{n,\epsilon}(x,y)
\le
a\omega\Big(
\int
\lVert x-y\rVert
\d\pi_{n,\epsilon}(x,y)
\Big)
\le
a\omega(d_n+\epsilon).
\end{align*}
Letting $\epsilon\downarrow0$ gives
\begin{align*}
\big\lvert
(\Q_n-\P)[\psi]
\big\rvert
\le
a\omega(d_n).
\yestag
\label{eq:W1_modulus_bound}
\end{align*}

We first verify finite-dimensional covariance consistency. Fix
$f,g\in\cF$. Since $\lVert f\rVert_\infty\vee
\lVert g\rVert_\infty\le B$, the product satisfies
\begin{align*}
\lvert f(x)g(x)-f(y)g(y)\rvert
\le
\lvert f(x)\rvert
\lvert g(x)-g(y)\rvert
+
\lvert g(y)\rvert
\lvert f(x)-f(y)\rvert
\le
2B\omega\big(
\lVert x-y\rVert
\big).
\end{align*}
Applying \eqref{eq:W1_modulus_bound} to $f$, $g$, and $fg$ gives
\begin{align*}
&
\big\lvert
\big\{
\Q_n[fg]-\Q_n[f]\Q_n[g]
\big\}
-
\big\{
\P[fg]-\P[f]\P[g]
\big\}
\big\rvert
\\
&\quad\le
\big\lvert
(\Q_n-\P)[fg]
\big\rvert
+
\big\lvert\Q_n[f]\big\rvert
\big\lvert(\Q_n-\P)[g]\big\rvert
+
\big\lvert\P[g]\big\rvert
\big\lvert(\Q_n-\P)[f]\big\rvert
\\
&\quad\le
4B\omega(d_n)
=
o_{\P_\cO}(1).
\end{align*}
Thus \eqref{eq:FD-cov} holds.

For the local semimetric, write $u:=f-g$. Then
$\lVert u\rVert_\infty\le2B$, $u$ has modulus $2\omega$, and
\begin{align*}
\lvert u(x)^2-u(y)^2\rvert
=
\lvert u(x)-u(y)\rvert
\lvert u(x)+u(y)\rvert
\le
8B\omega\big(
\lVert x-y\rVert
\big).
\end{align*}
Therefore,
\begin{align*}
\big\lvert
\rho_{\Q_n}(f,g)^2
-
\rho_\P(f,g)^2
\big\rvert
&\le
\big\lvert
(\Q_n-\P)[u^2]
\big\rvert
+
\big\lvert
\Q_n[u]^2-\P[u]^2
\big\rvert
\\
&\le
8B\omega(d_n)
+
2\omega(d_n)
\big\{
\lvert\Q_n[u]\rvert
+
\lvert\P[u]\rvert
\big\}
\le
16B\omega(d_n).
\end{align*}
Consequently, for every $\delta>0$,
\begin{align*}
\sup_{\rho_\P(f,g)<\delta}
\rho_{\Q_n}(f,g)^2
\le
\delta^2
+
16B\omega(d_n).
\end{align*}
Since $d_n=o_{\P_\cO}(1)$ and $\omega$ is continuous at zero, $\omega(d_n)=o_{\P_\cO}(1)$. For every $\epsilon>0$ and every
$0<\delta<\epsilon$,
\begin{align*}
\P_\cO\Big(
\sup_{\rho_\P(f,g)<\delta}
\rho_{\Q_n}(f,g)>\epsilon
\Big)
\le
\P_\cO\Big(
\omega(d_n)
>
\frac{\epsilon^2-\delta^2}{16B}
\Big)
\to0.
\end{align*}
Thus \eqref{eq:local_metric_transfer_Qn} holds.
Theorem~\ref{thm:unif_entropy} gives \eqref{eq:CAE}, then the
conclusion follows from Theorem~\ref{thm:cvg_emp_proc}.
\end{proof}

\subsubsection{Proof of Theorem~\ref{thm:WGAN_bootstrap_CWC}}

\begin{proof}
Lemma~\ref{lem:WGAN_training_implies_W1} gives $W_1(\Q_n,\P) = o_{\P_\cO}(1)$.
The conclusion follows from Lemma~\ref{lem:W1_modulus_CWC}. For the last
statement, take $\omega(t)=Lt$.
\end{proof}

\section{Additional technical results}

This section provides model-specific verification results for the
applications in Section~\ref{sec:app}. For triangular normalizing
flows, we give a concrete support-matched class satisfying the model
assumptions. For flow matching and WGANs, we provide controlled neural
templates that reduce the corresponding assumptions to approximation,
generalization, and optimization conditions. For score-based diffusion
models, we establish the reverse-diffusion regularity and KL-divergence
bounds used in the main text.

\subsection{Triangular normalizing flows}
\label{sec:NF-apdx}

We give a example satisfying
Assumptions~\ref{ass:triangular_data} and
\ref{ass:triangular_model}. The normalized integral construction below is an exponential-link instance
of the parameterizations in \citet[equation~(4.9)]{zech2022sparse} and
\citet[equation~(4.1)]{wang2022minimax}. It is restricted to a
hypercube so that all marginalizations are over fixed domains; this avoids the
boundary degeneration that can occur for general convex supports. Any known hyperrectangle can be reduced to the unit cube by an affine
bijection, with the fitted generator subsequently conjugated back to the
original coordinates.

For any integer $r\ge1$, let $C^1([0,1]^r)$ be the class of continuous functions on
$[0,1]^r$ that are continuously differentiable on $(0,1)^r$ and whose
first-order partial derivatives admit continuous extensions to $[0,1]^r$.

For $j\in\zahl{p}$ and
$g_j\in C^1([0,1]^{p-j+1})$, define the conditional log-normalizer
\begin{align*}
A_jg_j(z_{j+1:p})
:={}
\log
\int_0^1
\exp\big\{g_j(t,z_{j+1:p})\big\}
\d t,
\end{align*}
and the $j$-th triangular coordinate
\begin{align*}
S_{g,j}(z_j,z_{j+1:p})
:={}
\int_0^{z_j}
\exp\big\{
g_j(t,z_{j+1:p})
-
A_jg_j(z_{j+1:p})
\big\}
\d t.
\yestag
\label{eq:integrated_exponential_coordinate}
\end{align*}
For $j=p$, the conditioning argument $z_{p+1:p}$ is void and $A_p g_p$ is a
scalar. Write $S_g := (S_{g,1},\ldots,S_{g,p})^\top$.
For constants $B,L<\infty$, define
\begin{align*}
\cG_j(B,L)
:=
\big\{
g\in C^1([0,1]^{p-j+1}):
\lVert g\rVert_\infty\le B, 
\sup_x\lVert\nabla g(x)\rVert\le L
\big\},
\end{align*}
and
$\cT(B,L)
:= \{ S_g: g_j\in\cG_j(B,L), j\in\zahl{p}\}$.

\begin{proposition}
\label{prop:integrated_exponential_flow}
Suppose $\cZ=\cU=[0,1]^p$ and $\P_U={\rm Unif}([0,1]^p)$.
Suppose also that the target density satisfies $p\in C^1([0,1]^p)$ and $\inf_{z\in[0,1]^p}p(z)>0$. Then Assumption~\ref{ass:triangular_data} holds. Moreover, there exist
finite $B,L>0$ such that, upon taking
$\cT_n:=\cT(B,L)$ for every $n$,
Assumption~\ref{ass:triangular_model}(i)-(iv) holds.
\end{proposition}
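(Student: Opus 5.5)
The plan is to verify the two groups of conditions directly. The key observation is that the Knothe--Rosenblatt map of $\P$ itself lies in $\cT(B,L)$ once $B$ and $L$ are chosen large enough. This makes the approximation error $\alpha_n^{\rm NF}$ exactly zero and lets us use a fixed class for every $n$.

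\textbf{Step 1: Assumption~\ref{ass:triangular_data}.} The cube $[0,1]^p$ is compact and convex with nonempty interior. Since $p$ is continuous and strictly positive on a compact set, $0<\underline p\le p\le\overline p<\infty$ there. The support of $\P$ is therefore the whole cube, the density lower bound holds, and $\lvert\log p\rvert$ is bounded, hence $\P$-integrable. Assumption~\ref{ass:triangular_model}(i) is immediate, because $p_U\equiv1$ on $\operatorname{int}([0,1]^p)$.

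\textbf{Step 2: every $S_g\in\cT(B,L)$ is a monotone triangular $C^1$-diffeomorphism, and (ii) holds.}
\begin{itemize}
\item Triangularity is built into \eqref{eq:integrated_exponential_coordinate}.
\item $A_jg_j$ is $C^1$, by differentiating under the integral, using boundedness of $g_j$ and $\nabla g_j$. Hence $S_{g,j}$ is $C^1$ up to the boundary.
\item $D_jS_{g,j}=\exp\{g_j-A_jg_j\}\in[e^{-2B},e^{2B}]$, so (ii) holds with $M_S=e^{2B}$.
\item For fixed $z_{j+1:p}$, the map $z_j\mapsto S_{g,j}$ is a strictly increasing bijection of $[0,1]$, since $S_{g,j}(0,\cdot)=0$ and $S_{g,j}(1,\cdot)=1$.
\item I would construct the inverse recursively: solve for $z_p$ from $u_p$, then $z_{p-1}$ from $(u_{p-1},z_p)$, and so on. This shows $S_g$ is a bijection of the cube onto itself that maps the interior onto the interior.
\item Continuity plus compactness makes $S_g$ a homeomorphism. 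The inverse function theorem, applied with the triangular Jacobian whose diagonal is positive, gives $C^1$ regularity of the inverse on the interior.
\end{itemize}

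\textbf{Step 3: approximation, (iii).} Let $p_{j:p}$ be the marginal density of $z_{j:p}$, and put $g_j^\star:=\log p_{j:p}$. Integrating out coordinates of a $C^1$ density bounded between $\underline p$ and $\overline p$ on the cube gives marginals that are again $C^1$, bounded above and away from zero, with bounded gradients, by differentiation under the integral sign. Thus $g^\star_j\in\cG_j(B,L)$ for suitable finite $B,L$.

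With this choice, $A_jg_j^\star=\log p_{j+1:p}$, so $S_{g^\star,j}$ is the conditional CDF of $z_j$ given $z_{j+1:p}$. By the change-of-variables formula,
\[
q_{S_{g^\star}}=\prod_j p_{j:p}/p_{j+1:p}=p .
\]
Hence $D_{\rm KL}(\P\|\Q_{S_{g^\star}})=0$ and $\alpha_n^{\rm NF}=0$.

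\textbf{Step 4: generalization, (iv).} On the interior of the cube,
\[
\ell_{S_g}=-\sum_j\{g_j-A_jg_j\},
\]
which is bounded by $2pB$ and hence $\P$-integrable. The map $g\mapsto\ell_{S_g}$ is Lipschitz from the product sup-norm to the sup-norm, because $\lvert A_jg-A_jg'\rvert\le\lVert g-g'\rVert_\infty$. By Arzel\`a--Ascoli, each $\cG_j(B,L)$ is totally bounded in sup-norm. Hence $\{\ell_S:S\in\cT(B,L)\}$ has finite $\epsilon$-brackets in sup-norm for every $\epsilon>0$, so it is $\P$-Glivenko--Cantelli and $\Delta_n^{\rm NF}\to0$ almost surely. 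The class is separable, which handles measurability.

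\textbf{Main obstacle.} I expect the delicate parts to be Step~2 and the regularity claim in Step~3:
\begin{itemize}
\item showing that the recursively built inverse is a homeomorphism preserving interiors with a $C^1$ interior inverse;
\item checking that all marginals $p_{j:p}$ inherit $C^1$ regularity up to the boundary with uniformly bounded gradients, so that a single finite pair $(B,L)$ works.
\end{itemize}
For the second point I would first try dominated differentiation under the integral, using the continuous extension of $\nabla p$ to the closed cube.
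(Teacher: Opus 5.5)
Your proposal is correct and follows the paper's proof essentially step for step. The shared steps are: the backward-recursion, homeomorphism and inverse-function-theorem argument for membership in the monotone triangular class; the bound $M_S=e^{2B}$ from $\lvert A_jg_j\rvert\le B$; an exact Knothe--Rosenblatt representative in $\cT(B,L)$ giving $\alpha_n^{\rm NF}=0$; and a uniform law of large numbers via Arzel\`a--Ascoli total boundedness and the $1$-Lipschitz property of $A_j$.

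There are two cosmetic differences. You take $g_j^\star=\log p_{j:p}$, whereas the paper takes the log conditional density $\log(p_{j:p}/p_{j+1:p})$; these differ only by a function of $z_{j+1:p}$, which the normalizer $A_j$ absorbs, so both yield the same map. You also invoke a bracketing Glivenko--Cantelli theorem, whereas the paper applies the strong law directly over a sequence of finite sup-norm nets.
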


\begin{proof}
We first verify Assumption~\ref{ass:triangular_data}.
The set $[0,1]^p$ is compact and convex with nonempty interior.
Moreover, continuity and strict positivity of $p$ on the compact cube
give constants
\begin{align*}
0<\underline p
\le
p(z)
\le
\overline p
<
\infty,
\qquad
z\in[0,1]^p.
\end{align*}
In particular, $\log p$ is bounded and hence
$\P[\lvert\log p\rvert]<\infty$.
Thus Assumption~\ref{ass:triangular_data} holds.

We next verify
Assumption~\ref{ass:triangular_model}(i)-(iv).

First, the uniform noise distribution satisfies
Assumption~\ref{ass:triangular_model}(i) with
$\underline p_U=\overline p_U=1$. 
For every $g_j$, the construction in
\eqref{eq:integrated_exponential_coordinate} gives 
\begin{align*}
S_{g,j}(0,z_{j+1:p})=0, \qquad
S_{g,j}(1,z_{j+1:p})=1, 
\end{align*}
and
\begin{align*}
D_j S_{g,j}(z_j,z_{j+1:p})
=
\exp\big\{
g_j(z_j,z_{j+1:p})
-
A_jg_j(z_{j+1:p})
\big\}
>0.
\yestag
\label{eq:integrated_exponential_derivative}
\end{align*}
Thus for every fixed $z_{j+1:p}$, the map
$z_j\mapsto S_{g,j}(z_j,z_{j+1:p})$ is a strictly increasing bijection from
$[0,1]$ onto $[0,1]$. Starting with the last coordinate and proceeding
backward therefore shows that $S_g$ is a bijection from $[0,1]^p$ onto
$[0,1]^p$. The endpoint identities also show that
$S_g((0,1)^p)=(0,1)^p$. The map is continuous, so compactness of the domain
implies that its inverse is continuous. Moreover, $S_g$ is $C^1$, and its
triangular Jacobian has strictly positive determinant on $(0,1)^p$. The
inverse function theorem therefore shows that its restriction to $(0,1)^p$
is a $C^1$-diffeomorphism.
Hence every $S_g\in\cT(B,L)$ satisfies
Definition~\ref{def:triangular_normalizing_flow}.

If $\lVert g_j\rVert_\infty\le B$, then
$-B \le A_jg_j(z_{j+1:p}) \le B$.
Consequently, \eqref{eq:integrated_exponential_derivative} gives $e^{-2B} \le D_j S_{g,j}(z) \le e^{2B}$ uniformly over $j$, $z$, and $S_g\in\cT(B,L)$. Thus Assumption~\ref{ass:triangular_model}(ii) holds with
$M_S=e^{2B}$. The lower bound $D_j S_{g,j}\ge e^{-2B}$ will also be useful
below.

Second, we construct an exact population map in the class. For
$j\in\zahl{p}$, define the trailing marginal density
\begin{align*}
p_{j:p}(z_{j:p})
:={}
\int_{[0,1]^{j-1}}
p(x_{1:j-1},z_{j:p})
\d x_{1:j-1},
\end{align*}
and set $p_{p+1:p}\equiv1$. Define the reverse conditional density
\begin{align*}
r_j^\star(z_j\mid z_{j+1:p})
:={}
\frac{p_{j:p}(z_{j:p})}
{p_{j+1:p}(z_{j+1:p})}.
\end{align*}
All integrations are over fixed cubes. Hence differentiation under the
integral sign shows that every $p_{j:p}$ is $C^1$. Since $p$ is continuous
and bounded away from zero on the compact unit cube, the functions
$p_{j:p}$ are uniformly bounded above and away from zero. It follows that
$r_j^\star$ is positive and $C^1$, and that both
$\log r_j^\star$ and its gradient are uniformly bounded over
$j\in\zahl{p}$. Therefore, for some finite constants $B$ and $L$,
\begin{align*}
g_j^\star(z_j,z_{j+1:p})
:={}
\log r_j^\star(z_j\mid z_{j+1:p})
\in
\cG_j(B,L)
\end{align*}
for every $j\in\zahl{p}$. Because $r_j^\star(\cdot\mid z_{j+1:p})$ is a conditional density,
\begin{align*}
A_jg_j^\star(z_{j+1:p})
=
\log
\int_0^1
r_j^\star(t\mid z_{j+1:p})
\d t
=
0.
\end{align*}
Thus $D_j S_{g^\star,j}(z)
=
r_j^\star(z_j\mid z_{j+1:p})$.
Since the base density is one, the density induced by $S_{g^\star}$ is
\begin{align*}
q_{S_{g^\star}}(z)
=
\prod_{j=1}^p
r_j^\star(z_j\mid z_{j+1:p})
=
\prod_{j=1}^p
\frac{p_{j:p}(z_{j:p})}
{p_{j+1:p}(z_{j+1:p})}
=
p(z).
\end{align*}
Therefore $\Q_{S_{g^\star}}=\P$, and
\begin{align*}
\inf_{S\in\cT(B,L)}
D_{\rm KL}(\P\|\Q_S)
=
0.
\end{align*}
This verifies Assumption~\ref{ass:triangular_model}(iii).

It remains to verify the uniform law of large numbers. Under the uniform base
distribution,
\begin{align*}
\ell_{S_g}(z)
=
-
\sum_{j=1}^p
\big\{
g_j(z_{j:p})
-
A_jg_j(z_{j+1:p})
\big\},
\end{align*}
so $\lVert\ell_{S_g}\rVert_\infty\le2pB$. Moreover, for any two functions
$g$ and $\widetilde g$ on the same domain,
\begin{align*}
\big\lVert
A_jg-A_j\widetilde g
\big\rVert_\infty
\le
\lVert g-\widetilde g\rVert_\infty;
\yestag
\label{eq:log_normalizer_lipschitz}
\end{align*}
since with $\delta:=\lVert g-\widetilde g\rVert_\infty$,
\begin{align*}
e^{-\delta}
\int_0^1e^{\widetilde g(t,z_{j+1:p})}\d t
\le
\int_0^1e^{g(t,z_{j+1:p})}\d t
\le
e^{\delta}
\int_0^1e^{\widetilde g(t,z_{j+1:p})}\d t,
\end{align*}
and taking logarithms gives
\eqref{eq:log_normalizer_lipschitz}. Consequently,
\begin{align*}
\big\lVert
\ell_{S_g}-\ell_{S_{\widetilde g}}
\big\rVert_\infty
\le
2
\sum_{j=1}^p
\lVert g_j-\widetilde g_j\rVert_\infty.
\yestag
\label{eq:triangular_loss_lipschitz}
\end{align*}

Let $K_j:=[0,1]^{p-j+1}$.
Fix an arbitrary sequence
$\{\tilde g_m\}_{m\ge1}\subset\cG_j(B,L)$. The uniform sup-norm bound
implies that this sequence is pointwise bounded. Moreover, since $K_j$
is convex, the common gradient bound gives
$\lvert \tilde g_m(x)- \tilde g_m(y)\rvert
\le
L\lVert x-y\rVert$
for every $m\ge1$ and every $x,y\in K_j$. Hence the sequence is
equicontinuous. By the Arzelà-Ascoli theorem,
it has a subsequence that converges uniformly on $K_j$.

We now show that this implies that $\cG_j(B,L)$ is totally bounded
under the sup norm. Otherwise, there would exist $\epsilon_0>0$ and a
sequence $\{\tilde g_m\}_{m\ge1}\subset\cG_j(B,L)$ such that $\lVert \tilde g_{m_1}-\tilde g_{m_2}\rVert_\infty
\ge
\epsilon_0$ for all $m_1\ne m_2$.
The preceding Arzelà-Ascoli argument gives a uniformly convergent
subsequence. Such a subsequence is Cauchy under the sup norm, contradicting
the displayed separation. Therefore, $\cG_j(B,L)$ is totally
bounded under the sup norm.

The finite product of these classes is
totally bounded, and \eqref{eq:triangular_loss_lipschitz} implies that the loss
class
\begin{align*}
\mathcal L(B,L)
:=
\big\{
\ell_{S_g}:S_g\in\cT(B,L)
\big\}
\end{align*}
is also totally bounded under the sup norm.
For any $\epsilon>0$, let
$\ell_1,\ldots,\ell_N$ be a finite $\epsilon$-net for
$\mathcal L(B,L)$. Then
\begin{align*}
\sup_{\ell\in\mathcal L(B,L)}
\big\lvert
(\bP_n-\P)\ell
\big\rvert
\le
\max_{1\le k\le N}
\big\lvert
(\bP_n-\P)\ell_k
\big\rvert
+
2\epsilon.
\end{align*}
For each integer $m\ge1$, choose a finite $m^{-1}$-net
$\ell_{m,1},\ldots,\ell_{m,N_m}$ for $\mathcal L(B,L)$.
By the strong law of large numbers, on an event of probability one,
\begin{align*}
\max_{1\le k\le N_m}
\big\lvert
(\bP_n-\P)\ell_{m,k}
\big\rvert
\longrightarrow
0
\end{align*}
for every $m\ge1$ simultaneously.
On this event,
\begin{align*}
\limsup_{n\to\infty}
\sup_{\ell\in\mathcal L(B,L)}
\big\lvert
(\bP_n-\P)\ell
\big\rvert
\le
\frac{2}{m}
\end{align*}
for every $m\ge1$.
Letting $m\to\infty$ proves
\begin{align*}
\sup_{S\in\cT(B,L)}
\big\lvert
(\bP_n-\P)\ell_S
\big\rvert
\longrightarrow
0
\end{align*}
almost surely and completes the proof.
\end{proof}

Proposition~\ref{prop:integrated_exponential_flow} verifies
Assumption~\ref{ass:triangular_data} and
Assumption~\ref{ass:triangular_model}(i)-(iv).
If, in addition, the fitted map satisfies
Assumption~\ref{ass:triangular_model}(v), then
Theorem~\ref{thm:triangular_flow_bootstrap_CWC} yields conditional
weak convergence of the triangular generative bootstrap process for
every bounded $\P$-Donsker class.

In fact, this class satisfies a stronger two-sided
likelihood-ratio bound, which yields the converse as well.
For every $S_g\in\cT(B,L)$, the preceding proof gives for Lebesgue-almost every $z\in[0,1]^p$,
\begin{align*}
e^{-2Bp}
\le
q_{S_g}(z)
\le
e^{2Bp}.
\end{align*}
Then for every fitted map $\hat S_n\in\cT(B,L)$,
\begin{align*}
\Big\{\sup_{z\in[0,1]^p}p(z) \Big\}^{-1}
e^{-2Bp}
\le
{\d\Q_n}/{\d\P}
\le
\Big\{\inf_{z\in[0,1]^p}p(z)\Big\}^{-1}
e^{2Bp}
\end{align*}
$\P$-almost surely.
Thus the model satisfies the two directional domination conditions in
Section~\ref{subsec:relation_Donsker}. Under Assumption~\ref{ass:triangular_model}(v),
Lemma~\ref{lem:triangular_KL_consistency} also gives
$\Q_n\rightsquigarrow\P$ in $\P_\cO$-probability.
Therefore, Corollary~\ref{cor:Donsker_equivalence_weak_consistency}
implies that, for every measurable class $\cF$ with a bounded envelope,
the triangular-flow bootstrap process converges conditionally if and
only if $\cF$ is $\P$-Donsker.

\subsection{Flow matching}
\label{sec:FM-apdx}

This subsection gives a finite-dimensional neural template for
verifying the flow-matching conditions in
Section~\ref{sec:FM}. The construction uses an affine skip connection
to accommodate linear growth, a smooth residual network for nonlinear
flexibility, and piecewise-linear interpolation in time. Deterministic
norm constraints yield the flow regularity in
Assumption~\ref{def:controlled_neural_FM_class}. We then verify
approximation through a target-specific closure condition and
generalization through a finite-dimensional covering argument.
Numerical optimization remains a separate condition on the fitting
procedure.

\begin{definition}[Controlled affine-residual neural vector fields]
\label{def:FM_neural_template}
Fix an integer $D\ge1$. For each $n$, let
\begin{align*}
0=t_{0,n}<t_{1,n}<\cdots<t_{K_n,n}=1
\end{align*}
be a deterministic partition of $[0,1]$, and let
$\phi_{0,n},\ldots,\phi_{K_n,n}$ be its nodal hat functions. Therefore, $\phi_{k,n}(t)\ge0$ and $\sum_{k=0}^{K_n}\phi_{k,n}(t)=1$
for $0\le t\le1$. 

Fix constants $L_A,M_b<\infty$ and
$\kappa_\ell,\beta_\ell<\infty$ for $1\le\ell\le D+1$, all independent of $n$. Define
\begin{align*}
c_0:=1,
\qquad
c_\ell:=\kappa_\ell c_{\ell-1}+\beta_\ell
\end{align*}
for $1\le\ell\le D+1$.
Set 
\begin{align*}
L_r := \prod_{\ell=1}^{D+1}\kappa_\ell,
\qquad
M_r := c_{D+1}.
\end{align*}
Assume $L_A+L_r \le L$ and $M_b+M_r \le M$,
where $L$ and $M$ are the constants in
Assumption~\ref{def:controlled_neural_FM_class}.

A parameter $\theta\in\Theta_n$ consists of matrices and vectors
\begin{align*}
&\big\{(A_k,b_k): \lVert A_k\rVert_{\rm op}
\le L_A, \lVert b_k\rVert \le M_b, ~0\le k\le K_n\big\},
\\
&\big\{(W_\ell,a_\ell): \lVert W_\ell\rVert_{\rm op}
\le \kappa_\ell, \lVert a_\ell\rVert
\le \beta_\ell, ~1\le\ell\le D+1 \big\},
\end{align*}
where all quantities are vectorized in the usual way. Define
\begin{align*}
A_\theta(t)
:=
\sum_{k=0}^{K_n}A_k\phi_{k,n}(t),
\qquad
b_\theta(t)
:=
\sum_{k=0}^{K_n}b_k\phi_{k,n}(t).
\end{align*}
Let
$\sigma_{\rm act}:\bR\to\bR$ be continuously differentiable, applied coordinatewise,
and satisfy $\sigma_{\rm act}(0)=0$ with $\lVert\sigma_{\rm act}'\rVert_\infty\le1$. For $y=(t,x)\in[0,1]\times\bR^p$, the residual network $r_\theta(t,x)$ is defined as
\begin{align*}
h_{\theta,0}(y)
&:=y,
\\
h_{\theta,\ell}(y)
&:=
\sigma_{\rm act}\big(
W_\ell h_{\theta,\ell-1}(y)+a_\ell
\big), \quad 1\le\ell\le D,
\\
r_\theta(t,x)
&:=
W_{D+1}h_{\theta,D}(t,x)+a_{D+1},
\end{align*}
where $m_{n,0}=p+1$, $m_{n,D+1}=p$, and $m_{n,1},\ldots,m_{n,D}\ge1$ are arbitrary hidden-layer widths. 
Define the vector field by
\begin{align*}
\tilde v_{\theta,t}(x)
:=
A_\theta(t)x+b_\theta(t)+r_\theta(t,x).
\end{align*}
The resulting vector-field class is
\begin{align*}
\cV_n
:=
\big\{
\tilde v_\theta:\theta\in\Theta_n
\big\}.
\end{align*}
\end{definition}

The class $\cV_n$ is a norm-controlled
abstraction of neural vector-field
parametrizations used in continuous-time flow-based generative models
\citep{chen2018neural,grathwohl2018ffjord,lipman2022flow}. The following proposition verifies the velocity regularity conditions required in the
main text. It also records a parameter-sensitivity bound that will be used for
a uniform law of large numbers for the sampled loss. Let
\begin{align*}
N_n^{\rm par}
:=
(K_n+1)(p^2+p)
+
\sum_{\ell=1}^{D+1}
 m_{n,\ell}
\big( m_{n,\ell-1}+1 \big)
\end{align*}
denote the total number of parameters.

\begin{proposition}[Regularity and parameter sensitivity]
\label{prop:FM_template_regularity}
For the class in Definition~\ref{def:FM_neural_template}, the parameter set
$\Theta_n$ is compact and every $\tilde v_\theta\in\cV_n$ satisfies
\begin{align*}
\sup_{t\in[0,1]}
\lVert
\tilde v_{\theta,t}(0)
\rVert
\le M,
\qquad
\sup_{t\in[0,1],x\in\bR^p}
\lVert
\nabla_x\tilde v_{\theta,t}(x)
\rVert_{\rm op}
\le L.
\end{align*}
Consequently, $\cV_n$ satisfies
Assumption~\ref{def:controlled_neural_FM_class}.

Moreover, there exists a constant $C<\infty$, independent of $n$, such that
for every $\theta,\theta'\in\Theta_n$ and every $x\in\bR^p$,
\begin{align*}
\sup_{t\in[0,1]}
\big\lVert
\tilde v_{\theta,t}(x)
-
\tilde v_{\theta',t}(x)
\big\rVert
\le
C\big(1+\lVert x\rVert\big)
\lVert\theta-\theta'\rVert_2;
\yestag
\label{eq:FM_parameter_sensitivity}
\end{align*}
and there exists a constant
$C_0<\infty$, independent of $n$, such that
\begin{align*}
\log \cN \big(
\eta,
\Theta_n,
\lVert\cdot\rVert_2
\big)
\le
N_n^{\rm par}
\log\Big(
1+
\frac{C_0\sqrt{N_n^{\rm par}}}{\eta}
\Big),
\qquad
0<\eta\le1.
\yestag
\label{eq:FM_parameter_entropy}
\end{align*}
\end{proposition}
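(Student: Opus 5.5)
The plan has three parts: compactness of $\Theta_n$, the two regularity bounds (which give Assumption~\ref{def:controlled_neural_FM_class}), and the two parameter estimates \eqref{eq:FM_parameter_sensitivity}–\eqref{eq:FM_parameter_entropy}.

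\emph{Compactness.} $\Theta_n$ is a finite product of closed operator-norm and Euclidean-norm balls in finite-dimensional spaces. Each factor is closed and bounded, so $\Theta_n$ is compact.

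\emph{Regularity.} I will first bound the hidden layers by induction on $\ell$. Since $\sigma_{\rm act}(0)=0$ and $\lVert\sigma_{\rm act}'\rVert_\infty\le1$, we have $\lvert\sigma_{\rm act}(s)\rvert\le\lvert s\rvert$, and coordinatewise application is $1$-Lipschitz in the Euclidean norm. Hence $\lVert h_{\theta,\ell}(y)\rVert\le\kappa_\ell\lVert h_{\theta,\ell-1}(y)\rVert+\beta_\ell$. At $y=(t,0)$ we have $\lVert y\rVert\le1=c_0$, so the induction gives $\lVert h_{\theta,\ell}(t,0)\rVert\le c_\ell$ and $\lVert r_\theta(t,0)\rVert\le c_{D+1}=M_r$. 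The hat functions form a partition of unity, so $A_\theta(t)$ and $b_\theta(t)$ are convex combinations and satisfy $\lVert A_\theta(t)\rVert_{\rm op}\le L_A$ and $\lVert b_\theta(t)\rVert\le M_b$. Therefore $\lVert\tilde v_{\theta,t}(0)\rVert\le M_b+M_r\le M$.

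For the Jacobian, the chain rule gives $\nabla_x r_\theta=W_{D+1}\,\mathrm{diag}(\sigma_{\rm act}')\,W_D\cdots$, applied to the $x$-columns of $W_1$. Each diagonal factor has operator norm at most $1$, and the $x$-block of $W_1$ has operator norm at most $\kappa_1$. So $\lVert\nabla_x r_\theta\rVert_{\rm op}\le L_r$ and $\lVert\nabla_x\tilde v_{\theta,t}\rVert_{\rm op}\le L_A+L_r\le L$. Joint measurability and $C^1$ regularity in $x$ follow from continuity of every term.

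\emph{Parameter sensitivity.} The affine part is straightforward. Since $\sum_k\phi_{k,n}=1$,
\[
\lVert(A_\theta-A_{\theta'})x+(b_\theta-b_{\theta'})\rVert\le\max_k\big(\lVert A_k-A_k'\rVert_{\rm op}\lVert x\rVert+\lVert b_k-b_k'\rVert\big),
\]
which is at most $(1+\lVert x\rVert)\lVert\theta-\theta'\rVert_2$. The residual part is the only place needing care. I will telescope layer by layer: set $\Delta_\ell:=\lVert h_{\theta,\ell}(y)-h_{\theta',\ell}(y)\rVert$. Then
\[
\Delta_\ell\le\kappa_\ell\Delta_{\ell-1}+\lVert W_\ell-W_\ell'\rVert_{\rm op}\lVert h_{\theta',\ell-1}(y)\rVert+\lVert a_\ell-a_\ell'\rVert.
\]
The same induction as above gives $\lVert h_{\theta',\ell-1}(y)\rVert\le C(1+\lVert x\rVert)$, using $\lVert y\rVert\le1+\lVert x\rVert$. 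Unrolling this linear recursion over the fixed depth $D$ gives a bound $C(1+\lVert x\rVert)\lVert\theta-\theta'\rVert_2$, where $C$ depends only on $D$ and the constants $\kappa_\ell,\beta_\ell$. In particular $C$ does not depend on the widths or on $K_n$, because operator norms are bounded by Frobenius norms.

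\emph{Entropy.} $\Theta_n$ sits inside a Euclidean ball in $\bR^{N_n^{\rm par}}$. Its radius is at most $C_0\sqrt{N_n^{\rm par}}/2$, since each block has bounded operator norm and hence Frobenius norm at most $\sqrt{\mathrm{rank}}$ times a constant, and there are at most $N_n^{\rm par}$ scalar coordinates. The standard volumetric bound $\cN(\eta,B_\rho,\lVert\cdot\rVert_2)\le(1+2\rho/\eta)^N$ then gives \eqref{eq:FM_parameter_entropy}.

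The step needing most care is this radius bound. Summing the squared Frobenius norms per block, with rank at most the matrix dimension, gives a total at most $C^2N_n^{\rm par}$ when the widths are arbitrary. That is exactly the $\sqrt{N_n^{\rm par}}$ factor in the statement, and verifying it block by block is the main obstacle.
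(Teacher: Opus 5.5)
Your proposal is correct and follows essentially the same route as the paper: partition-of-unity bounds for the affine part, a layerwise induction for the residual network at $x=0$ and for its Jacobian, a fixed-depth telescoping recursion for the parameter sensitivity, and a boundedness-plus-volume argument for the entropy. The only difference is in the entropy step, where the paper simply notes that every scalar entry is bounded by a fixed constant (e.g.\ $\lvert (W_\ell)_{ij}\rvert\le\lVert W_\ell\rVert_{\rm op}\le\kappa_\ell$), so $\Theta_n\subset[-C_0,C_0]^{N_n^{\rm par}}$ and a coordinate grid gives the bound; this makes the step you flag as the main obstacle immediate, although your Frobenius--rank radius computation is also valid.
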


\begin{proof}
Because the hat functions are nonnegative and form a partition of unity,
\begin{align*}
\sup_{t\in[0,1]}
\lVert A_\theta(t)\rVert_{\rm op}
&\le L_A,
\qquad
\sup_{t\in[0,1]}
\lVert b_\theta(t)\rVert
\le M_b.
\end{align*}
Since $\sigma_{\rm act}(0)=0$ and $\sigma_{\rm act}$ is $1$-Lipschitz, induction over the layers gives
\begin{align*}
\lVert h_{\theta,\ell}(t,x)\rVert
\le
c_\ell\big(1+\lVert x\rVert\big),
\quad
0\le\ell\le D,
\end{align*}
and in particular,
\begin{align*}
\sup_{t\in[0,1]}
\lVert r_\theta(t,0)\rVert
\le M_r.
\end{align*}
The chain rule and $\lVert\sigma_{\rm act}'\rVert_\infty\le1$ also give
\begin{align*}
\sup_{t\in[0,1],x\in\bR^p}
\lVert
\nabla_x r_\theta(t,x)
\rVert_{\rm op}
\le
\prod_{\ell=1}^{D+1}
\lVert W_\ell\rVert_{\rm op}
\le L_r.
\end{align*}
The first conclusion now follows from $L_A+L_r\le L$ and
$M_b+M_r\le M$.

We next prove \eqref{eq:FM_parameter_sensitivity}. Write
$d:=\lVert\theta-\theta'\rVert_2$. Every individual matrix difference has
operator norm at most $d$, and every individual vector difference has
Euclidean norm at most $d$. The partition-of-unity property gives
\begin{align*}
\sup_{t\in[0,1]}
\lVert A_\theta(t)-A_{\theta'}(t)\rVert_{\rm op}
&\le d,
\qquad
\sup_{t\in[0,1]}
\lVert b_\theta(t)-b_{\theta'}(t)\rVert
\le d.
\end{align*}
For the residual network, the $1$-Lipschitz property of $\sigma_{\rm act}$ yields, for
$1\le\ell\le D$,
\begin{align*}
&
\lVert
h_{\theta,\ell}(t,x)
-
h_{\theta',\ell}(t,x)
\rVert
\\
&\le
\lVert
W_\ell h_{\theta,\ell-1}(t,x)+a_\ell
-
W'_\ell h_{\theta',\ell-1}(t,x)-a'_\ell
\rVert
\\
&\le
\lVert W_\ell\rVert_{\rm op}
\lVert
h_{\theta,\ell-1}(t,x)
-
h_{\theta',\ell-1}(t,x)
\rVert
+
\lVert W_\ell-W'_\ell\rVert_{\rm op}
\lVert h_{\theta',\ell-1}(t,x)\rVert
+
\lVert a_\ell-a'_\ell\rVert
\\
&\le
\kappa_\ell
\lVert
h_{\theta,\ell-1}(t,x)
-
h_{\theta',\ell-1}(t,x)
\rVert
+
\big\{
c_{\ell-1}(1+\lVert x\rVert)+1
\big\}d.
\end{align*}
Starting from the common input
$h_{\theta,0}=h_{\theta',0}$ and iterating this inequality, followed by the
same calculation for the output layer gives
\begin{align*}
\sup_{t\in[0,1]}
\lVert
r_\theta(t,x)-r_{\theta'}(t,x)
\rVert
&\le
C_1\big(1+\lVert x\rVert\big)d
\end{align*}
for a constant $C_1$ depending only on the fixed depth and the fixed
layerwise bounds. Combining the affine and residual parts proves
\eqref{eq:FM_parameter_sensitivity}.

Because the hat functions are continuous, the maps
$t\mapsto A_\theta(t)$ and $t\mapsto b_\theta(t)$ are continuous.
Moreover, an induction over the network layers shows that
$(t,x)\mapsto r_\theta(t,x)$ is continuous. Since
$\sigma_{\rm act}'$ is continuous, the chain-rule representation of
$\nabla_x r_\theta(t,x)$ and another induction over the layers show that
$(t,x)\mapsto\nabla_x r_\theta(t,x)$ is continuous. Consequently,
both
\begin{align*}
(t,x)
\longmapsto
\tilde v_{\theta,t}(x),
\qquad
(t,x)
\longmapsto
\nabla_x\tilde v_{\theta,t}(x)
\end{align*}
are continuous on $[0,1]\times\bR^p$.
In particular, they satisfy the joint
measurability requirements in
Assumption~\ref{def:controlled_neural_FM_class}.

Finally, for each fixed $n$, the set $\Theta_n$ is a closed subset of the
finite-dimensional Euclidean space $\bR^{N_n^{\rm par}}$, since all
of its defining norm constraints are closed. Moreover, every scalar
parameter coordinate is bounded by a constant independent of $n$. Hence $\Theta_n
\subset
[-C_0,C_0]^{N_n^{\rm par}}$ for some fixed $C_0<\infty$ and is compact.

Partition each coordinate interval
$[-C_0,C_0]$ into subintervals of length at most
$\eta/\sqrt{N_n^{\rm par}}$. Each resulting grid cell has Euclidean diameter at
most $\eta$, and the number of such cells is bounded by $(1+{2C_0\sqrt{N_n^{\rm par}}}/{\eta})^{N_n^{\rm par}}$.
Choosing one point of $\Theta_n$ from each grid cell that intersects
$\Theta_n$ gives an $\eta$-net. Therefore, after increasing $C_0$ if
necessary,
\begin{align*}
\log \cN \big(
\eta,\Theta_n,\lVert\cdot\rVert_2
\big)
\le
N_n^{\rm par}
\log\Big(
1+
\frac{C_0\sqrt{N_n^{\rm par}}}{\eta}
\Big),
\end{align*}
which proves~\eqref{eq:FM_parameter_entropy}.
\end{proof}

We next give a target-specific sufficient condition for the approximation
part of Assumption~\ref{ass:controlled_neural_FM_training}. The statement is
formulated as a closure condition.

\begin{proposition}[Approximation by the affine-residual sieve]
\label{prop:FM_template_approximation}
Assume
$\E[
\lVert Z\rVert^2
+
\lVert U\rVert^2]
<\infty$.
Suppose the mesh of the partition in
Definition~\ref{def:FM_neural_template} converges to zero and the true
marginal velocity admits a decomposition
\begin{align*}
v_t(x)
=
A_\star(t)x+b_\star(t)+r_\star(t,x),
\end{align*}
where
$A_\star:[0,1]\to\bR^{p\times p}$ and
$b_\star:[0,1]\to\bR^p$ are continuous, with
$\bR^{p\times p}$ equipped with the operator norm and
$\bR^p$ equipped with the Euclidean norm, and satisfy
\begin{align*}
\sup_{t\in[0,1]}
\lVert A_\star(t)\rVert_{\rm op}
\le L_A,
\qquad
\sup_{t\in[0,1]}
\lVert b_\star(t)\rVert
\le M_b.
\end{align*}
Assume also that there exists $C_\star<\infty$ such that for all
$(t,x)\in[0,1]\times\bR^p$,
\begin{align*}
\lVert r_\star(t,x)\rVert
\le
C_\star\big(1+\lVert x\rVert\big),
\end{align*}
and that there exist radii $R_n\uparrow\infty$ and residual networks $r_n$
obeying the constraints in Definition~\ref{def:FM_neural_template} such that
\begin{align*}
\sup_{t\in[0,1],x\in B_{R_n}}
\lVert
r_n(t,x)-r_\star(t,x)
\rVert
\longrightarrow0.
\yestag
\label{eq:FM_residual_closure}
\end{align*}
Then
\begin{align*}
\alpha_n^{\rm FM}
=
\inf_{\tilde v\in\cV_n}
\big\{
\cR^{\rm FM}(\tilde v)-\cR^{\rm FM}(v)
\big\}
\longrightarrow0.
\end{align*}
\end{proposition}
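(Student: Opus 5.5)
By the projection identity \eqref{eq:CFM_projection_identity},
\[
\cR^{\rm FM}(\tilde v)-\cR^{\rm FM}(v)=\int_0^1\lVert\tilde v_t-v_t\rVert_{L^2(\P_t)}^2\d t .
\]
So it suffices to exhibit, for each $n$, a specific $\theta_n\in\Theta_n$ whose field $\tilde v_{\theta_n}$ makes this integral tend to zero. The plan is to build $\theta_n$ component by component, matching the decomposition of $v_t$.

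\textbf{Construction.} For the affine part, take nodal values $A_k:=A_\star(t_{k,n})$ and $b_k:=b_\star(t_{k,n})$. These satisfy the norm constraints $\lVert A_k\rVert_{\rm op}\le L_A$ and $\lVert b_k\rVert\le M_b$ by hypothesis. The resulting hat-function interpolants $A_{\theta_n}(t)=\sum_k A_\star(t_{k,n})\phi_{k,n}(t)$ and $b_{\theta_n}$ then differ from $A_\star$ and $b_\star$ by at most the modulus of continuity evaluated at the mesh. This is because the $\phi_{k,n}$ are nonnegative, sum to one, and $\phi_{k,n}(t)>0$ only for nodes adjacent to $t$. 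Uniform continuity on $[0,1]$ then gives
\[
a_n:=\sup_t\lVert A_{\theta_n}(t)-A_\star(t)\rVert_{\rm op}+\sup_t\lVert b_{\theta_n}(t)-b_\star(t)\rVert\to0 .
\]
For the residual part, take the weights of the network $r_n$ from \eqref{eq:FM_residual_closure}. These obey the constraints, so $\theta_n\in\Theta_n$.

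\textbf{Bounding the error.} Pointwise,
\[
\lVert\tilde v_{\theta_n,t}(x)-v_t(x)\rVert\le a_n(1+\lVert x\rVert)+\lVert r_n(t,x)-r_\star(t,x)\rVert .
\]
Squaring and integrating against $\P_t\,\d t$ gives two terms. The first is at most $2a_n^2\int_0^1\E(1+\lVert X_t\rVert)^2\d t$. Since $X_t=\mu_tZ+\sigma_tU$ with $\mu_t,\sigma_t\in[0,1]$, we have $\sup_t\E\lVert X_t\rVert^2\le 2\E[\lVert Z\rVert^2+\lVert U\rVert^2]<\infty$, so this term vanishes.

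For the residual term, split on $B_{R_n}$ versus $B_{R_n}^c$. On the ball, the contribution is at most $\sup_{t,\,x\in B_{R_n}}\lVert r_n-r_\star\rVert^2\to0$ by \eqref{eq:FM_residual_closure}. Off the ball, use the growth bounds. The residual network satisfies $\lVert r_n(t,x)\rVert\le \lVert r_n(t,0)\rVert+L_r\lVert x\rVert\le M_r+L_r\lVert x\rVert$, by the layer induction already carried out in Proposition~\ref{prop:FM_template_regularity}, and $\lVert r_\star\rVert\le C_\star(1+\lVert x\rVert)$. Hence this contribution is at most
\[
C\int_0^1\E\big[(1+\lVert X_t\rVert)^2\ind\{\lVert X_t\rVert>R_n\}\big]\d t .
\]

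\textbf{Main obstacle.} The key step is showing this tail integral vanishes uniformly in $t$. This follows from uniform integrability of $\{\lVert X_t\rVert^2\}_{t\in[0,1]}$, since each is dominated by $2\lVert Z\rVert^2+2\lVert U\rVert^2$. Indeed, $\{\lVert X_t\rVert>R\}\subseteq\{\lVert Z\rVert+\lVert U\rVert>R\}$, so the integrand is bounded by $C\,\E[(1+\lVert Z\rVert+\lVert U\rVert)^2\ind\{\lVert Z\rVert+\lVert U\rVert>R_n\}]$. That bound is independent of $t$ and tends to $0$ by dominated convergence as $R_n\uparrow\infty$.

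Combining the affine and residual estimates gives $\alpha_n^{\rm FM}\le\int_0^1\lVert\tilde v_{\theta_n,t}-v_t\rVert_{L^2(\P_t)}^2\d t\to0$.
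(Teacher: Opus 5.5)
Your proposal is correct and follows the paper's proof essentially step for step. Both arguments interpolate $A_\star$ and $b_\star$ at the nodes with hat functions, insert the closure networks $r_n$, reduce via the projection identity \eqref{eq:CFM_projection_identity}, and split the residual error over $B_{R_n}$ and $B_{R_n}^c$ using the linear-growth bounds. The only cosmetic difference is the tail step: you use the $t$-uniform domination $\lVert X_t\rVert\le\lVert Z\rVert+\lVert U\rVert$, whereas the paper appeals directly to finiteness of $\int_0^1\int\lVert x\rVert^2\,\d\P_t(x)\,\d t$, which amounts to dominated convergence on the product measure.
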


\begin{proof}
Let
\begin{align*}
A^{(n)}(t)
:=
\sum_{k=0}^{K_n}
A_\star(t_{k,n})\phi_{k,n}(t),
\qquad
b^{(n)}(t)
:=
\sum_{k=0}^{K_n}
b_\star(t_{k,n})\phi_{k,n}(t).
\end{align*}
Write
$h_n
:=
\max_{1\le k\le K_n}
(t_{k,n}-t_{k-1,n})
\to 0$ 
to denote the mesh size of the partition. For any fixed
$t\in[t_{j,n},t_{j+1,n}]$, since the nodal hat functions are
nonnegative and form a partition of unity, only
$\phi_{j,n}(t)$ and $\phi_{j+1,n}(t)$ are nonzero with $\phi_{j,n}(t)+\phi_{j+1,n}(t)=1$, and
\begin{align*}
A^{(n)}(t)
=
\phi_{j,n}(t)A_\star(t_{j,n})
+
\phi_{j+1,n}(t)A_\star(t_{j+1,n}).
\end{align*}
Consequently,
\begin{align*}
\big\lVert
A^{(n)}(t)-A_\star(t)
\big\rVert_{\rm op}
&=
\big\lVert
\phi_{j,n}(t)
\big\{
A_\star(t_{j,n})-A_\star(t)
\big\}
+
\phi_{j+1,n}(t)
\big\{
A_\star(t_{j+1,n})-A_\star(t)
\big\}
\big\rVert_{\rm op}
\\
&\le
\phi_{j,n}(t)
\big\lVert
A_\star(t_{j,n})-A_\star(t)
\rVert_{\rm op}
+
\phi_{j+1,n}(t)
\lVert
A_\star(t_{j+1,n})-A_\star(t)
\big\rVert_{\rm op}
\\ 
&\le 
\sup_{s,t\in[0,1]:
\lvert s-t\rvert\le h_n}
\lVert
A_\star(s)-A_\star(t)
\rVert_{\rm op}
\longrightarrow 0;
\end{align*}
where the convergence follows because $A_\star$ is uniformly
continuous on the compact interval $[0,1]$ and $h_n\to0$.
Similarly, 
\begin{align*}
\sup_{t\in[0,1]}
\lVert
b^{(n)}(t)-b_\star(t)
\rVert
\le 
\sup_{s,t\in[0,1],
\lvert s-t\rvert\le h_n}
\lVert
b_\star(s)-b_\star(t)
\rVert
\longrightarrow 0.
\end{align*}
Define
\begin{align*}
\tilde v_{n,t}(x)
:=
A^{(n)}(t)x+b^{(n)}(t)+r_n(t,x).
\end{align*}
Then $\tilde v_n\in\cV_n$. Since
$X_t=\mu_tZ+\sigma_tU$ and $\mu,\sigma$ are bounded on $[0,1]$,
\begin{align*}
\int_0^1
\int_{\bR^p}
\lVert x\rVert^2
\d\P_t(x)
\d t
<\infty.
\yestag
\label{eq:FM_path_second_moment_apdx}
\end{align*}
By the projection identity
\eqref{eq:CFM_projection_identity}, it is enough to prove
\begin{align*}
\int_0^1
\lVert
\tilde v_{n,t}-v_t
\rVert_{L^2(\P_t)}^2
\d t
\longrightarrow0.
\end{align*}
The affine terms converge by
\eqref{eq:FM_path_second_moment_apdx} and the two uniform approximation
bounds above. For the residual term, the class constraints imply
\begin{align*}
\lVert r_n(t,x)\rVert
&\le
M_r+L_r\lVert x\rVert.
\end{align*}
Together with the affine-growth assumption on $r_\star$, this gives
\begin{align*}
&\int_0^1
\int_{\bR^p}
\lVert
r_n(t,x)-r_\star(t,x)
\rVert^2
\d\P_t(x)
\d t
\notag\\
&\le
\sup_{t\in[0,1],x\in B_{R_n}}
\lVert
r_n(t,x)-r_\star(t,x)
\rVert^2
+
C
\int_0^1
\int_{B_{R_n}^c}
\big(1+\lVert x\rVert^2\big)
\d\P_t(x)
\d t.
\end{align*}
The first term converges to zero by \eqref{eq:FM_residual_closure}; the second
converges to zero by \eqref{eq:FM_path_second_moment_apdx}. The conclusion
follows.
\end{proof}

Proposition~\ref{prop:FM_template_approximation} shows that if the
constrained residual classes approximate $r_\star$ uniformly on expanding
balls while retaining the fixed norm bounds in
Definition~\ref{def:FM_neural_template}, then this local uniform
approximation condition extends to the approximation required in the
main theorem. Related classical universal-approximation results give uniform approximation on compact sets for unconstrained neural networks \citep{cybenko1989approximation,kidger2020universal}.

Condition~\eqref{eq:FM_residual_closure} is an architecture and
target specific closure requirement. For example, suppose
\begin{align*}
\P_U
    =
{\rm N}(m_U,\Sigma_U),
\qquad
\P
=
{\rm N}(m_Z,\Sigma_Z),
\end{align*}
where $\Sigma_U$ and $\Sigma_Z$ are positive definite. Then
$(X_t,V_t)$ is jointly Gaussian for every $t\in[0,1]$, and hence
$v_t(x)=\E[V_t\mid X_t=x]$ is affine in $x$. More precisely,
\begin{align*}
v_t(x)
&=
A_\star(t)x+b_\star(t),
\\
A_\star(t)
&=
\big(
\dot\mu_t\mu_t\Sigma_Z
+
\dot\sigma_t\sigma_t\Sigma_U
\big)
\big(
\mu_t^2\Sigma_Z
+
\sigma_t^2\Sigma_U
\big)^{-1},
\\
b_\star(t)
&=
\dot\mu_t m_Z+\dot\sigma_t m_U
-
A_\star(t)
\big(
\mu_t m_Z+\sigma_t m_U
\big).
\end{align*}
The coefficient functions $A_\star$ and $b_\star$ are continuous on
$[0,1]$. Thus one may take $r_\star\equiv0$, choose $L_A$ and $M_b$
large enough to contain these coefficients, and verify the approximation
condition using only the piecewise-linear time sieves.

It remains to verify the generalization condition for the objective actually
used in Assumption~\ref{def:FM_training_objective}. For
$\theta\in\Theta_n$, define the augmented-observation loss
\begin{align*}
\ell_\theta(z,t,u)
:=
\big\lVert
\tilde v_{\theta,t}
\big(
\mu_tz+\sigma_tu
\big)
-
\big(
\dot\mu_tz+\dot\sigma_tu
\big)
\big\rVert^2.
\end{align*}
Then we can write
\begin{align*}
\cR^{\rm FM}(\tilde v_\theta)
=
\E\big[
\ell_\theta(Z,T,U)
\big],
\qquad
\hat \cR^{\rm FM}_n(\tilde v_\theta)
=
\frac1n
\sum_{i=1}^n
\ell_\theta(Z_i,T_i,U_i).
\end{align*}
The next proposition gives a simple finite-dimensional sufficient condition
for the corresponding uniform law of large numbers.

\begin{proposition}[Generalization for the sampled flow-matching loss]
\label{prop:FM_template_generalization}
Suppose that $\lVert Z\rVert$ and $\lVert U\rVert$ are sub-Gaussian random variables.
If
\begin{align*}
\frac{
N_n^{\rm par}
\log\big(
2+nN_n^{\rm par}
\big)
}{n}
&\longrightarrow0,
\yestag
\label{eq:FM_parameter_growth_apdx}
\end{align*}
then
\begin{align*}
\sup_{\tilde v\in\cV_n}
\big\lvert
\hat \cR^{\rm FM}_n(\tilde v)-\cR^{\rm FM}(\tilde v)
\big\rvert
&=
o_{\P_\cO}(1).
\end{align*}
\end{proposition}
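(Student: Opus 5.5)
We prove the uniform law by truncation followed by a finite-net argument in parameter space, using the two estimates of Proposition~\ref{prop:FM_template_regularity}. Write $W_i:=(Z_i,T_i,U_i)$ and $\Xi_i:=1+\lVert Z_i\rVert+\lVert U_i\rVert$.

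\textbf{Step 1: envelope and Lipschitz bounds.} Because $\mu,\sigma,\dot\mu,\dot\sigma$ are bounded on $[0,1]$, and $\lVert\tilde v_{\theta,t}(x)\rVert\le M+L\lVert x\rVert$, there is a constant $C$ independent of $n$ such that
\[
\ell_\theta(z,t,u)\le C(1+\lVert z\rVert+\lVert u\rVert)^2.
\]
Write $a^2-b^2=(a-b)(a+b)$ and apply \eqref{eq:FM_parameter_sensitivity} at $x=\mu_tz+\sigma_tu$. This gives
\[
\lvert\ell_\theta-\ell_{\theta'}\rvert\le C(1+\lVert z\rVert+\lVert u\rVert)^2\lVert\theta-\theta'\rVert_2.
\]
So the loss class has a square-growth envelope and is Lipschitz in $\theta$ with a random square-growth constant.

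\textbf{Step 2: discretization.} Take an $\eta_n$-net $\{\theta_1,\ldots,\theta_{N}\}$ of $\Theta_n$ with $\eta_n:=1/n$. By \eqref{eq:FM_parameter_entropy},
\[
\log N\le N_n^{\rm par}\log(1+C_0 n\sqrt{N_n^{\rm par}})\lesssim N_n^{\rm par}\log(2+nN_n^{\rm par}).
\]
The Lipschitz bound from Step 1 gives
\[
\sup_\theta\lvert(\hat\cR_n-\cR)(\tilde v_\theta)\rvert\le\max_k\lvert(\hat\cR_n-\cR)(\tilde v_{\theta_k})\rvert+C\eta_n\Big\{\frac1n\sum_i\Xi_i^2+\E\Xi^2\Big\}.
\]
The last term is $O_{\P_\cO}(1/n)$, since finite second moments and Markov's inequality control $\frac1n\sum_i\Xi_i^2$.

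\textbf{Step 3: maximal inequality with truncation (main obstacle).} The losses are unbounded, so Bernstein cannot be applied directly. We truncate at level $\tau_n:=C(\log n)$, namely $\ell^{\le}_\theta:=\ell_\theta\ind\{\Xi^2\le\tau_n\}$. The remainder is handled as follows.
\begin{itemize}
\item By sub-Gaussianity, $\P(\max_i\Xi_i^2>\tau_n)\to0$ once the constant in $\tau_n$ is large, so with probability tending to one the empirical remainder vanishes for all $\theta$ at once.
\item The population remainder is bounded by $C\E[\Xi^2\ind\{\Xi^2>\tau_n\}]\to0$.
\end{itemize}
For the truncated part, each $\ell^{\le}_{\theta_k}$ is bounded by $C\tau_n$. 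Hoeffding's inequality plus a union bound gives
\[
\max_k\lvert(\bP_n-\P)\ell^{\le}_{\theta_k}\rvert\lesssim\tau_n\sqrt{\frac{\log N+\log n}{n}}\lesssim\log n\sqrt{\frac{N_n^{\rm par}\log(2+nN_n^{\rm par})}{n}}
\]
with high probability. The delicate point is the extra $\log n$ factor against \eqref{eq:FM_parameter_growth_apdx}. To avoid it, use Bernstein's inequality instead: the variance of each truncated loss is bounded by a constant through the fourth moment of $\Xi$. The bound becomes
\[
\sqrt{\frac{\log N}{n}}+\frac{\tau_n\log N}{n}.
\]
The square-root term vanishes by \eqref{eq:FM_parameter_growth_apdx}. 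If the residual $\tau_n\log N/n$ term still carries an unwanted logarithm, instead choose $\tau_n\to\infty$ slowly enough that $\tau_n\log N/n\to0$. This slow choice costs nothing in the remainder step: the population tail $\E[\Xi^2\ind\{\Xi^2>\tau_n\}]\to0$ for any $\tau_n\to\infty$. For the empirical tail, Markov's inequality bounds $\frac1n\sum_i\Xi_i^2\ind\{\Xi_i^2>\tau_n\}$ by the same vanishing population tail in expectation, which suffices.

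Combining the net bound, the truncated maximal bound and the two tail controls yields $\Delta_n^{\rm FM}=o_{\P_\cO}(1)$.
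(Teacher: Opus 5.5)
Your proof is correct. Its skeleton is the same as the paper's:
\begin{itemize}
\item a square-growth envelope;
\item the parameter-Lipschitz bound from \eqref{eq:FM_parameter_sensitivity};
\item a $1/n$-net whose size is controlled by \eqref{eq:FM_parameter_entropy};
\item a discretization error of order $O_{\P_\cO}(n^{-1})$;
\item a union bound over the net.
\end{itemize}

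The two arguments differ only in the concentration step. The paper does not truncate. It notes that the envelope $C(1+\lVert Z\rVert^2+\lVert U\rVert^2)$ is sub-exponential, because $\lVert Z\rVert$ and $\lVert U\rVert$ are sub-Gaussian. It then applies the sub-exponential Bernstein inequality directly to each centered net loss. This gives $O_{\P_\cO}(\sqrt{\log M_n/n}+\log M_n/n)$ for the maximum over the net in one line, with an explicit rate.

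Your route truncates at a level $\tau_n\to\infty$ with $\tau_n\log N/n\to0$ and uses Bernstein for bounded variables. Both tails are controlled through $\E[\Xi^2\ind\{\Xi^2>\tau_n\}]\to0$, using Markov for the empirical one. You lose the rate, since that tail term vanishes at no specified speed. In exchange you need much less than sub-Gaussianity. A finite fourth moment suffices for your Bernstein variance bound. If you use Hoeffding with, say, $\tau_n=(\log(2N)/n)^{-1/4}$, you need only the finite second moments already imposed in Assumptions~\ref{ass:FM_data} and~\ref{ass:controlled_neural_FM_training}. So your version shows that sub-Gaussianity matters only for obtaining a rate.

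One presentational point: commit to the slowly growing $\tau_n$ from the start. With $\tau_n=C\log n$, the available bound on the linear Bernstein term $\tau_n\log N/n$ is of order $\log n\cdot N_n^{\rm par}\log(2+nN_n^{\rm par})/n$. Condition \eqref{eq:FM_parameter_growth_apdx} does not force this to zero, so that first attempt does not close. Once you use the slow truncation, the sub-Gaussian maximal bound in your first bullet is no longer needed.
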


\begin{proof}
Because $\mu$, $\sigma$, $\dot\mu$, and $\dot\sigma$ are bounded on
$[0,1]$, the affine-growth bound from
Proposition~\ref{prop:FM_template_regularity} gives a constant $C<\infty$ such
that, uniformly over $\theta\in\Theta_n$,
\begin{align*}
\ell_\theta(z,t,u)
\le
C\big(
1+\lVert z\rVert^2+\lVert u\rVert^2
\big).
\yestag
\label{eq:FM_loss_envelope_apdx}
\end{align*}
Combining \eqref{eq:FM_parameter_sensitivity} with
$\lvert\lVert a\rVert^2-\lVert b\rVert^2\rvert
\le(\lVert a\rVert+\lVert b\rVert)\lVert a-b\rVert$ also gives
\begin{align*}
\big\lvert
\ell_\theta(z,t,u)-\ell_{\theta'}(z,t,u)
\big\rvert
&\le
C\big(
1+\lVert z\rVert^2+\lVert u\rVert^2
\big)
\lVert\theta-\theta'\rVert_2.
\yestag
\label{eq:FM_loss_parameter_Lipschitz_apdx}
\end{align*}
By assumption, the common envelope in
\eqref{eq:FM_loss_envelope_apdx} is sub-exponential, uniformly in
$n$ and $\theta$. Hence the centered
losses are uniformly sub-exponential, and the ordinary sub-exponential
Bernstein inequality applies with constant $C'$ independent of $n$ and $\theta$: for every
$x>0$,
\begin{align*}
\P_\cO\Big(
\big\lvert
\hat \cR^{\rm FM}_n(\tilde v_\theta)
-
\cR^{\rm FM}(\tilde v_\theta)
\big\rvert
>
C'\Big\{
\sqrt{\frac{x}{n}}
+
\frac{x}{n}
\Big\}
\Big)
\le
2e^{-x}.
\end{align*}

Let $\eta_n:=n^{-1}$ and let
$\theta_1,\ldots,\theta_{M_n}$ be an $\eta_n$-net of $\Theta_n$ under the
Euclidean metric. By \eqref{eq:FM_parameter_entropy},
\begin{align*}
\log M_n
\le
N_n^{\rm par}
\log\big(
1+C_0n\sqrt{N_n^{\rm par}}
\big)
\lesssim
N_n^{\rm par}
\log\big(
2+nN_n^{\rm par}
\big).
\end{align*}
For each $\theta$, choose a net point $\pi_n(\theta)$ satisfying
$\lVert\theta-\pi_n(\theta)\rVert_2\le n^{-1}$. Then
\eqref{eq:FM_loss_parameter_Lipschitz_apdx} gives
\begin{align*}
&\sup_{\theta\in\Theta_n}
\big\lvert
\big\{
\hat \cR^{\rm FM}_n(\tilde v_\theta)-\cR^{\rm FM}(\tilde v_\theta)
\big\}
-
\big\{
\hat \cR^{\rm FM}_n(\tilde v_{\pi_n(\theta)})
-
\cR^{\rm FM}(\tilde v_{\pi_n(\theta)})
\big\}
\big\rvert
\\
& \le
\sup_{\theta\in\Theta_n}
\big\{
\frac1n
\sum_{i=1}^n
\big\lvert
\ell_\theta(Z_i,T_i,U_i) - \ell_{\pi_n(\theta)}(Z_i,T_i,U_i) 
\big\rvert
+
\E \big[
\big\lvert
\ell_\theta(Z,T,U) - \ell_{\pi_n(\theta)}(Z,T,U) 
\big\rvert
\big]
\big\}
\\
&\le
\frac{C}{n}
\big\{
\frac1n
\sum_{i=1}^n
\big(
1+\lVert Z_i\rVert^2+\lVert U_i\rVert^2
\big)
+
\E\big[
1+\lVert Z\rVert^2+\lVert U\rVert^2
\big]
\big\}
=
O_{\P_\cO}(n^{-1}).
\end{align*}
The preceding Bernstein inequality and a union bound give, for every
$x>0$,
\begin{align*}
&\P_\cO\Big(
\max_{1\le j\le M_n}
\big\lvert \hat \cR^{\rm FM}_n(\tilde v_{\theta_j})
-
\cR^{\rm FM}(\tilde v_{\theta_j}) \big\rvert
>
C'
\Big\{
\sqrt{\frac{x}{n}}
+
\frac{x}{n}
\Big\}
\Big)
\\
&\le
\sum_{j=1}^{M_n}
\P_\cO\Big(
\big\lvert \hat \cR^{\rm FM}_n(\tilde v_{\theta_j})
-
\cR^{\rm FM}(\tilde v_{\theta_j}) \big\rvert
>
C'
\Big\{
\sqrt{\frac{x}{n}}
+
\frac{x}{n}
\Big\}
\Big)
\le
2M_ne^{-x}.
\end{align*}
Taking $x=\log(2M_n)+t$ yields, for every $t>0$,
\begin{align*}
\P_\cO\Big(
\max_{1\le j\le M_n}
\big\lvert \hat \cR^{\rm FM}_n(\tilde v_{\theta_j})
-
\cR^{\rm FM}(\tilde v_{\theta_j}) \big\rvert
>
C'
\Big\{
\sqrt{\frac{\log(2M_n)+t}{n}}
+
\frac{\log(2M_n)+t}{n}
\Big\}
\Big)
\le
e^{-t}.
\end{align*}
Consequently,
\begin{align*}
\max_{1\le j\le M_n}
\big\lvert
\hat \cR^{\rm FM}_n(\tilde v_{\theta_j})
-
\cR^{\rm FM}(\tilde v_{\theta_j})
\big\rvert
=
O_{\P_\cO}\Big(
\sqrt{\frac{\log(2M_n)}{n}}
+
\frac{\log(2M_n)}{n}
\Big).
\end{align*}
The right-hand side is $o_{\P_\cO}(1)$ by
\eqref{eq:FM_parameter_growth_apdx}. Combining the last two displays proves
the result.
\end{proof}

The preceding propositions now combine directly with the main-text training
assumption.

\begin{corollary}[Verification of the flow-matching conditions]
\label{cor:FM_template_verification}
Suppose the class $\cV_n$ is constructed as in
Definition~\ref{def:FM_neural_template}. Assume the conditions of
Propositions~\ref{prop:FM_template_approximation} and
\ref{prop:FM_template_generalization} hold. If the fitted vector field also
satisfies $\varepsilon_n^{\rm FM} = o_{\P_\cO}(1)$,
then Assumption~\ref{def:controlled_neural_FM_class} and
Assumption~\ref{ass:controlled_neural_FM_training} hold. 
\end{corollary}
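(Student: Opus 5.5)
The statement to prove is Corollary~\ref{cor:FM_template_verification}. It is a bookkeeping result, so I would assemble it directly from Propositions~\ref{prop:FM_template_regularity}, \ref{prop:FM_template_approximation} and \ref{prop:FM_template_generalization}. No new analytic argument should be needed.

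\textbf{Step 1: Assumption~\ref{def:controlled_neural_FM_class}.} I would invoke the first part of Proposition~\ref{prop:FM_template_regularity}. For every $\theta\in\Theta_n$ it gives
\[
\sup_t\lVert\tilde v_{\theta,t}(0)\rVert\le M,
\qquad
\sup_{t,x}\lVert\nabla_x\tilde v_{\theta,t}(x)\rVert_{\rm op}\le L,
\]
with $L,M$ independent of $n$, because $L_A+L_r\le L$ and $M_b+M_r\le M$. The remaining requirements are spatial $C^1$ regularity and joint measurability of $(t,x)\mapsto\tilde v_{\theta,t}(x)$ and $(t,x)\mapsto\nabla_x\tilde v_{\theta,t}(x)$. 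These follow from the joint continuity established in the same proof. Nonemptiness of $\cV_n$ holds because, for example, $\theta=0$ lies in $\Theta_n$ (using $\sigma_{\rm act}(0)=0$).

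\textbf{Step 2: Assumption~\ref{ass:controlled_neural_FM_training}.}
\begin{enumerate}[label=(\alph*)]
\item \emph{Moment condition.} The conditions of Proposition~\ref{prop:FM_template_approximation} include $\E\lVert U\rVert^2<\infty$. This is exactly the second-moment requirement on $\P_U$. It also follows from the sub-Gaussianity assumed in Proposition~\ref{prop:FM_template_generalization}.
\item \emph{Part (i), approximation.} Proposition~\ref{prop:FM_template_approximation} gives $\alpha_n^{\rm FM}\to0$ directly.
\item \emph{Part (ii), generalization.} Proposition~\ref{prop:FM_template_generalization} gives $\Delta_n^{\rm FM}=o_{\P_\cO}(1)$. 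The supremum there is over the same class $\cV_n=\{\tilde v_\theta:\theta\in\Theta_n\}$ and the same sampled risk $\hat\cR_n^{\rm FM}$ used in Assumption~\ref{def:FM_training_objective}, through the identification by $\ell_\theta$.
\item \emph{Part (iii), optimization.} This is the hypothesis $\varepsilon_n^{\rm FM}=o_{\P_\cO}(1)$.
\end{enumerate}

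\textbf{Main point needing care.} The main obstacle, though minor, is consistency of objects across the propositions. Specifically:
\begin{itemize}
\item The constraints on the networks $r_n$ in \eqref{eq:FM_residual_closure} must match those defining $\Theta_n$, so that the approximating field $\tilde v_n$ built in the proof of Proposition~\ref{prop:FM_template_approximation} indeed lies in $\cV_n$.
\item The population risk in Proposition~\ref{prop:FM_template_generalization} must be the same $\cR^{\rm FM}$, with $T\sim{\rm Unif}(0,1)$ independent of $(Z,U)$.
\end{itemize}
Both follow from the definitions, so the corollary follows.
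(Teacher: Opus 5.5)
Your proposal is correct and takes the same route as the paper, which gives no separate proof and simply notes that Proposition~\ref{prop:FM_template_regularity} delivers Assumption~\ref{def:controlled_neural_FM_class}, while Propositions~\ref{prop:FM_template_approximation} and~\ref{prop:FM_template_generalization} and the optimization hypothesis deliver parts (i)--(iii) of Assumption~\ref{ass:controlled_neural_FM_training}; the second-moment condition on $\P_U$ comes from the hypotheses of Proposition~\ref{prop:FM_template_approximation}. Your extra bookkeeping checks (nonemptiness of $\cV_n$, the constraints on $r_n$ matching those of $\Theta_n$, and identifying the sampled risk via $\ell_\theta$) are sound; note only that $\theta=0\in\Theta_n$ follows from the norm bounds being nonnegative, not from $\sigma_{\rm act}(0)=0$.
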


Together with Assumption~\ref{ass:FM_data} and the remaining conditions of
Theorem~\ref{thm:FM_bootstrap_CWC}, Corollary~\ref{cor:FM_template_verification} yields the conditional weak
convergence asserted in that theorem.

The same argument applies to other parametrizations that provide
comparable flow-regularity, parameter-sensitivity, and covering bounds.

\subsection{Score-based diffusion models}
\label{sec:diffusion-apdx}

This subsection gives primitive conditions under which the
reverse-diffusion regularity requirement in
Assumption~\ref{ass:diffusion_score_consistency}(i) and the path-space KL
bound used in Lemma~\ref{lem:diffusion_TV_consistency} hold. For the latter,
we apply the diffusion KL estimate of
\citet[Lemma~4.4(i) and Remark~4.5]{lacker2023hierarchies}.
Proposition~\ref{prop:diffusion_OU_reverse_path} records the required
properties of the exact reverse process, and
Proposition~\ref{prop:diffusion_admissible_regularity} gives well-posedness
and finite-energy properties for the learned reverse process.
Proposition~\ref{prop:diffusion_reverse_KL_bounds} then establishes the KL
bound. Finally, Corollary~\ref{cor:diffusion_fitted_reverse_KL} specializes
it to the fitted diffusion model of Section~\ref{sec:diffusion}, and
Proposition~\ref{prop:diffusion_Gaussian_plugin} gives a simple Gaussian
plug-in verification of the score-consistency condition.

Since the KL comparison below is between the laws of entire reverse
trajectories, it is convenient to place the exact and learned processes
on a common canonical path space. For fixed $0<\tau<T<\infty$, let
\begin{align*}
\Omega_{\tau,T}
:=
C\big([0,T-\tau];\bR^p\big),
\end{align*}
equipped with the uniform topology and its Borel $\sigma$-field, and
write $\mathsf X_r(\omega) := \omega(r)$ with $0\le r\le T-\tau$
for the canonical coordinate process on $\Omega_{\tau,T}$.
Under the path laws introduced below, $\mathsf X$ represent the exact and learned reverse diffusion, respectively.

We first record the regularity of the early-stopped Ornstein--Uhlenbeck reverse path.

\begin{proposition}[Early-stopped Ornstein-Uhlenbeck reverse path]
\label{prop:diffusion_OU_reverse_path}
Suppose Assumption~\ref{ass:diffusion_data_path} holds. Then the following
statements hold.
\begin{enumerate}[label=(\roman*),itemsep=-.5ex]
\item For every $t>0$, the law $\P_t$ admits a strictly positive
$C^\infty$ Lebesgue density $p_t$, and the map
\begin{align*}
(t,x)
\longmapsto
s_t(x)
=
\nabla\log p_t(x)
\end{align*}
is continuous on $(0,\infty)\times\bR^p$.

\item For every $t>0$,
\begin{align*}
s_t(X_t)
=
\E\big[
\psi_t(X_t,Z)
\mid
X_t
\big]
\end{align*}
almost surely, and for every fixed $0<\tau<T<\infty$,
\begin{align*}
\int_\tau^T
\lVert s_t\rVert_{L^2(\P_t)}^2
\d t
<
\infty.
\end{align*}

\item For every fixed $0<\tau<T<\infty$, the time reversal of the forward
OU process over $[\tau,T]$ induces a probability measure
$\mathbf P_{\tau,T}^{\rm ex}$ on $\Omega_{\tau,T}$ under which the
canonical process is a weak solution of
\begin{align*}
\d \mathsf X_r
=
\big\{
\mathsf X_r
+
2s_{T-r}(\mathsf X_r)
\big\}
\d r
+
\sqrt{2}\d W_r^{\rm ex},
\qquad
\mathsf X_0\sim\P_T.
\yestag
\label{eq:appendix_exact_reverse_OU}
\end{align*}
Moreover,
\begin{align*}
\mathsf X_r
\sim
\P_{T-r},
\qquad
0\le r\le T-\tau.
\yestag
\label{eq:appendix_exact_reverse_marginals}
\end{align*}
In particular, $\mathsf X_{T-\tau}\sim\P_\tau$.
\end{enumerate}
\end{proposition}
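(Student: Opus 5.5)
Part (i) is a Gaussian-convolution computation. Since $X_t\overset{\rm d}{=}e^{-t}Z+\sigma_t\xi$ with $\sigma_t:=\sqrt{1-e^{-2t}}$, the density has the form
\begin{align*}
p_t(x)=\int\varphi_{\sigma_t}(x-e^{-t}z)\,\d\P(z).
\end{align*}
This is strictly positive because the Gaussian kernel is. Differentiating under the integral sign, with derivatives of $\varphi_{\sigma}$ dominated by polynomial multiples of Gaussians uniformly over $(t,x)$ in compact subsets of $(0,\infty)\times\bR^p$, shows that $p_t$ is $C^\infty$ in $x$ and that $(t,x)\mapsto(p_t(x),\nabla p_t(x))$ is jointly continuous. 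Continuity of $s_t=\nabla p_t/p_t$ then follows from positivity.

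For part (ii), the same differentiation yields the Tweedie-type identity
\begin{align*}
\nabla p_t(x)=\int\psi_t(x,z)\,\varphi_{\sigma_t}(x-e^{-t}z)\,\d\P(z).
\end{align*}
Dividing by $p_t(x)$ identifies $s_t(x)$ with the posterior mean of $\psi_t(x,Z)$ given $X_t=x$. Conditional Jensen then gives
\begin{align*}
\lVert s_t\rVert_{L^2(\P_t)}^2\le\E\lVert\psi_t(X_t,Z)\rVert^2=\frac{p}{1-e^{-2t}}.
\end{align*}
This bound is integrable on $[\tau,T]$ since $\tau>0$. Finite second moment of $\P$ is not even needed here.

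Part (iii) I plan to obtain from a standard time-reversal theorem for diffusions, either Haussmann--Pardoux or, more conveniently, the finite-entropy/finite-energy criterion of Föllmer and Cattiaux--Conforti--Gentil--Léonard. That criterion requires $\int_\tau^T\E\lVert\nabla\log p_t(X_t)\rVert^2\d t<\infty$, and this is exactly what part (ii) provides. Under it, the reversed process $Y_r:=X_{T-r}$ on $[0,T-\tau]$ is a weak solution with drift
\begin{align*}
-b(T-r,Y_r)+2\nabla\log p_{T-r}(Y_r)=Y_r+2s_{T-r}(Y_r),
\end{align*}
where $b(x)=-x$ and the diffusion coefficient is $\sqrt2$, driven by a Brownian motion in the reversed filtration. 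I take $\mathbf P^{\rm ex}_{\tau,T}$ to be the law of $Y$ on $\Omega_{\tau,T}$; continuity of OU paths places it there. The marginal statement \eqref{eq:appendix_exact_reverse_marginals} is immediate from $Y_r=X_{T-r}$.

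The main obstacle is verifying the hypotheses of the reversal theorem without any Lipschitz or growth condition on $s$. The cleanest route is to restrict to $[\tau,T]$, where the forward process starts from $\P_\tau$, which has a smooth positive density and finite second moment. This is where the second-moment assumption on $\P$ enters: it gives finite relative entropy of the path law with respect to a stationary OU reference. With the energy bound from part (ii), the Cattiaux et al.\ theorem then applies directly.

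If that theorem were unavailable, the fallback is the direct route. I would show that $\tilde W_r:=(Y_r-Y_0-\int_0^r\{Y_u+2s_{T-u}(Y_u)\}\d u)/\sqrt2$ is a continuous martingale with quadratic variation $rI_p$ in the reversed filtration, and conclude by Lévy's characterization. The martingale property would be checked against test functions using the Fokker--Planck equation for $p_t$ and integration by parts. Localization is justified by the smoothness and positivity from part (i) and the integrability from part (ii).
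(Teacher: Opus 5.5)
Your parts (i) and (ii) match the paper's proof essentially line for line: Gaussian convolution, differentiation under the integral, identifying $s_t$ as the posterior mean of $\psi_t$, and conditional Jensen giving $p/(1-e^{-2t})$.

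For (iii) you reach the same reversal formula by a different verification. The paper applies \citet[Theorem~2.1]{haussmann1986time} and checks its density-regularity hypothesis directly. It writes $p_t$ as the convolution of the density of $e^{-t}Z$ with $\phi_{\sigma_t}$, and Young's $L^1$--$L^2$ inequality gives $\lVert p_t\rVert_{L^2(\lambda)}\le\lVert\phi_{\sigma_t}\rVert_{L^2(\lambda)}$ and $\lVert\nabla p_t\rVert_{L^2(\lambda)}\le\lVert\nabla\phi_{\sigma_t}\rVert_{L^2(\lambda)}$. Both bounds are uniform on $[\tau,T]$ because $\sigma_t\ge\sigma_\tau>0$. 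Together with the globally Lipschitz OU coefficients, the theorem yields the drift $x+2s_{T-r}(x)$.

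You instead use the finite-entropy reversal theorem of F\"ollmer and Cattiaux--Conforti--Gentil--L\'eonard, with the stationary OU process as reversible reference. You need $D_{\rm KL}(\P_\tau\|\gamma_p)<\infty$, which follows from the second moment by the same Gaussian-mixture convexity bound the paper later uses for $D_{\rm KL}(\P_{T_n}\|\gamma_p)$, and the score energy from (ii). The backward drift $-x+2\nabla\log(\d\P_{T-r}/\d\gamma_p)=x+2s_{T-r}$ then follows directly, so your argument is valid.

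The trade-off: the paper's check is a two-line $L^2$ estimate that never uses the second moment or any entropy bound. Yours reuses (ii) and rests on a more general theorem that would also handle irregular forward drifts, at the cost of a heavier citation.

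One correction of emphasis: neither theorem needs regularity of $s$ itself, only of the forward coefficients and the marginal densities. So the \emph{main obstacle} you flag does not actually arise here.
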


\begin{proof}
For every $t>0$, the law $\P_t$ is the convolution of a finite measure with
a nondegenerate Gaussian density. Standard differentiation under the integral
therefore gives a strictly positive $C^\infty$ density $p_t$ and joint
continuity of $s_t$ on $(0,\infty)\times\bR^p$.

Now we verify the conditional-expectation identity. Since $t>0$, differentiation under the integral sign gives
\begin{align*}
\nabla p_t(x)
=
\int_{\mathbb R^p}
\nabla_x p_t(x\mid z)\d \P(z)
=
\int_{\mathbb R^p}
\psi_t(x,z)p_t(x\mid z)\d \P(z).
\end{align*}
Because $p_t(x)>0$, it follows that
\begin{align*}
s_t(x)
=
\nabla\log p_t(x)
=
\frac{
\int_{\mathbb R^p}
\psi_t(x,z)p_t(x\mid z)\d \P(z)
}{
p_t(x)
}.
\end{align*}
The conditional distribution of $Z$ given $X_t=x$ is characterized by
\begin{align*}
\P(Z\in \d z\mid X_t=x)
=
\frac{p_t(x\mid z)}{p_t(x)}\P(\d z).
\end{align*}
Consequently,
\begin{align*}
s_t(x)
=
\E\big[
\psi_t(X_t,Z)
\mid
X_t=x
\big]
\end{align*}
for every $x$, and therefore $s_t(X_t)
= \E[ \psi_t(X_t,Z) \mid X_t]$ almost surely.
Conditional Jensen's inequality gives
\begin{align*}
\lVert s_t\rVert_{L^2(\P_t)}^2
=
\E\big[
\big\lVert
\E\big[
\psi_t(X_t,Z)
\mid
X_t
\big]
\big\rVert^2
\big]
\le
\E\big[
\lVert\psi_t(X_t,Z)\rVert^2
\big]
=
\frac{\E\lVert\xi\rVert^2}{1-e^{-2t}}
=
\frac{p}{1-e^{-2t}},
\end{align*}
Hence its integral is finite
on every interval bounded away from zero.

Finally, fix $0<\tau<T<\infty$ and write
$\sigma_t := \sqrt{1-e^{-2t}}$.
Let $\mu_t$ denote the Lebesgue density of $e^{-t}Z$. Then
$p_t = \mu_t*\phi_{\sigma_t}$ and $\lVert \mu_t\rVert_{L^1(\lambda)}=1$.
Lemma~\ref{lem:L1_Lr_convolution} then gives
\begin{align*}
\lVert p_t\rVert_{L^2(\lambda)}
\le
\lVert\phi_{\sigma_t}\rVert_{L^2(\lambda)},
\qquad
\lVert\nabla p_t\rVert_{L^2(\lambda)}
\le
\lVert\nabla\phi_{\sigma_t}\rVert_{L^2(\lambda)}.
\end{align*}
Since
$\inf_{t\in[\tau,T]}\sigma_t
= \sigma_\tau >0$,
the right-hand sides are uniformly bounded over
$t\in[\tau,T]$. Hence
\begin{align*}
\int_\tau^T
\Big\{
\lVert p_t\rVert_{L^2(\lambda)}^2
+
\lVert\nabla p_t\rVert_{L^2(\lambda)}^2
\Big\}
\d t
<
\infty.
\end{align*}
Thus the density regularity condition in
\citet[Theorem~2.1]{haussmann1986time} holds for the forward OU
process restricted to $[\tau,T]$. Its drift and diffusion
coefficients are globally Lipschitz with linear growth. The
time-reversal theorem therefore applies, and because the diffusion
matrix is $2I_p$, the reversed drift is
\begin{align*}
-(-x)+2\nabla\log p_{T-r}(x)
=
x+2s_{T-r}(x).
\end{align*}
This gives \eqref{eq:appendix_exact_reverse_OU}. The marginal identity
\eqref{eq:appendix_exact_reverse_marginals} follows directly from the
definition of the time-reversed path.
\end{proof}

We next give a simple sufficient condition for the learned reverse equation to
be well posed. The condition is imposed separately on each finite interval
$[\tau,T]$; its controlling functions need not be uniform over $n$.

\begin{definition}[Diffusion-admissible score field]
\label{def:diffusion_admissible_score}
Fix $0<\tau<T<\infty$. A jointly Borel measurable map
\begin{align*}
\tilde s:
[\tau,T]\times\bR^p
\to
\bR^p
\end{align*}
is called diffusion-admissible on $[\tau,T]$ if, for Lebesgue-almost every
$t\in[\tau,T]$, the map $x\mapsto\tilde s_t(x)$ is continuous and there exist
nonnegative functions $L_{\tilde s},M_{\tilde s}\in L^2([\tau,T])$ such that
\begin{align*}
\lVert
\tilde s_t(x)-\tilde s_t(y)
\rVert
\le
L_{\tilde s}(t)
\lVert x-y\rVert,
\qquad
\lVert\tilde s_t(0)\rVert
\le
M_{\tilde s}(t)
\end{align*}
for all $x,y\in\bR^p$ and Lebesgue-almost every $t\in[\tau,T]$.
\end{definition}

The preceding condition implies the linear-growth bound
\begin{align*}
\lVert\tilde s_t(x)\rVert
\le
M_{\tilde s}(t)
+
L_{\tilde s}(t)\lVert x\rVert.
\yestag
\label{eq:diffusion_admissible_linear_growth}
\end{align*}
It permits the spatial Lipschitz and growth envelopes to depend on the fitted
score and hence, in an application below, on the fitting information $\cO$.

\begin{proposition}[Well-posedness and finite pathwise score energy]
\label{prop:diffusion_admissible_regularity}
Suppose Assumption~\ref{ass:diffusion_data_path} holds. Fix
$0<\tau<T<\infty$, and let $\tilde s$ be diffusion-admissible on
$[\tau,T]$. Then the following statements hold.
\begin{enumerate}[label=(\roman*),itemsep=-.5ex]
\item One has
\begin{align*}
\int_\tau^T
\lVert\tilde s_t\rVert_{L^2(\P_t)}^2
\d t
<
\infty,
\qquad
\int_\tau^T
\E\big[
\lVert\psi_t(X_t,Z)\rVert^2
\big]
\d t
<
\infty.
\yestag
\label{eq:diffusion_population_risk_finiteness}
\end{align*}
Consequently, the population denoising-score risks in
\eqref{eq:diffusion_DSM_objectives} are finite for $\tilde s$ and $s$ on
$[\tau,T]$.

\item For every probability measure $\nu$ on $\bR^p$, the reverse equation
\begin{align*}
\hat X_r^{\tilde s,\nu}
=
U_0
+
\int_0^r
\big\{
\hat X_u^{\tilde s,\nu}
+
2\tilde s_{T-u}
\big(
\hat X_u^{\tilde s,\nu}
\big)
\big\}
\d u
+
\sqrt{2}W_r^{\rm rev},
\qquad
U_0\sim\nu,
\yestag
\label{eq:appendix_learned_reverse_general_initial}
\end{align*}
for $0\le r\le T-\tau$, where $U_0$ is independent of
$W^{\rm rev}$, admits a unique non-explosive strong solution. The same
conclusion holds after starting the equation at any deterministic
reverse time from any deterministic initial state.

\item Let $\mathbf Q_{\tau,T}^{\tilde s,\nu}$ denote the law of the solution
of \eqref{eq:appendix_learned_reverse_general_initial} on
$\Omega_{\tau,T}$. Under both $\mathbf P_{\tau,T}^{\rm ex}$ and
$\mathbf Q_{\tau,T}^{\tilde s,\nu}$,
\begin{align*}
\int_0^{T-\tau}
\big\lVert
s_{T-r}(\mathsf X_r)
-
\tilde s_{T-r}(\mathsf X_r)
\big\rVert^2
\d r
<
\infty
\end{align*}
almost surely.
\end{enumerate}
\end{proposition}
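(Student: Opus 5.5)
Part (i) is mostly moment bookkeeping. Using the linear-growth bound \eqref{eq:diffusion_admissible_linear_growth}, I would write
\[
\lVert\tilde s_t\rVert_{L^2(\P_t)}^2\le 2M_{\tilde s}(t)^2+2L_{\tilde s}(t)^2\,\P_t[\lVert x\rVert^2].
\]
Since $X_t\overset{\rm d}{=}e^{-t}Z+\sqrt{1-e^{-2t}}\xi$, the second moment satisfies $\sup_{t\ge0}\P_t[\lVert x\rVert^2]\le 2\P[\lVert z\rVert^2]+2p<\infty$ by Assumption~\ref{ass:diffusion_data_path}. Because $L_{\tilde s},M_{\tilde s}\in L^2([\tau,T])$, integrating over $[\tau,T]$ gives the first finiteness claim. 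The second claim follows from $\E\lVert\psi_t(X_t,Z)\rVert^2=p/(1-e^{-2t})$, which is bounded on $[\tau,T]$. Combining these with Proposition~\ref{prop:diffusion_OU_reverse_path}(ii) and $(a+b)^2\le2a^2+2b^2$ shows that both DSM risks are finite.

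For part (ii), the drift of the reverse equation is
\[
b(u,x)=x+2\tilde s_{T-u}(x),
\]
and it is globally Lipschitz in $x$ with time-dependent constant $1+2L_{\tilde s}(T-u)$. This constant lies in $L^2\subset L^1$ on the compact interval $[0,T-\tau]$. The drift also has linear growth with $L^1$ coefficients $1+2L_{\tilde s}$ and $2M_{\tilde s}$, and the diffusion coefficient is the constant $\sqrt2$. I would then run the standard Carathéodory-type Picard iteration pathwise: for each continuous path $w$ of $W^{\rm rev}$ and initial value $U_0$, solve the integral equation $y_r=U_0+\int_0^r b(u,y_u)\,\d u+\sqrt2 w_r$. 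Existence and uniqueness hold deterministically by Gronwall's inequality with the integrable Lipschitz modulus, and Gronwall with linear growth gives non-explosion. The solution map is adapted and measurable because it is the limit of adapted Picard iterates, so the solution is strong, and pathwise uniqueness follows from the same Gronwall estimate. Any deterministic starting time and deterministic initial state are handled identically.

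Part (iii) is the main obstacle, particularly under $\mathbf P^{\rm ex}$, where $s$ need not be Lipschitz. I would bound the integrand by $2\lVert s_{T-r}(\mathsf X_r)\rVert^2+2\lVert\tilde s_{T-r}(\mathsf X_r)\rVert^2$ and show that each piece has finite expectation, which gives almost-sure finiteness.
\begin{itemize}
\item Under $\mathbf P^{\rm ex}$ the marginals are $\mathsf X_r\sim\P_{T-r}$ by \eqref{eq:appendix_exact_reverse_marginals}. Fubini then gives expectations $\int_\tau^T\lVert s_t\rVert_{L^2(\P_t)}^2\,\d t$, which is finite by Proposition~\ref{prop:diffusion_OU_reverse_path}(ii), and the analogous $\tilde s$ integral, finite by part (i).
\item Under $\mathbf Q^{\tilde s,\nu}$ the marginals are unknown and $\nu$ may lack second moments, so expectations are unavailable and I would argue pathwise instead. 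The term in $\tilde s$ is bounded by $\int(2M_{\tilde s}^2+2L_{\tilde s}^2\sup_r\lVert\mathsf X_r\rVert^2)$, which is finite since continuous paths are bounded on a compact interval. For $s$ I would use the joint continuity of $s$ on $(0,\infty)\times\bR^p$ from Proposition~\ref{prop:diffusion_OU_reverse_path}(i): on the compact set $[\tau,T]\times\{\lVert x\rVert\le\sup_r\lVert\mathsf X_r\rVert\}$, $s$ is bounded, so the integral is finite for every continuous path.
\end{itemize}
The same pathwise argument also covers $\mathbf P^{\rm ex}$, which removes the need for the moment computation there.
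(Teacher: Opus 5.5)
Your proposal is correct and follows essentially the same route as the paper: a uniform second-moment bound for $\P_t$ combined with linear growth for (i); a Lipschitz/linear-growth well-posedness argument with Gronwall for non-explosion in (ii); and, for (iii), the purely pathwise argument that $s$ is jointly continuous, hence bounded on $[\tau,T]\times\{\lVert x\rVert\le R\}$, while $\tilde s$ grows linearly along bounded continuous paths. The only differences are cosmetic. In (ii) the paper cites the standard SDE existence theorem, whereas you run the pathwise Picard iteration that additive noise allows. In (i) the paper obtains finiteness of the risk at $s$ from the projection identity, whereas you use $(a+b)^2\le 2a^2+2b^2$ together with the finiteness of $\int_\tau^T\lVert s_t\rVert_{L^2(\P_t)}^2\,\d t$.
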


\begin{proof}
The OU representation gives
\begin{align*}
\int_{\bR^p}
\lVert x\rVert^2
\d\P_t(x)
=
\E\big[
\lVert X_t\rVert^2
\big]
\le
2\E\lVert Z\rVert^2
+
2p
\end{align*}
uniformly over $t\ge0$. Hence
\eqref{eq:diffusion_admissible_linear_growth}, Cauchy-Schwarz, and
$L_{\tilde s},M_{\tilde s}\in L^2([\tau,T])$ imply the first finiteness
statement in \eqref{eq:diffusion_population_risk_finiteness}. The second
follows from
\begin{align*}
\E\big[
\lVert\psi_t(X_t,Z)\rVert^2
\big]
=
\frac{p}{1-e^{-2t}},
\end{align*}
whose integral is finite on $[\tau,T]$. The squared-loss projection identity
then also gives finiteness of the risk at $s$.

The learned reverse drift
\begin{align*}
b_r^{\tilde s}(x)
:=
x+2\tilde s_{T-r}(x),
\qquad
0\le r\le T-\tau,
\end{align*}
satisfies, for Lebesgue-almost every $r$,
\begin{align*}
\lVert
b_r^{\tilde s}(x)-b_r^{\tilde s}(y)
\rVert
\le
\ell(r)\lVert x-y\rVert,
\qquad
\lVert b_r^{\tilde s}(x)\rVert
\le
m(r)+\ell(r)\lVert x\rVert,
\end{align*}
where
\begin{align*}
\ell(r)
:=
1+2L_{\tilde s}(T-r),
\qquad
m(r)
:=
2M_{\tilde s}(T-r).
\end{align*}
Since $L_{\tilde s},M_{\tilde s}\in L^2([\tau,T])$ and the time
interval is finite,
\begin{align*}
\int_0^{T-\tau}
\big\{
\ell(r)+m(r)
\big\}
\d r
<
\infty.
\end{align*}
Thus the drift is jointly measurable, globally Lipschitz in the
spatial variable with a time-integrable Lipschitz envelope, and has a
time-integrable linear-growth envelope. Since the diffusion
coefficient is the constant matrix $\sqrt{2}I_p$, the standard
existence and pathwise-uniqueness theorem for for SDEs with
time-dependent coefficients
gives a unique strong solution of
\eqref{eq:appendix_learned_reverse_general_initial} on
$[0,T-\tau]$.

For completeness, the solution is non-explosive. To see this, writing
\begin{align*}
Y_r
:=
\sup_{0\le u\le r}
\lVert
\widehat X_u^{\tilde s,\nu}
\rVert,
\end{align*}
the integral equation and the preceding growth bound give
\begin{align*}
Y_r
&\le
\lVert U_0\rVert
+
\sqrt{2}
\sup_{0\le u\le r}
\lVert W_u^{\rm rev}\rVert
+
\int_0^r m(u)\d u
+
\int_0^r \ell(u)Y_u\d u.
\end{align*}
Gronwall's inequality therefore yields
\begin{align*}
Y_{T-\tau}
\le
\Big\{
\lVert U_0\rVert
+
\sqrt{2}
\sup_{0\le r\le T-\tau}
\lVert W_r^{\rm rev}\rVert
+
\int_0^{T-\tau}m(r)\d r
\Big\}
\exp\Big(
\int_0^{T-\tau}\ell(r)\d r
\Big)
<
\infty
\end{align*}
almost surely.
The same argument applies after starting the equation at any
deterministic reverse time from any deterministic initial state.

It remains to verify the pathwise energy statement. Every continuous path on
$[0,T-\tau]$ is bounded. By Proposition~\ref{prop:diffusion_OU_reverse_path},
$(t,x)\mapsto s_t(x)$ is continuous on
$[\tau,T]\times\bR^p$, and hence $s_{T-r}(\mathsf X_r)$ is bounded along each
continuous path. On the other hand,
\eqref{eq:diffusion_admissible_linear_growth} and the boundedness of the path
give
\begin{align*}
\int_0^{T-\tau}
\lVert
\tilde s_{T-r}(\mathsf X_r)
\rVert^2
\d r
<
\infty.
\end{align*}
The asserted finiteness follows from the triangle inequality.
\end{proof}

We can now establish KL-divergence bounds comparing the exact and
learned reverse diffusions. Recall that
$\mathbf P_{\tau,T}^{\rm ex}$ and
$\mathbf Q_{\tau,T}^{\tilde s,\nu}$ are probability measures on the
path space $\Omega_{\tau,T}$: respectively, they are the laws of the
exact and learned reverse paths over the entire interval
$[0,T-\tau]$. 
Write $\Q_{\tau,T}^{\tilde s,\nu}$ as the distribution of $\hat X_{T-\tau}^{\tilde s,\nu}$ for the terminal law of the learned reverse process \eqref{eq:appendix_learned_reverse_general_initial}. The terminal
marginal of the exact reverse process \eqref{eq:appendix_exact_reverse_marginals} is $P_\tau$.

\begin{proposition}[KL-divergence bounds for the exact and learned reverse diffusions]
\label{prop:diffusion_reverse_KL_bounds}
Suppose Assumption~\ref{ass:diffusion_data_path} holds. Fix
$0<\tau<T<\infty$, let $\tilde s$ be diffusion-admissible on $[\tau,T]$,
and let $\nu$ be any probability measure on $\bR^p$ satisfying $D_{\rm KL}(\P_T\|\nu) < \infty$.
Then
\begin{align*}
D_{\rm KL}
\big(
\mathbf P_{\tau,T}^{\rm ex}
\|
\mathbf Q_{\tau,T}^{\tilde s,\nu}
\big)
&\le
D_{\rm KL}(\P_T\|\nu)
+
\int_\tau^T
\lVert
\tilde s_t-s_t
\rVert_{L^2(\P_t)}^2
\d t,
\yestag
\label{eq:diffusion_path_space_KL_bound}
\\
D_{\rm KL}
\big(
\P_\tau
\|
\Q_{\tau,T}^{\tilde s,\nu}
\big)
&\le
D_{\rm KL}(\P_T\|\nu)
+
\int_\tau^T
\lVert
\tilde s_t-s_t
\rVert_{L^2(\P_t)}^2
\d t.
\yestag
\label{eq:diffusion_terminal_KL_bound_general}
\end{align*}
\end{proposition}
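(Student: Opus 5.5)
The plan is to prove the path-space bound \eqref{eq:diffusion_path_space_KL_bound} first and then obtain \eqref{eq:diffusion_terminal_KL_bound_general} from it by data processing. For the path-space bound we will use the diffusion KL estimate of \citet[Lemma~4.4(i) and Remark~4.5]{lacker2023hierarchies}. It concerns two path laws on $\Omega_{\tau,T}$ that solve SDEs with the same constant diffusion matrix $2I_p$ and drifts $b^{\rm ex}_r(x)=x+2s_{T-r}(x)$ and $b^{\tilde s}_r(x)=x+2\tilde s_{T-r}(x)$. The estimate reads
\begin{align*}
D_{\rm KL}\big(\mathbf P^{\rm ex}_{\tau,T}\|\mathbf Q^{\tilde s,\nu}_{\tau,T}\big)
\le
D_{\rm KL}(\P_T\|\nu)
+
\frac14\,\E_{\mathbf P^{\rm ex}_{\tau,T}}\int_0^{T-\tau}\big\lVert b^{\rm ex}_r(\mathsf X_r)-b^{\tilde s}_r(\mathsf X_r)\big\rVert^2\d r.
\end{align*}
The factor $\tfrac14$ equals $\tfrac12$ times the inverse diffusion coefficient $\tfrac12$. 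Since $b^{\rm ex}-b^{\tilde s}=2(s-\tilde s)$, the integrand equals $4\lVert s_{T-r}-\tilde s_{T-r}\rVert^2(\mathsf X_r)$. The $\tfrac14$ and the $4$ cancel, so the integrated term is exactly the squared score error along the exact path.

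We next check the hypotheses the lemma needs, in order. First, $\mathbf P^{\rm ex}_{\tau,T}$ is a weak solution of \eqref{eq:appendix_exact_reverse_OU}; this is Proposition~\ref{prop:diffusion_OU_reverse_path}(iii). Second, the learned equation \eqref{eq:appendix_learned_reverse_general_initial} is well posed from any initial law and from any deterministic restart time and state; this is Proposition~\ref{prop:diffusion_admissible_regularity}(ii). Uniqueness in law from every starting point is exactly what Lacker's Remark~4.5 requires in place of a Novikov condition. Third, the drift difference has finite pathwise energy almost surely under both path laws, by Proposition~\ref{prop:diffusion_admissible_regularity}(iii). With these in place the lemma applies, and the initial-law term reduces to $D_{\rm KL}(\P_T\|\nu)$ because $\mathsf X_0\sim\P_T$ under $\mathbf P^{\rm ex}$.

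To identify the expectation term, we use the marginal identity \eqref{eq:appendix_exact_reverse_marginals}, $\mathsf X_r\sim\P_{T-r}$, together with Tonelli's theorem (the integrand is nonnegative and jointly measurable). This gives $\int_0^{T-\tau}\lVert s_{T-r}-\tilde s_{T-r}\rVert_{L^2(\P_{T-r})}^2\d r$. The substitution $t=T-r$ turns this into $\int_\tau^T\lVert\tilde s_t-s_t\rVert_{L^2(\P_t)}^2\d t$, which is finite by \eqref{eq:diffusion_population_risk_finiteness}. This proves \eqref{eq:diffusion_path_space_KL_bound}.

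The terminal bound \eqref{eq:diffusion_terminal_KL_bound_general} then follows from the data-processing inequality for KL divergence, applied to the measurable map $\omega\mapsto\omega(T-\tau)$. This map pushes $\mathbf P^{\rm ex}_{\tau,T}$ to $\P_\tau$ and $\mathbf Q^{\tilde s,\nu}_{\tau,T}$ to $\Q^{\tilde s,\nu}_{\tau,T}$.

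The main obstacle is matching Lacker's hypotheses exactly. His drifts are allowed to be only locally bounded and measurable, but his proof uses a localization via stopping times together with well-posedness of the second SDE from arbitrary restart points. We have to confirm that the continuity of $s$ on $[\tau,T]\times\bR^p$ and the admissibility envelopes $L_{\tilde s},M_{\tilde s}\in L^2$ fit his framework; this is the point where Proposition~\ref{prop:diffusion_admissible_regularity}(ii) and (iii) are essential. If the lemma turns out to demand more, the fallback is a direct localized Girsanov argument. Stop at $\zeta_k=\inf\{r:\int_0^r\lVert s-\tilde s\rVert^2\d u\ge k\}$, where Novikov's condition holds trivially, bound the KL divergence of the stopped laws by the chain rule, and let $k\to\infty$ using lower semicontinuity of KL divergence. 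The finite-energy statement guarantees that $\zeta_k\uparrow$ beyond $T-\tau$ almost surely, which makes this limit legitimate.
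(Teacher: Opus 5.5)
Your proposal is correct and follows essentially the same route as the paper: you apply \citet[Lemma~4.4(i) and Remark~4.5]{lacker2023hierarchies} with its hypotheses supplied by Propositions~\ref{prop:diffusion_OU_reverse_path} and~\ref{prop:diffusion_admissible_regularity}, identify the drift-discrepancy term through the marginal identity \eqref{eq:appendix_exact_reverse_marginals} and the substitution $t=T-r$, and then pass to the terminal law by data processing under $\omega\mapsto\omega(T-\tau)$. Your constant bookkeeping ($\tfrac14\cdot 4=1$) agrees with the paper's $\tfrac12\lVert(\sqrt2 I_p)^{-1}\cdot 2(s-\tilde s)\rVert^2$, and the localized Girsanov fallback is not needed.
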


\begin{proof}
Under $\mathbf P_{\tau,T}^{\rm ex}$, the canonical process has drift
\begin{align*}
b_r^{\rm ex}(x)
:={}
x+2s_{T-r}(x),
\end{align*}
whereas under $\mathbf Q_{\tau,T}^{\tilde s,\nu}$ it has drift
\begin{align*}
b_r^{\tilde s}(x)
:={}
x+2\tilde s_{T-r}(x).
\end{align*}
Both processes have diffusion coefficient matrix $\sqrt{2}I_p$. By
Proposition~\ref{prop:diffusion_OU_reverse_path}, the exact reverse equation
has a weak solution. By
Proposition~\ref{prop:diffusion_admissible_regularity}, the learned reverse
equation is well posed from every deterministic starting time and state, and
the quadratic drift discrepancy is finite almost surely under both path laws.
We may therefore apply the localized KL-divergence estimate of
\citet[Lemma~4.4(i) and Remark~4.5]{lacker2023hierarchies}. It gives
\begin{align*}
D_{\rm KL}
\big(
\mathbf P_{\tau,T}^{\rm ex}
\|
\mathbf Q_{\tau,T}^{\tilde s,\nu}
\big)
&\le
D_{\rm KL}(\P_T\|\nu)
+
\frac12
\E_{\mathbf P_{\tau,T}^{\rm ex}}
\int_0^{T-\tau}
\big\lVert
(\sqrt{2}I_p)^{-1}
\big\{
 b_r^{\rm ex}(\mathsf X_r)
-
b_r^{\tilde s}(\mathsf X_r)
\big\}
\big\rVert^2
\d r
\\
&=
D_{\rm KL}(\P_T\|\nu)
+
\E_{\mathbf P_{\tau,T}^{\rm ex}}
\int_0^{T-\tau}
\big\lVert
s_{T-r}(\mathsf X_r)
-
\tilde s_{T-r}(\mathsf X_r)
\big\rVert^2
\d r.
\end{align*}
Using \eqref{eq:appendix_exact_reverse_marginals} and the change of variables
$t=T-r$ gives
\begin{align*}
\E_{\mathbf P_{\tau,T}^{\rm ex}}
\int_0^{T-\tau}
\big\lVert
s_{T-r}(\mathsf X_r)
-
\tilde s_{T-r}(\mathsf X_r)
\big\rVert^2
\d r
=
\int_\tau^T
\lVert
\tilde s_t-s_t
\rVert_{L^2(\P_t)}^2
\d t.
\end{align*}
This proves \eqref{eq:diffusion_path_space_KL_bound}. Define the terminal
evaluation map
\begin{align*}
\mathsf e_{T-\tau}
:
\Omega_{\tau,T}
\longrightarrow
\bR^p,
\qquad
\mathsf e_{T-\tau}(\omega)
:=
\omega(T-\tau).
\end{align*}
For any $\omega,\omega'\in\Omega_{\tau,T}$,
\begin{align*}
\big\lVert
\mathsf e_{T-\tau}(\omega)
-
\mathsf e_{T-\tau}(\omega')
\big\rVert
\le
\sup_{0\le r\le T-\tau}
\big\lVert
\omega(r)-\omega'(r)
\big\rVert.
\end{align*}
Hence $\mathsf e_{T-\tau}$ is continuous under the uniform topology on
$\Omega_{\tau,T}$ and is therefore Borel measurable. By
\eqref{eq:appendix_exact_reverse_marginals} and the definition of
$\Q_{\tau,T}^{\tilde s,\nu}$,
\begin{align*}
(\mathsf e_{T-\tau})\#
\mathbf P_{\tau,T}^{\rm ex}
=
\P_\tau,
\qquad
(\mathsf e_{T-\tau})\#
\mathbf Q_{\tau,T}^{\tilde s,\nu}
=
\Q_{\tau,T}^{\tilde s,\nu}.
\end{align*}
Therefore, Lemma~\ref{lem:KL_data_processing} gives
\begin{align*}
D_{\rm KL}
\big(
\P_\tau
\|
\Q_{\tau,T}^{\tilde s,\nu}
\big)
&=
D_{\rm KL}
\big(
(\mathsf e_{T-\tau})\#
\mathbf P_{\tau,T}^{\rm ex}
\|
(\mathsf e_{T-\tau})\#
\mathbf Q_{\tau,T}^{\tilde s,\nu}
\big)
\\
&\le
D_{\rm KL}
\big(
\mathbf P_{\tau,T}^{\rm ex}
\|
\mathbf Q_{\tau,T}^{\tilde s,\nu}
\big)
\le
D_{\rm KL}(\P_T\|\nu)
+
\int_\tau^T
\lVert
\tilde s_t-s_t
\rVert_{L^2(\P_t)}^2
\d t.
\end{align*}
 This proves
\eqref{eq:diffusion_terminal_KL_bound_general}.
\end{proof}

For the Ornstein-Uhlenbeck model, the terminal bound
\eqref{eq:diffusion_terminal_KL_bound_general} has the same form as the
likelihood-weighted KL bound of
\citet[Theorem~1]{song2021maximum}, which is stated under stronger
regularity conditions. Here, the localized argument of
\citet[Lemma~4.4(i) and Remark~4.5]{lacker2023hierarchies} requires only
the diffusion well-posedness and finite-energy conditions verified above,
rather than a Novikov-type exponential-integrability condition.

The next corollary translates Proposition~\ref{prop:diffusion_reverse_KL_bounds}
directly into the notation of Section~\ref{sec:diffusion}.

\begin{corollary}[KL-divergence bound for the fitted reverse diffusion]
\label{cor:diffusion_fitted_reverse_KL}
Suppose Assumptions~\ref{ass:diffusion_data_path} and
\ref{def:diffusion_fit_generator} hold. Assume that, conditionally
on $\cO$, the fitted score $\hat s_n$ is diffusion-admissible on
$[\tau_n,T_n]$ almost surely. Then the learned reverse equation
\eqref{eq:learned_reverse_diffusion} admits a unique non-explosive strong
solution and, realization-wise,
\begin{align*}
D_{\rm KL}(\P_{\tau_n}\|\Q_n)
&\le
D_{\rm KL}(\P_{T_n}\|\gamma_p)
+
\int_{\tau_n}^{T_n}
\lVert
\hat s_{n,t}-s_t
\rVert_{L^2(\P_t)}^2
\d t
\\
&=
D_{\rm KL}(\P_{T_n}\|\gamma_p)
+
\mathcal E_n^{\rm DSM}.
\end{align*}
\end{corollary}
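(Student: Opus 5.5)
This corollary is a specialization of Proposition~\ref{prop:diffusion_reverse_KL_bounds}, so the plan is to fix a realization of $\cO$ and read off the choices $\tau=\tau_n$, $T=T_n$, $\tilde s=\hat s_n$, and $\nu=\gamma_p$. The learned reverse equation \eqref{eq:learned_reverse_diffusion} is exactly \eqref{eq:appendix_learned_reverse_general_initial} with $U_0\sim\gamma_p$ independent of $W^{\rm rev}$. Conditioning on $\cO$ leaves the bootstrap noise $(U_0,W^{\rm rev})$ independent of the fitted score. On the probability-one event where $\hat s_n$ is diffusion-admissible on $[\tau_n,T_n]$, the score is therefore a deterministic admissible field. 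Proposition~\ref{prop:diffusion_admissible_regularity}(ii) then gives a unique non-explosive strong solution, and the terminal law of that solution is $\Q_{\tau_n,T_n}^{\hat s_n,\gamma_p}=\hat G_n\#\P_U=\Q_n$.

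The one hypothesis that still has to be checked is $D_{\rm KL}(\P_{T_n}\|\gamma_p)<\infty$. I would obtain it from the convexity argument already used in the proof of Lemma~\ref{lem:diffusion_TV_consistency}. The law $\P_{T}$ is a mixture over $Z$ of ${\rm N}(e^{-T}Z,(1-e^{-2T})I_p)$, and joint convexity of KL (applied to the mixture against the fixed measure $\gamma_p$) bounds $D_{\rm KL}(\P_T\|\gamma_p)$ by the averaged Gaussian KL. That average is
\[
\tfrac{e^{-2T}}{2}\P\lVert Z\rVert^2+\tfrac p2\{-e^{-2T}-\log(1-e^{-2T})\},
\]
which is finite for each $T>0$ by Assumption~\ref{ass:diffusion_data_path}.

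With both hypotheses in place, \eqref{eq:diffusion_terminal_KL_bound_general} gives the displayed inequality realization-wise. The final equality is just the definition of $\mathcal E_n^{\rm DSM}$ in \eqref{eq:diffusion_score_consistency}.

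The only delicate point is measurability and conditioning, namely that the deterministic-score statement can legitimately be applied to a random $\hat s_n$. This works because $\hat s_n$ is $\cO$-measurable and the bootstrap randomness is independent of $\cO$, so the conditional law given $\cO$ coincides with the law of the solution driven by the frozen field. I would state this explicitly and restrict attention to the almost-sure admissibility event.
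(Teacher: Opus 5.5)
Your proposal is correct and follows the paper's proof essentially verbatim. Like the paper, you specialize Propositions~\ref{prop:diffusion_admissible_regularity} and~\ref{prop:diffusion_reverse_KL_bounds} realization-wise with $\tau=\tau_n$, $T=T_n$, $\tilde s=\hat s_n$, $\nu=\gamma_p$, check $D_{\rm KL}(\P_{T_n}\|\gamma_p)<\infty$ through the Gaussian-mixture convexity bound from the proof of Lemma~\ref{lem:diffusion_TV_consistency}, and identify the terminal law with $\Q_n$; your explicit remark on freezing the $\cO$-measurable score against independent bootstrap noise makes precise a point the paper leaves implicit.
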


\begin{proof}
Apply Propositions~\ref{prop:diffusion_admissible_regularity} and
\ref{prop:diffusion_reverse_KL_bounds} realization-wise with
\begin{align*}
\tau=\tau_n,
\qquad
T=T_n,
\qquad
\tilde s=\hat s_n,
\qquad
\nu=\gamma_p.
\end{align*}
The Gaussian-mixture bound in the proof of
Lemma~\ref{lem:diffusion_TV_consistency} shows that
$D_{\rm KL}(\P_{T_n}\|\gamma_p)<\infty$ for every $n$. The terminal law
$\Q_{\tau_n,T_n}^{\hat s_n,\gamma_p}$ is precisely $\Q_n$ by
\eqref{eq:learned_reverse_diffusion} and the definition of $\Q_n$.
\end{proof}

We conclude with a simple example, which verifies the output-level
score condition by a structured plug-in estimator. For symmetric matrices $A$ and $B$, we write $A\preceq B$ if
$B-A$ is positive semidefinite.

\begin{proposition}[Gaussian plug-in verification]
\label{prop:diffusion_Gaussian_plugin}
Suppose
\begin{align*}
\P
=
{\rm N}(m,\Sigma),
\qquad
\underline\lambda I_p
\preceq
\Sigma
\preceq
\overline\lambda I_p
\end{align*}
for some fixed constants $0< \underline\lambda \le \overline\lambda<\infty$. Let
$\hat m_n$ and $\hat\Sigma_n$ be $\cO$-measurable estimators satisfying
\begin{align*}
\lVert\hat m_n-m\rVert
+
\lVert\hat\Sigma_n-\Sigma\rVert_{\rm op}
=
o_{\P_\cO}(1),
\qquad
\frac{\underline\lambda}{2}I_p
\preceq
\hat\Sigma_n
\preceq
2\overline\lambda I_p
\end{align*}
almost surely. Define
\begin{align*}
\hat\Sigma_{n,t}
:=
e^{-2t}\hat\Sigma_n
+
(1-e^{-2t})I_p,
\qquad
\hat s_{n,t}(x)
:=
-\hat\Sigma_{n,t}^{-1}
\big(
x-e^{-t}\hat m_n
\big).
\yestag
\label{eq:diffusion_Gaussian_plugin_score}
\end{align*}
Then, conditionally on $\cO$, the fitted score is diffusion-admissible on every
$[\tau_n,T_n]$. Moreover, there exists a constant $C<\infty$, depending only
on $p$, $\underline\lambda$, and $\overline\lambda$, such that
\begin{align*}
\mathcal E_n^{\rm DSM}
&\le
C
\Big
\{
\lVert\hat m_n-m\rVert^2
+
\lVert\hat\Sigma_n-\Sigma\rVert_{\rm op}^2
\Big\}
\yestag
\label{eq:diffusion_Gaussian_score_energy_bound}
\end{align*}
for every $n$. Consequently,
$\mathcal E_n^{\rm DSM}=o_{\P_\cO}(1)$. If $\hat m_n$ and $\hat\Sigma_n$ are the sample
mean and a spectrally truncated sample covariance matrix in fixed dimension,
then $\mathcal E_n^{\rm DSM}=O_{\P_\cO}(n^{-1})$.
\end{proposition}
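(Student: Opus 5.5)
The plan is to compute everything explicitly, because under $\P={\rm N}(m,\Sigma)$ every forward marginal is Gaussian. From the OU representation $X_t\overset{\rm d}{=}e^{-t}Z+\sqrt{1-e^{-2t}}\xi$ we get $\P_t={\rm N}(e^{-t}m,\Sigma_t)$ with $\Sigma_t:=e^{-2t}\Sigma+(1-e^{-2t})I_p$. The true score is therefore $s_t(x)=-\Sigma_t^{-1}(x-e^{-t}m)$, which has the same form as \eqref{eq:diffusion_Gaussian_plugin_score} with $(\hat m_n,\hat\Sigma_n)$ replaced by $(m,\Sigma)$.

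\textbf{Eigenvalue control.} Both $\Sigma_t$ and $\hat\Sigma_{n,t}$ are convex combinations of a matrix and $I_p$. Set
\[
c:=\min\{\underline\lambda/2,1\},\qquad C_1:=\max\{2\overline\lambda,1\}.
\]
Then for all $t\ge0$ we have $cI_p\preceq\Sigma_t,\hat\Sigma_{n,t}\preceq C_1I_p$, almost surely for the fitted matrix. This is the single estimate everything else rests on.

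\textbf{Admissibility.} The spatial Lipschitz constant of $\hat s_{n,t}$ is $\|\hat\Sigma_{n,t}^{-1}\|_{\rm op}\le c^{-1}$. The intercept satisfies $\|\hat s_{n,t}(0)\|\le c^{-1}\|\hat m_n\|$. Both bounds are constant in $t$, so they lie in $L^2([\tau_n,T_n])$ on any finite interval. Joint continuity of $(t,x)\mapsto\hat s_{n,t}(x)$ is immediate. Hence Definition~\ref{def:diffusion_admissible_score} holds realization-wise.

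\textbf{Score-energy bound.} Write the error as
\[
\hat s_{n,t}(x)-s_t(x)=(\Sigma_t^{-1}-\hat\Sigma_{n,t}^{-1})(x-e^{-t}m)+e^{-t}\hat\Sigma_{n,t}^{-1}(\hat m_n-m).
\]
For the first term, use the resolvent identity $\Sigma_t^{-1}-\hat\Sigma_{n,t}^{-1}=\Sigma_t^{-1}(\hat\Sigma_{n,t}-\Sigma_t)\hat\Sigma_{n,t}^{-1}$ together with $\hat\Sigma_{n,t}-\Sigma_t=e^{-2t}(\hat\Sigma_n-\Sigma)$. Under $\P_t$ one has $\E\|X_t-e^{-t}m\|^2=\tr\Sigma_t\le pC_1$. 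Combining these gives
\[
\|\hat s_{n,t}-s_t\|_{L^2(\P_t)}^2\le 2c^{-4}pC_1e^{-4t}\|\hat\Sigma_n-\Sigma\|_{\rm op}^2+2c^{-2}e^{-2t}\|\hat m_n-m\|^2 .
\]
Integrating over $[\tau_n,T_n]\subset[0,\infty)$ and using $\int_0^\infty e^{-2t}\,\d t=1/2$ yields \eqref{eq:diffusion_Gaussian_score_energy_bound}. The constant $C$ depends only on $p,\underline\lambda,\overline\lambda$ and is uniform in $n$, even though $T_n\to\infty$ and $\tau_n\downarrow0$.

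The step I expect to need the most care is this uniformity over the growing window $[\tau_n,T_n]$. It is handled by two facts: the $e^{-t}$ factors make the integrand integrable at $\infty$, and the eigenvalue floor $c$ does not degenerate as $t\downarrow0$, since $\Sigma_t\to\Sigma\succeq\underline\lambda I_p$ and no $(1-e^{-2t})^{-1}$ singularity appears. Consistency $\mathcal E_n^{\rm DSM}=o_{\P_\cO}(1)$ then follows directly from the assumed consistency of $(\hat m_n,\hat\Sigma_n)$.

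\textbf{Rate for the sample estimators.} In fixed dimension, the CLT gives $\|\bar Z_n-m\|=O_{\P}(n^{-1/2})$ and $\|S_n-\Sigma\|_{\rm op}=O_{\P}(n^{-1/2})$ for the sample covariance $S_n$. Let $\hat\Sigma_n$ be $S_n$ with its eigenvalues clipped to $[\underline\lambda/2,2\overline\lambda]$. The spectrum of $\Sigma$ lies inside this interval, so clipping leaves $\Sigma$ unchanged. The scalar clipping map is $1$-Lipschitz, so the induced matrix map is $1$-Lipschitz in Frobenius norm (Hoffman--Wielandt type). Using equivalence of norms in fixed dimension,
\[
\|\hat\Sigma_n-\Sigma\|_{\rm op}\le\|S_n-\Sigma\|_F\le\sqrt p\,\|S_n-\Sigma\|_{\rm op}=O_{\P}(n^{-1/2}).
\]
Clipping also enforces the almost-sure sandwich $\frac{\underline\lambda}{2}I_p\preceq\hat\Sigma_n\preceq2\overline\lambda I_p$. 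Plugging these rates into \eqref{eq:diffusion_Gaussian_score_energy_bound} gives $\mathcal E_n^{\rm DSM}=O_{\P_\cO}(n^{-1})$.
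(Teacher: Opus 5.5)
Your proposal is correct and follows essentially the same route as the paper. The shared steps are the explicit Gaussian marginals $\P_t={\rm N}(e^{-t}m,\Sigma_t)$, uniform eigenvalue bounds on $\Sigma_t$ and $\hat\Sigma_{n,t}$, the resolvent identity with $\hat\Sigma_{n,t}-\Sigma_t=e^{-2t}(\hat\Sigma_n-\Sigma)$, the same two-term decomposition of the score error, and integration of the $e^{-4t}$ and $e^{-2t}$ factors over $[0,\infty)$. Your explicit constants and your Frobenius-Lipschitz argument for the spectrally truncated covariance fill in details that the paper leaves as ``standard.''
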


\begin{proof}
In this case, the OU marginal is
\begin{align*}
\P_t
=
{\rm N} \big(e^{-t}m, \Sigma_t\big),
\end{align*}
where we write $\Sigma_t
:=
e^{-2t}\Sigma
+
(1-e^{-2t})I_p$;
so its exact score is
\begin{align*}
s_t(x)
=
-\Sigma_t^{-1}
\big(
x-e^{-t}m
\big).
\end{align*}
The spectral assumptions imply that the eigenvalues of both $\Sigma_t$ and
$\hat\Sigma_{n,t}$ are bounded above and away from zero uniformly over
$t\ge0$ and $n$. In particular,
\begin{align*}
\sup_{n\ge1}
\sup_{t\ge0}
\lVert\hat\Sigma_{n,t}^{-1}\rVert_{\rm op}
<
\infty.
\end{align*}
Thus \eqref{eq:diffusion_Gaussian_plugin_score} is affine in $x$, with a
uniformly bounded spatial Lipschitz constant. Its value at zero is bounded by
$Ce^{-t}\lVert\hat m_n\rVert$, which is square integrable over every
$[\tau_n,T_n]$. Hence the fitted score is diffusion-admissible.

The resolvent identity gives
\begin{align*}
\hat\Sigma_{n,t}^{-1}
-
\Sigma_t^{-1}
=
\hat\Sigma_{n,t}^{-1}
\big(
\Sigma_t-
\hat\Sigma_{n,t}
\big)
\Sigma_t^{-1}
=
-e^{-2t}
\hat\Sigma_{n,t}^{-1}
\big(
\hat\Sigma_n-
\Sigma
\big)
\Sigma_t^{-1}.
\end{align*}
Consequently,
\begin{align*}
\lVert
\hat\Sigma_{n,t}^{-1}
-
\Sigma_t^{-1}
\rVert_{\rm op}
&\le
Ce^{-2t}
\lVert
\hat\Sigma_n-
\Sigma
\rVert_{\rm op}.
\yestag
\label{eq:diffusion_Gaussian_inverse_bound}
\end{align*}
For $X_t\sim\P_t$,
\begin{align*}
\hat s_{n,t}(X_t)-s_t(X_t)
&=
\big(
\Sigma_t^{-1}
-
\hat\Sigma_{n,t}^{-1}
\big)
\big(
X_t-e^{-t}m
\big)
+
e^{-t}
\hat\Sigma_{n,t}^{-1}
\big(
\hat m_n-m
\big).
\end{align*}
Since
$\E\big[
\lVert X_t-e^{-t}m\rVert^2
\big]
=
\tr(\Sigma_t)
\le C$,
we obtain from \eqref{eq:diffusion_Gaussian_inverse_bound}
\begin{align*}
\lVert
\hat s_{n,t}-s_t
\rVert_{L^2(\P_t)}^2
&\le
Ce^{-4t}
\lVert
\hat\Sigma_n-
\Sigma
\rVert_{\rm op}^2
+
Ce^{-2t}
\lVert
\hat m_n-m
\rVert^2.
\end{align*}
Integrating over $[\tau_n,T_n]$ and using
\begin{align*}
\int_0^\infty e^{-4t}\d t
<
\infty,
\qquad
\int_0^\infty e^{-2t}\d t
<
\infty
\end{align*}
gives \eqref{eq:diffusion_Gaussian_score_energy_bound}. The final assertion
follows from the standard fixed-dimensional rates for the sample mean and
sample covariance matrix.
\end{proof}

\subsection{Wasserstein generative adversarial networks}
\label{sec:WGAN-apdx}

We first explain the connection between the empirical adversarial
criterion in Assumption~\ref{def:WGAN_generator} and Wasserstein
minimum-distance fitting. For a fixed measurable generator
$G:\bR^q\to\bR^p$ such that $G\#\P_U$ has a finite first moment, the
Kantorovich-Rubinstein duality gives
\begin{align*}
W_1
\big(
\bP_n,
G\#\P_U
\big)
&=
\sup_{\varphi\in{\rm Lip}_1^0(\bR^p)}
\big\{
\bP_n[\varphi]
-
\P_U[\varphi\circ G]
\big\}.
\end{align*}
Thus, minimizing the right-hand side over $G$ seeks a generated
distribution that is close to the empirical data distribution in
$W_1$.

This ideal objective is not directly available. The full
unit-Lipschitz class is infinite-dimensional, and the expectation
$\P_U[\varphi\circ G]$ is typically unavailable in closed form when
$\varphi$ and $G$ are neural networks. Replacing the full Lipschitz class
by the neural critic class $\mathcal D_n$ and replacing $\P_U$ by the
empirical latent distribution $\bP_{U,n}$ gives
\begin{align*}
\inf_{G\in\cG_n}
\sup_{D\in\mathcal D_n}
\big\{
\bP_n[D]
-
\bP_{U,n}[D\circ G]
\big\}
&=
\inf_{G\in\cG_n}
\sup_{D\in\mathcal D_n}
\widehat{\mathcal L}_n(G,D),
\end{align*}
which is the empirical saddle-point problem used in
Assumption~\ref{def:WGAN_generator}. The difference between
$\bP_{U,n}$ and $\P_U$ is handled separately through the latent-sample
stability condition \eqref{eq:WGAN_latent_stability}.

The verification proceeds in two parts. First, we give norm-controlled
generator and critic classes that ensure the deterministic Lipschitz
and latent-stability conditions. Second, we combine critic
approximation on the empirical fitting set with approximate critic
maximization, generator minimization, and population approximation to
verify the remaining output-level training conditions.

\begin{definition}[Controlled feedforward WGAN classes]
\label{def:controlled_WGAN_classes}
Let $\sigma:\bR\to\bR$ be an activation function applied coordinatewise and satisfy $\sigma(0) = 0$ and $\lvert \sigma(s)-\sigma(t) \rvert \le \lvert s-t\rvert$ for every $s,t\in\bR$.

Fix positive integers $L_{\rm G}$ and $L_{\rm D}$ and constants
$B_{\rm G},M_{\rm G},M_{\rm D}<\infty$, none of which depends on $n$.
The hidden-layer widths below may depend on $n$.

A generator $G_\theta:\bR^q\to\bR^p$ is defined recursively by
\begin{align*}
h_0^{\rm G}(u)
&:=
u,
\\
h_\ell^{\rm G}(u)
&:=
\sigma\big(
W_\ell^{\rm G}h_{\ell-1}^{\rm G}(u)
+
b_\ell^{\rm G}
\big),
\quad
\ell\in\zahl{L_{\rm G}},
\\
G_\theta(u)
&:=
W_{L_{\rm G}+1}^{\rm G}
h_{L_{\rm G}}^{\rm G}(u)
+
b_{L_{\rm G}+1}^{\rm G}.
\end{align*}
All matrix and vector dimensions are compatible and
\begin{align*}
\max_{1\le\ell\le L_{\rm G}+1}
\big\lVert
W_\ell^{\rm G}
\big\rVert_{\rm op}
\le
B_{\rm G},
\quad
\max_{1\le\ell\le L_{\rm G}+1}
\big\lVert
b_\ell^{\rm G}
\big\rVert
\le
M_{\rm G}.
\end{align*}
Let $\cG_n$ be any deterministic class of such generator networks.

For the critic $D_\eta$, define
\begin{align*}
h_0^{\rm D}(x)
&:=
x,
\\
h_\ell^{\rm D}(x)
&:=
\sigma\big(
W_\ell^{\rm D}h_{\ell-1}^{\rm D}(x)
+ b_\ell^{\rm D}
\big),
\quad
\ell\in\zahl{L_{\rm D}},
\\
\bar D_\eta(x)
&:=
a^\top h_{L_{\rm D}}^{\rm D}(x),
\\
D_\eta(x)
&:=
\bar D_\eta(x)
-
\bar D_\eta(0),
\end{align*}
where
\begin{align*}
\max_{1\le\ell\le L_{\rm D}}
\big\lVert
W_\ell^{\rm D}
\big\rVert_{\rm op}
\le 1,
\qquad
\lVert a\rVert
\le 1,
\qquad
\max_{1\le\ell\le L_{\rm D}}
\big\lVert b_\ell^{\rm D} \big\rVert
\le M_{\rm D}.
\end{align*}
Let $\mathcal D_n$ consist of all such anchored critics, their
negatives, and the zero function.
\end{definition}

The ReLU and hyperbolic tangent activations are simple examples of
activations satisfying the Lipschitz and anchoring conditions in
Definition~\ref{def:controlled_WGAN_classes}. The norm constraints above
are exact deterministic constraints defining the theoretical model
class; practical normalization or regularization schemes need not impose
them exactly.

\begin{proposition}[Regularity of the controlled WGAN classes]
\label{prop:controlled_WGAN_regularity}
Under Definition~\ref{def:controlled_WGAN_classes}, define $L_{\rm G}^\star
:=
B_{\rm G}^{L_{\rm G}+1}$ 
and 
$M_{\rm G}^\star
:=
M_{\rm G}
\sum_{j=0}^{L_{\rm G}}
B_{\rm G}^{j}$.
Then every $G\in\cG_n$ satisfies
\begin{align*}
{\rm Lip}(G)
&\le
L_{\rm G}^\star,
\yestag
\label{eq:controlled_WGAN_generator_Lip}
\\
\lVert G(u)\rVert
&\le
M_{\rm G}^\star
+
L_{\rm G}^\star
\lVert u\rVert,
\quad
u\in\bR^q.
\yestag
\label{eq:controlled_WGAN_generator_growth}
\end{align*}
Moreover,
\begin{align*}
\mathcal D_n
\subseteq
{\rm Lip}_1^0(\bR^p),
\qquad
0\in\mathcal D_n,
\qquad
D\in\mathcal D_n
\text{ implies }
-D\in\mathcal D_n.
\yestag
\label{eq:controlled_WGAN_critic_properties}
\end{align*}
If $\P_U[\lVert U\rVert]<\infty$ and
$\hat G_n\in\cG_n$, then $\hat G_n\#\P_U$ has a finite first moment and
\begin{align*}
{\rm Lip}(\hat G_n)
W_1
\big(
\bP_{U,n},
\P_U
\big)
\longrightarrow
0
\end{align*}
almost surely. Consequently,
Assumption~\ref{ass:WGAN_model}(i) and the latent-sample stability
condition in Assumption~\ref{ass:WGAN_model}(iv) hold.
\end{proposition}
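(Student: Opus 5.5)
The plan has three parts: the generator bounds by induction over the layers, the critic properties directly from the construction, and latent-sample stability by combining the uniform Lipschitz bound with almost sure $W_1$ consistency of $\bP_{U,n}$.

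\emph{Generator bounds.} Since $\sigma(0)=0$ and $\sigma$ is $1$-Lipschitz coordinatewise, $\lVert\sigma(x)-\sigma(y)\rVert\le\lVert x-y\rVert$ and $\lVert\sigma(x)\rVert\le\lVert x\rVert$ in Euclidean norm. Each layer map $h\mapsto\sigma(W_\ell^{\rm G}h+b_\ell^{\rm G})$ is therefore $B_{\rm G}$-Lipschitz, and the output affine map is also $B_{\rm G}$-Lipschitz. Composing the $L_{\rm G}+1$ maps gives \eqref{eq:controlled_WGAN_generator_Lip}.

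For the growth bound we bound $\lVert G(0)\rVert$ by induction. Set $a_0=0$ and $a_\ell\le B_{\rm G}a_{\ell-1}+M_{\rm G}$. This yields $\lVert G(0)\rVert\le M_{\rm G}\sum_{j=0}^{L_{\rm G}}B_{\rm G}^j=M_{\rm G}^\star$. Then $\lVert G(u)\rVert\le\lVert G(0)\rVert+{\rm Lip}(G)\lVert u\rVert$ gives \eqref{eq:controlled_WGAN_generator_growth}.

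\emph{Critic properties.} The same layerwise argument shows that $\bar D_\eta$ is Lipschitz with constant at most $\lVert a\rVert\prod_\ell\lVert W_\ell^{\rm D}\rVert_{\rm op}\le1$. Anchoring gives $D_\eta(0)=0$, so $D_\eta\in{\rm Lip}_1^0(\bR^p)$. The class $\mathcal D_n$ is closed under negation and contains $0$ by its definition, and negation preserves membership in ${\rm Lip}_1^0$. This establishes \eqref{eq:controlled_WGAN_critic_properties}.

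\emph{Finite first moment and latent stability.} By the growth bound, $\P_U[\lVert\hat G_n(U)\rVert]\le M_{\rm G}^\star+L_{\rm G}^\star\P_U[\lVert U\rVert]<\infty$, and $\hat G_n$ is globally Lipschitz. The product satisfies ${\rm Lip}(\hat G_n)W_1(\bP_{U,n},\P_U)\le L_{\rm G}^\star W_1(\bP_{U,n},\P_U)$, where $L_{\rm G}^\star$ is deterministic and independent of $n$. It therefore suffices to show $W_1(\bP_{U,n},\P_U)\to0$ almost surely for i.i.d.\ draws with finite first moment. This is the empirical $W_1$ consistency already invoked for $\bP_n$ (Lemma~\ref{lem:empirical_W1_consistency}), now applied on $\bR^q$; it combines Varadarajan's almost sure weak convergence with the SLLN for $\lVert U_i\rVert$.

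The main subtlety is measurability of $\hat G_n$ jointly with $\cO$, so that the realization-wise bound transfers to the conditional statements. This is harmless because the bound uses only the deterministic constant $L_{\rm G}^\star$. Assumption~\ref{ass:WGAN_model}(i) and (iv) then follow, since almost sure convergence implies $o_{\P_\cO}(1)$.
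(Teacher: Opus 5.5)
Your proposal is correct and follows essentially the same route as the paper: composing layerwise operator-norm bounds for ${\rm Lip}(G)$, iterating $\lVert h_\ell^{\rm G}(0)\rVert\le B_{\rm G}\lVert h_{\ell-1}^{\rm G}(0)\rVert+M_{\rm G}$ for the growth bound, the same product bound plus anchoring for the critics, and the deterministic constant $L_{\rm G}^\star$ combined with Lemma~\ref{lem:empirical_W1_consistency} (applied on $\bR^q$) for latent-sample stability. Your added remark on measurability is harmless, and the paper does not need it because the bound is realization-wise with a deterministic constant.
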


\begin{proof}
Since $\sigma$ is applied coordinatewise and is $1$-Lipschitz,
\begin{align*}
\big\lVert
\sigma(x)-\sigma(y)
\big\rVert
\le
\lVert x-y\rVert
\end{align*}
for vectors $x$ and $y$ of the same dimension. Therefore, for every $1\le\ell\le L_{\rm G}$,
\begin{align*}
\big\lVert
h_\ell^{\rm G}(u)
-
h_\ell^{\rm G}(v)
\big\rVert
&\le
\big\lVert
W_\ell^{\rm G}
\big\rVert_{\rm op}
\big\lVert
h_{\ell-1}^{\rm G}(u)
-
h_{\ell-1}^{\rm G}(v)
\big\rVert.
\end{align*}
Iterating this inequality over the hidden layers and then applying the
operator-norm bound to the output layer gives
\begin{align*}
\lVert
G(u)-G(v)
\rVert
&\le
\big\lVert
W_{L_{\rm G}+1}^{\rm G}
\big\rVert_{\rm op}
\big\lVert
h_{L_{\rm G}}^{\rm G}(u)
-
h_{L_{\rm G}}^{\rm G}(v)
\big\rVert
\\
&\le
\big(
\prod_{\ell=1}^{L_{\rm G}+1}
\big\lVert
W_\ell^{\rm G}
\big\rVert_{\rm op}
\big)
\lVert u-v\rVert
\le
B_{\rm G}^{L_{\rm G}+1}
\lVert u-v\rVert,
\end{align*}
which proves \eqref{eq:controlled_WGAN_generator_Lip}.

Because $\sigma(0)=0$,
\begin{align*}
\big\lVert
h_\ell^{\rm G}(0)
\big\rVert
&\le
B_{\rm G}
\big\lVert
h_{\ell-1}^{\rm G}(0)
\big\rVert
+
M_{\rm G}.
\end{align*}
Starting from $h_0^{\rm G}(0)=0$ and iterating this bound yields
\begin{align*}
\lVert G(0)\rVert
&\le
M_{\rm G}
\sum_{j=0}^{L_{\rm G}}
B_{\rm G}^{j}
=
M_{\rm G}^\star.
\end{align*}
Combining this with
\eqref{eq:controlled_WGAN_generator_Lip} gives
\begin{align*}
\lVert G(u)\rVert
\le
\lVert G(u)-G(0)\rVert
+
\lVert G(0)\rVert
\le
L_{\rm G}^\star
\lVert u\rVert
+
M_{\rm G}^\star,
\end{align*}
which proves \eqref{eq:controlled_WGAN_generator_growth}.

For every unanchored critic $\bar D_\eta$,
\begin{align*}
\big\lvert
\bar D_\eta(x)-\bar D_\eta(y)
\big\rvert
\le
\lVert a\rVert
\big(
\prod_{\ell=1}^{L_{\rm D}}
\big\lVert
W_\ell^{\rm D}
\big\rVert_{\rm op}
\big)
\lVert x-y\rVert
\le
\lVert x-y\rVert.
\end{align*}
Subtracting $\bar D_\eta(0)$ does not change the Lipschitz constant and
gives $D_\eta(0)=0$. The remaining properties in
\eqref{eq:controlled_WGAN_critic_properties} follow directly from the
definition of $\mathcal D_n$.

If $G\in\cG_n$, then
\begin{align*}
\P_U
\big[
\lVert G(U)\rVert
\big]
&\le
M_{\rm G}^\star
+
L_{\rm G}^\star
\P_U
\big[
\lVert U\rVert
\big]
<
\infty.
\end{align*}
Thus $G\#\P_U$ has a finite first moment. Finally,
Lemma~\ref{lem:empirical_W1_consistency} gives
\begin{align*}
W_1
\big(
\bP_{U,n},
\P_U
\big)
\longrightarrow
0
\end{align*}
almost surely. Since
${\rm Lip}(\hat G_n)\le L_{\rm G}^\star$, the latent-sample stability
conclusion follows.
\end{proof}

The fixed norm bounds in
Definition~\ref{def:controlled_WGAN_classes} are only one convenient
choice. More generally, if a generator class has a deterministic
Lipschitz bound $L_n^{\rm G}$, then it suffices to require
\begin{align*}
L_n^{\rm G}
W_1
\big(
\bP_{U,n},
\P_U
\big)
=
o_{\P_\cO}(1).
\end{align*}
This permits the generator complexity and Lipschitz constant to increase
with $n$.

We next isolate a direct sufficient condition for the critic-class part
of Assumption~\ref{ass:WGAN_model}(ii). For $G\in\cG_n$, define the
restricted empirical critic value
\begin{align*}
\widehat{\mathcal V}_n(G)
:=
\sup_{D\in\mathcal D_n}
\widehat{\mathcal L}_n(G,D).
\yestag
\label{eq:WGAN_restricted_empirical_value}
\end{align*}
For the fitted generator, let
\begin{align*}
K_n^{\rm fit}
:=
\big\{
Z_i,
\hat G_n(U_i):
i\in\zahl{n}
\big\}
\end{align*}
be the finite set on which the empirical adversarial criterion is
evaluated, and define
\begin{align*}
\eta_n^{\rm crit}
:=
\sup_{\varphi\in{\rm Lip}_1^0(\bR^p)}
\inf_{D\in\mathcal D_n}
\max_{x\in K_n^{\rm fit}}
\big\lvert
\varphi(x)-D(x)
\big\rvert.
\yestag
\label{eq:WGAN_training_support_approximation}
\end{align*}

\begin{lemma}[Critic approximation on the empirical fitting set]
\label{lem:WGAN_training_support_critic}
Suppose
$\mathcal D_n\subseteq{\rm Lip}_1^0(\bR^p)$. Then, realization-wise,
\begin{align*}
0
\le
W_1
\big(
\bP_n,
\hat G_n\#\bP_{U,n}
\big)
-
\widehat{\mathcal V}_n(\hat G_n)
\le
2\eta_n^{\rm crit}.
\yestag
\label{eq:WGAN_training_support_gap}
\end{align*}
Consequently, if
$\eta_n^{\rm crit}=o_{\P_\cO}(1)$, then
$W_1\big(\bP_n, \hat G_n\#\bP_{U,n} \big)
-
\widehat{\mathcal V}_n(\hat G_n)
= o_{\P_\cO}(1)$.
\end{lemma}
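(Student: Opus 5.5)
The argument is a direct comparison of the two suprema, $\sup_{\varphi\in{\rm Lip}_1^0(\bR^p)}$ and $\sup_{D\in\mathcal D_n}$ of the same linear functional $\widehat{\mathcal L}_n(\hat G_n,\cdot)$. The functional only evaluates its argument at the finitely many points of $K_n^{\rm fit}$. Fix a realization of $\cO$. By Lemma~\ref{lem:Kantorovich_Rubinstein}, applied to the two finitely supported measures $\bP_n$ and $\hat G_n\#\bP_{U,n}$, we have
\[
W_1\big(\bP_n,\hat G_n\#\bP_{U,n}\big)=\sup_{\varphi\in{\rm Lip}_1^0(\bR^p)}\widehat{\mathcal L}_n(\hat G_n,\varphi).
\]
This identity was already used in the proof of Lemma~\ref{lem:WGAN_training_implies_W1}. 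Anchoring at $0$ costs nothing, because both measures are probability measures and adding a constant to $\varphi$ leaves the criterion unchanged.

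\emph{Lower bound.} Since $\mathcal D_n\subseteq{\rm Lip}_1^0(\bR^p)$, the supremum defining $\widehat{\mathcal V}_n(\hat G_n)$ ranges over a subset. Hence $\widehat{\mathcal V}_n(\hat G_n)\le W_1(\bP_n,\hat G_n\#\bP_{U,n})$, which gives the left inequality in \eqref{eq:WGAN_training_support_gap}.

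\emph{Upper bound.} Fix $\varphi\in{\rm Lip}_1^0(\bR^p)$ and $\epsilon>0$. By definition of $\eta_n^{\rm crit}$, there is $D\in\mathcal D_n$ with
\[
\max_{x\in K_n^{\rm fit}}|\varphi(x)-D(x)|\le\eta_n^{\rm crit}+\epsilon.
\]
Every $Z_i$ and every $\hat G_n(U_i)$ lies in $K_n^{\rm fit}$. Therefore
\[
\big|\widehat{\mathcal L}_n(\hat G_n,\varphi)-\widehat{\mathcal L}_n(\hat G_n,D)\big|
\le\frac1n\sum_{i=1}^n\big\{|\varphi(Z_i)-D(Z_i)|+|\varphi(\hat G_n(U_i))-D(\hat G_n(U_i))|\big\}
\le 2(\eta_n^{\rm crit}+\epsilon).
\]
It follows that $\widehat{\mathcal L}_n(\hat G_n,\varphi)\le\widehat{\mathcal V}_n(\hat G_n)+2\eta_n^{\rm crit}+2\epsilon$. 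Taking the supremum over $\varphi$ and then letting $\epsilon\downarrow0$ gives the right inequality.

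\emph{Main subtlety.} The only delicate point is that $\eta_n^{\rm crit}$ might be infinite, or that the infimum in its definition might not be attained. Both cases are handled by the $\epsilon$-slack above, and the bound is trivial when $\eta_n^{\rm crit}=\infty$.

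\emph{Consequence.} Since the inequality holds realization-wise, $\eta_n^{\rm crit}=o_{\P_\cO}(1)$ immediately implies that the nonnegative gap is $o_{\P_\cO}(1)$.
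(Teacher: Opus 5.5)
Your proposal is correct and follows the paper's proof essentially step for step. The lower bound comes from $\mathcal D_n\subseteq{\rm Lip}_1^0(\bR^p)$ together with the Kantorovich--Rubinstein duality of Lemma~\ref{lem:Kantorovich_Rubinstein}; the upper bound comes from picking, for each $\varphi$ and $\epsilon>0$, a critic $D\in\mathcal D_n$ that is $(\eta_n^{\rm crit}+\epsilon)$-close to $\varphi$ on $K_n^{\rm fit}$, which gives $\widehat{\mathcal L}_n(\hat G_n,\varphi)\le\widehat{\mathcal V}_n(\hat G_n)+2\eta_n^{\rm crit}+2\epsilon$, and then taking the supremum over $\varphi$ and letting $\epsilon\downarrow0$. (As a minor aside, since $\mathcal D_n$ is nonempty and both $\varphi$ and every $D\in\mathcal D_n$ are bounded by $\lVert x\rVert$, one has $\eta_n^{\rm crit}\le 2\max_{x\in K_n^{\rm fit}}\lVert x\rVert<\infty$, so the infinite case you guard against never actually arises.)
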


\begin{proof}
The first inequality follows from
$\mathcal D_n\subseteq{\rm Lip}_1^0(\bR^p)$ and the
Kantorovich-Rubinstein duality.

Fix
$\varphi\in{\rm Lip}_1^0(\bR^p)$ and $\epsilon>0$. By
\eqref{eq:WGAN_training_support_approximation}, there exists
$D_{\varphi,\epsilon}\in\mathcal D_n$ such that
\begin{align*}
\max_{x\in K_n^{\rm fit}}
\big\lvert
\varphi(x)-D_{\varphi,\epsilon}(x)
\big\rvert
\le
\eta_n^{\rm crit}
+
\epsilon.
\end{align*}
It follows that
\begin{align*}
\widehat{\mathcal L}_n
\big(
\hat G_n,\varphi
\big)
&\le
\widehat{\mathcal L}_n
\big(
\hat G_n,D_{\varphi,\epsilon}
\big)
+
\bP_n
\big[
\lvert
\varphi-D_{\varphi,\epsilon}
\rvert
\big]
+
\bP_{U,n}
\big[\lvert
\varphi\circ\hat G_n
-
D_{\varphi,\epsilon}\circ\hat G_n
\rvert\big]
\\
&\le
\widehat{\mathcal V}_n(\hat G_n)
+
2\eta_n^{\rm crit}
+
2\epsilon.
\end{align*}
Taking the supremum over
$\varphi\in{\rm Lip}_1^0(\bR^p)$, applying
Lemma~\ref{lem:Kantorovich_Rubinstein}, and then letting
$\epsilon\downarrow0$ proves
\eqref{eq:WGAN_training_support_gap}.
\end{proof}

The quantity $\eta_n^{\rm crit}$ measures the critic's ability to
approximate unit-Lipschitz test functions on the empirical fitting set
associated with the fitted generator. It does not require uniform approximation over every $G\in\cG_n$, nor does it require either empirical distribution to have compact support in a fixed nonrandom set.

We now combine critic approximation, numerical critic maximization, and
empirical generator minimization into sufficient conditions for the
output-level requirements in
Assumption~\ref{ass:WGAN_model}(ii)-(iii).

\begin{proposition}[Primitive sufficient conditions for a well-trained fitted pair]
\label{prop:WGAN_primitive_verification}
Suppose Assumption~\ref{ass:WGAN_data} holds,
$\P_U[\lVert U\rVert]<\infty$, and
$0 \in \mathcal D_n
\subseteq
{\rm Lip}_1^0(\bR^p)$.
Suppose the fitted pair
$(\hat G_n,\hat D_n)\in\cG_n\times\mathcal D_n$ satisfies the following
conditions.

\begin{enumerate}[label=(\roman*),itemsep=-.5ex]

\item The critic approximation error on the empirical fitting set satisfies
$\eta_n^{\rm crit}=o_{\P_\cO}(1)$.

\item The fitted critic approximately maximizes the restricted empirical
criterion:
\begin{align*}
0
\le
\widehat{\mathcal V}_n(\hat G_n)
-
\widehat{\mathcal L}_n
\big(
\hat G_n,\hat D_n
\big)
=
o_{\P_\cO}(1).
\yestag
\label{eq:WGAN_critic_optimization_apdx}
\end{align*}

\item The fitted generator approximately minimizes the restricted
empirical critic value: for some nonnegative
$r_n=o_{\P_\cO}(1)$,
\begin{align*}
\widehat{\mathcal V}_n(\hat G_n)
\le
\inf_{G\in\cG_n}
\widehat{\mathcal V}_n(G)
+
r_n.
\yestag
\label{eq:WGAN_generator_optimization_apdx}
\end{align*}

\item There exists a deterministic sequence
$G_n^\circ\in\cG_n$ such that
\begin{align*}
W_1
\big(
G_n^\circ\#\P_U,
\P
\big)
\to
0,
\qquad
{\rm Lip}(G_n^\circ)
W_1
\big(
\bP_{U,n},
\P_U
\big)
=
o_{\P_\cO}(1).
\yestag
\label{eq:WGAN_comparator_conditions}
\end{align*}

\end{enumerate}

Then the critic-adequacy condition
\eqref{eq:WGAN_critic_adequacy} and the fitted empirical-value condition
\eqref{eq:WGAN_empirical_value} hold.
\end{proposition}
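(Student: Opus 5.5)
The plan is to sandwich the restricted empirical critic value $\widehat{\mathcal V}_n(\hat G_n)$ between the fitted critic value and the full empirical Wasserstein distance. Then we show that $\widehat{\mathcal V}_n(\hat G_n)$ itself tends to zero by comparing with the deterministic comparator $G_n^\circ$.

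\emph{Critic adequacy.} By Lemma~\ref{lem:Kantorovich_Rubinstein},
\[
\sup_{\varphi\in{\rm Lip}_1^0(\bR^p)}\widehat{\mathcal L}_n(\hat G_n,\varphi)=W_1(\bP_n,\hat G_n\#\bP_{U,n}),
\]
so the quantity in \eqref{eq:WGAN_critic_adequacy} splits as
\[
\big\{W_1(\bP_n,\hat G_n\#\bP_{U,n})-\widehat{\mathcal V}_n(\hat G_n)\big\}+\big\{\widehat{\mathcal V}_n(\hat G_n)-\widehat{\mathcal L}_n(\hat G_n,\hat D_n)\big\}.
\]
The first bracket is at most $2\eta_n^{\rm crit}$ by Lemma~\ref{lem:WGAN_training_support_critic} together with condition (i). The second bracket is $o_{\P_\cO}(1)$ by condition (ii). This gives \eqref{eq:WGAN_critic_adequacy}.

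\emph{Empirical value.} Since $0\in\mathcal D_n$, we have $\widehat{\mathcal V}_n(G)\ge0$ for every $G$. Using (ii), this yields
\[
\lvert\widehat{\mathcal L}_n(\hat G_n,\hat D_n)\rvert\le \widehat{\mathcal V}_n(\hat G_n)+o_{\P_\cO}(1).
\]
For the lower direction, note that $\widehat{\mathcal L}_n(\hat G_n,\hat D_n)\ge \widehat{\mathcal V}_n(\hat G_n)-o_{\P_\cO}(1)\ge -o_{\P_\cO}(1)$. It therefore suffices to show $\widehat{\mathcal V}_n(\hat G_n)=o_{\P_\cO}(1)$. By (iii), and then because $\mathcal D_n\subseteq{\rm Lip}_1^0$ together with duality,
\[
\widehat{\mathcal V}_n(\hat G_n)\le \widehat{\mathcal V}_n(G_n^\circ)+r_n\le W_1(\bP_n,G_n^\circ\#\bP_{U,n})+r_n.
\]
The triangle inequality gives
\[
W_1(\bP_n,G_n^\circ\#\bP_{U,n})\le W_1(\bP_n,\P)+W_1(\P,G_n^\circ\#\P_U)+W_1(G_n^\circ\#\P_U,G_n^\circ\#\bP_{U,n}).
\]
The first term vanishes almost surely by Lemma~\ref{lem:empirical_W1_consistency}, which applies because of Assumption~\ref{ass:WGAN_data}. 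The second vanishes by (iv). The third is at most ${\rm Lip}(G_n^\circ)W_1(\bP_{U,n},\P_U)$ by the pushforward contraction argument \eqref{eq:WGAN_pushforward_contraction}, and this is $o_{\P_\cO}(1)$ by (iv).

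The main point needing care is well-definedness: $W_1$ must be finite and duality must apply. This requires $G_n^\circ\#\P_U$ to have a finite first moment, which follows from ${\rm Lip}(G_n^\circ)<\infty$ and $\P_U[\lVert U\rVert]<\infty$. I expect the implicit Lipschitz finiteness of $G_n^\circ$ to follow from (iv), and the measures built from empirical samples have finite support. Apart from this, the argument is a routine chaining of earlier results.
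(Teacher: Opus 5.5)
Your proposal is correct and matches the paper's proof step for step. It uses the same comparator chain $\widehat{\mathcal V}_n(\hat G_n)\le \widehat{\mathcal V}_n(G_n^\circ)+r_n\le W_1(\bP_n,G_n^\circ\#\bP_{U,n})+r_n$ with the three-term triangle inequality and pushforward contraction, nonnegativity from $0\in\mathcal D_n$, and the same two-bracket split for critic adequacy via Lemma~\ref{lem:WGAN_training_support_critic}. On well-definedness, you do not need ${\rm Lip}(G_n^\circ)<\infty$: finiteness of $W_1(G_n^\circ\#\P_U,\P)$ in (iv), together with Assumption~\ref{ass:WGAN_data}, already gives $G_n^\circ\#\P_U$ a finite first moment for all large $n$.
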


\begin{proof}
The triangle inequality and the contraction property under
$G_n^\circ$ give
\begin{align*}
&
W_1
\big(
\bP_n,
G_n^\circ\#\bP_{U,n}
\big)
\\
&~~\le
W_1
\big(
\bP_n,
\P
\big)
+
W_1
\big(
\P,
G_n^\circ\#\P_U
\big)
+
W_1
\big(
G_n^\circ\#\P_U,
G_n^\circ\#\bP_{U,n}
\big)
\\
&~~\le
W_1
\big(
\bP_n,
\P
\big)
+
W_1
\big(
\P,
G_n^\circ\#\P_U
\big)
+
{\rm Lip}(G_n^\circ)
W_1
\big(
\P_U,
\bP_{U,n}
\big)
=
o_{\P_\cO}(1),
\end{align*}
where the first term converges to zero almost surely by
Lemma~\ref{lem:empirical_W1_consistency} and the remaining terms vanish
by \eqref{eq:WGAN_comparator_conditions}.

Since
$\mathcal D_n\subseteq{\rm Lip}_1^0(\bR^p)$,
\begin{align*}
\widehat{\mathcal V}_n(G_n^\circ)
&\le
W_1
\big(
\bP_n,
G_n^\circ\#\bP_{U,n}
\big)
=
o_{\P_\cO}(1).
\end{align*}
Consequently,
\eqref{eq:WGAN_generator_optimization_apdx} yields
\begin{align*}
0
\le
\widehat{\mathcal V}_n(\hat G_n)
\le
\widehat{\mathcal V}_n(G_n^\circ)
+
r_n
=
o_{\P_\cO}(1),
\yestag
\label{eq:WGAN_restricted_value_small}
\end{align*}
where the first inequality follows from $0\in\mathcal D_n$.

By
\eqref{eq:WGAN_critic_optimization_apdx} and
\eqref{eq:WGAN_restricted_value_small},
\begin{align*}
\big\lvert
\widehat{\mathcal L}_n
\big(
\hat G_n,\hat D_n
\big)
\big\rvert
\le
\widehat{\mathcal V}_n(\hat G_n)
+
\big\lvert
\widehat{\mathcal V}_n(\hat G_n)
-
\widehat{\mathcal L}_n
\big(
\hat G_n,\hat D_n
\big)
\big\rvert
=
o_{\P_\cO}(1),
\end{align*}
which proves
\eqref{eq:WGAN_empirical_value}.

Finally, Lemma~\ref{lem:WGAN_training_support_critic} and
\eqref{eq:WGAN_critic_optimization_apdx} give
\begin{align*}
&
\sup_{\varphi\in{\rm Lip}_1^0(\bR^p)}
\widehat{\mathcal L}_n
\big(
\hat G_n,\varphi
\big)
-
\widehat{\mathcal L}_n
\big(
\hat G_n,\hat D_n
\big)
\\
&\quad=
\big\{
W_1
\big(
\bP_n,
\hat G_n\#\bP_{U,n}
\big)
-
\widehat{\mathcal V}_n(\hat G_n)
\big\}
+
\big\{
\widehat{\mathcal V}_n(\hat G_n)
-
\widehat{\mathcal L}_n
\big(
\hat G_n,\hat D_n
\big)
\big\}
\\
&\quad\le
2\eta_n^{\rm crit}
+
\big\{
\widehat{\mathcal V}_n(\hat G_n)
-
\widehat{\mathcal L}_n
\big(
\hat G_n,\hat D_n
\big)
\big\}
=
o_{\P_\cO}(1),
\end{align*}
which proves
\eqref{eq:WGAN_critic_adequacy}.
\end{proof}

Combining the controlled architecture with the preceding proposition
gives the following concise verification of the full WGAN model
assumption.

\begin{corollary}[Verification for the controlled WGAN design]
\label{cor:controlled_WGAN_verification}
Suppose Assumption~\ref{ass:WGAN_data} holds and
$\P_U[\lVert U\rVert]<\infty$. Let $\cG_n$ and $\mathcal D_n$ be the
controlled feedforward classes in
Definition~\ref{def:controlled_WGAN_classes}, and suppose the fitted
pair belongs to
$\cG_n\times\mathcal D_n$.
Assume
\begin{align*}
\inf_{G\in\cG_n}
W_1
\big(
G\#\P_U,
\P
\big)
\longrightarrow
0,
\yestag
\label{eq:controlled_WGAN_population_approximation}
\end{align*}
and suppose
\begin{align*}
\eta_n^{\rm crit}
=
o_{\P_\cO}(1),
\qquad
\widehat{\mathcal V}_n(\hat G_n)
-
\widehat{\mathcal L}_n
\big(
\hat G_n,\hat D_n
\big)
=
o_{\P_\cO}(1),
\qquad
\widehat{\mathcal V}_n(\hat G_n)
\le
\inf_{G\in\cG_n}
\widehat{\mathcal V}_n(G)
+
o_{\P_\cO}(1).
\end{align*}
Then Assumption~\ref{ass:WGAN_model} holds.
\end{corollary}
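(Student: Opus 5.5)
The plan is to verify the four parts of Assumption~\ref{ass:WGAN_model} one at a time. Each part follows from a result already in the appendix, so the work is mostly assembling them.

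For part (i) and the latent-sample stability condition (iv), I will invoke Proposition~\ref{prop:controlled_WGAN_regularity}. Since $\hat G_n\in\cG_n$, we get ${\rm Lip}(\hat G_n)\le L_{\rm G}^\star$ realization-wise, where $L_{\rm G}^\star$ is a deterministic constant. The same proposition gives the finite first moment of $\hat G_n\#\P_U$. Moreover, ${\rm Lip}(\hat G_n)W_1(\bP_{U,n},\P_U)\to0$ almost surely by Lemma~\ref{lem:empirical_W1_consistency}, which requires $\P_U[\lVert U\rVert]<\infty$. This settles (i) and (iv) outright.

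For parts (ii) and (iii), I will apply Proposition~\ref{prop:WGAN_primitive_verification}. Its structural hypotheses are already available: $0\in\mathcal D_n\subseteq{\rm Lip}_1^0(\bR^p)$ holds by \eqref{eq:controlled_WGAN_critic_properties}, and the assumed conditions on $\eta_n^{\rm crit}$, on critic optimization and on generator optimization are exactly conditions (i)–(iii) of that proposition. Two small points need checking:
\begin{itemize}
\item Nonnegativity in \eqref{eq:WGAN_critic_optimization_apdx} is automatic, because $\hat D_n\in\mathcal D_n$ forces $\widehat{\mathcal V}_n(\hat G_n)\ge\widehat{\mathcal L}_n(\hat G_n,\hat D_n)$.
\item For (iii), write the $o_{\P_\cO}(1)$ term as $r_n'$ and take $r_n:=\max(r_n',0)$, which is still $o_{\P_\cO}(1)$ and nonnegative.
\end{itemize}

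The remaining step, and the only one with any real content, is condition (iv) of Proposition~\ref{prop:WGAN_primitive_verification}: we need a deterministic comparator sequence $G_n^\circ\in\cG_n$. Using \eqref{eq:controlled_WGAN_population_approximation}, for each $n$ I will choose $G_n^\circ\in\cG_n$ with
\[
W_1(G_n^\circ\#\P_U,\P)\le\inf_{G\in\cG_n}W_1(G\#\P_U,\P)+1/n .
\]
This choice depends only on deterministic objects (the class $\cG_n$ and the laws $\P_U,\P$), so the sequence $G_n^\circ$ is deterministic, and the first half of \eqref{eq:WGAN_comparator_conditions} follows. For the second half, ${\rm Lip}(G_n^\circ)\le L_{\rm G}^\star$ by \eqref{eq:controlled_WGAN_generator_Lip}, so again ${\rm Lip}(G_n^\circ)W_1(\bP_{U,n},\P_U)\to0$ almost surely. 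I also need $W_1(G\#\P_U,\P)$ to be finite for the comparator; this holds by \eqref{eq:controlled_WGAN_generator_growth} together with Assumption~\ref{ass:WGAN_data}.

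With these checks in place, Proposition~\ref{prop:WGAN_primitive_verification} yields \eqref{eq:WGAN_critic_adequacy} and \eqref{eq:WGAN_empirical_value}, which completes the verification of Assumption~\ref{ass:WGAN_model}.
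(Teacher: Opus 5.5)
Your proposal is correct and follows essentially the same route as the paper: Proposition~\ref{prop:controlled_WGAN_regularity} gives (i) and (iv), a deterministic near-minimizer $G_n^\circ$ with $1/n$ slack plus the uniform bound ${\rm Lip}(G_n^\circ)\le L_{\rm G}^\star$ supplies the comparator condition, and Proposition~\ref{prop:WGAN_primitive_verification} then yields (ii)--(iii). Your extra checks are details the paper leaves implicit: nonnegativity of the critic gap because $\hat D_n\in\mathcal D_n$, taking the positive part of the generator-optimization slack, and finiteness of $W_1(G\#\P_U,\P)$ via the growth bound.
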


\begin{proof}
Proposition~\ref{prop:controlled_WGAN_regularity} verifies
Assumption~\ref{ass:WGAN_model}(i) and (iv).

By
\eqref{eq:controlled_WGAN_population_approximation}, one can choose a
deterministic $G_n^\circ\in\cG_n$ such that
\begin{align*}
W_1
\big(
G_n^\circ\#\P_U,
\P
\big)
\le
\inf_{G\in\cG_n}
W_1
\big(
G\#\P_U,
\P
\big)
+
\frac{1}{n}.
\end{align*}
Moreover,
Proposition~\ref{prop:controlled_WGAN_regularity} gives
${\rm Lip}(G_n^\circ)
\le
L_{\rm G}^\star$,
and hence
\begin{align*}
{\rm Lip}(G_n^\circ)
W_1
\big(
\bP_{U,n},
\P_U
\big)
\longrightarrow
0
\end{align*}
almost surely. Thus
\eqref{eq:WGAN_comparator_conditions} holds. The remaining conditions of
Proposition~\ref{prop:WGAN_primitive_verification} are precisely the
three displayed training conditions in the statement, so
Assumption~\ref{ass:WGAN_model}(ii)-(iii) follows.
\end{proof}

\begin{remark}[Scope of the verification]
\label{rem:WGAN_verification_scope}
The norm-controlled critic architecture verifies the exact admissibility
condition
$\mathcal D_n\subseteq{\rm Lip}_1^0(\bR^p)$, while
$\eta_n^{\rm crit}=o_{\P_\cO}(1)$ is the remaining critic-richness
condition. Operator-norm control alone does not imply that a critic class
can approximate every unit-Lipschitz function well enough to distinguish
the fitted empirical distributions. This approximation property may be
verified using a sufficiently rich growing critic sieve or an
architecture-specific Lipschitz approximation result.

We leave this part at the level of
\eqref{eq:WGAN_training_support_approximation}, rather than imposing a
particular Lipschitz-universal architecture, because the bootstrap-process
result only uses the output-level conditions in
Assumption~\ref{ass:WGAN_model}. Similarly, the approximate maximization
and minimization conditions concern the numerical training procedure and
are not consequences of the architectural norm bounds alone.

The controlled feedforward design parallels the fixed-depth,
operator-norm-controlled generator construction used by
\citet{tran2026generative}. Their compact-support argument controls the
empirical-to-population discrepancy through bounded-Lipschitz uniform
laws. Here the target and latent distributions may be unbounded: finite
first moments and the contraction argument in
Lemma~\ref{lem:WGAN_training_implies_W1} replace the compact-support
requirement.
\end{remark}

\section{Auxiliary lemmas}

\begin{lemma}[Theorem 2.3.2 of \cite{durrett2019probability}]
\label{lemma:subseq_prob_cvg}

Let $V_1,V_2,...$ be a sequence of $\bR^r$-valued random variables. The sequence converges in probability to some random variable $V$ if and only if for each subsequence $n_k$ of $n$, there is a further subsequence $n_{k_\ell}$ that converges almost surely to $V$.

\end{lemma}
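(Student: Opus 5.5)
The statement is the subsequence characterization of convergence in probability for $\bR^r$-valued random variables (Theorem~2.3.2 of \cite{durrett2019probability}). The plan is to prove the two directions separately. Both reduce to the scalar sequence $d_n:=\lVert V_n-V\rVert$, since $V_n\to V$ in probability means exactly $\P(d_n>\epsilon)\to0$ for every $\epsilon>0$.

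For necessity, suppose $d_n\to0$ in probability and fix a subsequence $n_k$.
\begin{enumerate}
\item Choose indices $k_1<k_2<\cdots$ inductively so that $\P(d_{n_{k_\ell}}>2^{-\ell})\le 2^{-\ell}$. This is possible because $\P(d_{n_k}>2^{-\ell})\to0$ as $k\to\infty$ for each fixed $\ell$.
\item The probabilities $\P(d_{n_{k_\ell}}>2^{-\ell})$ are summable in $\ell$.
\item By the Borel--Cantelli lemma, almost surely only finitely many of these events occur.
\item Hence $d_{n_{k_\ell}}\le 2^{-\ell}$ for all large $\ell$ almost surely, i.e.\ $V_{n_{k_\ell}}\to V$ almost surely.
\end{enumerate}

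For sufficiency, argue by contradiction.
\begin{enumerate}
\item If $V_n\not\to V$ in probability, there exist $\epsilon,\eta>0$ and a subsequence $n_k$ with $\P(d_{n_k}>\epsilon)\ge\eta$ for all $k$.
\item By hypothesis, some further subsequence $n_{k_\ell}$ satisfies $d_{n_{k_\ell}}\to0$ almost surely.
\item Almost-sure convergence implies convergence in probability. For instance, apply dominated convergence to $\ind(d_{n_{k_\ell}}>\epsilon)$, which tends to $0$ almost surely and is bounded by $1$. This gives $\P(d_{n_{k_\ell}}>\epsilon)\to0$.
\item This contradicts $\P(d_{n_k}>\epsilon)\ge\eta$ holding along the whole subsequence $n_k$.
\end{enumerate}

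No step is a real obstacle. The only delicate point is the inductive choice of the $k_\ell$ in the necessity direction: the indices must be strictly increasing so that $n_{k_\ell}$ is genuinely a subsequence of $n_k$. This is ensured by taking each $k_\ell$ larger than $k_{\ell-1}$, which is possible since the bound in the first necessity step holds for all sufficiently large $k$.
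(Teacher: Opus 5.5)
Your proof is correct. It is essentially the standard argument behind Theorem~2.3.2 of Durrett, which the paper cites rather than reproduces. For necessity you apply Borel--Cantelli along an inductively chosen sub-subsequence with $\P(d_{n_{k_\ell}}>2^{-\ell})\le 2^{-\ell}$. For sufficiency you combine the fact that almost-sure convergence implies convergence in probability with a subsequence contradiction; Durrett packages that last step as a general lemma on real sequences, applied to $\P(d_n>\epsilon)$.
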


\begin{lemma}[Lévy's inequality, Proposition A.1.2 in \cite{MR1385671}]
\label{lemma:LevyIneq}

Let $X_1, \ldots, X_n$ be independent symmetric stochastic processes indexed by $T$; for any $k \in \bN$, we write $S_k := \sum_{i=1}^k X_i$. then for any $\lambda>0$ we have 
\begin{align*}
    \P\Big(\max_{1\le k \le n} \sup_{t \in T} \lvert S_k(t) \rvert > \lambda \Big) 
    \le 
    2\P\Big(\sup_{t \in T} \lvert S_n(t) \rvert > \lambda \Big).
\end{align*}
\end{lemma}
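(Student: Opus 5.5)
The plan is the classical reflection argument, run with the first index at which the supremum of the partial sums exceeds $\lambda$. Throughout I use the paper's measurability convention: the suprema $\sup_{t\in T}\lvert S_k(t)\rvert$ are treated as measurable. Symmetry of $X_i$ means that $X_i$ and $-X_i$ have the same law as processes, so independence gives the following. For each $k$, the vectors
\[
(X_1,\ldots,X_k,X_{k+1},\ldots,X_n)
\quad\text{and}\quad
(X_1,\ldots,X_k,-X_{k+1},\ldots,-X_n)
\]
have the same joint law.

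\textbf{Step 1 (first-passage decomposition).} Define the disjoint events
\[
A_k:=\Big\{\sup_{t}\lvert S_k(t)\rvert>\lambda,\ \sup_t\lvert S_j(t)\rvert\le\lambda \text{ for } j<k\Big\},\qquad k\in\zahl{n}.
\]
Their union is the event on the left-hand side of Lévy's inequality, and each $A_k$ depends only on $(X_1,\ldots,X_k)$.

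\textbf{Step 2 (reflection).} Write $R_k:=S_n-S_k$ and $S_n^{(k)}:=S_k-R_k$. Then $2S_k=S_n+S_n^{(k)}$ pointwise in $t$, so by the triangle inequality
\[
\sup_t\lvert S_k(t)\rvert\le\tfrac12\Big\{\sup_t\lvert S_n(t)\rvert+\sup_t\lvert S_n^{(k)}(t)\rvert\Big\}.
\]
Hence on $A_k$ at least one of $\sup_t\lvert S_n\rvert>\lambda$ or $\sup_t\lvert S_n^{(k)}\rvert>\lambda$ holds. By the equality in law from the first paragraph, the events $A_k\cap\{\sup_t\lvert S_n\rvert>\lambda\}$ and $A_k\cap\{\sup_t\lvert S_n^{(k)}\rvert>\lambda\}$ have equal probability. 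This is because the second event is the image of the first under the law-preserving map $(X_1,\ldots,X_n)\mapsto(X_1,\ldots,X_k,-X_{k+1},\ldots,-X_n)$. Therefore
\[
\P(A_k)\le 2\,\P\Big(A_k\cap\Big\{\sup_t\lvert S_n(t)\rvert>\lambda\Big\}\Big).
\]

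\textbf{Step 3 (summation).} Sum the bound of Step 2 over $k\in\zahl{n}$. Since the $A_k$ are disjoint, the right-hand sides add up to at most $2\P(\sup_t\lvert S_n(t)\rvert>\lambda)$, which is the claim.

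The only delicate point is Step 2's equality of probabilities. It requires that the events involved be measurable functions of the process vector, so that the law-preserving reflection map transports one event onto the other. Under the standing measurability convention this is immediate. Without it, I would first prove the inequality for finite $T'\subset T$ and then pass to countable $T'$ by monotone limits. For general $T$ the same argument goes through with outer probabilities, provided the reflection is implemented as a measurable coordinate permutation on a product probability space, as in \citet[Proposition~A.1.2]{MR1385671}.
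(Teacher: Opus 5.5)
Your proof is correct and is the standard first-passage reflection argument, with $A_k$ as the first exceedance, $2S_k=S_n+(S_k-R_k)$, and equality in law under reflecting the coordinates after $k$. This is the proof of Proposition~A.1.2 in \cite{MR1385671}, which the paper cites without giving its own proof. You are also right that the equality of the two reflected probabilities is the only step that needs the measurability convention, or, for general $T$, the outer-probability product-space setup of \cite{MR1385671}.
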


\begin{lemma}[Contraction principle, Proposition A.1.10 in \cite{MR1385671}]
\label{lemma:ContracIneq}
Let $X_1, \ldots, X_n$ be $T$-indexed stochastic processes, $\gamma_1, \ldots, \gamma_n$ be real valued random variables such that $0\le \gamma_i \le 1$. Let $\xi_1, \ldots, \xi_n$ be independent zero-mean real random variables independent of $(X_1, \ldots, X_n, \gamma_1, \ldots, \gamma_n)$, then
\begin{align*}
    \E\Big[ \sup_{t \in T} \Big\lvert \sum_{i=1}^n \xi_i \gamma_i X_i(t) \Big\rvert \Big]
    \le 
    \E\Big[ \sup_{t \in T} \Big\lvert \sum_{i=1}^n \xi_i X_i(t) \Big\rvert \Big].
\end{align*}  
\end{lemma}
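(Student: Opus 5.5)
Since this is a standard result, I would reduce it to a deterministic-coefficient statement by conditioning and then use convexity. First I would condition on $(X_1,\ldots,X_n,\gamma_1,\ldots,\gamma_n)$. Because $\xi=(\xi_1,\ldots,\xi_n)$ is independent of these variables, Fubini's theorem reduces the claim to the following: for fixed sample paths $x_1,\ldots,x_n$ of the processes and fixed constants $c=(c_1,\ldots,c_n)\in[0,1]^n$,
\begin{align*}
\Psi(c):=\E_\xi\Big[\sup_{t\in T}\Big\lvert\sum_{i=1}^n\xi_ic_ix_i(t)\Big\rvert\Big]
\le
\Psi(1,\ldots,1).
\end{align*}
Integrating over $(X,\gamma)$ then gives the lemma. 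If the right-hand side of the lemma is infinite there is nothing to prove. As elsewhere in the paper, I would use the standing measurability convention so that the suprema are measurable and Fubini applies.

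The first step is convexity of $\Psi$ on $[0,1]^n$. For each fixed $t$ and fixed realization of $\xi$, the map $c\mapsto\sum_i\xi_ic_ix_i(t)$ is linear. Hence its absolute value is convex in $c$, the supremum over $t$ is convex, and so is the expectation over $\xi$. A convex function on the cube attains its maximum at an extreme point. It therefore suffices to show $\Psi(\mathbf 1_A)\le\Psi(\mathbf 1)$ for every vertex $\mathbf 1_A$, where $A\subseteq\zahl{n}$.

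The second step handles the vertices by Jensen's inequality. Since the $\xi_i$ are independent with mean zero,
\begin{align*}
\sum_{i\in A}\xi_ix_i(t)
=
\E\Big[\sum_{i=1}^n\xi_ix_i(t)\ \Big|\ \xi_A\Big]
\end{align*}
for every $t$, where $\xi_A:=(\xi_i)_{i\in A}$. The functional $y\mapsto\sup_t\lvert y(t)\rvert$ is convex, and a supremum of conditional expectations is bounded by the conditional expectation of the supremum. Taking expectations then gives $\Psi(\mathbf 1_A)\le\Psi(\mathbf 1)$.

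I expect the only delicate point to be technical rather than conceptual. One needs integrability of $\xi_i$ so that the conditional-mean identity holds, which is implicit in ``zero-mean''. One also needs measurability of the supremum in order to apply Fubini and conditional Jensen. I would dispatch the latter by the paper's measurability convention, or alternatively by working with countable $T$ and using monotone convergence.
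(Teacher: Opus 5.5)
Your argument is correct: convexity of $c\mapsto\Psi(c)$ on $[0,1]^n$ reduces the claim to the vertices $\mathbf 1_A$. At each vertex, integrating out the independent mean-zero $\xi_i$, $i\notin A$, with Jensen's inequality gives $\Psi(\mathbf 1_A)\le\Psi(\mathbf 1)$. The paper gives no proof of its own; it only cites Proposition~A.1.10 of van der Vaart and Wellner, and your convexity-plus-Jensen route is the standard argument for that contraction principle.
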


We record a comparison inequality for centered empirical processes.

\begin{lemma}[Centered empirical-process comparison]
\label{lem:centered_empirical_process_comparison}
Let $\mu$ and $\nu$ be probability measures satisfying $\mu\le D\nu$ for
some $D<\infty$, and let $\cA$ be a function class with an envelope bounded
by $A_0<\infty$. Then, for every $m\ge1$,
\begin{align*}
\E_\mu\big[
\big\lVert\bG_m^\mu\big\rVert_{\cA}
\big]
\le
K_D
\E_\nu\big[
\big\lVert\bG_m^\nu\big\rVert_{\cA}
\big]
+
4A_0\sqrt m
\exp\Big(
-\frac m4
\Big),
\end{align*}
with the constant $K_D=8\lceil2D\rceil$.
\end{lemma}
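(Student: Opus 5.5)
The plan is to write $\mu$ as a mixture that contains a small copy of $\nu$, and to compare $\bG_m^\mu$ with the $\nu$-process through a Poissonization-type splitting. Set $N:=\lceil 2D\rceil$ and $\mu\le D\nu\le N\nu/2$. Define the probability measure $\lambda:=(N\nu-\mu)/(N-1)$. It is a genuine probability measure because $N\nu-\mu\ge N\nu/2\ge0$, and it gives the identity
\[
\nu=\tfrac1N\mu+\tfrac{N-1}{N}\lambda .
\]
So a $\nu$-sample can be produced as follows: each point is independently a $\mu$-draw with probability $1/N$ and a $\lambda$-draw otherwise. The plan is to bound the $\mu$-process by the $\nu$-process, since $\mu$-points appear as a thinned subsample of a $\nu$-sample of a larger size.

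The concrete steps are these.

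\textbf{Step 1 (symmetrization).} Bound $\E\lVert\bG_m^\mu\rVert_\cA\le 2\E\lVert m^{-1/2}\sum_{i\le m}\varepsilon_i a(X_i)\rVert_\cA$ with Rademacher signs. Symmetrization is applied in both directions, so the desymmetrization constant is absorbed into the factor $8$.

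\textbf{Step 2 (embedding).} Take $Nm$ i.i.d.\ $\nu$-draws $Y_j$, realized through the mixture above with selector indicators $\gamma_j\sim{\rm Bernoulli}(1/N)$. Split them into $N$ blocks of length $m$. On the event that at least $m$ of the $\gamma_j$ equal $1$, the first $m$ selected points form an i.i.d.\ $\mu$-sample. By the contraction principle (Lemma~\ref{lemma:ContracIneq}, with $\gamma_i$ the selectors, which are independent of the Rademacher signs) and Lévy's inequality (Lemma~\ref{lemma:LevyIneq}, used to pass from "the first $m$ selected terms" to the full partial sum), the symmetrized $\mu$-sum is dominated by the symmetrized sum over all $Nm$ points $Y_j$.

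\textbf{Step 3 (splitting the $\nu$-sum).} By the triangle inequality, the symmetrized sum over $Nm$ $\nu$-points is at most $N$ times $\E\lVert\sum_{i\le m}\varepsilon_i a(Y_i)\rVert_\cA$. Desymmetrizing then gives a bound of the form $\mathrm{const}\cdot N\sqrt m\,\E\lVert\bG_m^\nu\rVert_\cA/\sqrt m$. The centering terms $\mu a$ versus $\nu a$ disappear because both processes are symmetrized before comparison.

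\textbf{Step 4 (bad event).} On the complement of the good event, where ${\rm Bin}(Nm,1/N)<m$, bound the process trivially by $2A_0\sqrt m$. A Chernoff bound gives a probability of at most $e^{-m/4}$, which is exactly where the remainder term $4A_0\sqrt m\,e^{-m/4}$ comes from. Because the Binomial mean is exactly $m$, the deviation must be taken at a level slightly below the mean. So I would instead use $2Nm$ points, with $2m$ expected selections, and get the Chernoff exponent $-2m(1/2)^2/2=-m/4$. This extra doubling is absorbed into $K_D=8N$.

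I expect Step 2 to be the main obstacle. The difficulty is making the conditioning rigorous: after conditioning on the selectors, the selected points must be i.i.d.\ $\mu$ and the rest i.i.d.\ $\lambda$, independent of each other. One must also apply the contraction principle with random $\gamma_i$ while the index set of selected points is itself random. I would handle this by working conditionally on $(\gamma_j)$, using a Jensen step to add back the unselected $\lambda$-terms (their symmetrized contributions have conditional mean zero), and then applying Lévy's inequality in expectation form to pass to the first $m$ selections.
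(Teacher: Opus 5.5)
Your overall architecture matches the paper's proof:
\begin{itemize}
\item realize a $\mu$-sample as the selected points of a longer $\nu$-sample (your mixture $\nu=N^{-1}\mu+(1-N^{-1})\lambda$ is the paper's rejection sampling written differently);
\item control the event of too few selections by a Chernoff bound;
\item split the long $\nu$-sum into blocks of length $m$;
\item symmetrize at the start and desymmetrize at the end.
\end{itemize}
However, Step~3 fails as written, and this is exactly the point around which the paper's proof is organized.

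\textbf{The gap.} In Step~1 you symmetrize the \emph{uncentered} functions. In Step~3 you then desymmetrize $\E\lVert m^{-1/2}\sum_{i\le m}\varepsilon_i a(Y_i)\rVert_\cA$ with $Y_i\sim\nu$. Desymmetrization requires the functions inside the Rademacher sum to be centered at their $\nu$-mean: $\E\lVert\sum_i\varepsilon_i\{a(Y_i)-\nu[a]\}\rVert_\cA\le 2\E\lVert\sum_i\{a(Y_i)-\nu[a]\}\rVert_\cA$. The uncentered sum carries an extra term of order $\sup_{a\in\cA}\lvert\nu[a]\rvert$, and no multiple of $\E\lVert\bG_m^\nu\rVert_\cA$ controls it. For example, if $\cA$ contains a nonzero constant $c$, then $\bG_m^\nu c=0$, while $\E\lvert m^{-1/2}\sum_i\varepsilon_i c\rvert\to\lvert c\rvert\sqrt{2/\pi}>0$. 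The centering terms do not ``disappear because both processes are symmetrized.'' $\bG_m^\mu$ and $\bG_m^\nu$ are invariant under adding constants to $a$, but the uncentered Rademacher sums are not. Symmetrizing discards the centering rather than cancelling it, and desymmetrization cannot restore it.

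\textbf{The fix.} Fix the centering at $\nu[a]$ \emph{before} thinning. Put $b_a:=a-\nu[a]$. Since $\bG_m^\mu b_a=\bG_m^\mu a$, Step~1 may be applied to the class $\{b_a:a\in\cA\}$. The paper does this through the two-sample difference $\sum_j\{a(X_j)-a(X_j')\}$, which is unchanged when $a$ is replaced by $b_a$, followed by the triangle inequality. The rest of your argument then goes through:
\begin{itemize}
\item Steps~2 and~4 do not depend on the centering, because the Rademacher signs make every term mean zero.
\item The envelope of $b_a$ is $2A_0$, which produces the remainder $2\cdot 2A_0\sqrt m\,e^{-m/4}$.
\item At the end, $\E\lVert\sum_i\varepsilon_i b_a(Y_i)\rVert_\cA\le\E\lVert\sum_i\{a(Y_i)-a(Y_i')\}\rVert_\cA\le 2\E\lVert\sum_i\{a(Y_i)-\nu[a]\}\rVert_\cA$ by Jensen. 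This is legitimate precisely because $\nu[b_a]=0$.
\end{itemize}

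\textbf{The constant.} Your bookkeeping does not give $K_D=8\lceil 2D\rceil$. Selecting with probability $1/N$ forces $2Nm$ draws, hence $2N$ blocks. Combined with the factors $2$ (symmetrization), $2$ (L\'evy) and $2$ (desymmetrization), this gives $16N$, not $8N$. There are two ways to recover the stated constant:
\begin{itemize}
\item The paper accepts with probability $1/D$, so $\lceil 2D\rceil m$ draws already have at least $2m$ expected acceptances.
\item Alternatively, keep your mixture but drop L\'evy. Your selectors are independent of the points, so conditionally on $(\gamma_j)$ and on at least $m$ selections, the first $m$ selected terms are dominated by the full sum $\sum_j\varepsilon_j b_a(Y_j)$ by conditional Jensen alone, since all remaining terms are conditionally mean zero. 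This saves exactly the factor $2$.
\end{itemize}
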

\begin{proof}
For any probability measure $\lambda$, let
$X_1,\ldots,X_m,X_1',\ldots,X_m'$ be two independent i.i.d. samples from
$\lambda$, and write
\begin{align*}
\gamma_m(\lambda,\cA)
:=
\E_\lambda\big[
\big\lVert\bG_m^\lambda\big\rVert_{\cA}
\big],
\qquad
\Delta_m(\lambda,\cA)
:=
\E\Big[
\Big\lVert
\frac{1}{\sqrt m}
\sum_{i=1}^m
\{ a(X_i)-a(X_i')\}
\Big\rVert_{\cA}
\Big].
\end{align*}
Standard result gives $\gamma_m(\lambda,\cA)
\le
\Delta_m(\lambda,\cA)
\le
2\gamma_m(\lambda,\cA)$,
where the first inequality is obtained by conditioning on $X_1,\ldots,X_m$ and applying Jensen's inequality, and the second inequality is obtained from applying triangle inequality and using the identical distributions of the two samples.

Choose a version $r:=\d\mu/\d\nu$ satisfying $0\le r\le D$. Let $Y_1,Y_2,\ldots$ be i.i.d. from $\nu$, let
$V_1,V_2,\ldots$ be i.i.d. ${\rm Unif}[0,1]$, and let
$\xi_1,\xi_2,\ldots$ be independent Rademacher variables. Assume these three
sequences are mutually independent, and define
\begin{align*}
J_i
:=
\ind\Big(
V_i\le\frac{r(Y_i)}{D}
\Big),
\qquad
T_j
:=
\inf\Big\{ 
t\ge1: \sum_{i=1}^tJ_i=j 
\Big\}.
\end{align*}
For every Borel set $A$, one then has
\begin{align*}
\P\big(Y_i\in A, J_i=1 \big)
=
\int_A
\frac{r(y)}{D}
\d\nu(y)
=
\frac{\mu(A)}{D},
\qquad
\P(J_i=1)=\frac{1}{D}.
\end{align*}
Fix
$0=t_0<t_1<\cdots<t_\ell$ and Borel sets
$A_1,\ldots,A_\ell$. Independence across trials gives
\begin{align*}
&\P\big(
T_1=t_1,\ldots,T_\ell=t_\ell,
Y_{T_1}\in A_1,\ldots,Y_{T_\ell}\in A_\ell
\big)
\\
&=
\prod_{j=1}^\ell
\big\{
\P(J_i=0)^{t_j-t_{j-1}-1}
\P\big(Y_i\in A_j, J_i=1 \big)
\big\}
=
\prod_{j=1}^\ell
\Big\{
\big(1-\frac{1}{D}\big)^{t_j-t_{j-1}-1}
\frac{\mu(A_j)}{D}
\Big\}.
\end{align*}
Summing over $0<t_1<\cdots<t_\ell$ yields
\begin{align*}
\P\big(
Y_{T_1}\in A_1,\ldots,Y_{T_\ell}\in A_\ell
\big)
&=
\sum_{0<t_1<\cdots<t_\ell}
\P\big(
T_1=t_1,\ldots,T_\ell = t_\ell,
Y_{T_1}\in A_1,\ldots,Y_{T_\ell}\in A_\ell
\big)
\\
&=
\prod_{j=1}^\ell\mu(A_j)
\sum_{0<t_1<\cdots<t_\ell}
\prod_{j=1}^\ell
\Big\{
\big(1-\frac{1}{D}\big)^{t_j-t_{j-1}-1}
\frac{1}{D}
\Big\}
\\
&=
\prod_{j=1}^\ell\mu(A_j)
\sum_{0<t_1<\cdots<t_\ell}
\P\big( T_1=t_1,\ldots,T_\ell = t_\ell \big)
\\
&=
\prod_{j=1}^\ell\mu(A_j)
\P(T_\ell<\infty)
=
\prod_{j=1}^\ell\mu(A_j),
\end{align*}
where the last equality is given by the second Borel-Cantelli Lemma applied to independent Bernoulli trials.

For $a\in\cA$, write $b_a:=a-\nu[a]$, and let $X_1,\ldots,X_m,X_1',\ldots,X_m'$ denote the two independent $\mu$-samples
appearing in $\Delta_m(\mu,\cA)$. Since each paired difference is symmetric, the rejection construction gives
\begin{align*}
\Delta_m(\mu,\cA)
&=
\E_{\mu,\xi}
\Big[ \Big\lVert
\frac{1}{\sqrt m}
\sum_{j=1}^m
\xi_j
\{a(X_j)-a(X_j')\}
\Big\rVert_{\cA} \Big]
\\
&\le
2
\E_{\mu,\xi}\Big[
\Big\lVert
\frac{1}{\sqrt m}
\sum_{j=1}^m
\xi_jb_a(X_j)
\Big\rVert_{\cA}
\Big]
\\
&=
2\E_{\nu,V,\xi}\Big[
\Big\lVert
\frac{1}{\sqrt m}
\sum_{j=1}^m
\xi_{T_j}b_a(Y_{T_j})
\Big\rVert_{\cA}
\Big]
=
2\E_{\nu,V,\xi}\Big[
\Big\lVert
\frac{1}{\sqrt m}
\sum_{i=1}^{T_m}
\xi_iJ_i b_a(Y_i)
\Big\rVert_{\cA}
\Big]
\end{align*}
Because $\lvert b_a\rvert\le2A_0$ uniformly over $a\in\cA$, the stopped sum is bounded by $2A_0\sqrt m$ on
$\{T_m>M\}$. On $\{T_m\le M\}$, it is bounded by the largest partial sum up
to time $M$. Therefore,
\begin{align*}
\E_{\nu,V,\xi}\Big[
\Big\lVert
\frac{1}{\sqrt m}
\sum_{i=1}^{T_m}
\xi_iJ_i b_a(Y_i)
\Big\rVert_{\cA}
\Big]
\le
\E_{\nu,V,\xi}\Big[
\max_{1\le \ell\le M}
\Big\lVert
\frac{1}{\sqrt m}
\sum_{i=1}^{\ell}
\xi_iJ_i b_a(Y_i)
\Big\rVert_{\cA}
\Big]
+
2A_0\sqrt m\P(T_m>M).
\end{align*}
Take $k=\lceil2D\rceil$ and $M := k m$. For the second term here, Chernoff bound gives
\begin{align*}
\P(T_m>M)
=
\P\Big(
\sum_{i=1}^M J_i<m
\Big)
=
\P\big(
{\rm Bin}(M,{1}/{D})<m
\big)
\le
\exp\Big(
\frac{-m}{4}
\Big).
\end{align*}
For the first term, we have 
\begin{align*}
\E_{\nu,V,\xi}\Big[
\max_{1\le \ell\le M}
\Big\lVert
\frac{1}{\sqrt m}
\sum_{i=1}^{\ell}
\xi_iJ_i b_a(Y_i)
\Big\rVert_{\cA}
\Big]
&\le
2
\E_{\nu,V,\xi}\Big[
\Big\lVert
\frac{1}{\sqrt m}
\sum_{i=1}^{M}
\xi_iJ_i b_a(Y_i)
\Big\rVert_{\cA}
\Big]
\\
&\le
2\sqrt{k}
\E_{\nu,\xi}\Big[
\Big\lVert
\frac{1}{\sqrt M}
\sum_{i=1}^{M}
\xi_i
 b_a(Y_i)
\Big\rVert_{\cA}
\Big];
\end{align*}
where the first inequality is a result of applying Lemma~\ref{lemma:LevyIneq} and integrating both sides, while the second inequality is implied by Lemma~\ref{lemma:ContracIneq}. We then get
\begin{align*}
\E_{\nu,V,\xi}\Big[
\Big\lVert
\frac{1}{\sqrt m}
\sum_{i=1}^{T_m}
\xi_iJ_i b_a(Y_i)
\Big\rVert_{\cA}
\Big]
\le
2\sqrt{k}
\E_{\nu,\xi}\Big[
\Big\lVert
\frac{1}{\sqrt M}
\sum_{i=1}^{M}
\xi_i
\{a(Y_i)-\nu[a]\}
\Big\rVert_{\cA}
\Big]
+
2A_0\sqrt m
\exp\Big(\frac{-m}{4}\Big).
\end{align*}
Let $Y_1',\ldots,Y_M'$ be an independent i.i.d. sample from $\nu$.
Conditional Jensen's inequality and the triangle inequality gives
\begin{align*}
\E_{\nu,\xi}\Big[
\Big\lVert
\frac{1}{\sqrt M}
\sum_{i=1}^{M}
\xi_i
\{ a(Y_i)-\nu[a]\}
\Big\rVert_{\cA}
\Big]
\le
\E\Big[
\Big\lVert
\frac{1}{\sqrt M}
\sum_{i=1}^{M}
\xi_i
\{a(Y_i)-a(Y_i')\}
\Big\rVert_{\cA}
\Big]
\le
\sqrt k \Delta_m(\nu,\cA).
\end{align*}
Combining the preceeding bounds we obtain
\begin{align*}
\gamma_m(\mu,\cA)
&\le
4k\Delta_m(\nu,\cA)
+
4A_0\sqrt m
\exp\Big(
-\frac m4
\Big)
\\
&\le
8k\gamma_m(\nu,\cA)
+
4A_0\sqrt m
\exp\Big(
-\frac m4
\Big).
\end{align*}
Since $k=\lceil2D\rceil$, this is the claimed bound.
\end{proof}

For a probability measure $\mu$, define its local mean modulus by
\begin{align*}
\omega_m(\mu,\delta)
:=
\E_\mu\big[
\big\lVert\bG_m^\mu\big\rVert_{\cF_\delta}\big].
\end{align*}
For $\mu=\Q_n$, this is a random quantity through its dependence on $\cO$.
The next lemma converts the probability formulation in \eqref{eq:CAE} into
the local mean formulation needed for the previous comparison argument.

\begin{lemma}[Conditional equicontinuity and local mean moduli]
\label{lem:CAE_local_mean_equivalence}
The conditional asymptotic equicontinuity condition \eqref{eq:CAE} holds if
and only if, for every $a>0$,
\begin{align*}
\lim_{\delta\downarrow0}
\limsup_{n\to\infty}
\P_\cO\big(
\omega_n(\Q_n,\delta)>a
\big)
=0.
\yestag
\label{eq:conditional_local_mean_equicontinuity}
\end{align*}
\end{lemma}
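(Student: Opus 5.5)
The plan is to prove the two directions separately. Throughout, write
\[
Z_{n,\delta}:=\sup_{\rho_\P(f,g)<\delta}\lvert\tilde\bG_n[f]-\tilde\bG_n[g]\rvert=\lVert\tilde\bG_n\rVert_{\cF_\delta},
\]
so that $\omega_n(\Q_n,\delta)=\E_{\tilde U}[Z_{n,\delta}\mid\cO]$. Recall that $\cF$ has an envelope bounded by $B$, so $\cF_\delta$ has envelope bounded by $2B$.

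\emph{Sufficiency.} Assume \eqref{eq:conditional_local_mean_equicontinuity}. Conditional Markov's inequality gives
\[
\P_{\tilde U}(Z_{n,\delta}>\epsilon\mid\cO)\le \omega_n(\Q_n,\delta)/\epsilon .
\]
Hence the event in \eqref{eq:CAE} is contained in $\{\omega_n(\Q_n,\delta)>\epsilon\eta\}$. This is exactly the step already used in \eqref{eq:CAE_by_local_mean}, and taking $a=\epsilon\eta$ finishes this direction.

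\emph{Necessity.} Assume \eqref{eq:CAE}. We must upgrade a conditional probability bound to a conditional mean bound. Truncation alone fails, because $Z_{n,\delta}$ is bounded only by $4B\sqrt n$, so we need a uniform conditional integrability bound. The tool is the Hoffmann-J{\o}rgensen inequality, applied conditionally on $\cO$ to the i.i.d.\ $\Q_n$-sample after symmetrization. Each summand of the symmetrized process $n^{-1/2}\sum_i\xi_i\{a(\tilde Z_i)-\Q_n a\}$, $a\in\cF_\delta$, is bounded in sup norm by $4B/\sqrt n$. Hoffmann-J{\o}rgensen then yields, for $t_0$ with $\P(\lVert S\rVert>t_0)\le 1/8$,
\[
\E\lVert S\rVert\le C\big(t_0+4B/\sqrt n\big).
\]
So the conditional mean of the symmetrized process is controlled by its conditional quantile, up to a vanishing term. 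The quantile of the symmetrized process is in turn controlled by that of $Z_{n,\delta}$ through the symmetrization-in-probability inequality \citep[Lemma~2.3.7]{MR1385671}. That inequality needs a lower bound on $\P_{\tilde U}(\lvert\tilde\bG_n[a]\rvert\le x/2\mid\cO)$ for each $a\in\cF_\delta$. We obtain it from Chebyshev's inequality via
\[
\sup_{a\in\cF_\delta}\Var_{\Q_n}(a)\le \sup_{a\in\cF_\delta}\Q_n a^2 ,
\]
and this is the one delicate point. Finally, desymmetrization \citep[Lemma~2.3.6]{MR1385671} converts the symmetrized mean back to $\omega_n(\Q_n,\delta)$, up to a factor of $2$.

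\emph{Main obstacle.} The hard step is controlling $\sup_{a\in\cF_\delta}\Var_{\Q_n}(a)$ using only \eqref{eq:CAE}. The plan is to use the necessity-direction idea: conditional equicontinuity forces small conditional variances for increments. If $\Var_{\Q_n}(f-g)\ge c$ for some pair with $\rho_\P(f,g)<\delta$, then the conditional law of $\tilde\bG_n[f-g]$ is close to ${\rm N}(0,\Var_{\Q_n}(f-g))$ by a Berry--Esseen bound, since the summands are bounded by $2B$ and a lower variance bound $c$ makes the Lyapunov ratio $O(B/(c^{1/2}\sqrt n))$. This gives
\[
\P_{\tilde U}(Z_{n,\delta}>\epsilon\mid\cO)\ge \P\big(\lvert {\rm N}(0,c)\rvert>\epsilon\big)-o(1),
\]
which for small $\epsilon$ is bounded below. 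Therefore \eqref{eq:CAE} forces
\[
\lim_{\delta\downarrow0}\limsup_{n\to\infty}\P_\cO\Big(\sup_{a\in\cF_\delta}\Var_{\Q_n}(a)>c\Big)=0
\]
for every $c>0$.

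With this variance control, on the good event: fix $\epsilon,\eta$. The conditional $1/8$-quantile of the symmetrized supremum is at most a constant multiple of $\epsilon$ whenever $\P_{\tilde U}(Z_{n,\delta}>\epsilon\mid\cO)\le\eta$ and the variances are small. Hence $\omega_n(\Q_n,\delta)\le C(\epsilon+B/\sqrt n)$ off an event whose $\P_\cO$-probability vanishes in the $\lim_\delta\limsup_n$ sense. Taking $\epsilon=a/(2C)$ gives \eqref{eq:conditional_local_mean_equicontinuity}.
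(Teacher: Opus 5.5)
Your argument is correct in substance, and your sufficiency half is the same as the paper's. For necessity, however, you take a much heavier route.

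The paper does not symmetrize at all. Replacing one synthetic draw moves $Z_{n,\delta}$ by at most $4B/\sqrt n$, so McDiarmid's lower-tail inequality gives, conditionally on $\cO$,
$\P_{\tilde U}(Z_{n,\delta}\le\omega_n(\Q_n,\delta)-t\mid\cO)\le\exp\{-t^2/(8B^2)\}$.
This bound is free of $n$ because the squared differences sum to $16B^2$. Hence on $\{\omega_n(\Q_n,\delta)>a\}$ one has $\P_{\tilde U}(Z_{n,\delta}>a/2\mid\cO)\ge\kappa_a:=1-\exp\{-a^2/(32B^2)\}>0$. Applying \eqref{eq:CAE} with $\epsilon=a/2$ and $\eta=\kappa_a/2$ finishes the proof. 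So the uniform conditional integrability you flag as the main obstacle comes directly from bounded-difference concentration. No variance control, Berry--Esseen step, or Hoffmann-J{\o}rgensen inequality is needed.

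Your route does buy something in exchange. It invokes \eqref{eq:CAE} only at fixed levels of $\eta$, rather than at $\eta=\kappa_a/2\asymp a^2/B^2$. It also yields the more quantitative bound $\omega_n(\Q_n,\delta)\lesssim\epsilon+B/\sqrt n$ on the good event. Finally, it uses the bounded envelope only to make individual summands $O(n^{-1/2})$ and the Lyapunov ratio small. That makes it the more natural starting point if one wanted to replace bounded envelopes by a uniform-integrability condition such as \eqref{eq:envelope_UI2}, where bounded differences fail.

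One step needs repair as written. \citet[Lemma~2.3.7]{MR1385671}, whose $\beta$-factor is built from $\P_{\tilde U}(\lvert\tilde\bG_n[a]\rvert<x/2\mid\cO)$, bounds the tail of the unsymmetrized supremum by that of the symmetrized one. That is the opposite of what you need. The needed direction follows from the same argument run with an independent copy $Y_i'$ of $Y_i:=a(\tilde Z_i)-\Q_n[a]$. For $S:=n^{-1/2}\sum_i\xi_iY_i$ one gets $\beta\,\P(\lVert S\rVert_{\cF_\delta}>x\mid\cO)\le2\P_{\tilde U}(Z_{n,\delta}>x/4\mid\cO)$ with $\beta\ge1-4\sup_{a\in\cF_\delta}\mathrm{Var}_{\Q_n}(a)/x^2$. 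So your variance control still suffices. With $x=4\epsilon$ you then need $\eta\le1/32$ to reach the tail level $1/8$ in Hoffmann-J{\o}rgensen. This is harmless because \eqref{eq:CAE} holds for every $\eta>0$.
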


\begin{proof}
Write
\begin{align*}
Z_{n,\delta}
:=
\sup_{\rho_\P(f,g)<\delta}
\big\lvert
\tilde\bG_n[f]-\tilde\bG_n[g]
\big\rvert.
\end{align*}
Conditionally on $\cO$, its mean is
$\omega_n(\Q_n,\delta)$.  If
\eqref{eq:conditional_local_mean_equicontinuity} holds, conditional Markov's
inequality gives, for every $\epsilon,\eta>0$,
\begin{align*}
\P_\cO\big(
\P_{\tilde U}\big(
Z_{n,\delta}>\epsilon
\mid \cO
\big)>\eta
\big)
\le
\P_\cO\big(
\omega_n(\Q_n,\delta)>\epsilon\eta
\big),
\end{align*}
which implies \eqref{eq:CAE}.

Conversely, replacing one synthetic observation changes $Z_{n,\delta}$ by
at most $4B/\sqrt n$.  McDiarmid's lower-tail inequality therefore gives,
conditionally on $\cO$,
\begin{align*}
\P_{\tilde U}\big(
Z_{n,\delta}
\le
\omega_n(\Q_n,\delta)-t
\mid \cO
\big)
\le
\exp\Big(
-\frac{t^2}{8B^2}
\Big).
\yestag
\label{eq:conditional_modulus_lower_tail}
\end{align*}
Fix $a>0$ and let $\kappa_a := 1-\exp(-{a^2}/{32B^2})>0$. On the event $\{\omega_n(\Q_n,\delta)>a\}$,
\eqref{eq:conditional_modulus_lower_tail} implies
\begin{align*}
\P_{\tilde U}\Big(
Z_{n,\delta}>\frac a2
\mid \cO
\Big)
\ge
\kappa_a.
\end{align*}
Applying \eqref{eq:CAE} with $\epsilon=a/2$ and
$\eta=\kappa_a/2$ proves
\eqref{eq:conditional_local_mean_equicontinuity}.
\end{proof}

\begin{lemma}[$W_q$-distance convergence implies weak convergence]
\label{lemma:wq-implies-weak}
Let $(\mathcal X,d)$ be a separable metric space, and let $\Q$ be a tight Borel probability measure on $\mathcal X$. For some $q\ge 1$, let $\Q_n$ be a sequence of random Borel probability measures on $\mathcal X$ such that $W_q(\Q_n,\Q)$ is well-defined and
\begin{align*}
    W_q(\Q_n,\Q)=o_{\P}(1).
\end{align*}
Then, $\Q_n$ converges weakly to $\Q$ in probability. Equivalently,
\begin{align*}
    \Q_n h-\Q h=o_{\P}(1)
\end{align*}
for every bounded continuous function $h:\mathcal X\to\bR$. In particular, the conclusion holds when $\mathcal X=\bR^m$ is equipped with the Euclidean metric.
\end{lemma}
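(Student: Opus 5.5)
The plan is to reduce the statement to the deterministic fact that $W_q$-convergence implies weak convergence, and then pass from almost-sure to in-probability convergence through the subsequence criterion of Lemma~\ref{lemma:subseq_prob_cvg}.

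\emph{Step 1: monotonicity of $W_q$.} First we note that $W_1\le W_q$ for $q\ge1$. For any coupling $\pi$ of $\Q_n$ and $\Q$, Jensen's (or Lyapunov's) inequality gives $\int d\,\d\pi\le(\int d^q\,\d\pi)^{1/q}$. Taking infima over couplings shows that $W_1(\Q_n,\Q)=o_\P(1)$, so it suffices to treat $q=1$.

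\emph{Step 2: bounded-Lipschitz test functions.} Fix $h\in\BL1(\mathcal X,d)$. For any coupling $\pi$ we have
\[
\lvert\Q_nh-\Q h\rvert\le\int\lvert h(x)-h(y)\rvert\,\d\pi(x,y)\le\int\{d(x,y)\wedge 2\}\,\d\pi\le\int d\,\d\pi .
\]
Hence $\sup_{h\in\BL1}\lvert\Q_nh-\Q h\rvert\le W_1(\Q_n,\Q)=o_\P(1)$. The measurability of this supremum is covered by the paper's standing conventions.

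\emph{Step 3: extension to all bounded continuous $h$.} Fix a bounded continuous $h$. We argue by subsequences: given any subsequence, Lemma~\ref{lemma:subseq_prob_cvg} applied to the real variable $W_1(\Q_n,\Q)$ yields a further subsequence along which $W_1(\Q_n,\Q)\to0$ almost surely. Fix an outcome in this almost-sure event. By Step~2, $\Q_n g\to\Q g$ deterministically for every bounded Lipschitz $g$. Since $\Q$ is tight and Borel on a separable metric space, the classical portmanteau theorem applies: bounded Lipschitz convergence implies weak convergence $\Q_n\rightsquigarrow\Q$ (cf.\ Dudley, Theorem~11.3.3, or van der Vaart--Wellner Addendum~1.12.3). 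Therefore $\Q_nh\to\Q h$ along this further subsequence, almost surely. A second application of Lemma~\ref{lemma:subseq_prob_cvg} gives $\Q_nh-\Q h=o_\P(1)$. The Euclidean case follows because $\bR^m$ is separable and every Borel probability measure on it is tight.

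The main obstacle is Step~3, namely justifying that bounded-Lipschitz convergence implies convergence for every bounded continuous $h$ in a general separable metric space. If we prefer not to cite the portmanteau theorem, we would prove it directly. One route approximates $h$ from above and below by the Lipschitz inf-convolutions $h_k(x)=\inf_y\{h(y)+k\,d(x,y)\}$, which increase to $h$ pointwise. This gives $\liminf\Q_nh\ge\Q h_k\uparrow\Q h$ by monotone convergence, and applying the same bound to $-h$ closes the argument. Tightness of $\Q$ is used only to guarantee that the limit is a genuine Borel measure, so that the theorem applies.
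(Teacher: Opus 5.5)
Your proof is correct, but it takes a different route from the paper's.

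\textbf{How the paper argues.} For a fixed bounded continuous $h$ with $M=\lVert h\rVert_\infty$, the paper proves a deterministic continuity statement directly.
\begin{enumerate}[label=(\roman*),itemsep=-.5ex]
\item Tightness of $\Q$ gives a compact $K$ with $\Q(K^c)<\epsilon/(8M)$.
\item Compactness of $K$ gives a radius $\delta$ such that $\lvert h(x)-h(y)\rvert<\epsilon/2$ whenever $x\in K$ and $d(x,y)<\delta$.
\item Markov's inequality applied to $d^q$ under a near-optimal coupling then shows $\lvert \nu h-\Q h\rvert<\epsilon$ whenever $W_q(\nu,\Q)<\eta:=\delta(\epsilon/8M)^{1/q}$.
\end{enumerate}
The probabilistic conclusion is then one line: $\P(\lvert\Q_nh-\Q h\rvert>\epsilon)\le\P(W_q(\Q_n,\Q)\ge\eta)\to0$. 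There is no reduction to $W_1$, no subsequence extraction, and no appeal to the portmanteau theorem.

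\textbf{How you argue.} You factor through the bounded-Lipschitz distance, using $\sup_{h\in{\rm BL}_1}\lvert(\Q_n-\Q)h\rvert\le W_1(\Q_n,\Q)\le W_q(\Q_n,\Q)$. You then apply the portmanteau equivalence realization-wise along almost surely convergent subsequences. Your inf-convolution argument for that step is correct.

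\textbf{What each route buys.} Your route is modular. It yields a bound uniform over ${\rm BL}_1$ as a by-product, and it shows that the tightness hypothesis is not needed. The paper's route is self-contained, gives an explicit modulus $\eta(\epsilon,h,\Q)$, and transfers to convergence in probability without the subsequence criterion.

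\textbf{Two small corrections.}
\begin{enumerate}[label=(\roman*),itemsep=-.5ex]
\item Your closing remark misstates the role of tightness. $\Q$ is Borel by hypothesis. The equivalence between bounded-Lipschitz convergence and weak convergence for Borel laws on a metric space requires no tightness, and your own inf-convolution proof uses none. So in your argument tightness is simply unused; in the paper's proof it is exactly what produces the compact set $K$.
\item Apply Lemma~\ref{lemma:subseq_prob_cvg} to $W_q(\Q_n,\Q)$, the quantity actually assumed to be $o_\P(1)$. Then deduce $W_1(\Q_n,\Q)\to0$ on the same almost-sure event. This avoids any question about whether $W_1(\Q_n,\Q)$ is itself a random variable.
\end{enumerate}
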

\begin{proof}
Fix a bounded continuous function $h:\mathcal X\to\bR$ and write $M:=\lVert h\rVert_\infty$.
If $M=0$, the conclusion is immediate. Hence suppose $M>0$, we first prove the following deterministic implication: for every $\epsilon>0$, there exists $\eta>0$ such that $\lvert \nu h-\Q h\rvert<\epsilon$ for every Borel probability measure $\nu$ with $W_q(\nu,\Q)<\eta$.

Fix $\epsilon>0$. Since $\Q$ is tight, there exists a compact set $K\subset\mathcal X$ such that $\Q(K^c)<{\epsilon}/{8M}$.
By continuity of $h$ and compactness of $K$, there exists $\delta>0$ such that, for every $x\in K$ and every $y\in\mathcal X$ with $d(x,y)<\delta$,
\begin{align*}
    \lvert h(x)-h(y)\rvert<\frac{\epsilon}{2}.
\end{align*}
Indeed, for each $x\in K$, one can choose $r_x>0$ such that $d(x,y)<r_x$ implies $\lvert h(x)-h(y)\rvert<{\epsilon}/{4}$. Take $x_1,\ldots,x_N\in K$ such that
\begin{align*}
    K\subset \cup_{j=1}^N \{x\in\mathcal X:d(x,x_j)<r_{x_j}/2\},
\end{align*}
and set $\delta:=\min_{1\le j\le N}r_{x_j}/2$. Then if $x\in K$ and $d(x,y)<\delta$, choosing $j$ with $d(x,x_j)<r_{x_j}/2$ gives
\begin{align*}
    d(y,x_j)
    \le d(y,x)+d(x,x_j)
    < \delta+\frac{r_{x_j}}{2}
    \le r_{x_j},
\end{align*}
and therefore
\begin{align*}
    \lvert h(x)-h(y)\rvert
    \le \lvert h(x)-h(x_j)\rvert+\lvert h(y)-h(x_j)\rvert
    < \frac{\epsilon}{2}.
\end{align*}

Now set
\begin{align*}
    \eta:=\delta \Big(\frac{\epsilon}{8M}\Big)^{1/q}.
\end{align*}
Let $\nu$ be any Borel probability measure satisfying $W_q(\nu,\Q)<\eta$. By the definition of $W_q$, there exists a coupling $\pi$ of $\P$ and $\nu$ such that
\begin{align*}
    \Big\{\int_{\mathcal X\times\mathcal X} d(x,y)^q\d\pi(x,y)\Big\}^{1/q}<\eta.
\end{align*}
Let $(X,Y)$ have law $\pi$, so that $X\sim\Q$ and $Y\sim\nu$. Then
\begin{align*}
    \lvert \nu h-\Q h\rvert
    &=\Big\lvert \int_{\mathcal X\times\mathcal X} \{h(y)-h(x)\}\d\pi(x,y)\Big\rvert \\
    &\le \int_{\mathcal X\times\mathcal X} \lvert h(y)-h(x)\rvert\d\pi(x,y) \\
    &\le \frac{\epsilon}{2}
        +2M\pi\big\{x\notin K\text{ or }d(x,y)\ge\delta\big\} \\
    &\le \frac{\epsilon}{2}
        +2M\Q(K^c)
        +2M\pi\big\{d(x,y)\ge\delta\big\}.
\end{align*}
By Markov's inequality,
\begin{align*}
    \pi\big\{d(x,y)\ge\delta\big\}
    \le
    \frac{1}{\delta^q}
    \int_{\mathcal X\times\mathcal X}d(x,y)^q\d\pi(x,y)
    <
    \left(\frac{\eta}{\delta}\right)^q
    =
    \frac{\epsilon}{8M}.
\end{align*}
Consequently,
\begin{align*}
    \lvert \nu h-\Q h\rvert
    <
    \frac{\epsilon}{2}
    +2M\cdot\frac{\epsilon}{8M}
    +2M\cdot\frac{\epsilon}{8M}
    =
    \epsilon.
\end{align*}
This proves the deterministic implication.

Applying this implication with $\nu=\Q_n$, we obtain
\begin{align*}
    \P\big(\lvert \Q_n h-\Q h\rvert>\epsilon\big)
    \le
    \P\big(W_q(\Q_n,\Q)\ge\eta\big)
    \to 0.
\end{align*}
Since $\epsilon>0$ is arbitrary, one obtain the claimed $\Q_n h-\Q h=o_{\P}(1)$.

Finally, if $\mathcal X=\bR^m$ with the Euclidean metric, then every Borel probability measure on $\bR^m$ is tight. Indeed, for $K_R:=\{x\in\bR^m:\lVert x\rVert\le R\}$, we have $K_R\uparrow\bR^m$, and hence $\Q(K_R^c)\downarrow0$. Therefore the preceding argument applies directly on $\bR^m$.
\end{proof}

\begin{lemma}[Portmanteau theorem for a.s. bounded continuous functions]
\label{lemma:portmanteau_as_continuous}
Let $(\mathcal X,d)$ be a metric space, and let $\mu_n$ and $\mu$ be Borel probability measures on $\mathcal X$. Then
\begin{align*}
    \mu_n\rightsquigarrow \mu
\end{align*}
if and only if, for every bounded measurable function $h:\mathcal X\to\bR$ such that $\mu(D_h)=0$,
\begin{align*}
    \int h\d\mu_n\to \int h\d\mu.
\end{align*}
\end{lemma}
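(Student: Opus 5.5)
We prove the two implications separately. The main tool is the classical characterization of weak convergence through bounded continuous functions and their discontinuity sets.

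\emph{Necessity.} Suppose $\mu_n\rightsquigarrow\mu$ and let $h$ be bounded and measurable with $\mu(D_h)=0$. By splitting $h$ into its positive and negative parts and adding a constant, we may assume $0\le h\le M$. The layer-cake formula gives
\[
\int h\,\d\mu_n=\int_0^M\mu_n(h>t)\,\d t,
\]
and similarly for $\mu$.

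Fix $t$. The boundary of $\{h>t\}$ is contained in $D_h\cup\{h=t\}$. Indeed, if $x$ is a continuity point of $h$ with $h(x)\neq t$, then $h$ stays on one side of $t$ in a neighbourhood of $x$, so $x$ is an interior point of either $\{h>t\}$ or its complement. Since $h$ pushes $\mu$ forward to a probability measure on $\bR$, the set of $t$ with $\mu(h=t)>0$ is at most countable. Hence, for Lebesgue-almost every $t$, the set $\{h>t\}$ is a $\mu$-continuity set.

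The continuity-set form of the Portmanteau theorem then gives $\mu_n(h>t)\to\mu(h>t)$ for almost every $t\in[0,M]$. These integrands are bounded by $1$ on a finite interval, so dominated convergence yields $\int h\,\d\mu_n\to\int h\,\d\mu$.

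\emph{Sufficiency.} This direction is immediate. Every bounded continuous $h$ has $D_h=\emptyset$, so $\mu(D_h)=0$ and the hypothesis gives $\int h\,\d\mu_n\to\int h\,\d\mu$. That is the definition of weak convergence.

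\emph{Main obstacle.} The only nontrivial step is the continuity-set form of the Portmanteau theorem invoked in the necessity part. If it must be proved from the definition, we would use the standard Lipschitz approximations
\[
f_k(x)=\bigl(1-k\,d(x,F)\bigr)^+
\]
of the indicator of a closed set $F$. These decrease to $\ind_F$, which gives $\limsup_n\mu_n(F)\le\mu(F)$ for closed $F$ and, dually, $\liminf_n\mu_n(G)\ge\mu(G)$ for open $G$. For a $\mu$-continuity set $A$, sandwiching $A$ between its interior and its closure, both of which have $\mu$-measure $\mu(A)$, then gives $\mu_n(A)\to\mu(A)$.
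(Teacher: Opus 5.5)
Your proposal is correct and follows the paper's argument essentially step for step. Like the paper, you reduce to $0\le h\le M$, show $\partial\{h>t\}\subseteq D_h\cup\{h=t\}$, use that only countably many levels $t$ carry positive $\mu$-mass, apply the continuity-set Portmanteau theorem for almost every $t$, and conclude by dominated convergence in the layer-cake formula. The only difference is that you also sketch the continuity-set Portmanteau theorem via the approximations $(1-k\,d(x,F))^+$, which the paper simply cites as standard.
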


\begin{proof}
The reverse direction is immediate, since every bounded continuous function $h$ satisfies $D_h=\varnothing$. It remains to prove the forward direction. Suppose $\mu_n\rightsquigarrow\mu$, meaning that
\begin{align*}
    \int \varphi\d\mu_n\to \int \varphi\d\mu
\end{align*}
for every bounded continuous function $\varphi:\mathcal X\to\bR$. Let $h:\mathcal X\to\bR$ be bounded measurable and satisfy $\mu(D_h)=0$. We show that
\begin{align*}
    \int h\d\mu_n\to \int h\d\mu.
\end{align*}
After replacing $h$ by $h+\lVert h\rVert_\infty$, it suffices to consider the case $0\le h\le M$ for some $M<\infty$. For $t\in[0,M]$, define $A_t:=\{x\in\mathcal X:h(x)>t\}$. We claim that
\begin{align*}
    \partial A_t\subset D_h\cup\{x:h(x)=t\}.
\end{align*}
Indeed, if $x\notin D_h$ and $h(x)\ne t$, then $h$ is continuous at $x$ and either $h(x)>t$ or $h(x)<t$. In both cases, a small neighborhood of $x$ is either contained in $A_t$ or contained in $A_t^c$, so $x\notin\partial A_t$. Since $\mu(D_h)=0$, we have
\begin{align*}
    \mu(\partial A_t)
    \le \mu(D_h)+\mu\{x:h(x)=t\}
    =
    \mu\{x:h(x)=t\}.
\end{align*}
We now note that the set of $t$ satisfying $\mu\{x:h(x)=t\}>0$ is at most countable: for each integer $m\ge1$, define $S_m:=\{t\in\bR:\mu\{x:h(x)=t\}>{1}/{m}\}$.
The sets $\{x:h(x)=t\}$ are pairwise disjoint as $t$ varies. Hence $S_m$ contains at most $m$ elements; otherwise, if $t_1,\ldots,t_{m+1}\in S_m$ are distinct, then
\begin{align*}
1 \ge
\mu\big(\cup_{j=1}^{m+1}\{x:h(x)=t_j\}\big)
= \sum_{j=1}^{m+1}\mu\{x:h(x)=t_j\}
> \frac{m+1}{m} > 1,
\end{align*}
a contradiction. Therefore each $S_m$ is finite. Since
\begin{align*}
\left\{t\in\bR:\mu\{x:h(x)=t\}>0\right\}
= \cup_{m=1}^{\infty}S_m,
\end{align*}
this set is at most countable. Consequently, $\mu(\partial A_t)=0$ for Lebesgue-a.e. $t\in[0,M]$.

By the usual portmanteau theorem in the continuity-set form, which follows from the bounded-continuous-function characterization of weak convergence, for every such $t$ we have
\begin{align*}
    \mu_n(A_t)\to\mu(A_t).
\end{align*}
Since $0\le \mu_n(A_t)\le1$, dominated convergence gives
\begin{align*}
    \int h\d\mu_n
    =
    \int_0^M \mu_n(h>t)\d t
    =
    \int_0^M \mu_n(A_t)\d t
    \to 
    \int h\d\mu
    =
    \int_0^M \mu(A_t)\d t.
\end{align*}
This proves the claim.
\end{proof}

\begin{lemma}[$L^1$-$L^r$ convolution inequality, Theorem~1.2.10 in \cite{grafakos2008classical}]
\label{lem:L1_Lr_convolution}
Let $r\in[1,\infty]$, let $f\in L^r(\lambda)$, and let
$g\in L^1(\lambda)$, where $\lambda$ is Lebesgue measure on $\bR^p$.
Define
\begin{align*}
(g*f)(x)
:=
\int_{\bR^p}
g(y)f(x-y)
\d y.
\end{align*}
Then $g*f$ exists Lebesgue-almost everywhere, belongs to
$L^r(\lambda)$, and satisfies
\begin{align*}
\lVert
g*f
\rVert_{L^r(\lambda)}
\le
\lVert
g
\rVert_{L^1(\lambda)}
\lVert
f
\rVert_{L^r(\lambda)}.
\end{align*}
\end{lemma}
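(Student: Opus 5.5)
This is Young's inequality in the special case $\|g*f\|_r\le\|g\|_1\|f\|_r$. I will prove it by Minkowski's integral inequality for $r<\infty$ and by a direct estimate for $r=\infty$.

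\emph{Step 1 (measurability and the case $r=\infty$).} The map $(x,y)\mapsto g(y)f(x-y)$ is Lebesgue measurable on $\bR^p\times\bR^p$: it is the composition of measurable functions with the linear bijection $(x,y)\mapsto(y,x-y)$, and such bijections preserve Lebesgue-null sets. For $r=\infty$, the bound $\lvert g(y)f(x-y)\rvert\le\lvert g(y)\rvert\,\lVert f\rVert_\infty$ holds for almost every $y$ and every $x$, because translates of null sets are null. Integrating in $y$ shows that the convolution integral converges absolutely for every $x$ and gives $\lvert g*f(x)\rvert\le\lVert g\rVert_1\lVert f\rVert_\infty$.

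\emph{Step 2 ($1\le r<\infty$).} Set
\[
H(x):=\int\lvert g(y)\rvert\,\lvert f(x-y)\rvert\,\d y\in[0,\infty].
\]
Apply Minkowski's integral inequality to the nonnegative measurable kernel. This is justified by Tonelli's theorem together with duality of $L^r$ against nonnegative functions in $L^{r'}$, which also covers $r=1$. It gives
\[
\lVert H\rVert_{L^r}\le\int\lvert g(y)\rvert\,\lVert f(\cdot-y)\rVert_{L^r}\,\d y=\lVert g\rVert_1\lVert f\rVert_r,
\]
using translation invariance of Lebesgue measure.

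\emph{Step 3 (conclusion).} Since $H\in L^r$, $H$ is finite almost everywhere. Hence the integral defining $g*f$ converges absolutely for almost every $x$, which means $g*f$ exists almost everywhere. Moreover $\lvert g*f\rvert\le H$ pointwise wherever $g*f$ is defined, so $\lVert g*f\rVert_r\le\lVert g\rVert_1\lVert f\rVert_r$. Measurability of $g*f$ follows from Fubini's theorem applied on sets where $H$ is integrable.

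The main obstacle is justifying Minkowski's integral inequality with possibly infinite values. I would do this by the duality route: for nonnegative $\phi\in L^{r'}$ with $\lVert\phi\rVert_{r'}\le1$, Tonelli and Hölder give $\int H\phi\le\lVert g\rVert_1\lVert f\rVert_r$. Taking the supremum over such $\phi$ then bounds $\lVert H\rVert_r$; this duality formula holds for nonnegative measurable $H$ even when $\lVert H\rVert_r$ is a priori infinite, provided one first truncates $H$ to sets of finite measure.
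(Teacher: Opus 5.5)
Your argument is correct and is the standard Minkowski-integral-inequality proof of Young's inequality; the paper does not prove this lemma itself but cites Grafakos (Theorem~1.2.10), whose argument is essentially the one you give. One small fix in your duality step: restricting $H$ to a set of finite measure does not by itself put it in $L^r$, so also truncate the values (apply the bound to $\min(H,N)$ restricted to $B_N$, which lies in $L^r$, and let $N\to\infty$ by monotone convergence); with that, the proof is complete.
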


\begin{lemma}[KL divergence contraction under measurable maps]
\label{lem:KL_data_processing}
Let $\cX$ and $\cY$ be Polish spaces, let
$\Phi:\cX\to\cY$ be Borel measurable, and let $\mu$ and
$\nu$ be Borel probability measures on $\cX$. Then
\begin{align*}
D_{\rm KL}
\big(
\Phi\#\mu
\|
\Phi\#\nu
\big)
\le
D_{\rm KL}
\big(
\mu
\|
\nu
\big).
\end{align*}
\end{lemma}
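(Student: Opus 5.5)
This is the data-processing inequality for KL divergence under a measurable map. I would prove it through the Donsker--Varadhan variational formula, which works directly on the Polish spaces involved, rather than by manipulating densities.

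First I dispose of the trivial case. If $D_{\rm KL}(\mu\|\nu)=\infty$ there is nothing to prove. Otherwise $\mu\ll\nu$, and then $\Phi\#\mu\ll\Phi\#\nu$: indeed $\Phi\#\nu(B)=0$ means $\nu(\Phi^{-1}B)=0$, hence $\mu(\Phi^{-1}B)=0$. So the right-hand side is finite and both divergences are defined through densities.

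The main step is the variational representation, valid for Borel probability measures $P,Q$ on a Polish space:
\[
D_{\rm KL}(P\|Q)=\sup_{g}\Big\{\int g\,\d P-\log\int e^{g}\,\d Q\Big\},
\]
where the supremum runs over bounded Borel functions $g$. I will take this as standard. For completeness, the ``$\ge$'' direction follows from Jensen's inequality applied under $P$. The ``$\le$'' direction uses truncations $g_M$ of $\log(\d P/\d Q)$ to $[-M,M]$, with monotone convergence for the positive part and dominated convergence for the negative part. Carrying out this truncation limit carefully is the only delicate point of the argument; everything else is a change of variables.

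With the representation in hand, apply it on $\cY$ with $P=\Phi\#\mu$ and $Q=\Phi\#\nu$. For any bounded Borel $g$ on $\cY$, the composition $g\circ\Phi$ is a bounded Borel function on $\cX$, because $\Phi$ is Borel measurable. The change-of-variables formula gives
\[
\int g\,\d(\Phi\#\mu)-\log\int e^{g}\,\d(\Phi\#\nu)
=\int g\circ\Phi\,\d\mu-\log\int e^{g\circ\Phi}\,\d\nu .
\]
The right-hand side is at most $D_{\rm KL}(\mu\|\nu)$ by the easy direction of the representation on $\cX$. Taking the supremum over $g$ and using the representation on $\cY$ then yields $D_{\rm KL}(\Phi\#\mu\|\Phi\#\nu)\le D_{\rm KL}(\mu\|\nu)$.

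As an alternative route, one could write the density of $\Phi\#\mu$ with respect to $\Phi\#\nu$ as the conditional expectation $\E_\nu[\d\mu/\d\nu\mid\Phi]$. Conditional Jensen's inequality applied to the convex function $t\log t$ then gives the same inequality.
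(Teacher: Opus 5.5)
Your proof is correct, but it takes a different route from the paper's. The paper proves the lemma in one line by quoting Lemma A.1 of Fischer (2014), which is the identity
\[
D_{\rm KL}(\Phi\#\mu\|\Phi\#\nu)=\inf\big\{D_{\rm KL}(\gamma\|\nu):\ \Phi\#\gamma=\Phi\#\mu\big\},
\]
and then observes that $\mu$ itself is admissible in the infimum.

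That identity is more than the lemma needs. It characterizes the pushforward divergence as the cheapest lift, with the infimum attained by gluing the disintegration of $\nu$ along $\Phi$ to $\Phi\#\mu$. Results of this kind rest on the chain rule for relative entropy together with disintegration, and that is where a Polish (standard Borel) structure is used.

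Your Donsker--Varadhan argument proves only the inequality, but it is self-contained and purely measure-theoretic. It uses three ingredients: the Gibbs/Jensen direction on $\cX$, the change-of-variables identity, and the truncation direction on $\cY$. None of these uses the topology, so your route shows that the Polish hypothesis is not actually needed.

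Establishing $\Phi\#\mu\ll\Phi\#\nu$ first is a good move. It means you need the truncation direction only in the absolutely continuous case. There, your monotone-convergence step for the positive part correctly covers the possibility that $D_{\rm KL}(\Phi\#\mu\|\Phi\#\nu)$ is a priori infinite.

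Your conditional-Jensen alternative is also valid and equally topology-free. It needs two supporting facts:
\begin{enumerate}
\item the Doob--Dynkin factorization $\E_\nu[\d\mu/\d\nu\mid\sigma(\Phi)]=h\circ\Phi$, together with the check that $h=\d(\Phi\#\mu)/\d(\Phi\#\nu)$ holds $\Phi\#\nu$-a.e.;
\item the lower bound $t\log t\ge -1/e$, which justifies applying conditional Jensen to a possibly non-integrable convex function.
\end{enumerate}
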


\begin{proof}
By Lemma A.1 in \cite{fischer2014form},
\begin{align*}
D_{\rm KL}
(\Phi\#\mu
\|
\Phi\#\nu)
=
\inf
\big\{
D_{\rm KL}
(\gamma \| \nu)
:
\gamma
\text{ is a Borel probability measure on }\cX
\text{ s.t. }
\Phi\#\gamma=\Phi\#\mu
\big\}.
\end{align*}
Since $\mu$ is a Borel probability measure on $\cX$ satisfying
$\Phi\#\mu=\Phi\#\mu$, it is one of the measures over which the infimum
is taken and we have the desired result. 
\end{proof}

\begin{lemma}[Kantorovich-Rubinstein duality]
\label{lem:Kantorovich_Rubinstein}
Let $\mu$ and $\nu$ be Borel probability measures on $\bR^p$ with finite
first moments, and fix $z_0\in\bR^p$. Define
\begin{align*}
{\rm Lip}_1^{z_0}(\bR^p)
:=
\big\{
\varphi:\bR^p\to\bR:
\varphi(z_0)=0,\
{\rm Lip}(\varphi)\le1
\big\}.
\end{align*}
Then
\begin{align*}
W_1(\mu,\nu)
=
\sup_{{\rm Lip}(\varphi)\le1}
(\mu-\nu)[\varphi] 
=
\sup_{\varphi\in{\rm Lip}_1^{z_0}(\bR^p)}
(\mu-\nu)[\varphi]
=
\sup_{\varphi\in{\rm Lip}_1^{z_0}(\bR^p)}
\big\lvert
(\mu-\nu)[\varphi]
\big\rvert.
\yestag
\label{eq:KR_anchored_absolute}
\end{align*}
Consequently, for every real-valued $M$-Lipschitz function $\psi$, where
$M\ge0$, $\lvert (\mu-\nu)[\psi] \rvert
\le M W_1(\mu,\nu)$.
\end{lemma}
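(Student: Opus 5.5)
The statement is the Kantorovich--Rubinstein duality for measures on $\bR^p$ with finite first moments. I would take the classical duality $W_1(\mu,\nu)=\sup_{{\rm Lip}(\varphi)\le1}(\mu-\nu)[\varphi]$ as the starting point, citing Villani's \emph{Optimal Transport} (Theorem~5.10 and Remark~6.5) or Dudley's \emph{Real Analysis and Probability} (Theorem~11.8.2). The remaining equalities in \eqref{eq:KR_anchored_absolute} are elementary reductions of this one.

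First I would check that every expression is well defined. If ${\rm Lip}(\varphi)\le1$, then $\lvert\varphi(z)\rvert\le\lvert\varphi(z_0)\rvert+\lVert z-z_0\rVert$. Finite first moments therefore make $\mu[\varphi]$ and $\nu[\varphi]$ finite, so $(\mu-\nu)[\varphi]$ is a finite real number.

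For the easy inequality, I would take any coupling $\pi$ of $\mu$ and $\nu$. Then
\[
(\mu-\nu)[\varphi]=\int\{\varphi(x)-\varphi(y)\}\d\pi(x,y)\le\int\lVert x-y\rVert\d\pi(x,y).
\]
Taking the infimum over couplings gives $\sup\le W_1$. The reverse inequality is the genuine content of duality, and I would cite it rather than reprove it. If a self-contained argument were needed, the hardest step would be this direction. I would first try the compact case via a Hahn--Banach or $c$-cyclical monotonicity argument, then pass to finite first moments by truncation using the moment bound.

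Next comes anchoring. Replacing $\varphi$ by $\varphi-\varphi(z_0)$ preserves ${\rm Lip}(\varphi)\le1$. Because $\mu$ and $\nu$ are both probability measures, it also leaves $(\mu-\nu)[\varphi]$ unchanged. So the supremum over all $1$-Lipschitz functions equals the supremum over ${\rm Lip}_1^{z_0}(\bR^p)$.

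For the absolute-value form, the class ${\rm Lip}_1^{z_0}(\bR^p)$ is closed under $\varphi\mapsto-\varphi$. Hence $\sup\lvert(\mu-\nu)[\varphi]\rvert=\sup\max\{(\mu-\nu)[\varphi],(\mu-\nu)[-\varphi]\}$, which equals the supremum without absolute values.

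For the consequence, take $\psi$ to be $M$-Lipschitz. If $M=0$, then $\psi$ is constant and the difference vanishes. Otherwise $\psi/M$ is $1$-Lipschitz, so $\lvert(\mu-\nu)[\psi]\rvert=M\lvert(\mu-\nu)[\psi/M]\rvert\le MW_1(\mu,\nu)$ by the absolute-value form.
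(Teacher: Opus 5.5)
Your proposal is correct and follows the paper's proof essentially step for step: cite the classical duality (the paper uses Theorem~1.14 of \cite{villani2003topics}), anchor by subtracting $\varphi(z_0)$, which leaves $(\mu-\nu)[\varphi]$ unchanged because both are probability measures, use the symmetry $\varphi\mapsto-\varphi$ of the anchored class for the absolute-value form, and rescale by $M$ for the consequence. One small point: $\psi/M$ need not vanish at $z_0$, so before invoking the anchored absolute-value form you should subtract $\psi(z_0)/M$, as the paper does (this does not change $(\mu-\nu)[\psi/M]$); alternatively, apply the unanchored supremum together with its own symmetry under $\varphi\mapsto-\varphi$.
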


\begin{proof}
The first equality in \eqref{eq:KR_anchored_absolute} is the
Kantorovich-Rubinstein duality theorem; see Theorem~1.14 of \cite{villani2003topics}. We verify the conversion from its
usual signed, unanchored form to the form used in this paper.

For any $1$-Lipschitz function $\varphi$, define $\varphi^0(z) := \varphi(z)-\varphi(z_0)$.
Then $\varphi^0\in{\rm Lip}_1^{z_0}(\bR^p)$ and, because $\mu$ and $\nu$
are probability measures, $(\mu-\nu)[\varphi^0] = (\mu-\nu)[\varphi]$.
Thus anchoring does not change the supremum. 

Moreover, every anchored
$1$-Lipschitz function is integrable under both measures, since $\lvert\varphi(z)\rvert
\le
\lVert z-z_0\rVert$.
The anchored class is symmetric: if
$\varphi\in{\rm Lip}_1^{z_0}(\bR^p)$, then
$-\varphi\in{\rm Lip}_1^{z_0}(\bR^p)$. Hence $\lvert (\mu-\nu)[\varphi] \rvert
=
\max \{(\mu-\nu)[\varphi],
(\mu-\nu)[-\varphi]\}$ for every such $\varphi$.
Taking suprema proves the final equality in
\eqref{eq:KR_anchored_absolute}.

If $M=0$, then $\psi$ is constant and
$(\mu-\nu)[\psi]=0$. If $M>0$, apply
\eqref{eq:KR_anchored_absolute} to $\psi/M$, after subtracting its value at
$z_0$. This gives the second statement.
\end{proof}

\begin{lemma}[Almost-sure Wasserstein consistency of the empirical measure]
\label{lem:empirical_W1_consistency}
Let $Z_1,Z_2,\ldots$ be i.i.d. from a Borel probability measure $\P$ on
$\bR^p$, and suppose $\P[\lVert Z\rVert]<\infty$.
Then $W_1(\bP_n,\P) \to0$ almost surely.
\end{lemma}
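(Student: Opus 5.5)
The plan is to use the standard route through the strong law of large numbers combined with a truncation (uniform integrability) argument. The key tool is the characterization that $W_1(\mu_n,\mu)\to0$ if and only if $\mu_n\rightsquigarrow\mu$ and $\int\lVert z\rVert\,\d\mu_n\to\int\lVert z\rVert\,\d\mu$. Since this characterization is not recorded earlier in the paper, I will prove the needed direction directly.

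\textbf{Step 1: almost-sure weak convergence.} I will show that $\bP_n\rightsquigarrow\P$ almost surely. Fix a countable convergence-determining class, for instance bounded Lipschitz functions $\{h_k\}$ with rational parameters, such as $h(z)=(1-r^{-1}\,{\rm dist}(z,A))_+$ over rational boxes $A$ and rational $r>0$. The strong law gives $\bP_n h_k\to\P h_k$ for every $k$ on a single probability-one event. Applying the strong law also to $\lVert z\rVert$ and to $\lVert z\rVert\ind(\lVert z\rVert>R)$ for each rational $R$ enlarges this to one probability-one event $\Omega_0$ on which all of these averages converge.

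\textbf{Step 2: deterministic bound on $\Omega_0$.} Fix $\omega\in\Omega_0$ and $R>0$, and let $\pi_R(z)$ denote the radial projection onto $B_R$. By the triangle inequality,
\[
W_1(\bP_n,\P)\le W_1(\bP_n,\pi_R\#\bP_n)+W_1(\pi_R\#\bP_n,\pi_R\#\P)+W_1(\pi_R\#\P,\P).
\]
The two outer terms are bounded by $\bP_n[\lVert z\rVert\ind(\lVert z\rVert>R)]$ and $\P[\lVert z\rVert\ind(\lVert z\rVert>R)]$ respectively, using the coupling $(z,\pi_R(z))$. On $\Omega_0$ these converge to the same tail quantity under $\P$, which tends to zero as $R\to\infty$ by integrability.

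For the middle term, $\pi_R$ is continuous, so the continuous mapping theorem gives $\pi_R\#\bP_n\rightsquigarrow\pi_R\#\P$. All of these measures live on the compact set $B_R$, where $W_1$ metrizes weak convergence. Concretely, by Lemma~\ref{lem:Kantorovich_Rubinstein}, the anchored $1$-Lipschitz functions restricted to $B_R$ form a uniformly bounded equicontinuous family, which is totally bounded in sup norm by Arzel\`a--Ascoli. Weak convergence then yields uniform convergence over a finite net, and hence over the whole family.

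\textbf{Step 3: conclusion.} Take $\limsup_n$ and then let $R\to\infty$ along the rationals. This gives $W_1(\bP_n,\P)\to0$ on $\Omega_0$.

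The main obstacle is the middle term: the passage from weak convergence to $W_1$ convergence on a compact set. The plan is to handle it via Arzel\`a--Ascoli as described, using the same total-boundedness argument already employed in the proof of Proposition~\ref{prop:integrated_exponential_flow}. A minor point to check is that the tail functions $\lVert z\rVert\ind(\lVert z\rVert>R)$ are handled at rational $R$ only; monotonicity in $R$ makes this sufficient.
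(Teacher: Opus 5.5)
Your proof is correct. It shares the paper's skeleton but proves the cited ingredients by hand. The paper's proof consists of three citations: almost-sure weak convergence $\bP_n\rightsquigarrow\P$ from Varadarajan's theorem, almost-sure convergence $\bP_n[\lVert z\rVert]\to\P[\lVert z\rVert]$ from the strong law, and Villani's Theorem~6.9 (on $\bR^p$, weak convergence plus convergence of first moments is equivalent to $W_1$ convergence).

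Your Step~1 is essentially the proof of Varadarajan's theorem: the strong law on a countable convergence-determining class, with countably many null sets discarded. Your Step~2 replaces Villani's theorem with an explicit radial-truncation decomposition. There the tails are controlled by the strong law applied to $\lVert z\rVert\ind(\lVert z\rVert>R)$ for rational $R$, i.e.\ asymptotic uniform integrability of $\lVert z\rVert$ under $\bP_n$, rather than by convergence of the full first moment. Consequently the strong law for $\lVert z\rVert$ itself, which you list in Step~1, is never used. The compact part is handled by Kantorovich--Rubinstein duality plus Arzel\`a--Ascoli on $B_R$.

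Your route is self-contained apart from the paper's duality lemma and Arzel\`a--Ascoli, and it makes the mechanism explicit: tail control of first moments plus metrization of weak convergence by $W_1$ on compacts. The paper's route is a two-line reduction to standard results.

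The one assertion you leave unchecked is that the family $h_{A,r}$ over single rational boxes is convergence-determining. It is: large boxes give tightness of $\{\bP_n\}$, and any subsequential limit $\nu$ satisfies $\nu(A)=\P(A)$ on the $\pi$-system of closed rational boxes after letting $r\downarrow0$. Alternatively, let $A$ range over finite unions of rational boxes, which gives the open-set portmanteau bound directly.
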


\begin{proof}
Empirical measures of i.i.d. observations in a separable metric space converge weakly to their common distribution almost
surely; see \citet[Theorem~3]{varadarajan1958convergence}. Hence $\bP_n \rightsquigarrow \P$ almost surely.
The finite-first-moment assumption and the strong law of large numbers give
\begin{align*}
\bP_n\big[
\lVert z\rVert
\big]
=
\frac1n
\sum_{i=1}^n
\lVert Z_i\rVert
\to
\P\big[
\lVert z\rVert
\big]
\quad\text{almost surely}.
\end{align*}
On $\bR^p$, weak convergence together with convergence of first moments is
equivalent to convergence in $W_1$; see
\cite[Definition~6.8 and Theorem~6.9]{villani2009optimal}. Combining these
two facts proves the claim.
\end{proof}

\begin{lemma}[Lower-dimensional locally Lipschitz images]
\label{lem:lower_dimensional_Lipschitz_image}
Let $q<p$, and let $G:\bR^q\to\bR^p$ be locally Lipschitz. Then
$G(\bR^q)$ is a Borel subset of $\bR^p$ and $\lambda_p\big(G(\bR^q)\big)=0$.
Consequently, for every probability measure $\P_U$ on $\bR^q$, the
pushforward law $\Q_G=G\#\P_U$ is supported on a
$\lambda_p$-null set.
\end{lemma}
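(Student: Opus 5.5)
We need to prove: $G:\bR^q\to\bR^p$ locally Lipschitz with $q<p$ has Borel image of Lebesgue measure zero, and hence $G\#\P_U$ is carried by a null set.

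We begin with a countable exhaustion. Write $\bR^q=\bigcup_{k\ge1}B^{(q)}_k$ with $B^{(q)}_k$ the closed ball of radius $k$. Each ball is compact, so local Lipschitzness on it can be upgraded to a single Lipschitz constant $L_k$ by a finite-subcover argument. Concretely, cover the ball by finitely many open balls on which $G$ is Lipschitz. For pairs of points that are farther apart than a Lebesgue number of the cover, the Lipschitz quotient is bounded by $2\sup_{B_k}\lVert G\rVert$ divided by that number, and this supremum is finite by continuity.

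Borel measurability comes from continuity. Each $G(B^{(q)}_k)$ is the continuous image of a compact set, hence compact and therefore closed. Thus $G(\bR^q)=\bigcup_k G(B^{(q)}_k)$ is an $F_\sigma$ set and in particular Borel.

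For the null-measure claim it suffices to show $\lambda_p(G(B^{(q)}_k))=0$ for each $k$.
1. Fix $\epsilon\in(0,1)$ and cover $B^{(q)}_k$ by at most $C_{k,q}\epsilon^{-q}$ cubes of side $\epsilon$.
2. The image of each cube has diameter at most $L_k\sqrt q\,\epsilon$, so it lies in a Euclidean ball of that radius.
3. That ball has $\lambda_p$-measure at most $c_p(L_k\sqrt q)^p\epsilon^p$.
4. Summing over the cubes gives
\[
\lambda_p\big(G(B^{(q)}_k)\big)\le C'_{k}\,\epsilon^{p-q}.
\]

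Since $p>q$, the exponent $p-q$ is positive, so letting $\epsilon\downarrow0$ gives $\lambda_p(G(B^{(q)}_k))=0$. Countable subadditivity then gives $\lambda_p(G(\bR^q))=0$.

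Finally, for any $\P_U$ we have $\Q_G(G(\bR^q))=\P_U(G^{-1}(G(\bR^q)))=\P_U(\bR^q)=1$. This is well defined because $G(\bR^q)$ is Borel, so $\Q_G$ is carried by a $\lambda_p$-null set. The only mildly delicate step is the uniform Lipschitz constant on compacts; the rest is a routine covering estimate.
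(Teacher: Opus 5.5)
Your proof is correct and follows essentially the same route as the paper. Both use exhaustion by compact balls, the $F_\sigma$ argument for Borel measurability, and a grid of $O(\epsilon^{-q})$ cubes whose images lie in balls of radius $O(\epsilon)$, giving a bound of order $\epsilon^{p-q}$. The one variation is that you obtain a single Lipschitz constant on each ball via a Lebesgue-number argument, whereas the paper splits the ball into finitely many pieces on which $G$ is Lipschitz and then uses finite subadditivity.

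One small fix is needed in step 2. For each cube $Q$, bound the diameter of $G(Q\cap B^{(q)}_k)$ rather than that of $G(Q)$, because $L_k$ is only guaranteed on $B^{(q)}_k$ and the cubes may extend beyond the ball.
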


\begin{proof}
For $m\ge1$, let
\begin{align*}
C_m
:=
\big\{
u\in\bR^q:
\lVert u\rVert\le m
\big\}.
\end{align*}
We first show that $\lambda_p(G(C_m))=0$.

For every $u\in C_m$, local Lipschitz continuity gives a radius $r_u>0$
and a constant $L_u<\infty$ such that $G$ is $L_u$-Lipschitz on
$B(u,r_u)$. By compactness of $C_m$, there exist
$u_1,\ldots,u_N\in C_m$ such that $C_m \subset \cup_{j=1}^N B(u_j,r_{u_j}/2)$.
Set $A_j := C_m \cap B(u_j,r_{u_j}/2)$,
then the restriction of $G$ to $A_j$ is $L_{u_j}$-Lipschitz.

We use an covering argument. Let $A\subset\bR^q$ be bounded,
and suppose the restriction of $G$ to $A$ is $L$-Lipschitz. For every
sufficiently small $\delta>0$, a regular grid covers $A$ by at most
$C\delta^{-q}$ cubes of side length $\delta$, where $C<\infty$ depends
only on a bounded cube containing $A$ and on $q$. The intersection of $A$
with each grid cube has diameter at most $\sqrt q\delta$, so its image is
contained in a $p$-dimensional ball of radius
$L\sqrt q\delta$. If $v_p$ denotes the volume of the unit ball in
$\bR^p$, the $p$-dimensional Lebesgue outer measure satisfies
\begin{align*}
\lambda_p^*\big(
G(A)
\big)
\le
C\delta^{-q}
v_p
\big(
L\sqrt q\delta
\big)^p
=
Cv_pL^pq^{p/2}\delta^{p-q}.
\end{align*}
Since $p-q>0$, letting $\delta\downarrow0$ gives $\lambda_p^*(G(A))=0$.
Applying this argument to $A_1,\ldots,A_N$ and using finite subadditivity
gives $\lambda_p(G(C_m))=0$.

Local Lipschitz continuity implies continuity, so $G(C_m)$ is compact. In
particular, it is Borel. Moreover, $G(\bR^q) = \cup_{m=1}^\infty
G(C_m)$.
Thus $G(\bR^q)$ is an $F_\sigma$ set and hence is Borel. Countable
subadditivity gives $\lambda_p(G(\bR^q))=0$.
Finally,
\begin{align*}
\Q_G\big(
G(\bR^q)
\big)
=
\P_U\big(
G^{-1}\big(
G(\bR^q)
\big)
\big)
=
\P_U(\bR^q)
=
1.
\end{align*}
This proves the final assertion.
\end{proof}

{
\bibliographystyle{apalike}
\bibliography{AMS}

@article{lin2026consistency,
	Author = {Lin, Ziming and Han, Fang},
	Journal = {Biometrika},
	Number = {1},
	Pages = {asag005},
	Publisher = {Oxford University Press},
	Title = {On the consistency of bootstrap for matching estimators},
	Volume = {113},
	Year = {2026}}

@article{baptista2025approximation,
	Author = {Baptista, Ricardo and Hosseini, Bamdad and Kovachki, Nikola and Marzouk, Youssef and Sagiv, Amir},
	Journal = {Mathematics of Computation},
	Number = {354},
	Pages = {1863--1909},
	Title = {An approximation theory framework for measure-transport sampling algorithms},
	Volume = {94},
	Year = {2025}}

@article{zech2022sparse,
	Author = {Zech, Jakob and Marzouk, Youssef},
	Journal = {Constructive Approximation},
	Number = {3},
	Pages = {919--986},
	Publisher = {Springer},
	Title = {Sparse approximation of triangular transports, part {I}: The finite-dimensional case},
	Volume = {55},
	Year = {2022}}

@article{wang2022minimax,
	Author = {Wang, Sven and Marzouk, Youssef},
	Journal = {arXiv preprint arXiv:2207.10231},
	Title = {On minimax density estimation via measure transport},
	Year = {2022}}

@article{gao2024continuous,
	Author = {Gao, Yuan and Huang, Jian and Jiao, Yuling and Zheng, Shurong},
	Journal = {arXiv preprint arXiv:2404.00551},
	Title = {Convergence of continuous normalizing flows for learning probability distributions},
	Year = {2024}}

@article{benton2024flow,
	Author = {Joe Benton and George Deligiannidis and Arnaud Doucet},
	Issn = {2835-8856},
	Journal = {Transactions on Machine Learning Research},
	Title = {Error Bounds for Flow Matching Methods},
	Url = {https://openreview.net/forum?id=uqQPyWFDhY},
	Year = {2024}}

@article{kumar2026manifold,
	Author = {Kumar, Shivam and Wang, Yixin and Lin, Lizhen},
	Journal = {arXiv preprint arXiv:2602.22486},
	Title = {Flow matching is adaptive to manifold structures},
	Year = {2026}}

@article{kunkel2026lipschitz,
	Author = {Kunkel, Lea},
	Journal = {Journal of Multivariate Analysis},
	Pages = {105657},
	Publisher = {Elsevier},
	Title = {Distribution estimation via Flow Matching with {Lipschitz} guarantees},
	Volume = {215},
	Year = {2026}}

@article{kunkel2025minimax,
	Author = {Kunkel, Lea and Trabs, Mathias},
	Journal = {arXiv preprint arXiv:2504.13336},
	Title = {On the minimax optimality of flow matching through the connection to kernel density estimation},
	Year = {2025}}

@inproceedings{zhou2025flowerror,
	Author = {Zhou, Zhengyu and Liu, Weiwei},
	Booktitle = {International Conference on Machine Learning},
	Title = {An error analysis of flow matching for deep generative modeling},
	Year = {2025}}

@article{liang2021how,
	Author = {Liang, Tengyuan},
	Journal = {Journal of Machine Learning Research},
	Number = {228},
	Pages = {1--41},
	Title = {How well generative adversarial networks learn distributions},
	Volume = {22},
	Year = {2021}}

@inproceedings{tanielian2021approximating,
	Author = {Tanielian, Ugo and Biau, Gerard},
	Booktitle = {International Conference on Artificial Intelligence and Statistics},
	Title = {Approximating {Lipschitz} continuous functions with {GroupSort} neural networks},
	Year = {2021}}

@article{stephanovitch2025generalization,
	Author = {St{\'e}phanovitch, Arthur and Aamari, Eddie and Levrard, Cl{\'e}ment},
	Journal = {arXiv preprint arXiv:2507.04794},
	Title = {Generalization bounds for score-based generative models: a synthetic proof},
	Year = {2025}}

@inproceedings{zhang2024minimax,
	Author = {Kaihong Zhang and Heqi Yin and Feng Liang and Jingbo Liu},
	Booktitle = {International Conference on Machine Learning},
	Title = {Minimax Optimality of Score-based Diffusion Models: Beyond the Density Lower Bound Assumptions},
	Url = {https://openreview.net/forum?id=wTd7dogTsB},
	Year = {2024}}

@inproceedings{benton2024nearly,
	Author = {Benton, Joe and De Bortoli, Valentin and Doucet, Arnaud and Deligiannidis, George},
	Booktitle = {International Conference on Learning Representations},
	Title = {Nearly $ d $-linear convergence bounds for diffusion models via stochastic localization},
	Year = {2024}}

@inproceedings{chen2023improved,
	Author = {Chen, Hongrui and Lee, Holden and Lu, Jianfeng},
	Booktitle = {International Conference on Machine Learning},
	Title = {Improved analysis of score-based generative modeling: User-friendly bounds under minimal smoothness assumptions},
	Year = {2023}}

@inproceedings{lee2023convergence,
	Author = {Lee, Holden and Lu, Jianfeng and Tan, Yixin},
	Booktitle = {International Conference on Algorithmic Learning Theory},
	Title = {Convergence of score-based generative modeling for general data distributions},
	Year = {2023}}

@inproceedings{chen2023sampling,
	Author = {Sitan Chen and Sinho Chewi and Jerry Li and Yuanzhi Li and Adil Salim and Anru Zhang},
	Booktitle = {International Conference on Learning Representations},
	Title = {Sampling is as easy as learning the score: theory for diffusion models with minimal data assumptions},
	Url = {https://openreview.net/forum?id=zyLVMgsZ0U_},
	Year = {2023}}

@inproceedings{fukumizu2025flow,
	Author = {Fukumizu, Kenji and Suzuki, Taiji and Isobe, Noboru and Oko, Kazusato and Koyama, Masanori},
	Booktitle = {International Conference on Learning Representations},
	Title = {Flow matching achieves almost minimax optimal convergence},
	Year = {2025}}

@book{sontag1998mathematical,
	Author = {Sontag, Eduardo D.},
	Edition = {Second},
	Publisher = {Springer-Verlag New York},
	Title = {Mathematical Control Theory},
	Year = {1998}}

@article{albergo2025stochastic,
	Author = {Albergo, Michael and Boffi, Nicholas M and Vanden-Eijnden, Eric},
	Journal = {Journal of Machine Learning Research},
	Number = {209},
	Pages = {1--80},
	Title = {Stochastic interpolants: A unifying framework for flows and diffusions},
	Volume = {26},
	Year = {2025}}

@conference{albergo2022building,
	Author = {Albergo, Michael S and Vanden-Eijnden, Eric},
	Booktitle = {International Conference on Learning Representations},
	Title = {Building normalizing flows with stochastic interpolants},
	Year = {2023}}

@book{pachpatte1998inequalities,
	Author = {Pachpatte, B. G.},
	Publisher = {Academic Press},
	Title = {Inequalities for Differential and Integral Equations},
	Year = {1998}}

@article{dumbgen1993nondifferentiable,
	Author = {D{\"u}mbgen, Lutz},
	Journal = {Probability Theory and Related Fields},
	Number = {1},
	Pages = {125--140},
	Publisher = {Springer},
	Title = {On nondifferentiable functions and the bootstrap},
	Volume = {95},
	Year = {1993}}

@article{fang2019inference,
	Author = {Fang, Zheng and Santos, Andres},
	Journal = {The Review of Economic Studies},
	Number = {1},
	Pages = {377--412},
	Publisher = {Oxford University Press},
	Title = {Inference on directionally differentiable functions},
	Volume = {86},
	Year = {2019}}

@article{el2018can,
	Author = {El Karoui, Noureddine and Purdom, Elizabeth},
	Journal = {Journal of Machine Learning Research},
	Number = {5},
	Pages = {1--66},
	Title = {Can we trust the bootstrap in high-dimensions? The case of linear models},
	Volume = {19},
	Year = {2018}}

@article{lin2026bootstrap,
	Author = {Lin, Ziming and Han, Fang},
	Journal = {arXiv preprint arXiv:2604.17239},
	Title = {Bootstrap consistency for general double/debiased machine learning estimators},
	Year = {2026}}

@article{varadarajan1958convergence,
	Author = {Varadarajan, Veeravalli S},
	Journal = {Sankhy{\=a}: The Indian Journal of Statistics},
	Number = {1/2},
	Pages = {23--26},
	Publisher = {JSTOR},
	Title = {On the convergence of sample probability distributions},
	Volume = {19},
	Year = {1958}}

@article{chernozhukov2016empirical,
	Author = {Chernozhukov, Victor and Chetverikov, Denis and Kato, Kengo},
	Journal = {Stochastic Processes and their Applications},
	Number = {12},
	Pages = {3632--3651},
	Publisher = {Elsevier},
	Title = {Empirical and multiplier bootstraps for suprema of empirical processes of increasing complexity, and related {Gaussian} couplings},
	Volume = {126},
	Year = {2016}}

@techreport{praestgaard1990bootstrap,
	Author = {Praestgaard, J},
	Institution = {Department of Statistics, University of Washington},
	Journal = {Technical Rep},
	Number = {195},
	Publisher = {Citeseer},
	Title = {Bootstrap with general weights and multiplier central limit theorems},
	Year = {1990}}

@article{hall1989smoothing,
	Author = {Hall, Peter and DiCiccio, Thomas J and Romano, Joseph P},
	Journal = {The Annals of Statistics},
	Number = {2},
	Pages = {692--704},
	Publisher = {JSTOR},
	Title = {On smoothing and the bootstrap},
	Volume = {17},
	Year = {1989}}

@article{silverman1987bootstrap,
	Author = {Silverman, B. W. and Young, G. A.},
	Journal = {Biometrika},
	Number = {3},
	Pages = {469--479},
	Publisher = {Oxford University Press},
	Title = {The bootstrap: to smooth or not to smooth?},
	Volume = {74},
	Year = {1987}}

@book{grafakos2008classical,
	Author = {Grafakos, Loukas},
	Edition = {Second},
	Publisher = {Springer},
	Title = {Classical {Fourier} Analysis},
	Year = {2008}}

@article{fischer2014form,
	Author = {Fischer, Markus},
	Journal = {Bernoulli},
	Number = {4},
	Pages = {1765--1801},
	Publisher = {JSTOR},
	Title = {On the form of the large deviation rate function for the empirical measures of weakly interacting systems},
	Volume = {20},
	Year = {2014}}

@article{lacker2023hierarchies,
	Author = {Lacker, Daniel},
	Journal = {Probability and Mathematical Physics},
	Number = {2},
	Pages = {377--432},
	Publisher = {Mathematical Sciences Publishers},
	Title = {Hierarchies, entropy, and quantitative propagation of chaos for mean field diffusions},
	Volume = {4},
	Year = {2023}}

@article{haussmann1986time,
	Author = {Haussmann, Ulrich G and Pardoux, Etienne},
	Journal = {The Annals of Probability},
	Number = {4},
	Pages = {1188--1205},
	Publisher = {JSTOR},
	Title = {Time reversal of diffusions},
	Volume = {14},
	Year = {1986}}

@inproceedings{fu2026approximation,
	Author = {Fu, Guoji and Lee, Wee Sun},
	Booktitle = {Advances in Neural Information Processing Systems 38},
	Title = {Approximation and generalization abilities of score-based neural network generative models for sub-{Gaussian} distributions},
	Year = {2025}}

@inproceedings{han2024neural,
	Author = {Yinbin Han and Meisam Razaviyayn and Renyuan Xu},
	Booktitle = {International Conference on Learning Representations},
	Title = {Neural Network-Based Score Estimation in Diffusion Models: Optimization and Generalization},
	Year = {2024}}

@article{hyvarinen2005estimation,
	Author = {Aapo Hyv{{\"a}}rinen},
	Journal = {Journal of Machine Learning Research},
	Number = {24},
	Pages = {695--709},
	Title = {Estimation of Non-Normalized Statistical Models by Score Matching},
	Url = {http://jmlr.org/papers/v6/hyvarinen05a.html},
	Volume = {6},
	Year = {2005}}

@article{vincent2011connection,
	Author = {Vincent, Pascal},
	Journal = {Neural Computation},
	Number = {7},
	Pages = {1661--1674},
	Publisher = {MIT Press},
	Title = {A connection between score matching and denoising autoencoders},
	Volume = {23},
	Year = {2011}}

@inproceedings{oko2023diffusion,
	Author = {Oko, Kazusato and Akiyama, Shunta and Suzuki, Taiji},
	Booktitle = {International Conference on Machine Learning},
	Title = {Diffusion models are minimax optimal distribution estimators},
	Year = {2023}}

@inproceedings{chen2023score,
	Author = {Chen, Minshuo and Huang, Kaixuan and Zhao, Tuo and Wang, Mengdi},
	Booktitle = {International Conference on Machine Learning},
	Title = {Score approximation, estimation and distribution recovery of diffusion models on low-dimensional data},
	Year = {2023}}

@misc{liang2026diffusion,
	Archiveprefix = {arXiv},
	Author = {Ce Liang and Wei Ma},
	Eprint = {2607.24324},
	Howpublished = {arXiv preprint arXiv:2607.24324},
	Primaryclass = {stat.ME},
	Title = {Diffusion Bootstrap for High-Dimensional Linear Models},
	Url = {https://arxiv.org/abs/2607.24324},
	Year = {2026}}

@inproceedings{song2021maximum,
	Author = {Song, Yang and Durkan, Conor and Murray, Iain and Ermon, Stefano},
	Booktitle = {Advances in Neural Information Processing Systems 34},
	Title = {Maximum Likelihood Training of Score-Based Diffusion Models},
	Year = {2021}}

@inproceedings{song2021scorebased,
	Author = {Yang Song and Jascha Sohl-Dickstein and Diederik P Kingma and Abhishek Kumar and Stefano Ermon and Ben Poole},
	Booktitle = {International Conference on Learning Representations},
	Title = {Score-Based Generative Modeling through Stochastic Differential Equations},
	Url = {https://openreview.net/forum?id=PxTIG12RRHS},
	Year = {2021}}

@book{Dudley_2002,
	Author = {Dudley, R. M.},
	Collection = {Cambridge Studies in Advanced Mathematics},
	Edition = {Second},
	Place = {Cambridge},
	Publisher = {Cambridge University Press},
	Series = {Cambridge Studies in Advanced Mathematics},
	Title = {Real Analysis and Probability},
	Year = {2002}}

@article{stephanovitch2024optimal,
	Author = {Stephanovitch, Arthur and Tanielian, Ugo and Cadre, Benoit and Klutchnikoff, Nicolas and Biau, Gerard},
	Journal = {Bernoulli},
	Number = {4},
	Pages = {2955--2978},
	Publisher = {Bernoulli Society for Mathematical Statistics and Probability},
	Title = {Optimal 1-{Wasserstein} distance for {WGANs}},
	Volume = {30},
	Year = {2024}}

@book{villani2009optimal,
	Author = {Villani, C{\'e}dric},
	Publisher = {Springer},
	Title = {Optimal Transport: Old and New},
	Volume = {338},
	Year = {2009}}

@book{villani2003topics,
	Author = {Villani, C{\'e}dric},
	Publisher = {American Mathematical Society},
	Title = {Topics in Optimal Transportation},
	Volume = {58},
	Year = {2003}}

@inproceedings{gulrajani2017improved,
	Author = {Gulrajani, Ishaan and Ahmed, Faruk and Arjovsky, Martin and Dumoulin, Vincent and Courville, Aaron C},
	Booktitle = {Advances in Neural Information Processing Systems 30},
	Title = {Improved training of {Wasserstein} {GANs}},
	Year = {2017}}

@article{gao2023approximating,
	Author = {Gao, Yihang and Ng, Michael K and Zhou, Mingjie},
	Journal = {SIAM Journal on Mathematics of Data Science},
	Number = {4},
	Pages = {949--976},
	Publisher = {SIAM},
	Title = {Approximating probability distributions by using {Wasserstein} generative adversarial networks},
	Volume = {5},
	Year = {2023}}

@article{stephanovitch2024wasserstein,
	Author = {Stephanovitch, Arthur and Aamari, Eddie and Levrard, Clement},
	Journal = {The Annals of Statistics},
	Number = {5},
	Pages = {2167--2193},
	Publisher = {Institute of Mathematical Statistics},
	Title = {Wasserstein generative adversarial networks are minimax optimal distribution estimators},
	Volume = {52},
	Year = {2024}}

@inproceedings{schreuder2021statistical,
	Author = {Schreuder, Nicolas and Brunel, Victor-Emmanuel and Dalalyan, Arnak},
	Booktitle = {Algorithmic Learning Theory},
	Title = {Statistical guarantees for generative models without domination},
	Year = {2021}}

@article{biau2021some,
	Author = {Biau, G{\'e}rard and Sangnier, Maxime and Tanielian, Ugo},
	Journal = {Journal of Machine Learning Research},
	Number = {119},
	Pages = {1--45},
	Title = {Some theoretical insights into {Wasserstein} {GANs}},
	Volume = {22},
	Year = {2021}}

@inproceedings{arora2017generalization,
	Author = {Arora, Sanjeev and Ge, Rong and Liang, Yingyu and Ma, Tengyu and Zhang, Yi},
	Booktitle = {International Conference on Machine Learning},
	Title = {Generalization and equilibrium in generative adversarial nets ({GANs})},
	Year = {2017}}

@inproceedings{arjovsky2017towards,
	Author = {Arjovsky, Martin and Bottou, Leon},
	Booktitle = {International Conference on Learning Representations},
	Title = {Towards Principled Methods for Training Generative Adversarial Networks},
	Year = {2017}}

@article{gine1990bootstrapping,
	Author = {Gin{\'e}, Evarist and Zinn, Joel},
	Journal = {The Annals of Probability},
	Number = {2},
	Pages = {851--869},
	Publisher = {JSTOR},
	Title = {Bootstrapping general empirical measures},
	Volume = {18},
	Year = {1990}}

@article{huang2022error,
	Author = {Huang, Jian and Jiao, Yuling and Li, Zhen and Liu, Shiao and Wang, Yang and Yang, Yunfei},
	Journal = {Journal of Machine Learning Research},
	Number = {116},
	Pages = {1--43},
	Title = {An error analysis of generative adversarial networks for learning distributions},
	Volume = {23},
	Year = {2022}}

@inproceedings{arjovsky2017wasserstein,
	Author = {Arjovsky, Martin and Chintala, Soumith and Bottou, L{\'e}on},
	Booktitle = {International Conference on Machine Learning},
	Title = {Wasserstein generative adversarial networks},
	Year = {2017}}

@article{cybenko1989approximation,
	Author = {Cybenko, George},
	Journal = {Mathematics of Control, Signals and Systems},
	Number = {4},
	Pages = {303--314},
	Publisher = {Springer},
	Title = {Approximation by superpositions of a sigmoidal function},
	Volume = {2},
	Year = {1989}}

@inproceedings{kidger2020universal,
	Author = {Kidger, Patrick and Lyons, Terry},
	Booktitle = {Conference on Learning Theory},
	Title = {Universal approximation with deep narrow networks},
	Year = {2020}}

@inproceedings{kingma2018glow,
	Author = {Kingma, Durk P and Dhariwal, Prafulla},
	Booktitle = {Advances in Neural Information Processing Systems 31},
	Journal = {Advances in neural information processing systems},
	Title = {Glow: Generative flow with invertible 1x1 convolutions},
	Year = {2018}}

@inproceedings{huang2018neural,
	Author = {Huang, Chin-Wei and Krueger, David and Lacoste, Alexandre and Courville, Aaron},
	Booktitle = {International Conference on Machine Learning},
	Title = {Neural autoregressive flows},
	Year = {2018}}

@inproceedings{papamakarios2017masked,
	Author = {Papamakarios, George and Pavlakou, Theo and Murray, Iain},
	Booktitle = {Advances in Neural Information Processing Systems 30},
	Title = {Masked autoregressive flow for density estimation},
	Year = {2017}}

@inproceedings{dinh2017density,
	Author = {Dinh, Laurent and Sohl-Dickstein, Jascha and Bengio, Samy},
	Booktitle = {International Conference on Learning Representations},
	Title = {Density estimation using {Real NVP}},
	Year = {2017}}

@article{dinh2014nice,
	Author = {Dinh, Laurent and Krueger, David and Bengio, Yoshua},
	Journal = {arXiv preprint arXiv:1410.8516},
	Title = {{NICE}: Non-linear independent components estimation},
	Year = {2014}}

@article{kobyzev2020normalizing,
	Author = {Kobyzev, Ivan and Prince, Simon J. D. and Brubaker, Marcus A},
	Journal = {IEEE Transactions on Pattern Analysis and Machine Intelligence},
	Number = {11},
	Pages = {3964--3979},
	Publisher = {IEEE},
	Title = {Normalizing flows: An introduction and review of current methods},
	Volume = {43},
	Year = {2021}}

@inproceedings{miyato2018spectral,
	Author = {Miyato, Takeru and Kataoka, Toshiki and Koyama, Masanori and Yoshida, Yuichi},
	Booktitle = {International Conference on Learning Representations},
	Journal = {arXiv preprint arXiv:1802.05957},
	Title = {Spectral normalization for generative adversarial networks},
	Year = {2018}}

@inproceedings{grathwohl2018ffjord,
	Author = {Grathwohl, Will and Chen, Ricky T. Q. and Bettencourt, Jesse and Sutskever, Ilya and Duvenaud, David},
	Booktitle = {International Conference on Learning Representations},
	Title = {{FFJORD}: Free-form continuous dynamics for scalable reversible generative models},
	Year = {2019}}

@inproceedings{chen2018neural,
	Author = {Chen, Ricky T. Q. and Rubanova, Yulia and Bettencourt, Jesse and Duvenaud, David K},
	Booktitle = {Advances in Neural Information Processing Systems 31},
	Title = {Neural ordinary differential equations},
	Year = {2018}}

@inproceedings{lipman2022flow,
	Author = {Lipman, Yaron and Chen, Ricky T. Q. and Ben-Hamu, Heli and Nickel, Maximilian and Le, Matt},
	Booktitle = {International Conference on Learning Representations},
	Title = {Flow matching for generative modeling},
	Year = {2023}}

@inproceedings{irons2022triangular,
	Author = {Irons, Nicholas J and Scetbon, Meyer and Pal, Soumik and Harchaoui, Zaid},
	Booktitle = {International Conference on Artificial Intelligence and Statistics},
	Title = {Triangular flows for generative modeling: Statistical consistency, smoothness classes, and fast rates},
	Year = {2022}}

@article{van2011local,
	Author = {van der Vaart, Aad W. and Wellner, Jon A.},
	Journal = {Electronic Journal of Statistics},
	Pages = {192--203},
	Title = {A local maximal inequality under uniform entropy},
	Volume = {5},
	Year = {2011}}

@article{tran2026generative,
	Author = {Tran, Leon and Ye, Ting and Ding, Peng and Han, Fang},
	Journal = {arXiv preprint arXiv:2602.17052},
	Title = {Generative modeling for the bootstrap},
	Year = {2026}}

@article{efron1979bootstrap,
	Author = {Efron, B},
	Journal = {The Annals of Statistics},
	Number = {1},
	Pages = {1--26},
	Title = {Bootstrap Methods: Another Look at the Jackknife},
	Volume = {7},
	Year = {1979}}

@book{durrett2019probability,
	Author = {Durrett, Rick},
	Publisher = {Cambridge University Press},
	Title = {Probability: Theory and Examples (5th Edition)},
	Year = {2019}}

@book{kosorok2008introduction,
	Author = {Kosorok, Michael R},
	Publisher = {Springer},
	Title = {Introduction to Empirical Processes and Semiparametric Inference},
	Year = {2008}}

@article{mason1992rank,
	Author = {Mason, David M and Newton, Michael A},
	Journal = {The Annals of Statistics},
	Number = {3},
	Pages = {1611--1624},
	Publisher = {JSTOR},
	Title = {A rank statistics approach to the consistency of a general bootstrap},
	Volume = {20},
	Year = {1992}}

@article{praestgaard1993exchangeably,
	Author = {Praestgaard, Jens and Wellner, Jon A},
	Journal = {The Annals of Probability},
	Number = {4},
	Pages = {2053--2086},
	Title = {Exchangeably Weighted Bootstraps of the General Empirical Process},
	Volume = {21},
	Year = {1993}}

@article{lin2023failure,
	Author = {Lin, Zhexiao and Han, Fang},
	Journal = {Biometrika},
	Number = {3},
	Pages = {1063--1070},
	Title = {On the failure of the bootstrap for {C}hatterjee's rank correlation},
	Volume = {111},
	Year = {2024}}

@article{chernozhukov2018double,
	Author = {Chernozhukov, Victor and Chetverikov, Denis and Demirer, Mert and Duflo, Esther and Hansen, Christian and Newey, Whitney and Robins, James},
	Journal = {The Econometrics Journal},
	Number = {1},
	Pages = {C1--C68},
	Publisher = {Oxford University Press Oxford, UK},
	Title = {Double/debiased machine learning for treatment and structural parameters},
	Volume = {21},
	Year = {2018}}

@book{MR1385671,
	Author = {van der Vaart, Aad W. and Wellner, Jon A.},
	Publisher = {Springer},
	Title = {Weak Convergence and Empirical Processes: with Applications to Statistics},
	Year = {1996}}
}

\end{document}